\documentclass{article}
\usepackage[a4paper, margin=3cm]{geometry}
\usepackage{authblk}
\usepackage{parskip}
\usepackage{amsthm,amsmath,amssymb}
\usepackage{mathtools}
\usepackage{tabularx}
\usepackage{booktabs}
\usepackage{enumitem}
\usepackage{etoolbox}
\usepackage{needspace}
\IfFileExists{dsfont.sty}{\usepackage{dsfont}\newcommand{\ind}{\mathds{1}}}{\newcommand{\ind}{\mathbf{1}}}
\usepackage{tikz}
\usepackage[framemethod=TikZ]{mdframed}
\usepackage{xurl}
\usepackage[colorlinks,linkcolor=blue,citecolor=blue,urlcolor=blue,hypertexnames=false]{hyperref}
\hypersetup{pdftitle={Weighted Power Fr\'echet Means in Metric Spaces with Curvature Bounded Above},pdfauthor={Christof Sch\"otz}}
\usepackage[capitalize,nameinlink,noabbrev]{cleveref}
\newcommand*{\mc}[1]{\mathcal{#1}}
\newcommand{\N}{\mathbb{N}}
\newcommand{\Q}{\mathbb{Q}}
\newcommand{\R}{\mathbb{R}}
\newcommand{\Rp}{\R_{\geq0}}
\newcommand{\Rpp}{\R_{>0}}
\newcommand{\lcb}{\left\lbrace} \newcommand{\rcb}{\right\rbrace} \newcommand{\cb}[1]{\lcb #1 \rcb} \newcommand{\lab}{\left[} \newcommand{\rab}{\right]} \newcommand{\ab}[1]{\lab #1 \rab} \newcommand{\abOf}[1]{\!\ab{#1}} \newcommand{\lb}{\left(} \newcommand{\rb}{\right)} \newcommand{\br}[1]{\lb #1 \rb} \newcommand{\brOf}[1]{\!\br{#1}} \newcommand{\abs}[1]{\left| #1 \right|} \newcommand{\absof}[1]{\vert#1\vert} \newcommand{\normof}[1]{\Vert#1\Vert}
\newcommand{\normOf}[1]{\left\Vert#1\right\Vert}
\newcommand{\cardof}[1]{\##1}
\newcommand{\innerProduct}[2]{\left\langle#1,#2\right\rangle}
\newcommand{\ip}[2]{\innerProduct{#1}{#2}}
\renewcommand{\Pr}{\mathbf{P}}
\newcommand{\PrOf}[1]{\Pr\brOf{#1}}
\newcommand{\E}{\mathbf{E}} \newcommand{\Eof}[1]{\E[#1]}
\newcommand{\EOf}[1]{\E\abOf{#1}}
\newcommand{\euclof}[1]{\Vert#1\Vert_2}
\newcommand{\euclOf}[1]{\left\Vert#1\right\Vert_2}
\newcommand{\mink}[2]{\langle #1,#2\rangle_{1,k}}
\newcommand{\pr}{^\prime}
\DeclareMathOperator{\landauO}{\mathcal{O}}
\newcommand{\dunif}[1]{\mathsf{Unif}\brOf{#1}}
\newcommand{\eqcm}{\,,} \newcommand{\eqfs}{\,.}
\newcommand{\dl}{\mathrm{d}} \renewcommand{\subset}{\subseteq}
\newcommand{\euler}{\mathrm{e}}
\DeclareMathOperator*{\argmin}{arg\,min}
\newcommand{\const}[1]{C_{\mathsf{#1}}}
\newcommand{\popdis}{\bar\mu}
\newcommand{\samdis}{\hat\mu}
\newcommand{\popfm}{\bar m}
\newcommand{\samfm}{\hat m}
\newcommand{\samndis}{\hat\mu_n}
\newcommand{\samnfm}{\hat m_n}
\newcommand{\ol}[2]{\overline{#1#2}}
\newcommand{\intset}[1]{\{1,\dots,#1\}}
\newcommand{\diam}{\mathsf{diam}}
\newcommand{\as}{\mathsf{a.s.}}
\newcommand{\id}{\mathrm{id}}
\newcommand{\Vf}{\mathbf{V_{\!F}}} \newcommand{\Vfof}[1]{\Vf[#1]}
\newcommand{\VfOf}[1]{\Vf\abOf{#1}}
\newcommand{\Vm}{\mathbf{V_{\!M}}} \newcommand{\Vmof}[1]{\Vm[#1]}
\newcommand{\VmOf}[1]{\Vm\abOf{#1}}
\newcommand{\Vmp}[1]{\mathbf{V}^{#1}_{\!M}} \newcommand{\VmpOf}[2]{\Vmp{#1}\abOf{#2}}
\newcommand{\Vfp}[1]{\mathbf{V}^{#1}_{\!F}}
\newcommand{\catof}[1]{$\mathsf{CAT}(#1)$}
\newcommand{\cato}{\catof{0}}
\newcommand{\catk}{\catof{\kappa}}
\newcommand{\Vmpof}[2]{\Vmp{#1}[#2]} \newcommand{\Vfpof}[2]{\Vfp{#1}[#2]}
\newcommand{\VmofP}[1]{\Vm(#1)} \newcommand{\VmOfP}[1]{\Vm\brOf{#1}}
\newcommand{\VfofP}[1]{\Vf(#1)}
\newcommand{\VmpofP}[2]{\Vmp{#1}(#2)} \newcommand{\VmpOfP}[2]{\Vmp{#1}\brOf{#2}}
\newcommand{\VfpofP}[2]{\Vfp{#1}(#2)} \newcommand{\VfpOfP}[2]{\Vfp{#1}\brOf{#2}}
\usepackage{xspace}
\newcommand{\QIa}{\eqref{eq:power:qi}\xspace}
\newcommand{\VIa}{\eqref{eq:power:vi}\xspace}
\newcommand{\CPa}{\eqref{eq:power:cp}\xspace}
\newcommand{\Lamof}[2]{\Lambda_\alpha(#1;#2)}
\newcommand{\LamOf}[2]{\Lambda_\alpha\brOf{#1;#2}}
\newcommand{\setC}{\mc C}
\newcommand{\seg}[2]{[#1,#2]}
\newcommand{\Dk}{D_\kappa}
\newcommand{\VIp}{\eqref{eq:2fmcatk:vi}\xspace}
\newcommand{\CPp}{\eqref{eq:2fmcatk:CP}\xspace}

\mdfsetup{splittopskip=\topskip, splitbottomskip=3\baselineskip}
\usepackage{thmtools}
\def\postBoxSkip{1.0ex}
\def\postBoxSkipCmd{\vskip\postBoxSkip}
\def\preBoxSkip{1.0ex}
\def\preBoxSkipCmd{\vskip\preBoxSkip}
\declaretheoremstyle[
	bodyfont=\normalfont,
	postfoothook={\postBoxSkipCmd},
	preheadhook={\preBoxSkipCmd},
	mdframed={
		backgroundcolor = black!2,
}]{ruledBoxStyle}
\declaretheoremstyle[
	bodyfont=\normalfont,
	postfoothook={\postBoxSkipCmd},
	preheadhook={\preBoxSkipCmd},
	mdframed={
		backgroundcolor=black!4,
		linecolor = black!4,
}]{boxStyle}
\declaretheoremstyle[
	bodyfont=\normalfont,
	postfoothook={\postBoxSkipCmd},
	preheadhook={\preBoxSkipCmd},
	mdframed={
		linecolor = white,
		tikzsetting = {
			draw = black,
			line width = 1pt,loosely dotted,
		},
	}
]{dashedStyle}
\declaretheorem[style=ruledBoxStyle,name=Definition, numberwithin=section]{definition}
\declaretheorem[style=ruledBoxStyle,name=Lemma,numberlike=definition]{lemma}
\declaretheorem[style=ruledBoxStyle,name=Proposition,numberlike=definition]{proposition}
\declaretheorem[style=ruledBoxStyle,name=Theorem,numberlike=definition]{theorem}
\declaretheorem[style=ruledBoxStyle,name=Corollary,numberlike=definition]{corollary}
\declaretheorem[style=boxStyle,name=Remark,numberlike=definition]{remark}
\declaretheorem[style=boxStyle,name=Notation,numberlike=definition]{notation}
\declaretheorem[style=dashedStyle,name=Example,numberlike=definition]{example}

\title{Weighted Power Fr\'echet Means in Metric Spaces with Curvature Bounded Above}
\date{}
\author[1,2]{Christof Sch\"otz\thanks{christof.schoetz@tum.de, ORCID: \href{https://orcid.org/0000-0003-3528-4544}{0000-0003-3528-4544}}}
\affil[1]{Munich Climate Center and Earth System Modelling Group, Department of Aerospace and Geodesy, TUM School of Engineering and Design, Technical University of Munich, Munich, Germany}
\affil[2]{Artificial Intelligence Group, Research Domain IV -- Complexity Science, Potsdam Institute for Climate Impact Research, Potsdam, Germany}

\begin{document}
\maketitle

\begin{abstract}
    We establish non-asymptotic risk bounds for power Fr\'echet means in geodesic metric spaces with curvature bounded above.
    The observations form weighted, possibly infinite sequences of independent random variables whose laws and means may differ.
    We treat three settings.
    For $2$-Fr\'echet means in Hadamard spaces, we obtain a sharp mean squared error bound that becomes an identity in Hilbert spaces.
    For $2$-Fr\'echet means in CAT($\kappa$) spaces with $\kappa>0$, we establish variance and Wasserstein contraction inequalities with optimal constants depending on the circumradius $\varrho$ of the closed convex data domain, and obtain mean squared error bounds throughout the maximal range $\varrho<\pi/(2\sqrt{\kappa})$.
    For $\alpha$-Fr\'echet means in Hadamard spaces, $1<\alpha<2$, we derive finite $L^\alpha$ risk bounds under weighted $\alpha$-moment conditions, allowing even an infinite $\alpha$-moment of the population mixture.
    The proofs combine variance, quadruple, and contraction inequalities with a leave-one-out stability technique.
    Applications give prior and posterior bounds for Dirichlet-process Fr\'echet means and finite-sample guarantees for local constant Fr\'echet regression.
    The regression results require no response-space entropy condition and replace density smoothness assumptions common in earlier work with transport smoothness; for $\alpha<2$, the bounds remain finite even when the responses have infinite variance.
\end{abstract}

\noindent\textbf{Keywords:} power Fr\'echet mean; weighted samples; Hadamard space; $\mathsf{CAT}(\kappa)$ space; curvature bounded above; variance inequality; non-asymptotic bounds; local Fr\'echet regression; Dirichlet process.

\noindent\textbf{MSC 2020:} Primary 62R20, 60F25; secondary 53C23, 60B05, 62G08.

\tableofcontents

\section{Introduction}\label{sec:intro}

Linearity is a defining property of expectation and underlies much of its probabilistic theory.
Results for Fr\'echet means show that substantial parts of this theory extend to metric spaces, from laws of large numbers \cite{Ziezold1977ExpectedFigures-Reidel,BhattacharyaPatrangenaru2003LargeSample-AOS,Sturm2003NPC-ContempMath} to recent variance-over-sample-size bounds for the mean squared error under independent and identically distributed (iid) observations \cite{LeGouicEtAl2023FastConvergence-JEMS,BrunelSerres2026FiniteSample-arxiv}.
These developments also support statistical methods such as nonparametric regression with metric responses \cite{PetersenMueller2019Regression-AOS}.
The classical Fr\'echet mean, defined through squared distance, retains a connection to linearity by coinciding with expectation in inner product spaces.
Replacing the exponent $2$ by $\alpha\in(1,2)$ moves beyond this connection: the resulting power Fr\'echet means are generally nonlinear even in Euclidean spaces, yet retain many useful properties of expectation and additionally gain robustness \cite{Schoetz2025VarianceInequalities-EJP,Schoetz2026Transformed-arxiv}.

This theory is particularly tractable under suitable upper curvature bounds, the geometric setting of this article.
Such bounds also hold in spaces used to represent modern data, including positive definite matrices and phylogenetic trees.
We extend the probabilistic theory of power Fr\'echet means beyond iid samples to weighted, possibly infinite sequences of independent observations with different laws.
Besides advancing the probabilistic theory of power Fr\'echet means, our results yield tail-robust nonparametric regression with metric responses and bounds for Dirichlet-process means in Bayesian statistics.

Let $(\mc Q,d)$ be a metric space.
For a distribution $\mu$ with a finite second moment, its \emph{Fr\'echet mean} \cite{Frechet1948RandomElements-AIHP} (or barycenter) minimizes $q\mapsto\int d(y,q)^2\,\dl\mu(y)$.
With only a finite first moment, one instead minimizes the renormalized objective $q\mapsto\int(d(y,q)^2-d(y,o)^2)\,\dl\mu(y)$ for a fixed reference point $o\in\mc Q$.
In \emph{Hadamard spaces}, that is, complete metric spaces of nonpositive curvature in the sense of Alexandrov \cite{BuragoEtAl2001MetricGeometry-AMS,Bacak2014ConvexAnalysis-DeGruyter}, this mean exists and is unique \cite[Proposition 4.3]{Sturm2003NPC-ContempMath}.
Examples include Hilbert spaces, Hadamard manifolds, complete $\R$-trees \cite{Evans2008RealTrees-Springer}, \cite[Example II.1.15(5)]{BridsonHaefliger1999NPC-Springer}, open books \cite[Lemma 1.3]{HotzEtAl2013Sticky-AAP} (see also \cite[Example 3.3]{Sturm2003NPC-ContempMath}), the space of phylogenetic trees with the Billera--Holmes--Vogtmann metric \cite[Lemma 4.1]{BilleraEtAl2001PhylogeneticTrees-AAM}, and positive definite matrices with the affine-invariant metric \cite[Propositions 5 and 10]{BhatiaHolbrook2006MatrixMeans-LAA}.
The quadruple inequality, the variance inequality, and $W_1$ contraction of the barycenter map \cite{Sturm2003NPC-ContempMath} are the main geometric tools for finite-sample bounds \cite{BrunelSerres2026FiniteSample-arxiv}.

In a complete \catk{} space with $\kappa>0$, the Fr\'echet mean need not be unique, but restricting the distribution to a sufficiently small ball restores uniqueness \cite[Theorem B]{Yokota2016Barycenter-JMSJ}.
Besides spheres, this setting includes sufficiently small closed geodesic balls in Riemannian manifolds of sectional curvature at most $\kappa$ \cite[Theorem II.1A.6]{BridsonHaefliger1999NPC-Springer}, such as rotation groups and Grassmann manifolds with their standard Riemannian metrics; for uniqueness of means in such balls, see also \cite[Theorem 2.1]{Afsari2011CenterOfMass-PAMS} and \cite{Kendall1990HarmonicMaps-PLMS}.
Finite-sample bounds again rely on a quadruple inequality, a variance inequality, and a contraction property.
The triangle inequality can replace the quadruple inequality in the argument, but the recent quadruple inequality of \cite{Schoetz2026Trapezoid-arxiv} removes the diameter of the ball from the bounds.
A variance inequality and an empirical version of the contraction property were established in \cite[Proposition 3 and Theorem 1]{BrunelSerres2026FiniteSample-arxiv}; we strengthen both in this article.

Replacing the exponent $2$ by $\alpha\in(1,2]$ gives the \emph{power Fr\'echet mean} of order $\alpha$ (also called the $\alpha$-Fr\'echet mean or $L^\alpha$ center of mass \cite[Definition 1.1]{Afsari2011CenterOfMass-PAMS}).
It minimizes $q\mapsto\int(d(y,q)^\alpha-d(y,o)^\alpha)\,\dl\mu(y)$ and exists uniquely in a Hadamard space under only a finite $(\alpha-1)$-moment.
Power Fr\'echet means interpolate between median-like behavior as $\alpha\searrow1$ and the classical mean at $\alpha=2$.
The corresponding quadruple, variance, and contraction inequalities have recently been established \cite{Schoetz2019ConvergenceRates-EJS,Schoetz2024Quadruple-MIA,Schoetz2025VarianceInequalities-EJP,Schoetz2026Transformed-arxiv}.
For $1<\alpha<2$, the variance and contraction inequalities have distribution-dependent moduli.
Controlling the empirical moduli while retaining the effect of unequal weights is the main difficulty in our power Fr\'echet mean error bound.

Our setting is as follows.
Let $Y_i\sim\mu_i$, $i\in\N$, be independent random variables in a complete \catk{} space, and let $w_i\geq0$ be deterministic weights with $\sum_iw_i=1$.
Fix $\alpha\in(1,2]$, with $\alpha=2$ when $\kappa>0$.
Write $\samfm$ for the $\alpha$-Fr\'echet mean of the empirical measure $\sum_iw_i\delta_{Y_i}$ and $\popfm$ for that of the population mixture $\sum_iw_i\mu_i$.
In the Hadamard setting, a finite $(\alpha-1)$-moment of this mixture ensures existence and uniqueness of both means, almost surely for the empirical mean.
For $\kappa>0$, we assume that all component laws are supported in a common closed convex set of circumradius less than $\pi/(2\sqrt\kappa)$, which again ensures existence and uniqueness.
The component laws need not be identical or have a common mean.

Our contributions are the following.
\begin{enumerate}[label=(\alph*)]
 \item\label{itm:intro:two}
 For the $2$-Fr\'echet mean in a Hadamard space, we prove
 \[
 \EOf{d(\popfm,\samfm)^2}\leq\sum_{i=1}^{\infty}w_i^2\VmOf{Y_i},
 \qquad \VmOf{Y_i}:=\tfrac12\EOf{d(Y_i,Y_i\pr)^2},
 \]
 where $Y_i\pr$ is an independent copy of $Y_i$ (\cref{thm:two:infty:independent}).
 The bound is an identity in Hilbert spaces.
 Compared with \cite[Theorems 3 and 4]{BrunelSerres2026FiniteSample-arxiv}, it allows arbitrary deterministic weights, countably many observations, and heterogeneous component means.

 \item\label{itm:intro:catk}
 In a complete \catk{} space with $\kappa>0$, let the data lie in a closed convex set $\setC$ of circumradius $\varrho<\pi/(2\sqrt\kappa)$ and diameter $\delta/\sqrt\kappa$, and put $\Delta:=2\varrho\sqrt\kappa<\pi$, so that $\delta\leq\Delta\leq2\delta$ (\cref{rem:pos:diameter}).
 We determine the optimal constants depending only on $\Delta$ throughout this range, and those depending only on $\delta$ when $\delta<\pi/2$, in the quadratic variance and $W_1$ contraction inequalities
 \begin{align*}
 d(m_2(\mu),q)^2
 &\leq A\int\br{d(y,q)^2-d(y,m_2(\mu))^2}\,\dl\mu(y),\\
 d(m_2(\mu),m_2(\nu))&\leq B\,W_1(\mu,\nu)
 \end{align*}
 for all probability measures $\mu,\nu$ on $\setC$ and every $q\in\setC$ (\cref{thm:pos:hemisphere:vi,thm:pos:hemisphere:cp} in the appendix).
 This doubles the circumradius range of the quadratic variance and empirical contraction bounds in \cite[Proposition 3 and Theorem 1]{BrunelSerres2026FiniteSample-arxiv}.
 The threshold $\pi/(2\sqrt\kappa)$ is optimal: at $\varrho=\pi/(2\sqrt\kappa)$, no uniform constant exists even within a single fixed space (\cref{prp:pos:diameterobstruction}\,\ref{itm:pos:obstruction:boundary} in the appendix).
 The variance proof combines the curved comparison of \cite[Proposition 23]{Yokota2016Barycenter-JMSJ} with a change of measure and Jensen's inequality for a jointly convex function.
 The optimal contraction constants follow by retaining the location of the mean in a local transport comparison, using the quadruple inequality of \cite[Theorem 1.4]{Schoetz2026Trapezoid-arxiv}.

 These geometric inequalities yield the weighted-sample bound
 \[
 \EOf{d(\popfm,\samfm)^2}
 \leq C_\Delta^4\sum_{i=1}^{\infty}w_i^2\VmOf{Y_i},
 \qquad C_\Delta:=\frac{\Delta}{\sin(\Delta)},
 \]
 and we also give sharper constants and a concentration inequality (\cref{thm:pos:infty:independent,cor:pos:concentration}).
 The factor $C_\Delta^4$ remains bounded as $\varrho\nearrow\pi/(4\sqrt\kappa)$, answering the divergence part of \cite[Open Question 2]{BrunelSerres2026FiniteSample-arxiv} for empirical barycenters.
 At the larger threshold $\varrho\nearrow\pi/(2\sqrt\kappa)$, a spherical-orthant construction shows that divergence is necessary even at fixed diameter: the required risk coefficient is of order $(\pi/2-\varrho\sqrt\kappa)^{-2}$ along that family, and our diameter-aware bound attains this order (\cref{prp:pos:diameterobstruction}\,\ref{itm:pos:obstruction:order} in the appendix).
 The same examples rule out finite diameter-only variance and risk bounds at any fixed $\pi/2\leq\delta<\pi$ (\cref{prp:pos:diameterobstruction}\,\ref{itm:pos:obstruction:vi} and \ref{itm:pos:obstruction:risk} in the appendix).
 Asymptotically in the sample size, related effects are known on spheres: the ratio of $n$ times the mean squared error to the Fr\'echet variance can be arbitrarily large (finite-sample smeariness) \cite{EltznerEtAl2021FiniteSmeariness-GSI,HundrieserEtAl2024FiniteSmeariness-EJS}, and for rotationally symmetric families with a fixed radial profile that charges the boundary of a cap of radius $R$, its large-sample limit tends to $(R\cot R)^{-2}\asymp(\pi/2-R)^{-2}$ as the dimension grows \cite[Theorem 6.3]{Pal2026FiniteSmeariness-arxiv}, the same order as along our orthant family.
 
 \item\label{itm:intro:main}
 For power Fr\'echet means of order $\alpha\in(1,2]$ in Hadamard spaces, our main statistical result is
 \begin{equation*}
 \br{\EOf{d(\popfm,\samfm)^\alpha}}^{1/\alpha}
 \leq c_\alpha\br{
 \sigma_{\alpha-1}^{\phi}\Sigma_{2\alpha-2}^{1/2}
 +\sigma_{\alpha-1}^{\phi/2}\Sigma_\alpha^{1/2}
 +\Sigma_\alpha^{1/\alpha}}
 \end{equation*}
 for a constant $c_\alpha>0$ depending only on $\alpha$, where $\phi=(2-\alpha)/(\alpha-1)$,
 \[
 \sigma_{\alpha-1}:=\sum_iw_i\EOf{d(Y_i,\popfm)^{\alpha-1}},
 \qquad
 \Sigma_s:=\sum_{i=1}^{\infty}w_i^{s/(\alpha-1)}\VmpOf{s}{Y_i},
 \]
 and $\Vmpof{s}{Y_i}:=\tfrac12\Eof{d(Y_i,Y_i\pr)^s}$ (\cref{thm:power:main}).
 The common-center moment $\sigma_{\alpha-1}$ supplies the distributional scale; the weighted pairwise moments $\Sigma_s$ measure only fluctuations within the component laws.
 Thus the bound vanishes for deterministic observations even when their locations differ.
 For $1<\alpha<2$, $\sigma_{\alpha-1}$ cannot in general be replaced by the pairwise moment $\Sigma_{\alpha-1}$ (\cref{lmm:power:moments}\,(ii) in the appendix).

 \item\label{itm:intro:regression}
 The weighted bounds give finite-sample bias--variance inequalities for local constant Fr\'echet regression with metric predictors.
 For quadratic means, these hold in Hadamard spaces and under the positive-curvature circumradius condition above (\cref{cor:two:localreg,cor:pos:localfrechet}).
 Under the kernel and local design conditions, they require a second-moment bound and H\"older continuity of the conditional laws in $W_1$, with no response-space entropy bound or smoothness of conditional densities.
 This weakens the assumptions of earlier local constant analyses and includes moving point masses and translated uniform noise, for which the density conditions can fail (\cref{rem:two:smoothness,ex:two:phase}).

 For $1<\alpha<2$ in Hadamard spaces, the corresponding bound for the conditional $\alpha$-Fr\'echet mean uses a uniform conditional $\alpha$-moment bound and transport continuity adapted to the contraction modulus (\cref{cor:power:localfrechet}).
 With transport smoothness of order $\beta\in(0,1]$ and a local design condition with dimension exponent $D>0$, bandwidth $h\asymp n^{-1/(2\beta+D)}$ gives $L^\alpha$ risk $\landauO(n^{-\beta/(2\beta+D)})$ as $n\to\infty$.
 Thus the classical nonparametric rate is available with fewer than two conditional moments.
 We apply the general result to responses with infinite-variance noise in Hilbert spaces and in hyperbolic space, as well as to distribution-valued responses in the $2$-Wasserstein space over $\R$ (\cref{cor:power:additive,ex:power:hyperbolic,ex:power:wasserstein}).

 \item\label{itm:intro:dirichlet}
 Conditioning on the stick-breaking weights gives prior and posterior bounds for Dirichlet-process Fr\'echet means.
 For quadratic means, these extend the classical Hilbert-space variance formula to Hadamard spaces and, with explicit curvature factors, to the positive-curvature setting above; they also give credible balls and bounds on the influence of the prior (\cref{ssec:dirichlet,ssec:pos:dirichlet}).
 For power Fr\'echet means in Hadamard spaces, a finite $\alpha$-moment of the base measure suffices for non-asymptotic prior and conditional posterior bounds and credible-radius bounds up to a constant depending only on $\alpha$ (\cref{cor:power:dirichlet}).
\end{enumerate}

\textbf{Related work.}
For the quadratic Fr\'echet mean, write $m$ for the population mean, $\hat m_n$ for the empirical mean with weights $1/n$, and $\sigma^2=\Eof{d(Y,m)^2}$.
For iid observations in a Hadamard space of finite diameter, Ahidar-Coutrix, Le Gouic, and Paris \cite{AhidarCoutrixEtAl2020Barycenters-PTRF} derive from their general $M$-estimation bound \cite[Theorem 2.1]{AhidarCoutrixEtAl2020Barycenters-PTRF} a squared-error bound of order $(D+t)/n$ with probability at least $1-2\euler^{-t}$ under a local doubling entropy condition with dimension parameter $D$.
Their results extend to general metric spaces under additional assumptions.
For iid observations in possibly unbounded Hadamard spaces, Sch\"otz \cite[Corollaries 3--4]{Schoetz2019ConvergenceRates-EJS} used the quadruple inequality and a local entropy bound with finite entropy integral to obtain polynomial concentration at the $n^{-1/2}$ scale under a second moment, and an $\landauO(n^{-1})$ squared-risk bound under a $(2+\epsilon)$-moment.
The power Fr\'echet mean extension there assumes an additional variance inequality, as discussed below.
These metric entropy assumptions exclude full balls in infinite-dimensional Hilbert spaces; a separate Hilbert-space treatment in \cite[Section 4.2]{Schoetz2019ConvergenceRates-EJS} uses a weighted auxiliary metric and additional summability conditions on the coordinate variances.
Le Gouic, Paris, Rigollet, and Stromme \cite[Theorem 1.3]{LeGouicEtAl2023FastConvergence-JEMS} prove the entropy-free bound $\Eof{d(\hat m_n,m)^2}\leq\sigma^2/n$ for iid observations with a finite second moment when curvature is nonpositive and also bounded below by a finite constant.
Their Theorem 1.1 gives a dimension-free $\landauO(n^{-1})$ squared-risk bound in spaces of nonnegative curvature under quantitative geodesic bi-extendibility, a curvature-lower-bound setting.
On spheres, which also have a positive curvature upper bound, this condition confines the support to a ball of radius $\pi/4$ about the mean \cite[Section 1.2]{LeGouicEtAl2023FastConvergence-JEMS}, a factor $2$ short of the uniqueness radius $\pi/2$ of \cite[Theorem 2.1]{Afsari2011CenterOfMass-PAMS}, as noted in \cite[Section 3.5]{HundrieserEtAl2026LowerBound-AISM}; high-probability bounds with law-dependent constants for larger supports are given in \cite[Theorem 3.5]{LeGouicEtAl2023FastConvergence-JEMS}.
\Cref{thm:pos:infty:independent} covers every circumradius below $\pi/(2\sqrt\kappa)$ with constants that do not depend on the law.

Leave-one-out stability arguments yield entropy-free bounds in general Hadamard spaces, so these bounds also hold in infinite-dimensional Hilbert spaces; such arguments are also used in proofs of exponential concentration inequalities.
For iid observations, Escande \cite[Corollary 17]{Escande2024Minimizers-JMLR} gives an $\landauO(n^{-1})$ squared-risk bound and exponential deviation bounds when the distance between two independent observations has a finite sub-exponential norm.
Brunel and Serres \cite[Corollaries 11--12 and Theorem 14]{BrunelSerres2024Concentration-ALT} show that iid data for which all centered $1$-Lipschitz functions are uniformly sub-Gaussian yield exponential concentration at the $n^{-1/2}$ scale; bounded data also yield Hoeffding- and Bernstein-type bounds, without entropy assumptions.
Their positive-curvature result \cite[Theorem 18]{BrunelSerres2024Concentration-ALT} requires a uniform doubling-type covering bound on all balls and support in a ball of radius less than $\pi/(4\sqrt\kappa)$.
For arbitrary Hadamard spaces, Brunel and Serres \cite[Theorems 3--4]{BrunelSerres2026FiniteSample-arxiv} prove $\Eof{d(\hat m_n,m)^2}\leq2\sigma^2/n$ under only a second moment, and $2n^{-2}\sum_i\sigma_i^2$ for independent observations with a common mean $m$, where $\sigma_i^2=\Eof{d(Y_i,m)^2}$.
The same theorems give curvature-dependent squared-risk bounds in complete $\mathsf{CAT}(\kappa)$ spaces when $\kappa>0$ and the common convex data domain lies in a ball of radius less than $\pi/(4\sqrt\kappa)$, again without entropy assumptions.
Their Theorems 7--8 add sub-Gaussian and Bernstein-type concentration for independent observations with a common mean under the corresponding tail or boundedness assumptions.
For independent observations that are not identically distributed, Kendall and Le \cite{KendallLe2011LimitTheorems-BJPS} prove a weak law of large numbers and a central limit theorem for empirical Fr\'echet means on Riemannian manifolds, and K\"ostenberger and Stark \cite[Theorems 1 and 2]{KoestenbergerStark2025SignalRecovery-JMVA} give an $L^2$ bound for the inductive mean in Hadamard spaces for independent observations with different means and show that every $L^2$ limit of the empirical Fr\'echet means is the limit of the inductive means; neither paper gives a finite-sample bound for the empirical Fr\'echet mean.
Our weighted bounds allow different component means and countably many observations; in positive curvature they also enlarge the admissible circumradius range.

\emph{Power Fr\'echet means.}
For $1<\alpha<2$, Sch\"otz \cite[Corollary 5]{Schoetz2019ConvergenceRates-EJS} obtained polynomial deviation bounds for the empirical $\alpha$-Fr\'echet mean of iid observations in a Hadamard space under a finite $(2\alpha-2)$-moment, an entropy condition, and a separately assumed variance inequality for the population objective.
That paper proves the power quadruple inequality but does not establish the required power Fr\'echet mean variance inequality.
Distribution-dependent variance inequalities and uniqueness for transformed means, including power Fr\'echet means, are obtained in \cite[Theorem 5.4 and Corollary 5.8]{Schoetz2025VarianceInequalities-EJP}.
For iid observations with uniform weights in any Hadamard space, the finite-sample bounds of \cite[Theorem 3.3 and Corollary 3.5]{Schoetz2026Transformed-arxiv} then imply
\[
 \br{\EOf{d(\hat m_{\alpha,n},m_\alpha)^\alpha}}^{1/\alpha}
 =\landauO(n^{-1/2})
\]
under only a finite $\alpha$-moment, with no entropy or bounded-support condition, including in infinite-dimensional Hilbert spaces.
Here $m_\alpha$ and $\hat m_{\alpha,n}$ are the population and empirical $\alpha$-Fr\'echet means; the leading term of the bound depends on the $(\alpha-1)$- and $(2\alpha-2)$-moments, while the $\alpha$-moment enters terms of smaller order in $n$.
Our power Fr\'echet mean result extends this setting to unequal weights and independent observations with different laws and different means.
The qualitative theory already includes strong laws of large numbers for power Fr\'echet means with general $\alpha$, including possibly nonunique mean sets \cite{Schoetz2022StrongLLN-Statistics,EvansJaffe2024MeanSets-Bernoulli}.
For iid samples and $\alpha\geq1$, Jaffe \cite[Corollary 4.1]{Jaffe2024InfiniteDimensions-arxiv} proves almost-sure one-sided Hausdorff convergence of empirical $\alpha$-Fr\'echet mean sets under a finite $(\alpha-1)$-moment in separable spaces admitting a suitable weak convergence, including Hadamard spaces, without a Heine--Borel condition (allowing for infinite dimension).

Robust estimators provide a complementary route to exponential concentration under weak moment assumptions.
For iid data in Hadamard spaces, Yun and Park \cite{YunPark2023Exponential-Bernoulli} obtain polynomial concentration for empirical means and exponential concentration for median-of-means tournament estimators under a second moment and entropy assumptions on the loss class.
For the quadratic loss, their Proposition 4 verifies these assumptions in separable Hilbert spaces of arbitrary dimension; the resulting bounds in their Theorems 2 and 4 include an additional $\log n$ factor.
Their power Fr\'echet mean results require both a corresponding loss-class entropy bound and an additional variance inequality.
In \cite[Theorem 3.3]{KimEtAl2025Robust-AISTATS}, Fr\'echet-median aggregation converts constant-confidence bounds for independent preliminary estimators into exponential concentration in $\mathsf{CAT}(\kappa)$ spaces, with additional localization conditions when $\kappa>0$.
Their exponential bounds concern robust aggregation procedures; our bounds apply directly to weighted empirical minimizers.

Earlier quadratic variance inequalities under positive curvature \cite{Karcher1977CenterOfMass-CPAM,Ohta2007Convexities-GD,Yokota2017PBarycenter-TJM,BrunelSerres2026FiniteSample-arxiv} rest on strong convexity and therefore require $\Delta<\pi/2$ or $\delta<\pi/2$, while the inequalities of \cite[Proposition 19]{Yokota2016Barycenter-JMSJ} and \cite[Theorem 1.1]{Gietl2026VarianceInequalities-AGMS}, which reach further, give non-quadratic growth, respectively a bound at a single base point with the constant on the variance.
Our inequality holds throughout $\Delta<\pi$, at every point of the convex data set, with optimal one-parameter constants; \cref{rem:pos:literature,rem:pos:BrunelSerres2026} give the detailed comparisons.

For local regression, Petersen and M\"uller \cite[Section 4 and Theorems 3--4]{PetersenMueller2019Regression-AOS} introduce local linear Fr\'echet regression and prove asymptotic $O_P$ rates for it under density smoothness and entropy assumptions; the local constant (Nadaraya--Watson) Fr\'echet estimator discussed there goes back to \cite{DavisEtAl2007ShapeRegression-ICCV,Hein2009RobustRegression-NIPS}.
In a fixed-design model, Sch\"otz \cite[Theorems 1--2]{Schoetz2022Regression-EJS} strengthened this type of $O_P$ guarantee to finite-sample $L^2$ risk bounds for local polynomial Fr\'echet regression, including the local constant case, again under density smoothness and entropy assumptions.
On bounded response spaces, the quadratic transport condition used here is weaker than the corresponding density smoothness conditions, and our risk bounds require no response-space entropy condition; \cref{rem:two:smoothness,ex:two:phase} give the comparison and examples.
The density conditions constrain the noise distribution as well as the regression function \cite[Remark 2]{Schoetz2022Regression-EJS}; they can fail even for smoothly moving atoms or translated uniform noise.
Tucker and Wu \cite[Section 6.1]{TuckerWu2025PartiallyGlobal-SS} avoid density smoothness by directly assuming H\"older continuity of the conditional Fr\'echet objective, a condition on the objective itself for which no further sufficient model condition is given there; a response-space entropy condition is still required.
The density-and-entropy framework also appears in \cite{QiuEtAl2024Semisupervised-arxiv,QiuEtAl2026Mondrian-TIT}; related local Fr\'echet results under density, entropy, and boundedness hypotheses include \cite{ChenMueller2022Uniform-AOS,ImEtAl2025Spherical-EJS,ImJeon2026Riemannian-arxiv}.
Li and Zhu \cite{LiZhu2026Regression-StatPapers} obtain finite-sample deviation bounds without a response-space entropy assumption for a median-of-means local linear estimator with Wasserstein-valued responses in classes admitting a common transport-coordinate representation.

\textbf{Outline.}
\Cref{sec:prelim} introduces curvature upper bounds, means, moments, and variances, and states the quadruple inequality.
The main sections treat $2$-Fr\'echet means in Hadamard spaces (\cref{sec:two}), $2$-Fr\'echet means under a positive curvature upper bound (\cref{sec:pos}), and power Fr\'echet means in Hadamard spaces (\cref{sec:main}).
Each gives a weighted-sample bound and applications to local constant regression and Dirichlet-process means.
The positive-curvature geometric inequalities are proved in \cref{app:pos:tools}; \cref{app:tools} collects the power Fr\'echet mean tools and variance comparisons.
\Cref{app:proofs} contains the statistical proofs and regression comparisons, and \cref{sec:powfun} the analytic tools.

\section{General Preliminaries}\label{sec:prelim}

\textbf{Notation.}
The order of the power Fr\'echet mean is $\alpha\in(1,2]$ and $\phi := \frac{2-\alpha}{\alpha-1}$; $s$ denotes a generic moment order.
Unspecified constants are written $c_\alpha$, $c_s$, and so on: such a symbol denotes an element of $\Rpp$ that depends only on the parameters in its index, and its value may change from occurrence to occurrence, including within a single chain of inequalities.
We use the convention $0\cdot\infty:=0$; in particular, summands with zero weight vanish.
For real sequences $(a_n)_{n\in\N}$ and $(b_n)_{n\in\N}$, we write $a_n=\landauO(b_n)$ as $n\to\infty$ if there are $C<\infty$ and $n_0\in\N$ with $\absof{a_n}\leq C\absof{b_n}$ for all $n\geq n_0$, and $a_n=\mathrm{o}(b_n)$ as $n\to\infty$ if $a_n/b_n\to0$.
For positive sequences, $a_n\asymp b_n$ means that $a_n/b_n$ is bounded above and away from zero, and $a_n\sim b_n$ means that $a_n/b_n\to1$ as $n\to\infty$.
The same notation is used for other limits, such as $t\to0$, which are then indicated in the text; without such an indication, the limit is $n\to\infty$ in the sample size $n$, with all other model parameters fixed unless stated otherwise.
The notation $Y\sim\mu$ means that the random variable $Y$ has law $\mu$.
For an estimator $\hat m$ of a point $m$ and $\alpha\geq1$, we call $\Eof{d(\hat m,m)^\alpha}^{1/\alpha}$ its \emph{$L^\alpha$ risk}.
The following symbols recur throughout and are defined where they first appear.
\begin{center}
	\begin{tabular}{ll}
		\toprule
		$\ol qp := d(q,p)$ & distance in $\mc Q$; \cref{ssec:prelim:curvature}\\
		$m_\alpha(\mu)$, $m_\alpha[Y]$ & mean of a law or random variable; \cref{ssec:prelim:fm}\\
		$\popdis = \sum_i w_i\mu_i$, $\samdis = \sum_i w_i\delta_{Y_i}$ & population and empirical mixture; \cref{ssec:two:main,ssec:main:bound}\\
		$\popfm$, $\samfm$ & their $\alpha$-Fr\'echet means; \cref{ssec:two:main,ssec:main:bound}\\
		$w_i$, $w_{\max}$, $\normof{w}_s$ & weights and $\ell^s$-norms; \cref{rem:weights,rem:power:weights}\\
		$\sigma_\alpha^s(\mu)$, $\sigma_\alpha^s[Y]$ & $s$-th moment about the $\alpha$-Fr\'echet mean; \eqref{eq:sigma:def}\\
		$\sigma_s$ & $s$-th moment of $\popdis$ about $\popfm$, see \eqref{eq:power:sigma}\\
		$\Vfpof{s}{Y}$, $\Vmpof{s}{Y}$ & Fr\'echet and metric $s$-variance, see \eqref{eq:variances}; $\Vf := \Vfp2$, $\Vm := \Vmp2$\\
		$\Sigma_s = \sum_i w_i^{s/(\alpha-1)}\Vmpof{s}{Y_i}$ & weighted $s$-th moment, see \eqref{eq:power:sigmaw}\\
		$W_s(\mu,\nu)$ & $s$-Wasserstein distance without root, see \eqref{eq:W1}\\
		$\widetilde W_{\alpha-1}(\mu,\nu)$ & location-aware transport cost, see \eqref{eq:power:Wtilde}\\
		$\Lamof{\mu}{t}$ & regularized inverse moment, see \eqref{eq:power:Lambda}\\
		$\setC$, $\varrho$ & convex data set and its circumradius, see \cref{ssec:pos:prelim}\\
		$\Delta$, $C_\Delta$ & radius--curvature parameter and its factor, see \eqref{eq:pos:constants}\\
		$\const{Kmi}$, $\const{Kma}$, $\const{dns}$ & kernel and design constants, see \eqref{eq:localfrechet:kernel}, \eqref{eq:localfrechet:design}\\
		$c_\alpha$, $c_s$ & constants in $\Rpp$; see the convention above\\
		\bottomrule
	\end{tabular}
\end{center}

\subsection{Geometry}\label{ssec:prelim:curvature}
Let $(\mc Q, d)$ be a metric space.
For $q,p \in \mc Q$, write $\ol qp:=d(q,p)$.
For $o\in\mc Q$ and $r>0$, $B(o,r)$ and $\overline B(o,r)$ denote the open and the closed ball of radius $r$ about $o$.
A continuous curve $\gamma\colon[a,b]\to\mc Q$, $a<b$, is a \emph{(constant-speed) geodesic} between $q:=\gamma(a)$ and $p:=\gamma(b)$ if $\ol{\gamma(s)}{\gamma(t)} = \frac{|s-t|}{b-a}\,\ol qp$ for all $s,t\in [a,b]$. Its speed is $\ol qp/(b-a)$, and it has \emph{unit speed} when this equals $1$. Constant curves are allowed. In particular, geodesics parametrized on $[0,1]$ have speed equal to the distance between their endpoints.
A metric space in which any two points are joined by a geodesic is called a \emph{geodesic metric space}. If all geodesics between two points $q,p\in\mc Q$ have the same image, we denote this image by $\seg qp$.

Now let $(\mc Q, d)$ be a geodesic metric space and $\kappa\in\R$.
Set 
\begin{equation*}
    \Dk := \begin{cases}
        \infty & \text{for } \kappa\leq0\eqcm\\
        \pi/\sqrt\kappa & \text{for } \kappa>0\eqfs
    \end{cases}
\end{equation*}
Denote by $(M_\kappa, \operatorname{dist}_{M_\kappa})$ the complete, simply connected model surface of constant curvature $\kappa$: the hyperbolic plane with its $(-1)$-curvature metric scaled by $(-\kappa)^{-1/2}$ for $\kappa<0$, the Euclidean plane for $\kappa=0$, and the sphere of radius $\kappa^{-1/2}$ with its intrinsic metric for $\kappa>0$; see \cite[Chapters I.2 and II.1]{BridsonHaefliger1999NPC-Springer}.
The space $(\mc Q, d)$ is \catk{} if any two points at distance less than $\Dk$ are joined by a unique geodesic and every geodesic triangle of perimeter less than $2\Dk$ is at least as ``thin'' as its comparison triangle in $M_\kappa$: for $q,p,y\in\mc Q$ with $\ol qp + \ol py + \ol yq < 2\Dk$, let $\bar q,\bar p,\bar y\in M_\kappa$ have the same pairwise distances, let $\gamma_{pq}\colon[0,\ol pq]\to\mc Q$ and $\gamma_{py}\colon[0,\ol py]\to\mc Q$ be the geodesics from $p$ to $q$ and from $p$ to $y$, and let $\bar\gamma_{\bar p\bar q}$ and $\bar\gamma_{\bar p\bar y}$ be the corresponding geodesics in $M_\kappa$; then
\begin{equation*}
    \ol{\gamma_{pq}(s)}{\gamma_{py}(t)} \leq \operatorname{dist}_{M_\kappa}\brOf{\bar\gamma_{\bar p\bar q}(s), \bar\gamma_{\bar p\bar y}(t)}
    \qquad\text{for all } s \in [0, \ol pq],\ t \in [0, \ol py]
    \eqfs
\end{equation*}
A complete \cato{} space is called a \emph{Hadamard space}.
Every \catof{\kappa} space is also \catof{\kappa\pr} if $\kappa\leq\kappa\pr$ \cite[Theorem II.1.12]{BridsonHaefliger1999NPC-Springer}; in particular, all Hadamard-space results below apply to complete spaces with a negative curvature upper bound as well.
See \cite{BridsonHaefliger1999NPC-Springer,BuragoEtAl2001MetricGeometry-AMS,Bacak2014ConvexAnalysis-DeGruyter} for textbooks on the general theory.

The following inequality is a central tool for obtaining tail-robust results for Fr\'echet means in \catk{} spaces.
\begin{proposition}[Quadruple inequality {\cite[Corollaries 5.1 and 5.5]{Schoetz2026Trapezoid-arxiv}}]\label{prp:prelim:qi}
 Let $(\mc Q,d)$ be a \catk{} space, $\kappa\geq0$, and $\alpha\in[1,2]$.
 For $y,z,q,p\in\mc Q$ with $\ol yz,\ol qp<\pi/\sqrt\kappa$ if $\kappa>0$, so that the segments $\seg yz$ and $\seg qp$ are unique \cite[Proposition II.1.4(1)]{BridsonHaefliger1999NPC-Springer}, put $R := \sqrt{\kappa} \,\diam(\seg yz\cup\seg qp)$, and assume $R < \pi$.
 Set $C_0 := 1$ and $C_x := x / \sin(x)$ for $0<x<\pi$.
 Then
 \begin{equation}\label{eq:prelim:qi}
    \abs{\ol yq^\alpha-\ol yp^\alpha-\ol zq^\alpha+\ol zp^\alpha}
    \leq 2^{2-\alpha}\alpha C_R\,\ol qp\,\ol yz\,
    \diam\cb{y,z,q,p}^{\alpha-2}.
 \end{equation}
 The right-hand side is defined as zero if either $\ol qp=0$ or $\ol yz=0$.
\end{proposition}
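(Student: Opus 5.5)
The plan is a two-step reduction. First handle the squared-distance case in the model space, then pass to general $\alpha$ by a one-dimensional interpolation along the segment $\seg qp$. Fix $y,z$ and consider $f(t):=\ol y{\gamma(t)}^\alpha-\ol z{\gamma(t)}^\alpha$, where $\gamma\colon[0,1]\to\seg qp$ is the geodesic from $q$ to $p$. The left-hand side of \eqref{eq:prelim:qi} is then $\absof{f(0)-f(1)}$. It therefore suffices to bound the one-sided derivatives of $f$ by $2^{2-\alpha}\alpha C_R\,\ol qp\,\ol yz\,\diam\cb{y,z,q,p}^{\alpha-2}$ and integrate, since $f$ is Lipschitz on $[0,1]$.

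The first derivative is computed by the first variation formula in \catk{} spaces. For $x=\gamma(t)$ with $x\neq y$,
\[
\tfrac{\dl}{\dl t}\ol yx^\alpha=-\alpha\,\ol yx^{\alpha-1}\,\ol qp\cos\angle_x(y,\gamma^+),
\]
where $\gamma^+$ is the forward direction (one-sided, which is enough). The derivative of $f$ is then $\alpha\,\ol qp$ times the difference $\ol zx^{\alpha-1}\cos\angle_x(z,\cdot)-\ol yx^{\alpha-1}\cos\angle_x(y,\cdot)$. The key claim is that, in the space of directions at $x$, the map $y\mapsto \ol yx^{\alpha-1}\,\xi_y$ into the Euclidean tangent cone is Lipschitz for the relevant points. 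Here $\xi_y$ is the unit direction toward $y$. The required bound is
\[
\bigl|\,\ol yx^{\alpha-1}\xi_y-\ol zx^{\alpha-1}\xi_z\bigr|\le 2^{2-\alpha}C_R\,\ol yz\,\diam\cb{y,z,q,p}^{\alpha-2}.
\]
I split this into a radial part and an angular part. The radial part is $\absof{\ol yx^{\alpha-1}-\ol zx^{\alpha-1}}$. It is handled by the elementary inequality $\absof{a^{\alpha-1}-b^{\alpha-1}}\le 2^{2-\alpha}\absof{a-b}\max(a,b)^{\alpha-2}$ for $a,b\ge0$, which comes from concavity and homogeneity of $a\mapsto a^{\alpha-1}$, together with $\max(a,b)\le\diam$. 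The angular part is $\min(\ol yx,\ol zx)^{\alpha-1}$ times the angle $\angle_x(y,z)$.

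The angle is controlled by comparison with $M_\kappa$. The triangle $(x,y,z)$ lies in the set $\seg yz\cup\seg qp$ up to its convex hull, so its perimeter is governed by $R<\pi$. The \catk{} angle is at most the comparison angle. In the spherical comparison triangle, the spherical law of sines gives $\sin\bar\angle\le \sin(\sqrt\kappa\,\ol yz)/\sin(\sqrt\kappa\,\ol xy)$. Hence $\ol xy\cdot\bar\angle\le C_R\,\ol yz$, using $x/\sin x$ increasing and the obtuse case handled separately. Combining this with the radial part through the identity $\absof{a\xi-b\eta}^2=(a-b)^2+2ab(1-\cos\theta)$ and $\min(a,b)^{\alpha-1}\le \max(a,b)^{\alpha-2}\min(a,b)$ yields the claimed Lipschitz bound, possibly after adjusting to the exact constant $2^{2-\alpha}$.

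The main obstacle is this combined radial/angular estimate with the sharp constant $2^{2-\alpha}\alpha C_R$. It needs the correct choice of comparison angle so that the factor is exactly $C_R=R/\sin R$ with $R$ built from $\diam(\seg yz\cup\seg qp)$, rather than a cruder constant. My first attempt would be to prove it for $\alpha=2$ directly, where it reduces to the Euclidean quadruple inequality with the $C_R$ correction from Reshetnyak-type majorization of the quadrilateral. I would then get general $\alpha$ by the scalar power inequality above. Degenerate cases, where $x$ coincides with $y$ or $z$ or a distance vanishes, follow by continuity.
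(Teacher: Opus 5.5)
Your central step fails for $\alpha<2$. You want the pointwise estimate
\[
\bigl|\ol yx^{\alpha-1}\xi_y-\ol zx^{\alpha-1}\xi_z\bigr|\le 2^{2-\alpha}C_R\,\ol yz\,\diam\cb{y,z,q,p}^{\alpha-2},
\]
and with it a uniform bound on the one-sided derivatives of $f$ by the right-hand side of \eqref{eq:prelim:qi}. Neither can hold. You invoke $\max(\ol yx,\ol zx)\le\diam\cb{y,z,q,p}$, but since $\alpha-2\le0$ this gives $\max(\ol yx,\ol zx)^{\alpha-2}\ge\diam\cb{y,z,q,p}^{\alpha-2}$, which is the wrong direction.

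Concretely, take $\mc Q=\R$, $q=-L$, $p=L$, $y=\epsilon$, $z=-\epsilon$ with $0<\epsilon<L$. At the midpoint of $\seg qp$ one gets $\absof{f'(1/2)}=4\alpha L\epsilon^{\alpha-1}$. The right-hand side of \eqref{eq:prelim:qi} is $4\alpha\epsilon L^{\alpha-1}$, smaller by the factor $(\epsilon/L)^{2-\alpha}$. Yet $\absof{f(0)-f(1)}=2((L+\epsilon)^\alpha-(L-\epsilon)^\alpha)$ is asymptotic to that right-hand side as $\epsilon/L\to0$. So the inequality is sharp in this configuration while $f'$ concentrates where $\seg qp$ passes near $y$ and $z$; no supremum bound on $\absof{f'}$ can prove it.

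Using the honest factor $\max(\ol y{\gamma(t)},\ol z{\gamma(t)})^{\alpha-2}$ and integrating does not rescue the argument. In the same example the integrated bound exceeds the right-hand side by a factor of order $1/(\alpha-1)$. At $\alpha=1$, which the statement includes, the ratio even grows like $\log(L/\epsilon)$. The reason is that your radial estimate $\absof{a^{\alpha-1}-b^{\alpha-1}}\le2^{2-\alpha}\absof{a-b}\max(a,b)^{\alpha-2}$ discards the factor $\alpha-1$ that the true radial derivative carries away from the singularity.

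Your fallback, proving $\alpha=2$ and then passing to general $\alpha$ through a scalar power inequality, is not a reduction either. The $\alpha$-expression on the left of \eqref{eq:prelim:qi} is not a function of its quadratic counterpart. The paper does not prove \cref{prp:prelim:qi} itself but imports it from \cite{Schoetz2026Trapezoid-arxiv}. For $\alpha<2$ you need an argument that captures cancellation along the whole segment, not a uniform derivative bound.

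For $\alpha=2$ your outline is the natural one, but two steps need repair.
\begin{enumerate}
\item The tangent cone at $x$ is a Euclidean cone over the space of directions, not a linear space. Bounding the difference of the two first-variation terms by the cone distance between $\ol yx\,\xi_y$ and $\ol zx\,\xi_z$ therefore needs justification. It follows from the triangle inequality for Alexandrov angles, $\absof{\angle_x(y,v)-\angle_x(z,v)}\le\angle_x(y,z)$, by comparison with two planar vectors.
\item Your claim that the perimeter of the triangle $x,y,z$ is controlled by $R<\pi$ is incorrect. Each side is at most $R/\sqrt\kappa$, but the perimeter can reach $3R/\sqrt\kappa\ge2\pi/\sqrt\kappa$ once $R\ge2\pi/3$, so comparison angles need not exist. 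In that case you must argue directly, for instance from $\angle_x(y,z)\le\pi$, $\sqrt\kappa\,\ol yz\ge2\pi-2R$, and $\sin R\le\pi-R$.
\end{enumerate}
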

For $\kappa = 0$, we have $R=0$, so that $C_R=1$ and the assumption $R<\pi$ is void.
For $\alpha=2$ and $\kappa=0$, \eqref{eq:prelim:qi} is Reshetnyak's quadruple comparison \cite[Proposition 2.4]{Sturm2003NPC-ContempMath}, which in inner product spaces is equivalent to the Cauchy--Schwarz inequality.
For $\kappa=0$ and general $\alpha$, \cite[Theorem 3]{Schoetz2019ConvergenceRates-EJS} gives the weaker bound with $\ol yz^{\alpha-1}$ in place of $\ol yz\,\diam\{y,z,q,p\}^{\alpha-2}$ and the same constant.

\subsection{Fr\'echet Mean}\label{ssec:prelim:fm}

Let $(\mc Q,d)$ be a metric space.
Denote by $\mc P(\mc Q)$ the Borel probability measures on $\mc Q$ that are concentrated on a separable Borel subset of $\mc Q$.
For $s>0$, let $\mc P^s(\mc Q)$ consist of those $\mu\in\mc P(\mc Q)$ for which $\int\ol yq^s\,\dl\mu(y)<\infty$ for one, and hence every, $q\in\mc Q$; we say that such distributions have a \emph{finite $s$-th moment}.

\begin{remark}[Conditions on the distributions]\label{rem:separability}
  We assume a separable support in the definition of $\mc P(\mc Q)$ so that expressions like $d(Y,Y\pr)$ are measurable, where $Y, Y\pr$ are random variables in $\mc Q$ with laws in $\mc P(\mc Q)$.
\end{remark}

For $\alpha\in(1,\infty)$ and $\mu \in \mc P^{\alpha-1}(\mc Q)$, we have $\int \absof{\ol yq^\alpha - \ol yp^\alpha} \,\dl \mu(y) < \infty$ for all $q,p\in\mc Q$, which makes the following definition meaningful.
Fix an arbitrary reference point $o\in \mc Q$. For $\mu \in \mc P^{\alpha-1}(\mc Q)$, any minimizer of
\begin{equation}\label{eq:prelim:fmobjective}
    q\mapsto\int \ol yq^\alpha - \ol yo^\alpha \, \dl \mu(y)
\end{equation}
is called a \emph{power Fr\'echet mean of order $\alpha$} (short: $\alpha$-Fr\'echet mean) of $\mu$; the set of minimizers does not depend on the choice of $o$.
If the minimizer is unique, we denote it by $m_\alpha(\mu)$. For a random variable $Y$ with law $\mu$, we also write $m_\alpha[Y]:=m_\alpha(\mu)$.
In the main part of the paper, we only work in settings with unique $\alpha$-Fr\'echet means.
If $\mu\in\mc P^\alpha(\mc Q)$, the renormalization by $\ol yo^\alpha$ in \eqref{eq:prelim:fmobjective} is not needed, and $m_\alpha(\mu)$ minimizes $q\mapsto\int\ol yq^\alpha\,\dl\mu(y)$.
If $\mc Q$ is a Hilbert space, the $2$-Fr\'echet mean of $\mu\in\mc P^1(\mc Q)$ is its expectation.

\subsection{Moments}\label{ssec:prelim:moments}

Let $s\in\Rpp$ and $\mu\in\mc P(\mc Q)$. We call
\begin{equation*}
    \VfpOfP{s}{\mu} := \inf_{q\in\mc Q}\int \ol yq^{s}\,\dl\mu(y) \in [0,\infty]
\end{equation*}
the \emph{Fr\'echet $s$-variance} and
\begin{equation}\label{eq:variances}
    \VmpOfP{s}{\mu} := \frac12\iint \ol y{y\pr}^{s}\,\dl\mu(y)\,\dl\mu(y\pr) \in [0,\infty]
\end{equation}
the \emph{metric $s$-variance} of $\mu$.
We write $\Vf := \Vfp{2}$ for the \emph{Fr\'echet variance}, a term following \cite{DubeyMueller2019ANOVA-Biometrika}, and $\Vm := \Vmp{2}$ for the \emph{metric variance}, the diagonal of the metric covariance of \cite[(4)--(5)]{DubeyMueller2020TimeVarying-JRSSB}; see also \cite{SongMueller2026Dispersion-JASA}. The two types of $s$-variance differ by at most a constant factor:

\begin{lemma}[Comparison of the $s$-variances]\label{lmm:variances}
    Let $(\mc Q, d)$ be a metric space, $s\in\Rpp$, and $\mu\in\mc P(\mc Q)$.
    Then
    \begin{equation}\label{eq:variances:comparison}
        2^{\min(1-s,0)}\, \VmpOfP{s}{\mu}
        \leq
        \VfpOfP{s}{\mu}
        \leq
        2\, \VmpOfP{s}{\mu}
        \eqfs
    \end{equation}
\end{lemma}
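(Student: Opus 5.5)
Both inequalities in \eqref{eq:variances:comparison} come from comparing the double integral defining $\VmpOfP{s}{\mu}$ with the single integral $\int\ol yq^s\,\dl\mu(y)$ at an arbitrary point $q\in\mc Q$. The only tool needed is the triangle inequality together with an elementary power inequality. Throughout, $Y,Y\pr$ denote independent random variables with law $\mu$; by \cref{rem:separability}, $\ol Y{Y\pr}$ is measurable. All quantities are nonnegative and may be infinite, so Tonelli's theorem justifies every manipulation.

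\emph{Upper bound.} Fix $q\in\mc Q$ with $\int\ol yq^s\,\dl\mu(y)<\infty$; if no such $q$ exists, the infimum is $+\infty$ and the upper bound is trivial. The point $q=Y\pr$ is admissible in the infimum for every realization, so the right-hand inequality should come from averaging over $q\sim\mu$:
\[
\VfpOfP{s}{\mu}\leq\int\!\!\int\ol y{y\pr}^s\,\dl\mu(y)\,\dl\mu(y\pr)=2\,\VmpOfP{s}{\mu}.
\]
More precisely, the set of $y\pr$ with $\int\ol y{y\pr}^s\,\dl\mu(y)\leq\iint\ol y{y\pr}^s\,\dl\mu\,\dl\mu$ has positive $\mu$-measure. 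Picking any $y\pr$ in this set gives the bound.

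\emph{Lower bound.} Fix any $q$. The triangle inequality gives $\ol Y{Y\pr}^s\leq(\ol Yq+\ol q{Y\pr})^s$. To bound $(a+b)^s$ I will use the elementary estimate $(a+b)^s\leq c_s(a^s+b^s)$ with $c_s=\max(1,2^{s-1})$. For $s\leq1$ this is subadditivity of concave powers. For $s\geq1$ it is convexity: $(\tfrac{a+b}2)^s\leq\tfrac12(a^s+b^s)$. Taking expectations and using independence and identical laws gives
\[
2\VmpOfP{s}{\mu}\leq 2c_s\int\ol yq^s\,\dl\mu(y).
\]
Hence $\VmpOfP{s}{\mu}\leq c_s\int\ol yq^s\,\dl\mu(y)$ for every $q$. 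Taking the infimum over $q$ gives $c_s^{-1}\VmpOfP{s}{\mu}\leq\VfpOfP{s}{\mu}$, and $c_s^{-1}=\min(1,2^{1-s})=2^{\min(1-s,0)}$, as claimed.

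There is no real obstacle here. The only care needed is in the upper bound: the infimum must be handled when the integrals are infinite, and a good point $q$ must be selected rather than a random one, which is the measure-positivity argument above. The constant bookkeeping is just the case split $s\lessgtr1$ in the power inequality.
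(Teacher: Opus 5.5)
Your proof is correct and is the paper's argument: the upper bound comes from integrating $\VfpOfP{s}{\mu}\leq\int\ol yq^s\,\dl\mu(y)$ over $q\sim\mu$, and the lower bound comes from the triangle inequality with $(a+b)^s\leq2^{\max(0,s-1)}(a^s+b^s)$ followed by the infimum over $q$. The point selection and the separate infinite case are unnecessary, since integrating over $q$ is valid in $[0,\infty]$; your stated reason for that case is also off, because when $\VfpOfP{s}{\mu}=\infty$ the upper bound holds only since the same averaging forces $\VmpOfP{s}{\mu}=\infty$, not merely because the infimum is infinite.
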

Both constants are sharp (\cref{rem:app:variances:sharp} in the appendix).

For $\mu \in \mc P^{\alpha - 1}(\mc Q)$ with a unique $\alpha$-Fr\'echet mean and $s\in\Rpp$, we write
\begin{equation}\label{eq:sigma:def}
    \sigma_\alpha^s(\mu) := \int \ol y{m_\alpha(\mu)}^{s} \,\dl\mu(y)
\end{equation}
for the \emph{$s$-th moment of $\mu$ about its $\alpha$-Fr\'echet mean}.
For a fixed centering order $\alpha$, Lyapunov's inequality gives $\sigma_\alpha^r(\mu)^{1/r}\leq\sigma_\alpha^{s}(\mu)^{1/s}$ for $0<r\leq s$; if $s>1$, $\mu\in\mc P^s(\mc Q)$, and $m_s(\mu)$ exists uniquely, minimality also gives $\sigma_s^s(\mu)\leq\sigma_\alpha^s(\mu)$.
If $\mu\in\mc P^\alpha(\mc Q)$, then $\VfpofP{\alpha}{\mu} = \sigma_\alpha^\alpha(\mu)$.
For a $\mc Q$-valued random variable $Y$ with distribution $\mu$, we also write $\Vfpof{s}{Y} := \VfpofP{s}{\mu}$, $\Vmpof{s}{Y} := \VmpofP{s}{\mu}$, and $\sigma_\alpha^s[Y] := \sigma_\alpha^s(\mu)$.
Here and throughout, as for expectations, square brackets indicate that the argument is a random variable, which enters only through its distribution, whereas parentheses indicate that the argument is a distribution.

For $s\in(0,1]$ and $\mu,\nu\in\mc P^s(\mc Q)$, the \emph{$s$-Wasserstein distance} is
\begin{equation}\label{eq:W1}
    W_s(\mu,\nu):=\inf_{\pi\in\Pi(\mu,\nu)}\int\ol yz^s\,\dl\pi(y,z)
    \eqcm
\end{equation}
where $\Pi(\mu,\nu)$ denotes the set of couplings of $\mu$ and $\nu$, i.e., of probability measures on $\mc Q\times\mc Q$ with marginals $\mu$ and $\nu$.
We do not take the $s$-th root: only orders $s\leq1$ occur for $W_s$ in this paper, and for these $d^s$ is a metric on $\mc Q$, so that $W_s$ is the $1$-Wasserstein distance of $(\mc Q,d^s)$ and hence a metric on $\mc P^s(\mc Q)$, whereas the rooted version $W_s^{1/s}$ need not satisfy the triangle inequality for $s<1$.
The rooted quadratic Wasserstein distance, which appears only as the metric of a response space in \cref{ex:power:wasserstein}, is written $\overline W_2$.

\section{2-Fr\'echet Means in Hadamard Spaces}\label{sec:two}

In this section, we assume $(\mc Q,d)$ to be a Hadamard space ($\kappa=0$) and consider only the $2$-Fr\'echet mean ($\alpha=2$). After collecting the preliminaries for this setting (\cref{ssec:two:prelim}), we derive in \cref{ssec:two:main} an explicit non-asymptotic bound with an optimal constant (\cref{thm:two:infty:independent}). Then we apply it to local constant regression with metric predictors in \cref{ssec:localfrechet} and to Dirichlet-process Fr\'echet means in \cref{ssec:dirichlet}.

\subsection{Preliminaries}\label{ssec:two:prelim}

Let $(\mc Q,d)$ be a Hadamard space.
By \cref{prp:prelim:qi}, we have the \emph{quadruple inequality} \cite[Proposition 2.4]{Sturm2003NPC-ContempMath}
\begin{equation}\tag{QI}\label{eq:2fmcat0:qi}
    \abs{\ol yq^2-\ol yp^2-\ol zq^2+\ol zp^2}
    \leq 2 \, \ol qp\,\ol yz
    \qquad\text{for all } y,z,q,p\in\mc Q
    \eqfs
\end{equation}
Let $\mu \in \mc P^1(\mc Q)$ and $Y \sim \mu$.
The $2$-Fr\'echet mean of $\mu$,
\begin{equation}\label{eq:2fmcat0:fmdef}
    m := m_2(\mu) = m_2[Y] = \argmin_{q\in\mc Q} \EOf{\ol Yq^2 - \ol Yo^2}
    \eqcm
\end{equation}
exists and is unique \cite[Proposition~4.3]{Sturm2003NPC-ContempMath}.
It satisfies the \emph{variance inequality} \cite[Proposition~4.4]{Sturm2003NPC-ContempMath}: for all $q\in\mc Q$,
\begin{equation}\tag{VI}\label{eq:cat0:2fm:vi}
    \ol {m}{q}^2 \leq \EOf{\ol Yq^2 - \ol Ym^2}
    \eqfs
\end{equation}
Moreover, any $\mu,\nu \in \mc P^1(\mc Q)$ satisfy the \emph{contraction property} \cite[Theorem~6.3]{Sturm2003NPC-ContempMath}
\begin{equation}\tag{CP}\label{eq:cat0:2fm:CP}
    \ol {m_2(\mu)}{m_2(\nu)}\leq W_1(\mu, \nu)
    \eqfs
\end{equation}
For $\mu\in\mc P^2(\mc Q)$, the Fr\'echet and metric variances satisfy
\begin{equation}\label{eq:two:variances}
    \VfOf{Y}\leq\VmOf{Y}\leq2\,\VfOf{Y}
    \eqfs
\end{equation}
In Hilbert spaces, $\Vfof{Y}=\Vmof{Y}$. The inequalities and the Hilbert-space identity are proved in \cref{lmm:two:variances} in the appendix; the first inequality characterizes Hadamard spaces among complete metric spaces \cite[Theorem 4.9]{Sturm2003NPC-ContempMath}.

\subsection{The Weighted-Sample Bound}\label{ssec:two:main}

The following theorem, the main result of this section, gives a non-asymptotic $L^2$ risk bound for the Fr\'echet mean of a weighted infinite sequence of independent, not necessarily identically distributed random variables.
Its proof, given in \cref{app:two}, combines the quadruple inequality \eqref{eq:2fmcat0:qi}, the variance inequality \eqref{eq:cat0:2fm:vi}, and the $W_1$ contraction \eqref{eq:cat0:2fm:CP} with a leave-one-out argument (\emph{algorithmic stability}).
This argument is also used in \cite[Lemmas 6 and 7]{Escande2024Minimizers-JMLR}, \cite[proof of Theorem 3]{BrunelSerres2026FiniteSample-arxiv}, and \cite[Supplement S4]{Schoetz2026Transformed-arxiv}.

\begin{theorem}[Weighted-sample bound in Hadamard spaces]\label{thm:two:infty:independent}
    Let $(w_i)_{i\in\N} \subset [0,1]$ such that $\sum_{i=1}^\infty w_i = 1$.
    Let $(Y_i)_{i\in\N}$ be a sequence of independent random variables with distributions $\mu_1, \mu_2, \dots \in \mc P(\mc Q)$.
    Let $\popdis := \sum_{i=1}^\infty w_i \mu_i$ and $\samdis := \sum_{i=1}^\infty w_i \delta_{Y_i}$.
    Assume $\popdis \in \mc P^1(\mc Q)$.
    Set $\popfm := m_2(\popdis)$ and $\samfm := m_2(\samdis)$.
    Then
    \begin{equation}\label{eq:two:infty:independent}
        \EOf{\ol{\popfm}{\samfm}^2}
        \leq
        \sum_{i=1}^{\infty} w_i^2 \VmOf{Y_i}
        \eqfs
    \end{equation}
\end{theorem}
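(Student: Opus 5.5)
The plan is a leave-one-out (stability) argument that uses the three tools \eqref{eq:2fmcat0:qi}, \eqref{eq:cat0:2fm:vi} and \eqref{eq:cat0:2fm:CP} once each. We may assume the right-hand side of \eqref{eq:two:infty:independent} is finite. Let $(Y_i\pr)_{i\in\N}$ be an independent copy of $(Y_i)_{i\in\N}$. For each $i$, let $\samfm^{(i)}$ be the $2$-Fr\'echet mean of $\samdis^{(i)}:=\samdis-w_i\delta_{Y_i}+w_i\delta_{Y_i\pr}$. Two facts drive everything. First, $\samfm^{(i)}$ has the same law as $\samfm$ and is independent of $Y_i$. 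Second, by exchangeability, the pair $(Y_i,Y_i\pr)$ can be swapped, which exchanges $\samfm$ and $\samfm^{(i)}$. (We also need $\samdis\in\mc P^1(\mc Q)$ almost surely, which follows from $\popdis\in\mc P^1(\mc Q)$ by taking expectations.)

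\textbf{Step 1 (empirical variance inequality).} Apply \eqref{eq:cat0:2fm:vi} to $\samdis$ at the point $q=\popfm$:
\[
\ol{\popfm}{\samfm}^2\leq\sum_i w_i\br{\ol{Y_i}{\popfm}^2-\ol{Y_i}{\samfm}^2}.
\]
Split each summand as
\[
\ol{Y_i}{\popfm}^2-\ol{Y_i}{\samfm}^2=\br{\ol{Y_i}{\popfm}^2-\ol{Y_i}{\samfm^{(i)}}^2}+\br{\ol{Y_i}{\samfm^{(i)}}^2-\ol{Y_i}{\samfm}^2}.
\]

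\textbf{Step 2 (first term via the population variance inequality).} Put $g_i(q):=\EOf{\ol{Y_i}{\popfm}^2-\ol{Y_i}q^2}$. Independence of $Y_i$ and $\samfm^{(i)}$, together with equality of laws, gives $\EOf{\ol{Y_i}{\popfm}^2-\ol{Y_i}{\samfm^{(i)}}^2}=\EOf{g_i(\samfm)}$. Summing with weights yields $\EOf{\sum_iw_ig_i(\samfm)}$, which is the expected population objective of $\popdis$ at $\popfm$ minus that at $\samfm$. By \eqref{eq:cat0:2fm:vi} for $\popdis$ this is at most $-\EOf{\ol{\popfm}{\samfm}^2}$.

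\textbf{Step 3 (second term via symmetrization, QI and CP).} Swapping $Y_i\leftrightarrow Y_i\pr$ gives
\[
\EOf{\ol{Y_i}{\samfm^{(i)}}^2-\ol{Y_i}{\samfm}^2}=\tfrac12\EOf{\ol{Y_i}{\samfm^{(i)}}^2-\ol{Y_i}{\samfm}^2-\ol{Y_i\pr}{\samfm^{(i)}}^2+\ol{Y_i\pr}{\samfm}^2}.
\]
By \eqref{eq:2fmcat0:qi} this is at most $\EOf{\ol{\samfm}{\samfm^{(i)}}\,\ol{Y_i}{Y_i\pr}}$. Coupling $\samdis$ and $\samdis^{(i)}$ identically except on the $i$-th atom, \eqref{eq:cat0:2fm:CP} gives $\ol{\samfm}{\samfm^{(i)}}\leq W_1(\samdis,\samdis^{(i)})\leq w_i\,\ol{Y_i}{Y_i\pr}$. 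Hence the second term is at most $w_i\EOf{\ol{Y_i}{Y_i\pr}^2}=2w_i\VmOf{Y_i}$.

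\textbf{Combining.} Taking expectations in Step 1 and inserting Steps 2 and 3 gives
\[
\EOf{\ol{\popfm}{\samfm}^2}\leq-\EOf{\ol{\popfm}{\samfm}^2}+2\sum_iw_i^2\VmOf{Y_i}.
\]
Rearranging, $2\,\EOf{\ol{\popfm}{\samfm}^2}\leq2\sum_iw_i^2\VmOf{Y_i}$, which is \eqref{eq:two:infty:independent} with the sharp constant.

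\textbf{Main obstacle.} The difficulty is justifying these manipulations when only $\popdis\in\mc P^1(\mc Q)$ is assumed. The individual terms $\ol{Y_i}q^2$ need not be integrable, and the sums are infinite. I would work throughout with differences $\ol{Y_i}p^2-\ol{Y_i}q^2$, which by \eqref{eq:2fmcat0:qi} are bounded by $2\,\ol pq\,\ol{Y_i}{o}$ up to terms of order $\ol pq^2$. First I would show $\EOf{\ol{\popfm}{\samfm}^2}<\infty$, for instance by truncating to finitely many indices or by a crude a priori bound. This allows the interchange of $\E$ with $\sum_i$ (Fubini) and makes the subtraction in the final rearrangement legitimate. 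If needed, I would prove the theorem first for finitely many nonzero weights and pass to the limit using \eqref{eq:cat0:2fm:CP}.
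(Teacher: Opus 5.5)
Your argument is correct and is essentially the paper's proof: the same leave-one-out identity combining the variance inequality for both $\samdis$ (at $\popfm$) and $\popdis$ (at $\samfm$), the $Y_i\leftrightarrow Y_i\pr$ swap, the quadruple inequality, and the contraction bound $\ol{\samfm}{\samfm^{(i)}}\leq w_i\,\ol{Y_i}{Y_i\pr}$, giving the sharp constant. Your fallback for the integrability issue is also what the paper does. It proves the bound for finitely many coordinates, where a finite right-hand side forces finite second moments of the active components, with the tail replaced by a point mass $r_N\delta_o$. It then passes to the limit via $W_1$ contraction of both means and Fatou's lemma.
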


In Hilbert spaces, \eqref{eq:two:infty:independent} holds with equality, since there $\samfm = \sum_i w_i Y_i$, $\popfm = \sum_i w_i\, \Eof{Y_i}$, and $\Eof{\normof{\samfm - \popfm}^2} = \sum_i w_i^2\, \Vfof{Y_i} = \sum_i w_i^2\, \Vmof{Y_i}$.
Random weights $(w_i)_{i\in\N}$ that are independent of $(Y_i)_{i\in\N}$ are covered by conditioning on them; \cref{ssec:dirichlet} uses this.

The bound is nontrivial only if $\sum_iw_i^2\Vmof{Y_i}<\infty$, which requires $\mu_i\in\mc P^2(\mc Q)$ for every $i$ with $w_i>0$.
The mixture $\popdis$, however, need not have a finite second moment: both parts of the following example have $\popdis\in\mc P^1(\R)\setminus\mc P^2(\R)$ and $\sum_{i=1}^{\infty} w_i^2 \Vmof{Y_i} < \infty$.
\begin{example}\label{ex:two:infinitevariance}
 Set $\mc Q=\R$, $w_i=2^{-i}$, and $a_i=2^{i/2}$, $i\geq1$.
 \begin{enumerate}[label=(\alph*)]
  \item Let $\mu_i=\delta_{a_i}$. Then $\popfm=\sum_iw_ia_i=\sum_i2^{-i/2}<\infty$, whereas
  \[
   \sum_{i=1}^{\infty}w_i|a_i-\popfm|^2=\infty.
  \]
  Thus $\popdis\in\mc P^1(\R)\setminus\mc P^2(\R)$.
  Every component has zero metric variance and $\samdis=\popdis$ almost surely, so \eqref{eq:two:infty:independent} becomes $0\leq 0$.
  \item Let the $Y_i$ be independent with $\mu_i=\frac12(\delta_{-a_i}+\delta_{a_i})$.
  Then $\Eof{Y_i}=0$, $\Eof{|Y_i|}=2^{i/2}$, and $\Vmof{Y_i}=\Eof{|Y_i|^2}=2^i$.
  Consequently,
  \begin{align*}
   \sum_{i=1}^{\infty}w_i\EOf{|Y_i|}&=\sum_{i=1}^{\infty}2^{-i/2}<\infty,\\
   \sum_{i=1}^{\infty}w_i\EOf{|Y_i|^2}&=\sum_{i=1}^{\infty}1=\infty,\\
   \sum_{i=1}^{\infty}w_i^2\VmOf{Y_i}&=\sum_{i=1}^{\infty}2^{-i}=1.
  \end{align*}
  Thus, $\popdis\in\mc P^1(\R)\setminus\mc P^2(\R)$ and $\popfm=0$, and \eqref{eq:two:infty:independent} becomes $1\leq 1$.
 \end{enumerate}
\end{example}

\begin{remark}[Weight schemes]\label{rem:weights}
    We collect consequences of \cref{thm:two:infty:independent} for several weight schemes, retaining its independence and mixture-moment assumptions.
	\begin{enumerate}[label=(\roman*)]
        \item (Equal weights.)
        With $w_i = \frac1n\ind_{\{i\leq n\}}$, write $\samnfm$ for $\samfm$; identically distributed $Y_i$ give
        $\Eof{\ol{\popfm}{\samnfm}^2} \leq \frac1{n} \Vmof{Y_1}$, which is the penultimate step of the proof of \cite[Theorem 3]{BrunelSerres2026FiniteSample-arxiv}; their stated bound follows because $\Vmof{Y_1}\leq2\Vfof{Y_1}$.
        In general we obtain $\Eof{\ol{\popfm}{\samnfm}^2} \leq \frac1{n^2}\sum_{i=1}^{n} \Vmof{Y_i}$, which implies the $\kappa\leq0$ bound of \cite[Theorem 4]{BrunelSerres2026FiniteSample-arxiv}, where all $\mu_i$ are assumed to have the same Fr\'echet mean.
        Under the additional assumption of a curvature lower bound, Le Gouic et al.\ \cite[Theorem 1.3]{LeGouicEtAl2023FastConvergence-JEMS} give $\Eof{\ol{\popfm}{\samnfm}^2}\leq\Vfof{Y_1}/n$ for iid observations. Thus, by \eqref{eq:two:variances}, the Fr\'echet variance bound implied by \cref{thm:two:infty:independent} is off by at most a factor $2$.
        Whether that factor is needed, that is, whether $\Vf$ may replace $\Vm$ in every Hadamard space, is an open question.
		\item (Effective sample size.)
        Write $\normof{w}_2^2 := \sum_{i=1}^\infty w_i^2$ and call $n_{\mathrm{eff}} := \normof{w}_2^{-2}$ the \emph{effective sample size}.
		With $v := \sup\{\Vmof{Y_i}: i\in\N,\ w_i>0\}$, the bound \eqref{eq:two:infty:independent} implies $\Eof{\ol{\popfm}{\samfm}^2} \leq v\, n_{\mathrm{eff}}^{-1}$.
		For $w_i = \frac1n\ind_{\{i\leq n\}}$ we get $n_{\mathrm{eff}} = n$.
        \item (Triangular arrays.)
        Since $\normof{w}_2^2 \leq \sup_iw_i$, any triangular scheme of weights $(w_i^{(n)})_{i\in\N}$ with $\sup_iw_i^{(n)}\to0$ and $\sup_{n,i}\Vmof{Y_i^{(n)}}<\infty$ yields $L^2$ consistency for the moving target $m_2(\sum_i w_i^{(n)} \mu_i^{(n)})$, where both the weights and the distributions may vary with $n$.
		\item (Geometric weights.)
		For $\rho \in (0,1)$ and $w_i = (1-\rho)\rho^{i-1}$, $i\in\N$, the estimator $\samfm$ is the Fr\'echet mean of an exponentially weighted infinite independent sequence.
		Here $\normof{w}_2^2 = \frac{1-\rho}{1+\rho}$, so that $n_{\mathrm{eff}} = \frac{1+\rho}{1-\rho}$ and, for iid data, $\Eof{\ol{\popfm}{\samfm}^2} \leq \frac{1-\rho}{1+\rho}\Vmof{Y_1}$.
		For non-identically distributed data, e.g., under distributional drift, the target $\popfm$ automatically discounts old (large $i$) distributions geometrically. The bounds hold without stationarity, but still require independence.
		\item (Polynomially weighted windows.)
		For a window of length $n$ and $\lambda \geq 0$, let $w_i \propto (n+1-i)^\lambda\, \ind_{\{i\leq n\}}$, with $i = 1$ the most recent observation.
		Then $\normof{w}_2^2 = \frac{(\lambda+1)^2}{(2\lambda+1)\, n}\, (1 + \mathrm{o}(1))$ as $n\to\infty$, so the parametric rate is retained up to the constant $\frac{(\lambda+1)^2}{2\lambda+1}$; for $\lambda = 1$, the linearly weighted moving average common in time series analysis, $\normof{w}_2^2 = \frac{4}{3n}\, (1 + \mathrm{o}(1))$.
		\item (Polynomial weights on all of $\N$.)
		For fixed $\lambda > 0$ and $w_i = i^{-1-\lambda}/\zeta(1+\lambda)$, $i \in \N$, with $\zeta$ the Riemann zeta function, we have $\normof{w}_2^2 = \zeta(2+2\lambda)/\zeta(1+\lambda)^2 > 0$. As $\lambda \searrow 0$, $n_{\mathrm{eff}} = \zeta(1+\lambda)^2/\zeta(2+2\lambda) \sim \frac{6}{\pi^2 \lambda^2}$ diverges. Hence $\Eof{\ol{\popfm}{\samfm}^2} \xrightarrow{\lambda\searrow0} 0$ whenever $\sup_i\Vmof{Y_i}<\infty$.
	\end{enumerate}
\end{remark}

\subsection{Local Constant Fr\'echet Regression}\label{ssec:localfrechet}

Let $(\mc X,\rho)$ be a metric space, the predictor space.
Let $x_1,\ldots,x_n\in\mc X$ be fixed design points.
We consider the local constant Fr\'echet estimator, cf.\ \cite{DavisEtAl2007ShapeRegression-ICCV,Hein2009RobustRegression-NIPS,Schoetz2022Regression-EJS,PetersenMueller2019Regression-AOS}.
Let the kernel $K\colon\Rp\to\Rp$ satisfy
\begin{equation}\label{eq:localfrechet:kernel}
 \const{Kmi}^{-1}\ind_{[0,1/2]}\leq K\leq\const{Kma}\ind_{[0,1]},
 \qquad \const{Kmi},\const{Kma}\in[1,\infty).
\end{equation}
For a target $x\in\mc X$ and bandwidth $h>0$, set
\begin{equation}\label{eq:localfrechet:weights}
 w_{i,h}(x):=
 \frac{K(\rho(x_i,x)/h)}{\sum_{j=1}^n K(\rho(x_j,x)/h)},
 \qquad
 \samfm_h\!(x):=m_2\brOf{\sum_{i=1}^n w_{i,h}(x)\delta_{Y_i}}.
\end{equation}
Write $N_n(x,r):=\#\{i:\rho(x_i,x)\leq r\}$. If $N_n(x,h/2)>0$, the denominator in the definition of $w_{i,h}(x)$ is positive, the weights sum to one, and $w_{i,h}(x)=0$ for $\rho(x_i,x)>h$.

The following corollary gives finite-sample bounds for this Nadaraya--Watson-type estimator under a H\"older condition (Lipschitz for $\beta=1$) on $z\mapsto\mu_z$ with respect to the $1$-Wasserstein distance.

\begin{corollary}[Local constant Fr\'echet regression]\label{cor:two:localreg}
    Let $(\mc X, \rho)$ be a metric space.
    Let $\mu_z\in\mc P^2(\mc Q)$, $z\in\mc X$, satisfy
    \begin{equation}\label{eq:two:localreg:smooth}
        W_1(\mu_z,\mu_{z\pr})\leq L\rho(z,z\pr)^\beta
    \end{equation}
    for some $L>0$ and $\beta\in(0,1]$.
    Set $v := \sup_{z\in\mc X}\VmofP{\mu_z}$ and assume $v<\infty$.
    Let $Y_i\sim\mu_{x_i}$ be independent, and let the kernel and the estimator be as in \eqref{eq:localfrechet:kernel} and \eqref{eq:localfrechet:weights}.
    Let $x\in\mc X$.
    Assume that, for some $D \in \Rpp$ and $\const{dns}\in [1,\infty)$, the design satisfies
    \begin{equation}\label{eq:localfrechet:design}
    N_n(x,h/2)\geq \const{dns}^{-1}nh^D.
    \end{equation}
    Then
    \begin{equation}\label{eq:localfrechet:designbound}
    \br{\EOf{\ol{\samfm_h\!(x)\,}{m_2(\mu_x)}^2}}^{1/2}
    \leq L h^\beta+\sqrt{\frac{v\const{Kmi}\const{Kma}\const{dns}}{nh^D}}
    \eqfs
    \end{equation}
\end{corollary}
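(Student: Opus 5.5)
The plan is a bias--variance split via the triangle inequality in $L^2$:
\[
\Eof{\ol{\samfm_h(x)}{m_2(\mu_x)}^2}^{1/2}\le \Eof{\ol{\samfm_h(x)}{\popfm}^2}^{1/2}+\ol{\popfm}{m_2(\mu_x)},
\]
where $\popfm:=m_2(\popdis)$ with $\popdis:=\sum_{i=1}^n w_{i,h}(x)\mu_{x_i}$. The setup requires $N_n(x,h/2)>0$, which follows from \eqref{eq:localfrechet:design}, so the weights are well defined, deterministic, nonnegative and sum to one. Since $\mu_{x_i}\in\mc P^2(\mc Q)$, also $\popdis\in\mc P^2\subset\mc P^1$. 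We extend the weights by $w_i=0$ for $i>n$ (with arbitrary independent dummies $Y_i$), so that \cref{thm:two:infty:independent} applies.

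For the stochastic term, \cref{thm:two:infty:independent} gives $\Eof{\ol{\popfm}{\samfm_h(x)}^2}\le\sum_i w_{i,h}(x)^2\Vmof{Y_i}\le v\sum_i w_{i,h}(x)^2\le v\max_i w_{i,h}(x)$. By \eqref{eq:localfrechet:kernel}, each numerator is at most $\const{Kma}$. The denominator is at least $\const{Kmi}^{-1}N_n(x,h/2)\ge \const{Kmi}^{-1}\const{dns}^{-1}nh^D$ by \eqref{eq:localfrechet:design}. Hence $\max_i w_{i,h}(x)\le \const{Kmi}\const{Kma}\const{dns}/(nh^D)$, which yields the square-root term.

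For the bias term, the contraction property \eqref{eq:cat0:2fm:CP} gives $\ol{\popfm}{m_2(\mu_x)}\le W_1(\popdis,\mu_x)$. Write $\mu_x=\sum_i w_{i,h}(x)\mu_x$ and use joint convexity of $W_1$ under mixtures with common weights: gluing optimal (or $\varepsilon$-optimal) couplings $\pi_i\in\Pi(\mu_{x_i},\mu_x)$ into $\sum_i w_{i,h}(x)\pi_i$ shows
\[
W_1(\popdis,\mu_x)\le\sum_i w_{i,h}(x)W_1(\mu_{x_i},\mu_x).
\]
Then \eqref{eq:two:localreg:smooth} bounds each term by $L\rho(x_i,x)^\beta$. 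Since $w_{i,h}(x)=0$ whenever $\rho(x_i,x)>h$, every term with positive weight is at most $Lh^\beta$, and the weights sum to one, so the bias is at most $Lh^\beta$.

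No step is a real obstacle. The only care needed is in the mixture convexity of $W_1$: we glue $\varepsilon$-optimal couplings and let $\varepsilon\to0$, which avoids any question of existence of optimal couplings, and measurability is covered by the separability convention of \cref{rem:separability}. Adding the two bounds gives \eqref{eq:localfrechet:designbound}.
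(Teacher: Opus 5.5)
Your proposal is correct and follows essentially the same route as the paper's proof: a triangle inequality in $L^2$, the bias bounded via \eqref{eq:cat0:2fm:CP} together with componentwise coupling and the transport smoothness condition, and the stochastic term bounded via \cref{thm:two:infty:independent} with $\sum_i w_i^2\le\max_i w_i\le \const{Kmi}\const{Kma}\const{dns}/(nh^D)$ from the kernel and design bounds. Your extra care with $\varepsilon$-optimal couplings and zero-weight dummy coordinates is harmless and consistent with the paper's argument.
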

For $v>0$, the right-hand side of \eqref{eq:localfrechet:designbound} is minimized over $h>0$ by
\begin{equation}\label{eq:localfrechet:bandwidth}
    h_*:=\const{bw}\br{\frac{v\const{Kmi}\const{Kma}\const{dns}}{L^2n}}^{\frac{1}{2\beta+D}}
    \eqcm\qquad
    \const{bw} := \br{\frac{D}{2\beta}}^{\frac{2}{2\beta+D}}
    \eqfs
\end{equation}
If \eqref{eq:localfrechet:design} holds at $h=h_*$, the corresponding estimator satisfies
\begin{equation}\label{eq:localfrechet:optimal}
    \EOf{\ol{\samfm_{h_*}\!(x)\,}{m_2(\mu_x)}^2}
    \leq \const{rate}
    L^{\frac{2D}{2\beta+D}}
    \br{\frac{v\const{Kmi}\const{Kma}\const{dns}}n}^{\frac{2\beta}{2\beta+D}}
    \eqcm\qquad
    \const{rate} := \frac{(2\beta+D)^2}{D^{\frac{2D}{2\beta+D}}\br{2\beta}^{\frac{4\beta}{2\beta+D}}}
    \eqfs
\end{equation}
This is the classical nonparametric rate $n^{-2\beta/(2\beta+D)}$ for $\beta$-smooth regression functions on $D$-dimensional predictor spaces.

The local constant estimator appears as (4.1) in \cite{PetersenMueller2019Regression-AOS}, whose rate theorems concern local linear fits with random covariates; \cref{rem:two:smoothness} compares our assumption with a first-order local constant version of their density conditions.
For iid covariate--response pairs, \cref{cor:two:localreg} applies conditionally on the covariates whenever \eqref{eq:localfrechet:design} holds at $x$, which for a positive local count one may always arrange by taking $\const{dns}=\max\{1,nh^D/N_n(x,h/2)\}$; an unconditional risk bound would additionally require control of the random weights and a rule for empty windows.

For real predictors $x_i=i/n$ and $\beta\in(0,1]$, \cref{cor:two:localreg} covers the local constant case of \cite[Theorem 2]{Schoetz2022Regression-EJS}, which already gives finite-sample mean squared error bounds of order $h^{2\beta}+(nh)^{-1}$.
Compared to that result, we obtain better constants, allow a general metric predictor space, and weaken several assumptions: we require only a second moment instead of a moment of order larger than $2$; we need no entropy condition, which makes the result stronger in certain negatively curved spaces such as the hyperbolic plane $\mathbb H^2$ and applicable in infinite-dimensional spaces; and the Wasserstein smoothness condition \eqref{eq:two:localreg:smooth} is weaker than the assumption of \cite{Schoetz2022Regression-EJS} that the $\mu_z$ have densities that vary smoothly in $z$ at every fixed location. Supporting arguments are collected in \cref{app:comparison}.

We can view local Fr\'echet regression as a two-step procedure, in which we first estimate the $\R$-valued regression functions $x\mapsto\Eof{\ol Yq^2\mid X=x}$ for every fixed $q$, and then perform the minimization over $q$ for fixed $x$. From standard nonparametric regression theory, this requires smoothness of these $\R$-valued regression functions. Directly assuming this smoothness as in \cite{TuckerWu2025PartiallyGlobal-SS} yields a hard-to-verify condition. In \cite{PetersenMueller2019Regression-AOS,Schoetz2022Regression-EJS} it is obtained by more tractable smoothness conditions on densities, but they are restrictive. Condition \eqref{eq:two:localreg:smooth} occupies the position between the two. This discussion is further elaborated in \cref{rem:two:smoothness}.

Entropy conditions can fail in natural response spaces.
Im, Jeon, and Park \cite{ImEtAl2026Entropy-arxiv} show that the local entropy condition of \cite{PetersenMueller2019Regression-AOS}, uniform boundedness of the entropy integral over shrinking balls around the target, generally fails in the quadratic Wasserstein space of univariate distributions on a compact interval, in particular at targets with densities bounded away from zero and infinity.
The correction note \cite{PetersenMueller2026Correction-AOS} withdraws the statement that had asserted the condition in that space, replacing it by one restricted to finite-dimensional families of quantile functions. The original local-linear rate theorems also invoke this entropy condition.
For global Fr\'echet regression, the correction note recovers the parametric $O_P(n^{-1/2})$ rate using Hilbert space arguments and projection onto the closed convex set of quantile functions in $L^2(0,1)$; this representation is isometric for the quadratic Wasserstein metric \cite{PanaretosZemel2020Wasserstein-Springer}.
An isometric representation as a closed convex subset of an $L^2$ space likewise carries the finite-sample bounds of \cite{LiZhu2026Regression-StatPapers}, which are accordingly stated only for response classes that admit one.
In contrast, \cref{cor:two:localreg} needs neither an entropy condition nor a Hilbert space representation.

\begin{remark}[Density smoothness versus transport smoothness]\label{rem:two:smoothness}
For the local constant estimator with $\beta=1$, the smoothness assumption of \cite{Schoetz2022Regression-EJS} and the local constant version of the assumption of \cite{PetersenMueller2019Regression-AOS} are \emph{density} conditions.
In the setting of \cref{cor:two:localreg} with real predictors, they lead to a probability measure $\nu$ on $\mc Q$ with $\mu_x\ll\nu$ for all $x$, and densities $f(y\mid x):=\frac{\dl\mu_x}{\dl\nu}(y)$ such that $x\mapsto f(y\mid x)$ is Lipschitz with a constant $L(y)$ for $\nu$-almost every $y$.
In \cite[Theorems 1--2]{Schoetz2022Regression-EJS} (assumption \textsc{H\"olderSmoothDensity} with polynomial order zero), $\nu$ may be any probability measure dominating the conditional laws and $\normof{L}_{L^2(\nu)}\leq C_{\mathrm{SmD}}$; a second moment $\int\ol y{m_2(\mu_0)}^2\,\dl\nu(y)\leq C_{\mathrm{Int}}$ of the reference measure is assumed as well, with $C_{\mathrm{SmD}},C_{\mathrm{Int}}\geq1$.
The published condition (L1) of \cite{PetersenMueller2019Regression-AOS} requires the marginal density $f_X$ of $X$ and the conditional densities $g_y$ of $X$ given $Y=y$ to be twice continuously differentiable, with $\sup_{x,y}\absof{g_y^{\prime\prime}(x)}<\infty$; it also requires continuity of $x\mapsto\Pr(Y\in U\mid X=x)$ for every open $U\subset\mc Q$.
For the local constant comparison, the corresponding first-order condition is Lipschitz continuity of $f_X$ and a Lipschitz bound $L_g$ on $g_y$ that is uniform in $y$, retaining the continuity requirement on the conditional laws.
On any covariate interval on which $f_X\geq f_->0$, Bayes' formula with $\nu$ the law of $Y$ gives $f(y\mid x)=g_y(x)/f_X(x)$ and hence a uniform Lipschitz bound on $f(y\mid\cdot)$ depending on $L_g$, $f_-$, and $\operatorname{Lip}(f_X)$; the bound is derived in \cref{app:comparison}.
Provided the reference measure has a finite second moment, this local constant density condition therefore implies, on such an interval, the density smoothness condition of \cite{Schoetz2022Regression-EJS}, which in turn implies \eqref{eq:two:localreg:smooth} with $L=C_{\mathrm{Int}}^{1/2}C_{\mathrm{SmD}}$ and the same $\beta$.

The gap between density and transport smoothness reflects two mechanisms by which $x\mapsto\mu_x$ can vary.
Under \emph{reweighting}, mass is redistributed over a fixed support. Density conditions control this variation and imply a bound on $W_1$, by the common-mass bound $W_1(\mu_x,\mu_{x\pr})\leq\int\ol yo\,\absof{f(y\mid x)-f(y\mid x\pr)}\,\dl\nu(y)$ used in \cref{app:comparison}.
Under \emph{transport}, the support itself moves: $W_1(\delta_q,\delta_p)=\ol qp$, whereas a common dominating probability measure need not exist. When densities exist, they may be discontinuous in $x$, as for translated uniform noise.
Density conditions measure changes at fixed locations and can be much more restrictive than transport smoothness; \cref{ex:two:phase} quantifies the difference.

A third mechanism, neither a density nor a transport condition, is used in \cite[Assumption 4]{TuckerWu2025PartiallyGlobal-SS}: H\"older continuity of the conditional Fr\'echet objective,
\begin{equation}\label{eq:two:localreg:objective}
 F_x(q):=\int\ol yq^2\,\dl\mu_x(y)
 \eqcm\qquad
 \sup_{q\in\mc Q}\abs{F_x(q)-F_{x\pr}(q)}\leq L\rho(x,x\pr)^\beta
 \eqfs
\end{equation}
If $\mc Q$ is bounded, as in \cite{TuckerWu2025PartiallyGlobal-SS}, this condition is implied by \eqref{eq:two:localreg:smooth}, with $2\diam(\mc Q)L$ in place of $L$.
On unbounded response spaces it is not: for $\mc Q=\R$ and $\mu_x:=\delta_x$, \eqref{eq:two:localreg:smooth} holds with $L=\beta=1$, whereas $\sup_{q\in\R}\absof{(x-q)^2-(x\pr-q)^2}=\infty$ for $x\neq x\pr$.
The three conditions are thus ordered, \emph{density} $\Rightarrow$ \emph{transport} $\Rightarrow$ \emph{objective}, the second implication on bounded response spaces, and both implications are strict; this is verified in \cref{app:comparison}.

\end{remark}

\begin{example}[Phase noise along a geodesic]\label{ex:two:phase}
Let $\gamma\colon\R\to\mc Q$ be an isometric embedding of the real line into the Hadamard space $(\mc Q,d)$, i.e., $\ol{\gamma(s)}{\gamma(t)}=\absof{s-t}$ for all $s,t\in\R$, let $U$ be a real random variable with $\Eof{U}=0$ and $\Eof{U^2}<\infty$, let $\tau\geq0$, and consider real predictors $x\in[0,1]$ with
\begin{equation*}
 Y_x:=\gamma(x+\tau U)\sim\mu_x\eqfs
\end{equation*}
The noise acts on the parameter of the geodesic, and $\tau=0$ is the noise-free response $Y_x=\gamma(x)$.
(If only a geodesic segment $\gamma\colon[a,b]\to\mc Q$ is available and $\absof U\leq M$ almost surely, the statements below remain valid provided $[a,b]\supseteq[-\tau M,1+\tau M]$; the two noise laws in (ii) have $M=1$.)
The following statements are verified in \cref{app:comparison}.
\begin{enumerate}[label=(\roman*)]
 \item (Transport smoothness.) For all $\tau\geq0$ and $x,x\pr\in[0,1]$,
 \begin{equation}\label{eq:two:phase:w1}
  W_1(\mu_x,\mu_{x\pr})=\abs{x-x\pr}\eqcm\qquad m_2(\mu_x)=\gamma(x)\eqcm\qquad\VmOfP{\mu_x}=\tau^2\EOf{U^2}\eqfs
 \end{equation}
 Thus \eqref{eq:two:localreg:smooth} holds with $L=1$ and $\beta=1$, uniformly in $\tau$. For the design $x_i=i/n$ and $4/n\leq h\leq1$, \eqref{eq:localfrechet:design} holds with $D=1$ and $\const{dns}=4$, and \cref{cor:two:localreg} gives
 \begin{equation}\label{eq:two:phase:bound}
  \br{\EOf{\ol{\samfm_h\!(x)}{\gamma(x)}^2}}^{1/2}\leq h+2\tau \sqrt{\frac{\const{Kmi}\const{Kma} \EOf{U^2}}{nh}}\eqcm
 \end{equation}
 which for $\tau=0$ is the deterministic bound $h$.
 \item (Density smoothness fails or degrades.) Let $\tau>0$. If $U\sim\dunif{[-1,1]}$, then for every probability measure $\nu$ dominating all $\mu_x$, $x\in[0,1]$, there is no family of densities $f(\cdot\mid x)$, $x\in[0,1]$, such that $x\mapsto f(y\mid x)$ is continuous for $\nu$-almost every $y$; the smoothness assumption of \cite[Theorems 1--2]{Schoetz2022Regression-EJS} fails for every $\beta>0$, and so does the local constant analogue of (L1) in \cite{PetersenMueller2019Regression-AOS}, although each $\mu_x$ has a bounded density with respect to arc length on $\gamma$.
 If instead $U$ has the triangular density $u\mapsto(1-\absof u)_+$, the assumption of \cite{Schoetz2022Regression-EJS} holds with $\beta=1$, with $\nu$ the normalized arc-length measure on $\gamma([-\tau,1+\tau])$ and $C_{\mathrm{SmD}}=\max\{1,(1+2\tau)/\tau^2\}$, and this choice is optimal: every admissible choice of $\nu$ and $C_{\mathrm{SmD}}$ with $\beta=1$ satisfies $C_{\mathrm{SmD}}\geq(1+2\tau)/\tau^2$. For $\beta\in(0,1]$ and $\tau\leq\frac12$, the optimal $C_{\mathrm{SmD}}$ lies between $\tau^{-1-\beta}$ and $(1+2\tau)\tau^{-1-\beta}$.
\end{enumerate}
The transport constant remains $L=1$ in both families for every $\tau\geq0$, whereas no admissible density constant exists for uniform noise, the optimal one for triangular noise is of order $\tau^{-1-\beta}$ and diverges as the noise vanishes, and at $\tau=0$ the moving point masses admit no common dominating probability measure.
\end{example}

\subsection{Dirichlet-Process Fr\'echet Means}\label{ssec:dirichlet}

The Dirichlet process is the standard prior on probability measures in Bayesian nonparametrics \cite{Ferguson1973Dirichlet-AOS}.
It is defined on an arbitrary measurable space and is conjugate: the posterior given conditionally iid observations is again a Dirichlet process.
When the sample space is a Hadamard space, the Fr\'echet mean $m_2(P)$ of a random probability measure $P$ is a random point of $\mc Q$, and one may ask how concentrated its prior and posterior distributions are.
In a Hilbert space, $m_2(P)=\int y\,\dl P(y)$ is linear in $P$.
In a Hadamard space the mean is nonlinear, but the stick-breaking representation turns $P$ into a countably weighted sample of iid atoms, which is exactly the setting of \cref{thm:two:infty:independent}.

Let $a>0$ and $\mu\in\mc P^2(\mc Q)$. A random probability measure $P\sim\operatorname{DP}(a,\mu)$ has the stick-breaking representation \cite{Sethuraman1994DirichletPriors-SS}
\begin{equation}\label{eq:dirichlet:stickbreaking}
 P=\sum_{i=1}^\infty W_i\delta_{Y_i},
 \qquad W_i=V_i\prod_{j<i}(1-V_j),
\end{equation}
where $V_i\sim\operatorname{Beta}(1,a)$ are iid, $Y_i\sim\mu$ are iid, and the two sequences are independent. Each $W_i$ is the fraction $V_i$ of what is left of a unit stick after $i-1$ breaks; the remainder $\prod_{j\leq i}(1-V_j)$ tends to zero almost surely, so the weights sum to one. For nonatomic $\mu$, the atoms $Y_i$ are almost surely distinct, so $P$ has infinitely many atoms. Since $\Eof{\int \ol yo^2\,\dl P(y)}=\int \ol yo^2\,\dl\mu(y)<\infty$ for any $o\in\mc Q$, the mean $m_2(P)$ is well-defined almost surely.

Conditionally on the weights, \eqref{eq:dirichlet:stickbreaking} is the setting of \cref{thm:two:infty:independent} with $w_i=W_i$ and $\mu_i=\mu$ for all $i\in\N$: the population mixture is $\popdis=\sum_iW_i\mu=\mu$, the empirical mixture is $\samdis=P$, and the theorem gives
\begin{equation}\label{eq:dirichlet:risk}
 \EOf{\ol{m_2(P)}{m_2(\mu)}^2}
 \leq\VmOfP{\mu}\,\EOf{\sum_{i=1}^\infty W_i^2}
 =\frac{\VmOfP{\mu}}{a+1}
 \eqcm
\end{equation}
where the identity follows from $\Eof{W_i^2}=\frac{2}{(a+1)(a+2)}(\frac a{a+2})^{i-1}$ by summing the geometric series.
Thus the precision parameter $a$ controls how tightly the prior distribution of the Fr\'echet mean concentrates around $m_2(\mu)$.
Only two features of the Dirichlet process enter this prior bound: the weights are independent of the iid atoms, and $\Eof{\sum_iW_i^2}$ is known. The prior bound therefore holds for any proper species-sampling prior with $\VmofP{\mu}\,\Eof{\sum_iW_i^2}$ on the right-hand side, for instance with $(1-\varsigma)\VmofP{\mu}/(1+\vartheta)$ for the Pitman--Yor process with discount $\varsigma\in[0,1)$ and strength $\vartheta>-\varsigma$; the posterior bounds below use conjugacy of the Dirichlet process and do not transfer.
In a Hilbert space, \eqref{eq:dirichlet:risk} holds with equality and recovers the classical variance formula for the mean of a Dirichlet process, cf.\ \cite{Ferguson1973Dirichlet-AOS}; in a Hadamard space, the same expression is an upper bound.

Now let $X_1,\ldots,X_n$ be $\mc Q$-valued observations that are conditionally iid with law $P$, where $P\sim\operatorname{DP}(a,\mu)$. The posterior distribution of $P$ given $X_1,\ldots,X_n$ is $\operatorname{DP}(a+n,\tilde\mu_n)$ with
\begin{equation}\label{eq:dirichlet:posteriorbase}
 \tilde\mu_n:=\frac{a\mu+\sum_{j=1}^n\delta_{X_j}}{a+n}
\end{equation}
\cite{Ferguson1973Dirichlet-AOS}. Given the observations, $\tilde\mu_n$ is a fixed element of $\mc P^2(\mc Q)$, and \eqref{eq:dirichlet:risk} with $(a,\mu)$ replaced by $(a+n,\tilde\mu_n)$ yields
\begin{equation}\label{eq:dirichlet:posterior}
 \EOf{\ol{m_2(P)}{m_2(\tilde\mu_n)}^2\,\middle|\, X_1,\ldots,X_n}
 \leq\frac{\VmOfP{\tilde\mu_n}}{a+n+1}
 \eqcm\qquad
 \VmOfP{\tilde\mu_n}=\frac12\iint\ol y{y\pr}^2\,\dl\tilde\mu_n(y)\,\dl\tilde\mu_n(y\pr)
 \eqfs
\end{equation}
Note that $\VmofP{\tilde\mu_n}$, like the left-hand side of \eqref{eq:dirichlet:posterior}, is a $\sigma(X_1,\ldots,X_n)$-measurable random variable: it is the metric variance of the measure $\tilde\mu_n$, which depends on the observations.

The bound \eqref{eq:dirichlet:posterior} quantifies the posterior uncertainty about the Fr\'echet mean after finitely many observations.
Its center $m_2(\tilde\mu_n)$ interpolates between the prior center $m_2(\mu)$ and the Fr\'echet mean of the empirical measure $\hat\nu_n:=\frac1n\sum_{j=1}^n\delta_{X_j}$ of the observations: since $\tilde\mu_n=\frac{a}{a+n}\mu+\frac{n}{a+n}\hat\nu_n$, the contraction property \eqref{eq:cat0:2fm:CP} with the coupling that is the identity on the empirical part, respectively on the prior part, gives
\begin{equation}\label{eq:dirichlet:shrinkage}
\begin{split}
 \ol{m_2(\tilde\mu_n)}{m_2(\hat\nu_n)}&\leq W_1(\tilde\mu_n,\hat\nu_n)\leq\frac{a}{a+n}\,W_1(\mu,\hat\nu_n)
 \eqcm\\
 \ol{m_2(\tilde\mu_n)}{m_2(\mu)}&\leq W_1(\tilde\mu_n,\mu)\leq\frac{n}{a+n}\,W_1(\mu,\hat\nu_n)
 \eqfs
\end{split}
\end{equation}
Thus the influence of the prior on the center decays like $n^{-1}$, and the two bounds add up to $W_1(\mu,\hat\nu_n)$.
By the conditional Markov inequality, for $\eta\in(0,1)$, the closed ball around $m_2(\tilde\mu_n)$ of radius $\sqrt{\VmofP{\tilde\mu_n}/((a+n+1)\eta)}$ has posterior probability at least $1-\eta$; this is an explicit credible ball for the Fr\'echet mean.
With $(a,\mu)=(n,\hat\nu_n)$, \eqref{eq:dirichlet:risk} also covers Rubin's Bayesian bootstrap \cite{Rubin1981BayesianBootstrap-AOS}, whose resampled measures are $P^*\sim\operatorname{DP}(n,\hat\nu_n)$: $\Eof{\ol{m_2(P^*)}{m_2(\hat\nu_n)}^2\mid X_1,\ldots,X_n}\leq\VmofP{\hat\nu_n}/(n+1)$, with equality in Hilbert spaces.

Taking expectations in \eqref{eq:dirichlet:posterior} yields an unconditional bound.
Write $\lambda_n:=\frac a{a+n}$, so that $\tilde\mu_n=\lambda_n\mu+(1-\lambda_n)\hat\nu_n$ and
\begin{equation*}
 \VmOfP{\tilde\mu_n}
 =\lambda_n^2\,\VmOfP{\mu}
 +\lambda_n(1-\lambda_n)\iint\ol yz^2\,\dl\mu(y)\,\dl\hat\nu_n(z)
 +(1-\lambda_n)^2\,\VmOfP{\hat\nu_n}
 \eqfs
\end{equation*}
Marginally, each $X_j$ has law $\mu$, so $\Eof{\iint\ol yz^2\,\dl\mu(y)\,\dl\hat\nu_n(z)}=2\VmofP{\mu}$.
For $j\neq k$, conditioning on $P$ gives $\Eof{\ol{X_j}{X_k}^2}=2\,\Eof{\VmofP{P}}=\frac{2a}{a+1}\VmofP{\mu}$, since $\VmofP{P}=\frac12\sum_{i\neq i\pr}W_iW_{i\pr}\ol{Y_i}{Y_{i\pr}}^2$ by \eqref{eq:dirichlet:stickbreaking} and $\Eof{\sum_iW_i^2}=\frac1{a+1}$; hence $\Eof{\VmofP{\hat\nu_n}}=\frac{n-1}{n}\frac{a}{a+1}\VmofP{\mu}$.
Altogether,
\begin{equation}\label{eq:dirichlet:bayesrisk}
 \EOf{\VmOfP{\tilde\mu_n}}=\frac{a\,(a+n+1)}{(a+1)(a+n)}\,\VmOfP{\mu}
 \eqcm\qquad
 \EOf{\ol{m_2(P)}{m_2(\tilde\mu_n)}^2}\leq\frac{a}{(a+1)(a+n)}\,\VmOfP{\mu}
 \eqfs
\end{equation}
The bound coincides with \eqref{eq:dirichlet:risk} for $n=0$ and decays like $n^{-1}$.
It is an integrated Bayes squared-risk bound under the joint law of the prior and the observations: the posterior mean squared distance from $m_2(P)$ to $m_2(\tilde\mu_n)$, averaged over the prior predictive distribution of the data, is $\landauO(n^{-1})$ as $n\to\infty$, with the prior constant $\VmofP{\mu}$.

\section{\texorpdfstring{2-Fr\'echet Means in $\mathsf{CAT}(\kappa)$ Spaces, $\kappa>0$}{2-Frechet Means in CAT(kappa) Spaces, kappa > 0}}\label{sec:pos}

In this section, we assume $(\mc Q,d)$ to be a complete \catk{} space with $\kappa>0$ and again consider only the $2$-Fr\'echet mean ($\alpha=2$). Under positive curvature, $2$-Fr\'echet means are in general not unique. By restricting the distributions to a closed convex set with circumradius less than $\pi/(2\sqrt\kappa)$, we retain uniqueness as well as the quadruple, variance, and contraction inequalities, up to explicit constants (\cref{ssec:pos:prelim}). We then derive a weighted-sample bound in \cref{ssec:pos:main} (\cref{thm:pos:infty:independent}). We apply it to local constant regression in \cref{ssec:pos:localfrechet} and to Dirichlet-process Fr\'echet means in \cref{ssec:pos:dirichlet}.

\subsection{Preliminaries}\label{ssec:pos:prelim}

Let $\kappa\in\Rpp$ and let $(\mc Q,d)$ be a complete \catk{} space.
Let $\setC\subseteq\mc Q$ be a nonempty bounded closed convex set, and let $\varrho:=\inf_{o\in\mc Q}\sup_{y\in\setC}d(o,y)$ be its circumradius.
Assume $0<\varrho<\pi/(2\sqrt\kappa)$ and set 
\begin{equation}\label{eq:pos:constants}
    \Delta := 2\varrho\sqrt{\kappa}\in(0,\pi)
    \eqcm\qquad
    C_\Delta = \frac{\Delta}{\sin(\Delta)}\in(1,\infty)
    \eqfs
\end{equation}
As $\setC$ is convex, geodesics between points of $\setC$ stay in $\setC$, so that \cref{prp:prelim:qi} yields the \emph{quadruple inequality}
\begin{equation}\tag{QI$_+$}\label{eq:2fmcatk:qi}
    \frac12\abs{\ol yq^2-\ol yp^2-\ol zq^2+\ol zp^2}
    \leq C_\Delta\,\ol qp\,\ol yz
    \qquad\text{for all } y,z,q,p\in\setC
    \eqfs
\end{equation}
As probability measures $\mu\in\mc P(\setC)$ have bounded support, $\mc P(\setC)\subset\mc P^s(\mc Q)$ for all $s\in\Rpp$, and the renormalization in \eqref{eq:2fmcat0:fmdef} is not needed.
By \cite[Theorem B and Corollary 16]{Yokota2016Barycenter-JMSJ}, every $\mu\in\mc P(\setC)$ has a unique $2$-Fr\'echet mean, and this mean belongs to $\setC$.
Thus, for $Y\sim\mu\in\mc P(\setC)$,
\begin{equation*}
    m := m_2(\mu) = m_2[Y] = \argmin_{q\in\mc Q} \EOf{\ol Yq^2} = \argmin_{q\in\setC} \EOf{\ol Yq^2}
    \eqfs
\end{equation*}
By \cref{thm:pos:hemisphere:vi} in the appendix, $m$ satisfies the \emph{variance inequality}: for all $q\in\setC$,
\begin{equation}\tag{VI$_+$}\label{eq:2fmcatk:vi}
    \ol{m}q^2
    \leq
    C_\Delta
    \EOf{\ol Yq^2-\ol Ym^2}
    \eqfs
\end{equation}
Moreover, any $\mu,\nu \in \mc P(\setC)$ satisfy the \emph{contraction property} (\cref{thm:pos:hemisphere:cp} in the appendix)
\begin{equation}\tag{CP$_+$}\label{eq:2fmcatk:CP}
    \ol {m_2(\mu)}{m_2(\nu)}\leq C_\Delta^2\, W_1(\mu, \nu)
    \eqfs
\end{equation}
For $\mu\in\mc P(\setC)$, the Fr\'echet and metric variances satisfy
\begin{equation}\label{eq:pos:variances}
    \VfOf{Y}\leq\frac{2C_\Delta}{C_\Delta+1}\,\VmOf{Y}
    \qquad\text{and}\qquad
    \VmOf{Y}\leq2\,\VfOf{Y}
    \eqcm
\end{equation}
see \cref{lmm:pos:variances} in the appendix.
As $\Delta\to0$, we have $C_\Delta\to1$, and \eqref{eq:2fmcatk:qi}, \eqref{eq:2fmcatk:vi}, \eqref{eq:2fmcatk:CP}, and \eqref{eq:pos:variances} turn into the Hadamard-space inequalities of \cref{ssec:two:prelim}.

\begin{remark}[Radius versus diameter]\label{rem:pos:diameter}
    Set $\delta:=\sqrt\kappa\,\diam(\setC)$. Then $\delta\leq\Delta\leq2\delta$.
    Hence, the diameter condition $\delta<\pi/2$ implies the radius condition $\Delta<\pi$, whereas $\Delta<\pi$ is compatible with every diameter $\delta<\pi$.
    The condition $\delta<\pi$ alone still gives the quadruple inequality, but does not ensure a distribution-uniform variance inequality with a constant depending only on $\delta$ (\cref{prp:pos:diameterobstruction} in the appendix).
\end{remark}

\begin{remark}[Sharper constants]\label{rem:pos:sharper}
    Recall $C_\delta=\delta/\sin(\delta)$.
    For $0<\delta<\pi/2$, the optimal diameter-only variance and contraction constants are
    \begin{equation*}
        A_\delta^{\mathrm{diam}}:=\frac{\theta_\delta^2}{\delta^2-\theta_\delta^2},
        \qquad B_\delta^{\mathrm{diam}}:=\frac1{\sqrt{\cos(\delta)}},
        \qquad \theta_\delta:=\arccos\brOf{\sqrt{\cos(\delta)}}.
    \end{equation*}
    Both tend to $1$ as $\delta\searrow0$ and are set to $+\infty$ for $\delta\geq\pi/2$.
    \Cref{thm:pos:hemisphere:vi,thm:pos:hemisphere:cp} in the appendix give the optimal radius-only constants $A_\Delta^{\mathrm{rad}},B_\Delta^{\mathrm{rad}}$ implicitly; we do not know elementary closed forms for them.
    Restricting the base point to $\ol mq\leq\varrho$ does admit one: \cref{thm:pos:hemisphere:vi}\,(ii) gives the explicit optimal constant $A_\Delta^{\mathrm{near}}$ on that range, which is also an explicit lower bound for $A_\Delta^{\mathrm{rad}}$.
    The radius constants satisfy the non-sharp upper bounds $A^{\mathrm{rad}}_\Delta<1/\cos(\Delta/2)<C_\Delta$ and $B^{\mathrm{rad}}_\Delta<\min\{1/\cos(\Delta/2)^2,2C_\Delta\}$ (\cref{thm:pos:hemisphere:vi,thm:pos:hemisphere:cp} and \cref{lmm:pos:cp:scalar} in the appendix), which give the explicit choices
    \begin{align}
        A_{\Delta,\delta}&:=\min\{1/\cos(\Delta/2),A_\delta^{\mathrm{diam}}\},\label{eq:pos:vi:best}\\
        B_{\Delta,\delta}&:=\min\{1/\cos(\Delta/2)^2,2C_\Delta,B_\delta^{\mathrm{diam}},C_\delta A_{\Delta,\delta}\}.\label{eq:pos:cp:best}
    \end{align}
    We can replace $C_\Delta$ in \eqref{eq:2fmcatk:qi}, $C_\Delta$ in \eqref{eq:2fmcatk:vi}, and $C_\Delta^2$ in \eqref{eq:2fmcatk:CP} by $C_\delta$, $A_{\Delta,\delta}$, and $B_{\Delta,\delta}$, respectively.
    In particular, the multiplier $C_\delta A_{\Delta,\delta}B_{\Delta,\delta}$ that replaces $C_\Delta^4$ in \cref{thm:pos:infty:independent} is at most $\min\{C_\Delta/\cos(\Delta/2)^3,2C_\Delta^2/\cos(\Delta/2)\}$, which is less than $C_\Delta^4$ and, by its second term, less than $2C_\Delta^3$.
    \Cref{lmm:pos:variances} in the appendix also sharpens \eqref{eq:pos:variances} to $\Vfof{Y}\leq2A_{\Delta,\delta}(1+A_{\Delta,\delta})^{-1}\Vmof{Y}$.

    To gauge what the explicit surrogates cost, here are the optimal radius constants at three values of $\Delta$, computed from \cref{thm:pos:hemisphere:vi,thm:pos:hemisphere:cp}.
    \begin{center}
        \small
        \begin{tabular}{@{}rrrrrrrrr@{}}
            \toprule
            $\Delta$ & $A_\Delta^{\mathrm{near}}$ & $A_\Delta^{\mathrm{rad}}$ & $1/\cos(\Delta/2)$ & $C_\Delta$ & $B_\Delta^{\mathrm{rad}}$ & $1/\cos(\Delta/2)^2$ & $2C_\Delta$ & $C_\Delta^2$\\
            \midrule
            $1$ & $1.094$ & $1.094$ & $1.139$ & $1.188$ & $1.188$ & $1.298$ & $2.377$ & $1.412$\\
            $2$ & $1.601$ & $1.605$ & $1.851$ & $2.200$ & $2.208$ & $3.426$ & $4.399$ & $4.838$\\
            $3$ & $11.15$ & $11.55$ & $14.14$ & $21.26$ & $23.41$ & $199.9$ & $42.52$ & $451.9$\\
            \bottomrule
        \end{tabular}
    \end{center}
    At $\Delta=1$ the pairs $A_\Delta^{\mathrm{near}},A_\Delta^{\mathrm{rad}}$ and $C_\Delta,B_\Delta^{\mathrm{rad}}$ each agree to the digits shown; nevertheless, $A_\Delta^{\mathrm{near}}<A_\Delta^{\mathrm{rad}}<1/\cos(\Delta/2)<C_\Delta<B_\Delta^{\mathrm{rad}}<\min\{1/\cos(\Delta/2)^2,2C_\Delta\}$ and $1/\cos(\Delta/2)^2<C_\Delta^2$ for every $0<\Delta<\pi$, by \cref{thm:pos:hemisphere:vi}\,(ii), \eqref{eq:pos:vi:radius:upper}, and \eqref{eq:pos:cp:radius:upper} in the appendix. The bound $2C_\Delta$ is the smaller one from $\Delta\approx2.33$ on.
\end{remark}

\subsection{The Weighted-Sample Bound}\label{ssec:pos:main}

The following theorem is the analogue of \cref{thm:two:infty:independent} under positive curvature. Its proof, given in \cref{app:pos}, is the same leave-one-out argument with the inequalities of \cref{ssec:pos:prelim} in place of those of \cref{ssec:two:prelim}.
\begin{theorem}[Weighted-sample bound under positive curvature]\label{thm:pos:infty:independent}
    In the setting of \cref{ssec:pos:prelim}, let $(w_i)_{i\in\N} \subset [0,1]$ such that $\sum_{i=1}^\infty w_i = 1$.
    Let $(Y_i)_{i\in\N}$ be a sequence of independent random variables with distributions $\mu_1, \mu_2, \dots \in \mc P(\mc C)$.
    Let $\popdis := \sum_{i=1}^\infty w_i \mu_i$ and $\samdis := \sum_{i=1}^\infty w_i \delta_{Y_i}$. 
    Set $\popfm := m_2(\popdis)$ and $\samfm := m_2(\samdis)$.
    Then
	\begin{equation}\label{eq:pos:infty:independent}
		\EOf{\ol{\popfm}{\samfm}^2}
		\leq
		C_\Delta^4\sum_{i=1}^\infty w_i^2\, \VmOf{Y_i}
		\eqfs
	\end{equation}
\end{theorem}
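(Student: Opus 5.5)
The plan is to run the leave-one-out (algorithmic stability) argument with the positive-curvature inequalities \eqref{eq:2fmcatk:qi}, \eqref{eq:2fmcatk:vi}, and \eqref{eq:2fmcatk:CP} in place of their Hadamard-space versions. Each constant $C_\Delta$ enters the final bound at most once: one factor from (VI$_+$), one from (QI$_+$), and two from (CP$_+$), which gives $C_\Delta^4$.

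\textbf{Step 1: reduction to the population objective.} Write $F(q):=\sum_iw_i\EOf{\ol{Y_i}q^2}$, the objective of $\popdis$, and $\hat F(q):=\sum_iw_i\ol{Y_i}q^2$, the objective of $\samdis$. All points stay in $\setC$ because the means of laws on $\setC$ lie in $\setC$. Applying (VI$_+$) to $\popdis$ at $q=\samfm$ gives
\[
\ol{\popfm}{\samfm}^2\leq C_\Delta\brOf{F(\samfm)-F(\popfm)}.
\]
Since $\samfm$ minimizes $\hat F$, we have $\hat F(\samfm)-\hat F(\popfm)\leq0$. Hence
\[
F(\samfm)-F(\popfm)\leq\sum_iw_i\brOf{\EOf{g_i(\samfm)}-g_i(\samfm)}
\]
with $g_i(q):=\ol{Y_i}q^2-\ol{Y_i}{\popfm}^2$, interpreted under expectation as the expected fluctuation term.

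\textbf{Step 2: leave-one-out decoupling.} Let $Y_i\pr$ be an independent copy of $Y_i$, independent of everything else. Let $\samfm^{(i)}$ be the mean of the empirical mixture with $Y_i$ replaced by $Y_i\pr$. Then $\samfm^{(i)}$ is independent of $Y_i$, and by exchangeability of $(Y_i,Y_i\pr)$ we get $\EOf{g_i(\samfm^{(i)})}=\EOf{\ol{Y_i\pr}{\samfm}^2-\ol{Y_i\pr}{\popfm}^2}$. This shows
\[
\EOf{F(\samfm)-F(\popfm)}=\sum_iw_i\,\EOf{\ol{Y_i\pr}{\samfm}^2-\ol{Y_i\pr}{\popfm}^2-\ol{Y_i}{\samfm}^2+\ol{Y_i}{\popfm}^2}.
\]
Rewrite each summand by swapping roles, so that it becomes half of a quadruple expression in $(Y_i,Y_i\pr;\samfm^{(i)},\samfm)$. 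Then (QI$_+$) bounds it by $2C_\Delta\,\ol{Y_i}{Y_i\pr}\,\ol{\samfm}{\samfm^{(i)}}$. Because the measures $\samdis$ and $\samdis^{(i)}$ differ only in the $i$-th atom, (CP$_+$) with the coupling that is the identity except on that atom gives $\ol{\samfm}{\samfm^{(i)}}\leq C_\Delta^2w_i\ol{Y_i}{Y_i\pr}$.

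\textbf{Step 3: assembling the bound.} Combining the three steps yields
\[
\EOf{\ol{\popfm}{\samfm}^2}\leq C_\Delta\cdot C_\Delta\cdot C_\Delta^2\sum_iw_i^2\,\EOf{\ol{Y_i}{Y_i\pr}^2}\cdot\tfrac12\cdot(\text{bookkeeping}),
\]
which reduces to $C_\Delta^4\sum_iw_i^2\VmOf{Y_i}$ once the factor $\tfrac12$ in (QI$_+$) is matched against the $\tfrac12$ in the definition of $\Vm$.

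\textbf{Expected obstacles.} The main obstacle is the exact constant bookkeeping in Step 2. The symmetrization must be arranged so that the quadruple term appears with the factor $\tfrac12$ that (QI$_+$) absorbs; otherwise a stray factor $2$ appears. I will follow the Hadamard-case proof of \cref{thm:two:infty:independent} line by line, which yields constant $1$ there, and simply insert $C_\Delta$ at each invocation. The second technical point is the countably infinite sum. Here I will check measurability and finiteness, which is automatic since the support is bounded, and justify exchanging sums and expectations by Fubini with nonnegative or dominated terms. Support in the bounded set $\setC$ makes every integrability issue trivial.
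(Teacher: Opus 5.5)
There is a genuine gap, although it affects only the constant. As you have structured it, the argument proves $\EOf{\ol{\popfm}{\samfm}^2}\leq 2C_\Delta^4\sum_iw_i^2\VmOf{Y_i}$, not the stated bound.

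\textbf{Where the factor is lost.} In Step 1 you discard the empirical excess risk $\hat F(\popfm)-\hat F(\samfm)\geq0$. However, the leave-one-out computation in Step 2 evaluates exactly the sum $\EOf{F(\samfm)-F(\popfm)}+\EOf{\hat F(\popfm)-\hat F(\samfm)}$, because the $Y_i$-terms reproduce $\hat F$. So the equality you write for $\EOf{F(\samfm)-F(\popfm)}$ alone is only an inequality.

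\textbf{The constant count.} After symmetrization, \eqref{eq:2fmcatk:qi} bounds the $i$-th summand by $C_\Delta\,\EOf{\ol{Y_i}{Y_i\pr}\,\ol{\samfm}{\samfm^{(i)}}}$. The $\tfrac12$ of \eqref{eq:2fmcatk:qi} is used up here: it is what turns $X\leq -X+2C_\Delta(\cdot)$ into $X\leq C_\Delta(\cdot)$. Then \eqref{eq:2fmcatk:CP} gives $C_\Delta^3w_i^2\EOf{\ol{Y_i}{Y_i\pr}^2}=2C_\Delta^3w_i^2\VmOf{Y_i}$. The single variance inequality contributes one more $C_\Delta$, so the final constant is $2C_\Delta^4$. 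There is no second $\tfrac12$ left to cancel the $\tfrac12$ in $\Vm$, so your ``bookkeeping'' step counts the same $\tfrac12$ twice.

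\textbf{The loss is real.} In a Hilbert space both excess risks equal $\normof{\samfm-\popfm}^2$. The leave-one-out bound is attained there, with value $2\sum_iw_i^2\VmOf{Y_i}$. Dropping one excess risk therefore genuinely costs a factor $2$.

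\textbf{The fix.} Do not discard the empirical excess risk; bound it from below as well. Since $\samdis\in\mc P(\setC)$ and $\popfm\in\setC$, \eqref{eq:2fmcatk:vi} for $\samdis$ at $q=\popfm$ gives $\ol{\popfm}{\samfm}^2\leq C_\Delta\br{\hat F(\popfm)-\hat F(\samfm)}$. Adding the two variance inequalities puts $2C_\Delta^{-1}\EOf{\ol{\popfm}{\samfm}^2}$ on the left. The rest of your chain then yields exactly $C_\Delta^4\sum_iw_i^2\VmOf{Y_i}$.

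This is the paper's argument: \cref{lmm:loo} with $C_{\mathrm{QI}}=2C_\Delta$, then \eqref{eq:2fmcatk:vi} on both excess risks, then \eqref{eq:2fmcatk:CP}. It is also what the Hadamard proof you meant to follow does (``the two variance inequalities'').

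\textbf{A minor point.} Bounded support gives integrability and the interchange of sums and expectations, but not measurability of $\samfm$ for countably many observations. Measurability follows from \eqref{eq:2fmcatk:CP}, which makes $\samfm$ and $\samfm^{(i)}$ continuous functions of the observation sequence in the product topology.
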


The right-hand side is that of \cref{thm:two:infty:independent} multiplied by $C_\Delta^4$.
This factor is exactly the product of the constants in \eqref{eq:2fmcatk:vi}, \eqref{eq:2fmcatk:qi}, and \eqref{eq:2fmcatk:CP}, which are used successively in the proof of \cref{thm:pos:infty:independent}.
Using the constants \eqref{eq:pos:vi:best} and \eqref{eq:pos:cp:best} of \cref{rem:pos:sharper} gives the sharper bound
\begin{equation}\label{eq:pos:infty:refined}
    \EOf{\ol{\popfm}{\samfm}^2}
    \leq C_\delta A_{\Delta,\delta}B_{\Delta,\delta}\sum_{i=1}^{\infty}w_i^2\,\VmOf{Y_i}.
\end{equation}
The optimal one-parameter variance and contraction constants give the multipliers
$C_\Delta A_\Delta^{\mathrm{rad}}B_\Delta^{\mathrm{rad}}$ and, if $\delta<\pi/2$,
$C_\delta A_\delta^{\mathrm{diam}}B_\delta^{\mathrm{diam}}$.

Neither identical distributions nor equal Fr\'echet means of the $\mu_i$ are assumed.
Items (ii)--(vi) of \cref{rem:weights} apply verbatim, with each bound multiplied by $C_\Delta^4$; in particular, iid data with $w_i = \frac1n\ind_{\{i\leq n\}}$ give $\Eof{\ol{\popfm}{\samnfm}^2}\leq\frac{C_\Delta^4}{n}\Vmof{Y_1}$.

\begin{remark}[Strong convexity and the variance inequality in the literature]\label{rem:pos:literature}
    A key tool for obtaining \cref{thm:pos:infty:independent} is the variance inequality \eqref{eq:2fmcatk:vi} (see also \cref{thm:pos:hemisphere:vi} in the appendix) in \catk{} spaces. This article improves the previously known inequalities of this kind.

    To compare them, work in the notation of \cref{ssec:pos:prelim}, let $\mu\in\mc P(\setC)$, $Y\sim\mu$ and $m:=m_2(\mu)$, and write
    \begin{equation*}
        \mathcal E_\mu(q):=\EOf{\ol Yq^2-\ol Ym^2}
        \eqcm\qquad q\in\setC
        \eqcm
    \end{equation*}
    for the excess of the objective over its minimum. In this notation the variance inequality with constant $A$ is 
    \begin{equation}\label{eq:pos:literature:vi}
        \ol mq^2\leq A\,\mathcal E_\mu(q)
        \eqfs
    \end{equation}

    A function $f\colon\setC\to\R$ is \emph{$k$-strongly convex}, $k\in\Rp$, if
    \begin{equation}\label{eq:pos:strongconvexity}
        f(\gamma_t)\leq(1-t)f(\gamma_0)+tf(\gamma_1)-\frac k2\,t(1-t)\,\ol{\gamma_0}{\gamma_1}^2
    \end{equation}
    for every geodesic $\gamma\colon[0,1]\to\setC$ and every $t\in[0,1]$; for $k=0$ this is convexity. If the objective $q\mapsto\int\ol yq^2\,\dl\mu(y)$ is $k$-strongly convex with $k>0$, then applying \eqref{eq:pos:strongconvexity} along the geodesic from $m$ to $q$, using $f(\gamma_t)\geq f(m)$, and letting $t\searrow0$ gives $\mathcal E_\mu(q)\geq\frac k2\ol mq^2$, which is \eqref{eq:pos:literature:vi} with $A=2/k$.

    The following table compares different results in the literature; here $\cot=\cos/\sin$.
    \begin{center}
        \small
        \begin{tabular}{@{}lll@{}}
            \toprule
            source & hypothesis & statement\\
            \midrule
            \cite[Theorem 1.2]{Karcher1977CenterOfMass-CPAM} & Riemannian, $\Delta<\pi/2$ & $q\mapsto\int\ol yq^2\,\dl\mu(y)$ is $2\Delta\cot(\Delta)$-strongly convex\\
            \cite[Proposition 3.1(i)]{Ohta2007Convexities-GD} & $\delta<\pi/2$ & $q\mapsto\ol yq^2$ is $2\delta\cot(\delta)$-strongly convex, each $y$\\
            \cite[Corollary 16]{Yokota2017PBarycenter-TJM} & $\delta<\pi/2$ & $\mathcal E_\mu(q)\geq\delta\cot(\delta)\,\ol mq^2$\\
            \cite[Proposition 3]{BrunelSerres2026FiniteSample-arxiv} & $\Delta<\pi/2$ & $\mathcal E_\mu(q)\geq\Delta\cot(\Delta)\,\ol mq^2$\\
            \cite[Proposition 19]{Yokota2016Barycenter-JMSJ} & $\Delta<\pi$ & $\mathcal E_\mu(q)\geq c\,\ol mq^{a}$ for some $a>2$, $c>0$\\
            \cite[Theorem 1.1]{Gietl2026VarianceInequalities-AGMS} & $\Delta<\pi$ & $\ol mo^2\leq\mathcal E_\mu(o)+(1-\tfrac\Delta2\cot(\tfrac\Delta2))\VfofP\mu$\\
            \Cref{thm:pos:hemisphere:vi} & $\Delta<\pi$ & $\mathcal E_\mu(q)\geq\min\{A^{\mathrm{rad}}_\Delta,A^{\mathrm{diam}}_\delta\}^{-1}\ol mq^2$\\
            \bottomrule
        \end{tabular}
    \end{center}
    Each statement is asserted for every $\mu\in\mc P(\setC)$ and every $q\in\setC$, with one exception: Gietl's bound is asserted only at the center $o$ of a ball containing $\setC$. In the displayed Gietl bound, $o$ is a point with $\setC\subset\overline B(o,\varrho)$, if one exists, and $\mathcal E_\mu(o)$ is given by the same formula; the coefficient follows by applying the cited theorem to balls about $o$ with radii decreasing to $\varrho$. In \cite[Corollary 16]{Yokota2017PBarycenter-TJM} the modulus is left unspecified; the explicit constant in that row uses Ohta's modulus.
    By the argument above, the strong convexity of Karcher and of Ohta yields \eqref{eq:pos:literature:vi} with $A=\tan(\Delta)/\Delta$ and $A=\tan(\delta)/\delta$, respectively, which are also the constants of the variance inequalities \cite[Proposition 3]{BrunelSerres2026FiniteSample-arxiv} and \cite[Corollary 16]{Yokota2017PBarycenter-TJM}, respectively.
    The inequalities of Yokota \cite[Proposition 19]{Yokota2016Barycenter-JMSJ} and Gietl \cite[Theorem 1.1]{Gietl2026VarianceInequalities-AGMS} are of a different form. In the former, the growth is of order $\ol mq^{a}$ with an exponent $a>2$ and a constant depending only on $\kappa$ and the radius, neither made explicit, so it is weaker than quadratic near $m$.
    In the latter, the constant multiplies the variance rather than the excess, and only the center of the ball is admitted as a base point; its Section~1 notes that an arbitrary base point would require a diameter condition.
    Among the earlier results listed, only these two reach past $\delta=\pi/2$.
    \Cref{thm:pos:hemisphere:vi} is a variance inequality on the maximal range of radii (\cref{prp:pos:diameterobstruction}\,\ref{itm:pos:obstruction:boundary} in the appendix) with optimal constants, each optimal among constants depending on $\Delta$ alone, respectively on $\delta$ alone.
\end{remark}

\begin{remark}[Comparison with \cite{BrunelSerres2026FiniteSample-arxiv}]\label{rem:pos:BrunelSerres2026}
    Theorems 3 and 4 of \cite{BrunelSerres2026FiniteSample-arxiv} bound the risk of the empirical $2$-Fr\'echet mean of $n$ observations in $\setC$ by
    \begin{equation}\label{eq:pos:bs}
        \EOf{\ol{\popfm}{\samnfm}^2}\leq\frac{A_{\mathsf{BS}}}{n}\,\VfOf{Y_1}
        \qquad\text{and}\qquad
        \EOf{\ol{\popfm}{\samnfm}^2}\leq\frac{\tilde A_{\mathsf{BS}}}{n^2}\sum_{i=1}^n\VfOf{Y_i}
        \eqcm
    \end{equation}
    the first for iid observations, the second for independent observations with a common $2$-Fr\'echet mean.
    Both hold under $\varrho<\pi/(4\sqrt\kappa)$, half the range assumed in \cref{ssec:pos:prelim}, and their constants are
    \begin{equation*}
        A_{\mathsf{BS}}=\frac{16\pi\tan(\Delta)^2}{\Delta^2}
        \qquad\text{and}\qquad
        \tilde A_{\mathsf{BS}}=\frac{8\sqrt2\,\pi\tan(\Delta)^2}{\Delta^2}
        \eqfs
    \end{equation*}
    The bound \eqref{eq:pos:infty:independent} with $w_i=\frac1n\ind_{\{i\leq n\}}$ and \cref{lmm:pos:variances} in the appendix give
    \begin{equation}\label{eq:pos:openquestion}
        \EOf{\ol{\popfm}{\samnfm}^2}
        \leq\frac{C_\Delta^4}{n^2}\sum_{i=1}^n\VmOf{Y_i}
        \leq\frac{2C_\Delta^4}{n^2}\sum_{i=1}^n\VfOf{Y_i}
        \eqcm
    \end{equation}
    that is, both inequalities of \eqref{eq:pos:bs} with the single constant $2C_\Delta^4$, without the requirement of a common $2$-Fr\'echet mean, on the full range $\varrho<\pi/(2\sqrt\kappa)$, and, by \cref{thm:pos:infty:independent} itself, for weighted and countable samples.
    Already the smaller of the two published constants exceeds ours by a factor of at least $4\sqrt2\,\pi\approx17.8$:
    \begin{equation*}
        \frac{\tilde A_{\mathsf{BS}}}{2C_\Delta^4}
        =\frac{4\sqrt2\,\pi\sin(\Delta)^6}{\Delta^6\cos(\Delta)^2}
        \geq4\sqrt2\,\pi,
        \qquad 0<\Delta<\pi/2.
    \end{equation*}
    The ratio tends to $4\sqrt2\,\pi$ as $\Delta\searrow0$ and increases to infinity as $\Delta\nearrow\pi/2$, since its logarithmic derivative has the sign of $h(\Delta):=\Delta(1+2\cos(\Delta)^2)-3\sin(\Delta)\cos(\Delta)$, and $h(0)=0$, $h'(\Delta)=4\sin(\Delta)(\sin(\Delta)-\Delta\cos(\Delta))>0$.
    The main ingredients that allow for the improved result are the variance inequality \eqref{eq:2fmcatk:vi} of \cref{thm:pos:hemisphere:vi}, the quadruple inequality \eqref{eq:2fmcatk:qi} of \cite{Schoetz2026Trapezoid-arxiv}, and the contraction property \eqref{eq:2fmcatk:CP} of \cref{thm:pos:hemisphere:cp}, both theorems in the appendix.
    Theorem 1 of \cite{BrunelSerres2026FiniteSample-arxiv} makes the $n$-point empirical barycenter map $\frac Ln$-Lipschitz from $(\setC^n,\ell^1)$ to $\setC$ with $L=\pi\tan(\Delta)/(2\Delta)$, which is \eqref{eq:2fmcatk:CP} for pairs of uniform empirical measures on $n$ atoms, whereas \cref{thm:pos:hemisphere:cp} covers all $\mu,\nu\in\mc P(\setC)$ with the optimal constant $B_\Delta^{\mathrm{rad}}$.
    On the full range, Yokota \cite[Proposition 24]{Yokota2016Barycenter-JMSJ} shows that $\mu\mapsto m_2(\mu)$ is Lipschitz continuous with respect to the $p$-Wasserstein distance for some $p>1$, with a constant depending only on $\kappa$ and the radius; neither $p$ nor the constant is made explicit.

    Open Question 2 in \cite[Section 4.2]{BrunelSerres2026FiniteSample-arxiv} asks (among other things) whether the constants in \eqref{eq:pos:bs} must diverge as $\varrho\nearrow\pi/(4\sqrt\kappa)$. Since $2C_\Delta^4\leq\pi^4/8$ on that range, \eqref{eq:pos:openquestion} answers this in the negative. At the larger threshold $\varrho\nearrow\pi/(2\sqrt\kappa)$, however, divergence \emph{is} necessary: \cref{prp:pos:diameterobstruction}\,\ref{itm:pos:obstruction:order} in the appendix gives a family of fixed diameter along which the smallest admissible coefficient is of order $(\pi/2-\varrho\sqrt\kappa)^{-2}$.
\end{remark}

Under bounded support, a bounded-differences argument turns \cref{thm:pos:infty:independent} into a high-probability bound; its proof is in \cref{app:pos}.

\begin{corollary}[Bounded-support concentration]\label{cor:pos:concentration}
 In the setting of \cref{ssec:pos:prelim}, let $(Y_i)_{i\in\N}$ be a sequence of independent random variables with laws in $\mc P(\setC)$, and let $(w_i)_{i\in\N}\subset[0,1]$ be deterministic weights with $\sum_{i=1}^{\infty}w_i=1$.
 Define $\popfm,\samfm$ as in \cref{thm:pos:infty:independent}. For $\eta\in(0,1)$, with probability at least $1-\eta$,
 \begin{equation}\label{eq:pos:concentration}
 d(\samfm,\popfm)\leq C_\Delta^2\br{
 \br{\sum_{i=1}^{\infty} w_i^2\VmOf{Y_i}}^{1/2}
 +\diam(\setC)\br{\frac{\normOf{w}_2^2}{2}\log\frac1\eta}^{1/2}}.
 \end{equation}
\end{corollary}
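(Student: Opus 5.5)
We split $d(\samfm,\popfm)\leq \EOf{d(\samfm,\popfm)}+\bigl(d(\samfm,\popfm)-\EOf{d(\samfm,\popfm)}\bigr)$. The first term is bounded by Jensen's inequality and \cref{thm:pos:infty:independent}: $\EOf{d(\samfm,\popfm)}\leq\bigl(\EOf{d(\samfm,\popfm)^2}\bigr)^{1/2}\leq C_\Delta^2\bigl(\sum_iw_i^2\VmOf{Y_i}\bigr)^{1/2}$. The fluctuation term is handled by McDiarmid's bounded-differences inequality applied to $F(Y_1,Y_2,\dots):=d(m_2(\sum_iw_i\delta_{Y_i}),\popfm)$. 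Here $\popfm$ is deterministic, so it suffices to control how much $F$ moves when one coordinate changes.

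For the bounded differences, suppose $Y_j$ is replaced by $Y_j\pr\in\setC$ and call the resulting empirical measure $\samdis^{(j)}$. The triangle inequality and \eqref{eq:2fmcatk:CP} give
\[
\absof{F-F^{(j)}}\leq d\brOf{m_2(\samdis),m_2(\samdis^{(j)})}\leq C_\Delta^2W_1(\samdis,\samdis^{(j)})\leq C_\Delta^2w_j\,\diam(\setC)\eqcm
\]
using the coupling that fixes all atoms except the $j$-th. So the difference constants are $c_j=C_\Delta^2w_j\diam(\setC)$, with $\sum_jc_j^2=C_\Delta^4\diam(\setC)^2\normof{w}_2^2$.

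McDiarmid then yields, with probability at least $1-\eta$,
\[
F-\EOf{F}\leq\bigl(\tfrac12\textstyle\sum_jc_j^2\log(1/\eta)\bigr)^{1/2}=C_\Delta^2\diam(\setC)\bigl(\tfrac{\normof{w}_2^2}{2}\log\tfrac1\eta\bigr)^{1/2}\eqcm
\]
and adding the two bounds gives \eqref{eq:pos:concentration}.

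The main obstacle is that there may be countably many observations, whereas McDiarmid is stated for finitely many independent variables. The plan is to truncate. For each $N$, condition on $Y_{N+1},Y_{N+2},\dots$, or apply the Doob-martingale (Azuma--Hoeffding) version directly with the filtration $\sigma(Y_1,\dots,Y_N)$. Since $F$ is bounded by $\diam(\setC)$, the martingale $\EOf{F\mid Y_1,\dots,Y_N}$ converges to $F$ almost surely and in $L^1$ by Lévy's upward theorem, once we know $F$ is measurable with respect to $\sigma(Y_i:i\in\N)$. Its increments are bounded by $c_N$, by the same coupling argument applied to conditional expectations. The Azuma–Hoeffding estimate $\PrOf{M_N-M_0\geq t}\leq\exp(-2t^2/\sum_{j\leq N}c_j^2)$ then passes to the limit by Fatou's lemma.

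Measurability of $\samfm$ follows from the $W_1$-continuity of $m_2$ on $\mc P(\setC)$, again by \eqref{eq:2fmcatk:CP}, together with the separability of the supports.
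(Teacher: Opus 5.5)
Your proposal is correct and follows the paper's proof essentially step for step: you bound the mean by Jensen and \cref{thm:pos:infty:independent}, you bound the fluctuation through the Doob martingale $\EOf{d(\samfm,\popfm)\mid Y_1,\dots,Y_N}$ with bounded differences $C_\Delta^2w_i\diam(\setC)$ from \eqref{eq:2fmcatk:CP}, and you handle the countable sample by a limit in $N$ (the paper passes the moment generating function bound to the limit by dominated convergence, while you pass the tail bound by Fatou, which works equally well). The one point to state precisely is that the exponent $2t^2$ requires each increment $M_j-M_{j-1}$ to lie, conditionally on $Y_1,\dots,Y_{j-1}$, in an interval of length at most $c_j$, as in McDiarmid's argument; the bound $\absof{M_j-M_{j-1}}\leq c_j$ alone would only give $\exp(-t^2/(2\sum_jc_j^2))$.
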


Analogously to the comparison of $L^2$ risk bounds in \cref{rem:pos:BrunelSerres2026}, \cref{cor:pos:concentration} strengthens the corresponding concentration bound \cite[Corollary 1]{BrunelSerres2026FiniteSample-arxiv}.

\subsection{Local Constant Fr\'echet Regression}\label{ssec:pos:localfrechet}

We consider the local constant estimator \eqref{eq:localfrechet:weights} of \cref{ssec:localfrechet} for responses in $\setC$.

\begin{corollary}[Local constant Fr\'echet regression under an upper curvature bound]\label{cor:pos:localfrechet}
    In the setting of \cref{ssec:pos:prelim}, let $(\mc X, \rho)$ be a metric space.
    Let $\mu_z\in\mc P(\setC)$, $z\in\mc X$, satisfy the transport smoothness condition \eqref{eq:two:localreg:smooth} of \cref{cor:two:localreg} for some $L>0$ and $\beta\in(0,1]$.
    Set $v := \sup_{z\in\mc X}\VmofP{\mu_z}$.
    Let $Y_i\sim\mu_{x_i}$ be independent, and let the kernel and the estimator be as in \cref{ssec:localfrechet}.
    Let $x\in\mc X$, and assume that the design satisfies the condition \eqref{eq:localfrechet:design} of \cref{cor:two:localreg} for some $D \in \Rpp$ and $\const{dns}\in [1,\infty)$.
    Then
    \begin{equation}\label{eq:pos:localfrechet}
    \br{\EOf{\ol{\samfm_h\!(x)\,}{m_2(\mu_x)}^2}}^{1/2}
    \leq C_\Delta^2\br{L h^\beta+\sqrt{\frac{v\const{Kmi}\const{Kma}\const{dns}}{nh^D}}}
    \eqfs
    \end{equation}
\end{corollary}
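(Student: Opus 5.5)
The plan is a bias--variance split through the triangle inequality and Minkowski's inequality in $L^2$. Fix $x$ and $h$, write $w_i:=w_{i,h}(x)$, and let $\popdis:=\sum_i w_i\mu_{x_i}$ be the population mixture with mean $\popfm:=m_2(\popdis)$; since $\mu_{x_i}\in\mc P(\setC)$ and $\setC$ is convex and closed, $\popdis\in\mc P(\setC)$ and the mean exists uniquely in $\setC$. The design condition \eqref{eq:localfrechet:design} gives $N_n(x,h/2)>0$, so the weights are well defined, nonnegative, sum to one, and vanish for $\rho(x_i,x)>h$. Then
\[
\br{\EOf{\ol{\samfm_h\!(x)}{m_2(\mu_x)}^2}}^{1/2}\leq \ol{\popfm}{m_2(\mu_x)}+\br{\EOf{\ol{\samfm_h\!(x)}{\popfm}^2}}^{1/2},
\]
and I will show that each term is bounded by $C_\Delta^2$ times the corresponding term in \eqref{eq:pos:localfrechet}.

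\emph{Bias.} I apply \eqref{eq:2fmcatk:CP} to $\popdis$ and $\mu_x$, both in $\mc P(\setC)$, to get $\ol{\popfm}{m_2(\mu_x)}\leq C_\Delta^2W_1(\popdis,\mu_x)$. Coupling each component $\mu_{x_i}$ optimally with $\mu_x$ and mixing with weights $w_i$ gives a coupling of the mixtures, so $W_1$ is convex in the first argument: $W_1(\popdis,\mu_x)\leq\sum_iw_iW_1(\mu_{x_i},\mu_x)$. By \eqref{eq:two:localreg:smooth} and $\rho(x_i,x)\leq h$ on the support of the weights, this is at most $Lh^\beta$. If exact optimal couplings are not available, $\epsilon$-optimal ones suffice.

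\emph{Variance.} I apply \cref{thm:pos:infty:independent} with weights $w_i$ (extended by zero for $i>n$) and $Y_i\sim\mu_{x_i}$. This gives $\EOf{\ol{\samfm_h(x)}{\popfm}^2}\leq C_\Delta^4\,v\sum_iw_i^2\leq C_\Delta^4\,v\max_iw_i$. The kernel bounds \eqref{eq:localfrechet:kernel} give $K\leq\const{Kma}$ in the numerator and a denominator of at least $\const{Kmi}^{-1}N_n(x,h/2)$. Hence $\max_iw_i\leq\const{Kmi}\const{Kma}/N_n(x,h/2)\leq\const{Kmi}\const{Kma}\const{dns}/(nh^D)$ by \eqref{eq:localfrechet:design}. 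Taking square roots yields the term $C_\Delta^2\sqrt{v\const{Kmi}\const{Kma}\const{dns}/(nh^D)}$.

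No step is a real obstacle, because the heavy lifting sits in \eqref{eq:2fmcatk:CP} and \cref{thm:pos:infty:independent}. The only point needing care is to check that every measure involved ($\popdis$, $\mu_x$, and the empirical mixture) lies in $\mc P(\setC)$, so that both results apply with the same constant $C_\Delta$. The bias term carries $C_\Delta^2$ from \eqref{eq:2fmcatk:CP}, and the variance term carries $C_\Delta^2$ from the square root of $C_\Delta^4$. Since both carry the same factor, the bound factors as stated; the sharper constants of \cref{rem:pos:sharper} could be substituted separately in each term if desired.
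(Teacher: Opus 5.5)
Your proposal is correct and follows the paper's proof essentially step for step. The bias is bounded via \eqref{eq:2fmcatk:CP} and the componentwise mixture coupling, and the stochastic term via \cref{thm:pos:infty:independent} together with $\normof{w}_2^2\leq\max_iw_i\leq\const{Kmi}\const{Kma}\const{dns}/(nh^D)$, exactly as in the proof of \cref{cor:two:localreg}; the two are then combined by the triangle inequality in $L^2$.
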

The hypothesis $v<\infty$ of \cref{cor:two:localreg} is automatic here, as $v\leq\diam(\setC)^2/2$.
The right-hand side of \eqref{eq:pos:localfrechet} is $C_\Delta^2$ times that of \cref{cor:two:localreg}. Hence, for $v>0$, it is minimized over $h>0$ by the bandwidth $h_*$ of \eqref{eq:localfrechet:bandwidth}, and if the design condition holds at $h=h_*$, then \eqref{eq:localfrechet:optimal} holds with its right-hand side multiplied by $C_\Delta^4$. The nonparametric rate $n^{-2\beta/(2\beta+D)}$ is unchanged; only the constant carries the curvature factor.

Compared with \cite[Theorem 1]{Schoetz2022Regression-EJS} on the bounded response space $(\setC,d)$, beyond the points made in \cref{ssec:localfrechet}, the variance inequality assumed there (with constant $C_{\mathrm{Vlo}}$) holds automatically with $C_{\mathrm{Vlo}}=C_\Delta$ by \eqref{eq:2fmcatk:vi}, and the stochastic term retains the noise variance $v$ in place of the factor $\diam(\setC)^2$ appearing there.

\subsection{Dirichlet-Process Fr\'echet Means}\label{ssec:pos:dirichlet}

Let $a>0$ and $\mu\in\mc P(\setC)$, and let $P\sim\operatorname{DP}(a,\mu)$ and the observations $X_1,\ldots,X_n$ be as in \cref{ssec:dirichlet}.
The atoms of the stick-breaking representation \eqref{eq:dirichlet:stickbreaking} are iid with law $\mu$, so $P\in\mc P(\setC)$ almost surely, and the posterior base measure $\tilde\mu_n$ of \eqref{eq:dirichlet:posteriorbase} lies in $\mc P(\setC)$ as well.
Replacing \cref{thm:two:infty:independent} by \cref{thm:pos:infty:independent} and \eqref{eq:cat0:2fm:CP} by \eqref{eq:2fmcatk:CP}, the arguments of \cref{ssec:dirichlet} go through verbatim: the right-hand sides of the risk inequalities in \eqref{eq:dirichlet:risk}, \eqref{eq:dirichlet:posterior}, and \eqref{eq:dirichlet:bayesrisk} are multiplied by $C_\Delta^4$, those of \eqref{eq:dirichlet:shrinkage} by $C_\Delta^2$, and the radius of the credible ball by $C_\Delta^2$.
In particular,
\begin{equation*}
 \begin{aligned}
 \EOf{\ol{m_2(P)}{m_2(\tilde\mu_n)}^2\,\middle|\, X_1,\ldots,X_n}
 &\leq C_\Delta^4\,\frac{\VmOfP{\tilde\mu_n}}{a+n+1}
 \eqcm\\
 \EOf{\ol{m_2(P)}{m_2(\tilde\mu_n)}^2}
 &\leq C_\Delta^4\,\frac{a}{(a+1)(a+n)}\,\VmOfP{\mu}
 \eqfs
 \end{aligned}
\end{equation*}
The computation of $\Eof{\VmofP{\tilde\mu_n}}$ behind the second bound uses only the prior-predictive law of the observations and the stick-breaking representation, and is therefore valid in any metric space.

\section{Power Fr\'echet Means in Hadamard Spaces}\label{sec:main}

We return to Hadamard spaces and consider power Fr\'echet means of order $\alpha\in(1,2]$.
Throughout, put $\phi:=(2-\alpha)/(\alpha-1)$ and use the convention $s^0:=1$ also at $s=0$.
As in \cref{sec:two,sec:pos}, we first state the geometric tools, then the weighted-sample bound, and finally the applications to local constant regression and Dirichlet-process means.
At $\alpha=2$, the bounds reduce to those of \cref{sec:two} up to constants.

\subsection{Preliminaries}\label{ssec:power:prelim}

Let $(\mc Q,d)$ be a Hadamard space and $\alpha\in(1,2]$.
By \cref{prp:prelim:qi} with $\kappa=0$, we have, for all $y,z,q,p\in\mc Q$, the \emph{quadruple inequality}
\begin{equation}\tag{QI$_\alpha$}\label{eq:power:qi}
    \abs{\ol yq^\alpha-\ol yp^\alpha-\ol zq^\alpha+\ol zp^\alpha}
    \leq
    2^{2-\alpha}\alpha\,\ol qp\,\ol yz\,\diam\cb{y,z,q,p}^{\alpha-2}
    \eqfs
\end{equation}
Let $\mu\in\mc P^{\alpha-1}(\mc Q)$ and $Y\sim\mu$.
The $\alpha$-Fr\'echet mean of $\mu$,
\begin{equation}\label{eq:power:fmdef}
    m := m_\alpha(\mu) = m_\alpha[Y] = \argmin_{q\in\mc Q}\EOf{\ol Yq^\alpha-\ol Yo^\alpha}
    \eqcm
\end{equation}
exists by \cite[Proposition 5.2(ii)]{Schoetz2025VarianceInequalities-EJP} and is unique by \cite[Corollary 5.8]{Schoetz2025VarianceInequalities-EJP}.
For $\mu,\nu\in\mc P^{\alpha-1}(\mc Q)$, define the \emph{location-aware $(\alpha-1)$-transport cost}
\begin{equation}\label{eq:power:Wtilde}
    \widetilde W_{\alpha-1}(\mu,\nu)
    :=\inf_{\pi\in\Pi(\mu,\nu)}\int
    \ol yz\,\diam\cb{y,z,m_\alpha(\mu),m_\alpha(\nu)}^{\alpha-2}\,\dl\pi(y,z)
    \eqcm
\end{equation}
where the integrand is read as $0$ whenever $\ol yz=0$.
We have $\widetilde W_{\alpha-1}\leq W_{\alpha-1}$ with $\widetilde W_1=W_1$.
The cost $\widetilde W_{\alpha-1}$ is symmetric and vanishes exactly when $\mu=\nu$, but for $\alpha<2$ it is not a metric since the triangle inequality may fail (\cref{lmm:power:Wtilde} in the appendix).

For $\mu\in\mc P^{\alpha-1}(\mc Q)$ and $t\in\Rp$, define the \emph{regularized inverse moment}
\begin{equation}\label{eq:power:Lambda}
    \LamOf{\mu}{t}:=\int\br{\ol y{m_\alpha(\mu)}+t}^{\alpha-2}\,\dl\mu(y)\in(0,\infty]
    \eqfs
\end{equation}
It is nonincreasing in $t$ and finite for every $t>0$, whereas $\Lamof{\mu}{0}$ may be infinite; the products $t\Lamof{\mu}{t}$ and $t^2\Lamof{\mu}{t}$ are read as zero at $t=0$.
At $\alpha=2$, $\Lamof{\mu}{t}=1$ for all $t$.
For $\alpha<2$, $t^2\Lamof{\mu}{t}\leq t^\alpha$ for all $t$, $t^{2-\alpha}\Lamof{\mu}{t}\to1$ as $t\to\infty$, and $\Lamof{\mu}{t}\nearrow\Lamof{\mu}{0}$ as $t\searrow0$.

The mean $m$ satisfies the \emph{variance inequality} \cite[Theorem 5.4]{Schoetz2025VarianceInequalities-EJP} (see also \cref{lmm:power:vi} in the appendix): for all $q\in\mc Q$,
\begin{equation}\tag{VI$_\alpha$}\label{eq:power:vi}
    \frac{\alpha(\alpha-1)}{2}\,\ol mq^2\,\LamOf{\mu}{\ol mq}
    \leq
    \EOf{\ol Yq^\alpha-\ol Ym^\alpha}
    \eqfs
\end{equation}
Moreover, any $\mu,\nu\in\mc P^{\alpha-1}(\mc Q)$ satisfy the \emph{contraction property} (\cref{lmm:power:cp} in the appendix)
\begin{equation}\tag{CP$_\alpha$}\label{eq:power:cp}
    \ol{m_\alpha(\mu)}{m_\alpha(\nu)}
    \br{\LamOf{\mu}{\ol{m_\alpha(\mu)}{m_\alpha(\nu)}}+\LamOf{\nu}{\ol{m_\alpha(\mu)}{m_\alpha(\nu)}}}
    \leq
    \frac{2^{3-\alpha}}{\alpha-1}\,\widetilde W_{\alpha-1}(\mu,\nu)
    \eqfs
\end{equation}
At $\alpha=2$, \eqref{eq:power:qi}, \eqref{eq:power:vi} and \eqref{eq:power:cp} reduce to \eqref{eq:2fmcat0:qi}, \eqref{eq:cat0:2fm:vi} and \eqref{eq:cat0:2fm:CP}, respectively.
The constant $2^{3-\alpha}/(\alpha-1)$ in \CPa cannot be improved (\cref{rem:power:cp:sharp} in the appendix).

As in the previous sections, the main tools are \eqref{eq:power:qi}, \eqref{eq:power:vi}, and \eqref{eq:power:cp}, but their more complicated forms make the proof substantially harder.
In particular, for $\alpha<2$, the moduli in \VIa and \CPa depend on the distribution(s).
Applied to an empirical distribution, these moduli are random and depend on the observations being resampled in the stability argument.

\subsection{The Weighted-Sample Bound}\label{ssec:main:bound}

Let $(w_i)_{i\in\N} \subset [0,1]$ such that $\sum_{i=1}^\infty w_i = 1$.
Let $(Y_i)_{i\in\N}$ be a sequence of independent random variables with distributions $\mu_1, \mu_2, \dots \in \mc P(\mc Q)$.
Define
\begin{equation*}
	\popdis := \sum_{i=1}^\infty w_i\, \mu_i
	\eqcm\qquad
	\samdis := \sum_{i=1}^\infty w_i\, \delta_{Y_i}
	\eqcm
\end{equation*}
assume $\popdis\in\mc P^{\alpha-1}(\mc Q)$, and set $\popfm:=m_\alpha(\popdis)$ and $\samfm:=m_\alpha(\samdis)$. By Tonelli's theorem, $\samdis\in\mc P^{\alpha-1}(\mc Q)$ almost surely, so both means exist.
For $s \in \Rpp$, write
\begin{equation}\label{eq:power:sigma}
	\sigma_s := \sigma_\alpha^s(\popdis) = \sum_{i=1}^\infty w_i\, \EOf{\ol{Y_i}{\popfm}^{s}}
\end{equation}
as shorthand for the $s$-th moment of $\popdis$ about $\popfm$, and define the \emph{weighted moments}
\begin{equation}\label{eq:power:sigmaw}
	\Sigma_s := \sum_{i=1}^\infty w_i^{\frac{s}{\alpha-1}}\, \VmpOf{s}{Y_i}
	\eqfs
\end{equation}
The metric $s$-variance $\Vmpof{s}{Y_i}$ is defined in \eqref{eq:variances}.
Summands corresponding to indices $i$ with $w_i = 0$ are always read as $0$.

Only two weighted moments appear in the main result,
\begin{equation*}
	\Sigma_{2\alpha-2} = \sum_{i=1}^\infty w_i^2\, \VmpOf{2\alpha-2}{Y_i}
	\eqcm\qquad
	\Sigma_{\alpha} = \sum_{i=1}^\infty w_i^{2+\phi}\, \VmpOf{\alpha}{Y_i}
	\eqcm
\end{equation*}
where $\phi=(2-\alpha)/(\alpha-1)$.
Other weighted moments are used only in the proofs.
\Needspace{10\baselineskip}
\begin{theorem}[Weighted-sample bound for the sample $\alpha$-Fr\'echet mean]\label{thm:power:main}
	In the setting above, there is a constant $c_\alpha$ depending only on $\alpha$ such that
	\begin{equation}\label{eq:power:rate:sharp}
		\EOf{\ol{\popfm}{\samfm}^\alpha}^{\frac1\alpha}
		\leq
		c_\alpha\br{
			\sigma_{\alpha-1}^{\phi}\, \Sigma_{2\alpha-2}^{\frac12}
			+
			\sigma_{\alpha-1}^{\frac\phi2}\, \Sigma_{\alpha}^{\frac12}
			+
			\Sigma_\alpha^{\frac1\alpha}
		}
		\eqfs
	\end{equation}
\end{theorem}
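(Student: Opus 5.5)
The plan is to run the leave-one-out stability argument of \cref{thm:two:infty:independent}, now with the tools \QIa, \VIa and \CPa, and to control the random, distribution-dependent moduli $\Lambda_\alpha$ separately.

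\textbf{Step 1: variance inequality and quadruple decomposition.} Write $\delta:=\ol{\popfm}{\samfm}$. Applying \VIa to $\popdis$ at $q=\samfm$ gives
\[
\tfrac{\alpha(\alpha-1)}2\,\delta^2\LamOf{\popdis}{\delta}\leq F(\samfm)-F(\popfm),
\]
where $F$ is the population objective. Since $\samfm$ minimizes the empirical objective $\hat F$, the right-hand side is at most $(F-\hat F)(\samfm)-(F-\hat F)(\popfm)$. This difference equals $\sum_iw_i\,(\E_{Y_i'}-\mathrm{id})[\ol{Y_i}{\samfm}^\alpha-\ol{Y_i}{\popfm}^\alpha]$.

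For each $i$, introduce an independent copy $Y_i'$ and the leave-one-out mean $\samfm^{(i)}$ of the empirical measure with $Y_i$ replaced by $Y_i'$. Then $\samfm^{(i)}$ has the same law as $\samfm$, and $\samfm^{(i)}$ is independent of $Y_i$ jointly with $Y_i'$ exchangeable. Replacing $\samfm$ by $\samfm^{(i)}$ in the $i$th summand kills its expectation. By \QIa, the remaining error is bounded by
\[
2\cdot2^{2-\alpha}\alpha\sum_iw_i\,\E\big[\ol{\samfm}{\samfm^{(i)}}\,\ol{Y_i}{Y_i'}\,\diam\{Y_i,Y_i',\samfm,\samfm^{(i)}\}^{\alpha-2}\big].
\]

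\textbf{Step 2: stability via \CPa.} Use the coupling that moves only the atom $Y_i$ to $Y_i'$. This gives
\[
\widetilde W_{\alpha-1}(\samdis,\samdis^{(i)})\leq w_i\,\ol{Y_i}{Y_i'}\,\diam\{\dots\}^{\alpha-2}\leq w_i\,\ol{Y_i}{Y_i'}^{\alpha-1}.
\]
Hence, writing $\epsilon_i:=\ol{\samfm}{\samfm^{(i)}}$, we get $\epsilon_i\,\LamOf{\samdis}{\epsilon_i}\lesssim w_i\,\ol{Y_i}{Y_i'}^{\alpha-1}$. Since $t^{2-\alpha}\Lambda\to1$, there is a trivial regime giving $\epsilon_i^{\alpha-1}\lesssim w_i\ol{Y_i}{Y_i'}^{\alpha-1}$, which after insertion yields the $\Sigma_\alpha$-type terms with weight $w_i^{1/(\alpha-1)}$, i.e.\ $w_i^{2+\phi}\Vmp{\alpha}$. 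Otherwise $\epsilon_i\lesssim w_i\ol{Y_i}{Y_i'}^{\alpha-1}/\Lambda_\alpha(\samdis;\epsilon_i)$. Inserting this into Step 1, and using $\diam^{\alpha-2}\leq\ol{Y_i}{Y_i'}^{\alpha-2}$ so that the product becomes $\ol{Y_i}{Y_i'}^{2\alpha-2}$, produces the $w_i^2\Vmp{2\alpha-2}$ terms, i.e.\ $\Sigma_{2\alpha-2}$, divided by an empirical $\Lambda$.

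\textbf{Step 3: lower bound on $\Lambda$ (main obstacle).} The remaining task is to lower-bound the empirical modulus $\LamOf{\samdis}{t}$ and relate it to $\LamOf{\popdis}{\delta}$. By Jensen applied to the convex map $x\mapsto x^{\alpha-2}$,
\[
\LamOf{\samdis}{t}\geq\Big(\textstyle\sum_jw_j\ol{Y_j}{\samfm}+t\Big)^{\alpha-2},
\]
and similarly for $\popdis$ with $\sigma_1$. To avoid first moments and use only $\sigma_{\alpha-1}$, I would instead apply Markov's inequality to restrict to a set of mass at least $1/2$ where $\ol y m^{\alpha-1}\leq2\sigma_{\alpha-1}$. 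This gives
\[
\Lambda_\alpha\gtrsim(\sigma_{\alpha-1}^{1/(\alpha-1)}+t)^{\alpha-2}.
\]
The empirical quantity $\sum_jw_j\ol{Y_j}{\samfm}^{\alpha-1}$ is then compared with $\sigma_{\alpha-1}$ up to $\delta^{\alpha-1}$ and a weighted fluctuation controlled by $\Sigma_{\alpha-1}\leq\Sigma_{2\alpha-2}^{1/2}$-type terms (again via leave-one-out). This comparison, together with the cases $\delta\lessgtr\sigma_{\alpha-1}^{1/(\alpha-1)}$, is where I expect the work.

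\textbf{Step 4: conclusion.} In the case $\delta\leq\sigma_{\alpha-1}^{1/(\alpha-1)}$, the left side is of order $\delta^2\sigma_{\alpha-1}^{-\phi}$. Combined with Cauchy--Schwarz on the right side, which has the form $\E[\delta\cdot\ldots]$ after using $\epsilon_i\leq$ stability, this yields $\delta\lesssim\sigma_{\alpha-1}^{\phi}\Sigma_{2\alpha-2}^{1/2}+\sigma_{\alpha-1}^{\phi/2}\Sigma_\alpha^{1/2}$. In the complementary case, $\Lambda\gtrsim\delta^{\alpha-2}$, the inequality reads $\delta^\alpha\lesssim$ right side, and Hölder yields the term $\Sigma_\alpha^{1/\alpha}$. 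Taking $L^\alpha$ norms and absorbing cross terms via Young's inequality completes the proof.
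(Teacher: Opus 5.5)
Your skeleton matches the paper's: leave-one-out with \QIa, stability from \CPa via the one-atom coupling, \VIa for the population objective, and a two-regime conversion to $L^\alpha$. But the step you postpone in Step~3 is the actual content of the proof, and the route you sketch for it does not go through.

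After Step~2 the critical quantity is $\sum_i w_i^2\,\E[\ol{Y_i}{Y_i'}^{2\alpha-2}(\sigma_\alpha^{\alpha-1}(\samdis)+\epsilon_i^{\alpha-1})^{\phi}]$. Here the random modulus depends on $Y_i$ itself. Comparing $\sigma_\alpha^{\alpha-1}(\samdis)$ with $\sigma_{\alpha-1}$ ``up to $\delta^{\alpha-1}$'' leaves the unknowns $\delta$ and $\epsilon_i$ inside this expectation, correlated with $\ol{Y_i}{Y_i'}$. The paper instead uses the Dirac case of \CPa, \eqref{eq:power:rb}. It bounds $\delta^{\alpha-1}$, $\ol{\samfm^{(i)}}{\popfm}^{\alpha-1}$, $\sigma_\alpha^{\alpha-1}(\samdis)$ and $\epsilon_i^{\alpha-1}$ by $c_\alpha$ times the empirical moments $\hat\sigma_{\alpha-1},\hat\sigma_{\alpha-1}^{i}$ about the fixed point $\popfm$, and then symmetrizes to a single factor $\hat\sigma_{\alpha-1}^{\phi}$.

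Your proposed control of the fluctuation by ``$\Sigma_{\alpha-1}\leq\Sigma_{2\alpha-2}^{1/2}$'' is false: for uniform weights the left side is of order $1$ and the right side of order $n^{-1/2}$. What is true is that $\hat\sigma_{\alpha-1}-\sigma_{\alpha-1}$ has $L^2$ norm at most $\Sigma_{2\alpha-2}^{1/2}$. Even this cannot be inserted directly, because of the correlation with $\ol{Y_i}{Y_i'}^{2\alpha-2}$. Cauchy--Schwarz would require moments of order $4\alpha-4$, which exceed $\alpha$ when $\alpha>4/3$.

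The missing idea is the split $\hat\sigma_{\alpha-1}^{\phi}\leq c_\alpha\bigl(\sigma_{\alpha-1}^{\phi}+(V^{-i})^{\phi}+w_i^{\phi}\absof{X_i}^{\phi}\bigr)$, with $X_i:=\ol{Y_i}{\popfm}^{\alpha-1}-\E[\ol{Y_i}{\popfm}^{\alpha-1}]$. Here $V^{-i}$ is the centered empirical moment with the $i$-th summand removed, hence independent of $(Y_i,Y_i')$.
\begin{itemize}
\item The $V^{-i}$ contribution factorizes into $\E[\ol{Y_i}{Y_i'}^{2\alpha-2}]\,\E[(V^{-i})^{\phi}]$.
\item The diagonal term is handled by H\"older with exponents $\alpha/(2\alpha-2)$ and $\alpha/(2-\alpha)$, and yields $\Sigma_\alpha$.
\item Since $\phi>2$ for $\alpha<4/3$, bounding $\E[(V^{-i})^{\phi}]$ needs Rosenthal's inequality, which produces $\Sigma_{2-\alpha}$.
\item The resulting products $\Sigma_{2\alpha-2}^{1+\phi/2}$ and $\Sigma_{2\alpha-2}\Sigma_{2-\alpha}$ are absorbed into $\max\{\Sigma_{\alpha-1}^{\phi}\Sigma_{2\alpha-2},\Sigma_\alpha\}$. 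This works only through log-convexity of $s\mapsto\Sigma_s$ (\cref{lmm:power:interpol,lmm:power:absorb}) together with $\Sigma_{\alpha-1}\leq\sigma_{\alpha-1}$.
\end{itemize}

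The same interpolation, $\Sigma_{2\alpha-2}\leq\Sigma_{\alpha-1}^{2-\alpha}\Sigma_\alpha^{\alpha-1}$, is what removes the stray term $(\sigma_{\alpha-1}^{\phi}\Sigma_{2\alpha-2})^{1/\alpha}$ that your Step~4 produces. Young's inequality applied to the three target terms alone does not do this.

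Finally, the interchanges of sums and expectations in Steps~1--2 need $\popdis\in\mc P^\alpha(\mc Q)$, whereas the theorem assumes only an $(\alpha-1)$-moment. A coordinate truncation, as in Step~7 of the proof of \cref{prp:power:excess}, is therefore also required.
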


The theorem is proved in \cref{app:power} with a constant $c_\alpha$ that is bounded on $[1+\epsilon,2]$ for every $\epsilon\in(0,1)$; conversely, for $1<\alpha<2$, every admissible constant satisfies $c_\alpha\geq\max\{2^\phi,2^{2-\alpha}/(\alpha-1)\}$ and hence diverges as $\alpha\searrow1$ (\cref{rem:power:calpha} in the appendix).
The assumption $\popdis\in\mc P^{\alpha-1}(\mc Q)$ ensures existence of the means.
The right-hand side of \eqref{eq:power:rate:sharp} is finite if and only if $\Sigma_\alpha<\infty$; in particular, $\popdis\in\mc P^\alpha(\mc Q)$ suffices; for finitely many positive weights it is also necessary, but for infinitely many it is not (\cref{lmm:power:moments}\,(i) in the appendix).
The component distributions need not have a common mean: $\sigma_{\alpha-1}$ measures distances to the common target $\popfm$, whereas $\Sigma_{2\alpha-2}$ and $\Sigma_\alpha$ measure fluctuations within each component.
The exponent $\phi = (2-\alpha)/(\alpha-1)$ of the common-center factor tends to zero as $\alpha\nearrow2$; at $\alpha=2$ that factor disappears.
For each fixed $1<\alpha<2$, however, $\sigma_{\alpha-1}$ cannot be replaced by $\Sigma_{\alpha-1}$ (\cref{lmm:power:moments}\,(ii) in the appendix).
Moreover, none of the three summands is uniformly bounded by an $\alpha$-dependent constant times the sum of the other two (\cref{lmm:power:moments}\,(iii) in the appendix).
At $\alpha=2$ all three summands coincide.

For uniform weights this gives the parametric rate $n^{-1/2}$ in $L^\alpha$ under an $\alpha$-th moment condition.
\begin{corollary}[Parametric $L^\alpha$ rate]\label{cor:power:rate}
	Let $\alpha\in(1,2]$ and $n\in\N$.
	Let $Y_1,\dots,Y_n$ be independent random variables with distributions $\mu_1,\dots,\mu_n\in\mc P^{\alpha-1}(\mc Q)$, and set $\popdis := \frac1n\sum_{i=1}^n\mu_i$, $\popfm := m_\alpha(\popdis)$, $\samndis := \frac1n\sum_{i=1}^n \delta_{Y_i}$, and $\samnfm := m_\alpha(\samndis)$.
	For $s\in\Rpp$, let
	\begin{equation*}
		\sigma_s := \sigma_\alpha^s(\popdis)
        \qquad\text{and}\qquad
        \bar V_s := \frac1n\sum_{i=1}^n \VmpOf{s}{Y_i}
		\eqfs
	\end{equation*}
	Then, with a constant $c_\alpha$ depending only on $\alpha$, and with $\phi = (2-\alpha)/(\alpha-1)$, 
	\begin{equation}\label{eq:power:rate:iid}
		\EOf{\ol{\popfm}{\samnfm}^\alpha}^{\frac1\alpha}
		\leq
		c_\alpha\br{
			\sigma_{\alpha-1}^{\phi}\, \bar V_{2\alpha-2}^{\frac12}\, n^{-\frac12}
			+
			\sigma_{\alpha-1}^{\frac\phi2}\, \bar V_{\alpha}^{\frac12}\, n^{-\frac1{2(\alpha-1)}}
			+
			\bar V_\alpha^{\frac1\alpha}\, n^{-\frac1{\alpha(\alpha-1)}}
		}
		\eqfs
	\end{equation}
\end{corollary}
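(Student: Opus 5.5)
This corollary is a direct specialization of \cref{thm:power:main}. Take the weights $w_i=\frac1n\ind_{\{i\leq n\}}$ and, for $i>n$, set $Y_i$ to be deterministic with some arbitrary law, say $\delta_o$. These summands carry zero weight and do not enter $\popdis$, $\samdis$, or any $\Sigma_s$.

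The first step is to check the hypotheses. With these weights, $\popdis=\frac1n\sum_{i\le n}\mu_i$ is a finite mixture of laws in $\mc P^{\alpha-1}(\mc Q)$, so $\popdis\in\mc P^{\alpha-1}(\mc Q)$. Hence $\popfm$ and $\samnfm$ coincide with the $\popfm$ and $\samfm$ of \cref{thm:power:main}. Moreover, $\sigma_{\alpha-1}$ in the corollary equals the $\sigma_{\alpha-1}$ of \eqref{eq:power:sigma}.

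The second step is to evaluate the weighted moments \eqref{eq:power:sigmaw}:
\[
\Sigma_s=\sum_{i=1}^n n^{-\frac{s}{\alpha-1}}\VmpOf{s}{Y_i}=n^{1-\frac{s}{\alpha-1}}\,\bar V_s .
\]
For $s=2\alpha-2$ this gives $\Sigma_{2\alpha-2}=n^{-1}\bar V_{2\alpha-2}$, so $\Sigma_{2\alpha-2}^{1/2}=\bar V_{2\alpha-2}^{1/2}n^{-1/2}$.

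For $s=\alpha$, the exponent is $1-\frac{\alpha}{\alpha-1}=-\frac1{\alpha-1}$. Hence
\[
\Sigma_\alpha^{1/2}=\bar V_\alpha^{1/2}n^{-\frac1{2(\alpha-1)}}
\qquad\text{and}\qquad
\Sigma_\alpha^{1/\alpha}=\bar V_\alpha^{1/\alpha}n^{-\frac1{\alpha(\alpha-1)}} .
\]

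Substituting these three expressions into \eqref{eq:power:rate:sharp} yields \eqref{eq:power:rate:iid} with the same constant $c_\alpha$.

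There is no real obstacle here. The only points requiring care are the zero-weight convention for indices $i>n$, which the theorem states explicitly, and the exponent arithmetic. The claimed $n^{-1/2}$ rate follows because $\frac1{2(\alpha-1)}\ge\frac12$ and $\frac1{\alpha(\alpha-1)}\ge\frac12$ for $\alpha\in(1,2]$. The leading term is therefore the first one, and it is finite whenever $\bar V_\alpha<\infty$.
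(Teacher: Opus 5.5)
Your proposal is correct and follows the paper's proof: extend the sample by zero-weight coordinates, use $\Sigma_s=n^{1-s/(\alpha-1)}\bar V_s$ for uniform weights, and substitute into \cref{thm:power:main}. Your exponent arithmetic for $s=2\alpha-2$ and $s=\alpha$ is right.
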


The right-hand side of \eqref{eq:power:rate:iid} is finite if and only if $\bar V_\alpha<\infty$, equivalently if every $\mu_i$ has a finite $\alpha$-moment.
Since $\frac1{2(\alpha-1)}\geq\frac12$ and $\frac1{\alpha(\alpha-1)}\geq\frac12$ for $\alpha\in(1,2]$, the right-hand side is $\landauO(n^{-1/2})$ as $n\to\infty$ whenever $\sigma_{\alpha-1}, \bar V_{2\alpha-2}, \bar V_\alpha$ stay bounded; in particular this holds for iid data with law $\mu\in\mc P^\alpha(\mc Q)$.

An $\alpha$-th moment thus yields the order $n^{-\frac12}$ for the $L^\alpha$ risk of the sample $\alpha$-Fr\'echet mean with the population $\alpha$-Fr\'echet mean as target.
For ordinary mean estimation (2-Fr\'echet mean), a bounded $\alpha$-moment alone, $1<\alpha<2$, does not give the parametric rate uniformly over the class of such laws \cite[Theorem 3.1]{DevroyeEtAl2016SubGaussian-AOS}. 

For iid observations, \cite[Corollary 3.5]{Schoetz2026Transformed-arxiv} gives the same three terms with centered instead of pairwise moments; optimality of this rate is discussed in \cite[Section 5.4]{Schoetz2026Transformed-arxiv}.
Mass near the population mean can lead to faster rates for $1<\alpha<2$; for iid observations this is investigated in \cite[Sections 5.1 and 5.2]{Schoetz2026Transformed-arxiv}, and we do not pursue such refinements here.

\Needspace{8\baselineskip}
\begin{remark}[Weight schemes]\label{rem:power:weights}
The schemes of \cref{rem:weights} also apply to power Fr\'echet means.
Consider iid observations with law $\mu$.
Then the two weighted moments in \cref{thm:power:main} are
\begin{equation*}
 \Sigma_{2\alpha-2}=\VmpOfP{2\alpha-2}{\mu}\normOf{w}_2^2,
 \qquad
 \Sigma_\alpha=\VmpOfP{\alpha}{\mu}\normOf{w}_{2+\phi}^{2+\phi}.
\end{equation*}
The first term of the bound \eqref{eq:power:rate:sharp} is proportional to $\normof{w}_2$, while the other two involve the higher weight norm $\normof{w}_{2+\phi}$.
For uniform weights, $\normof{w}_2^2=n^{-1}$ and $\normof{w}_{2+\phi}^{2+\phi}=n^{-1-\phi}$, giving \cref{cor:power:rate}.
For geometric weights $w_i=(1-\rho)\rho^{i-1}$ with $0<\rho<1$,
\[
 \normOf{w}_2^2=\frac{1-\rho}{1+\rho},\qquad
 \normOf{w}_{2+\phi}^{2+\phi}=\frac{(1-\rho)^{2+\phi}}{1-\rho^{2+\phi}}
 \in\ab{\frac{(1-\rho)^{1+\phi}}{2+\phi},(1-\rho)^{1+\phi}}.
\]
For non-identically distributed observations, each component retains its own pairwise moment in $\Sigma_s$.
Set $w_{\max} := \sup_{i\in\N}w_i$.
The bounds $\Sigma_{2\alpha-2}\leq2w_{\max}\sigma_{2\alpha-2}$ and $\Sigma_\alpha\leq2w_{\max}^{1+\phi}\sigma_\alpha$, proved at the start of \cref{app:power}, show $L^\alpha$ consistency when $w_{\max}\to0$ and the relevant mixture moments stay bounded, as in \cref{rem:weights}\,(iii).
\end{remark}

\subsection{Local Constant Fr\'echet Regression}\label{ssec:power:localfrechet}
For $1<\alpha<2$, the weighted-sample bound yields nonparametric regression under fewer than two moments.
In particular, it permits conditional laws with infinite variance while retaining the usual rate for a H\"older regression function.
The target is the conditional $\alpha$-Fr\'echet mean, and its stability requires a smoothness condition adapted to \CPa: the transport cost between two conditional laws is bounded by the modulus of \CPa at the desired distance scale.
Kernel estimators with robust losses such as $d^p$, $p\geq1$, and Huber-type losses for metric-space-valued responses were introduced in \cite{Hein2009RobustRegression-NIPS}, with almost-sure consistency and, under a Lipschitz condition on the density, rates for the pointwise excess risk, for compact manifold-valued predictors and responses; \cref{cor:power:localfrechet} bounds the distance to the target without compactness or entropy conditions and allows infinite variance.

\Needspace{20\baselineskip}
\begin{corollary}[Local constant power Fr\'echet regression]\label{cor:power:localfrechet}
 Let $(\mc X,\rho)$ be a metric space, let $x\in\mc X$, and let $\mu_z\in\mc P^\alpha(\mc Q)$, $z\in\mc X$.
 Suppose that, for some $L>0$ and $\beta\in(0,1]$, all $z\in\mc X\setminus\{x\}$ satisfy, with $b:=L\rho(z,x)^\beta$,
 \begin{equation}\label{eq:power:localfrechet:smooth}
 \widetilde W_{\alpha-1}(\mu_z,\mu_x)
 \leq b\,\LamOf{\mu_x}{b}
 \eqfs
 \end{equation}
 Set $u:=\sup_{z\in\mc X}\VmpofP{\alpha}{\mu_z}$ and assume $u<\infty$.
 Let $Y_i\sim\mu_{x_i}$ be independent, let the kernel and weights be as in \cref{ssec:localfrechet}, and set
 $\samfm_h\!(x):=m_\alpha(\sum_iw_{i,h}(x)\delta_{Y_i})$.
 Assume that the design satisfies the condition \eqref{eq:localfrechet:design} of \cref{cor:two:localreg} at $x$ for some $D>0$ and $\const{dns}\in[1,\infty)$.
 Then, with $c_\alpha$ depending only on $\alpha$,
 \begin{equation}\label{eq:power:localfrechet}
 \EOf{\ol{\samfm_h\!(x)}{m_\alpha(\mu_x)}^\alpha}^{1/\alpha}
 \leq c_\alpha\br{Lh^\beta+u^{1/\alpha}
       \sqrt{\frac{\const{Kmi}\const{Kma}\const{dns}}{nh^D}}}.
 \end{equation}
\end{corollary}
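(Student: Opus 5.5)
The plan is to split the error by the triangle inequality into a bias term and a stochastic term, and handle the bias with \CPa and the stochastic term with \cref{thm:power:main}. Write $w_i:=w_{i,h}(x)$, $\popdis_h:=\sum_iw_i\mu_{x_i}$, $\popfm_h:=m_\alpha(\popdis_h)$ and $m_x:=m_\alpha(\mu_x)$. Then
\[
\EOf{\ol{\samfm_h\!(x)}{m_x}^\alpha}^{1/\alpha}\leq\ol{\popfm_h}{m_x}+\EOf{\ol{\samfm_h\!(x)}{\popfm_h}^\alpha}^{1/\alpha}
\eqfs
\]

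\textbf{Stochastic term.} Apply \cref{thm:power:main} conditionally on the design, with weights $w_i$ and laws $\mu_{x_i}$; this requires $\popdis_h\in\mc P^{\alpha-1}(\mc Q)$, which holds since each $\mu_z\in\mc P^\alpha$.

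The kernel bounds \eqref{eq:localfrechet:kernel} and the design condition \eqref{eq:localfrechet:design} give $w_{\max}\leq\const{Kmi}\const{Kma}\const{dns}/(nh^D)=:\epsilon$. We also have $\sum_iw_i^2\leq w_{\max}$.

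The three terms in \eqref{eq:power:rate:sharp} are then bounded as follows.
\begin{itemize}
\item $\Sigma_\alpha\leq w_{\max}^{1+\phi}u$, so $\Sigma_\alpha^{1/\alpha}\leq u^{1/\alpha}\epsilon^{(1+\phi)/\alpha}$. Since $(1+\phi)/\alpha=1/(\alpha(\alpha-1))\geq1/2$, this is at most $u^{1/\alpha}\epsilon^{1/2}$ when $\epsilon\leq1$.
\item $\Sigma_{2\alpha-2}\leq w_{\max}\sup_z\VmpofP{2\alpha-2}{\mu_z}\leq w_{\max}u^{(2\alpha-2)/\alpha}$, by Jensen's inequality for the power $(2\alpha-2)/\alpha\leq1$.
\item $\sigma_{\alpha-1}$ is the moment of $\popdis_h$ about its own mean. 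By minimality and Lyapunov, $\sigma_{\alpha-1}\leq\sigma^{\alpha}_{\alpha}(\popdis_h)^{(\alpha-1)/\alpha}\leq\VfpofP{\alpha}{\popdis_h}^{(\alpha-1)/\alpha}$. Bounding $\VfpofP{\alpha}{\popdis_h}$ by $u$ alone fails, because the mixture may spread over different centers.
\end{itemize}

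To handle the center spread, I would bound $\VfpofP{\alpha}{\popdis_h}\leq2^{\alpha-1}\br{\sum_iw_i\int\ol y{m_{x_i}}^\alpha\dl\mu_{x_i}(y)+\sum_iw_i\ol{m_{x_i}}{m_x}^\alpha}$. The first sum is $\leq2u$ by \cref{lmm:variances}. The second sum involves the center displacements, which the bias argument bounds by $c_\alpha Lh^\beta$. With these, $\sigma_{\alpha-1}^\phi\Sigma_{2\alpha-2}^{1/2}\lesssim(u^{1/\alpha}+Lh^\beta)^{\phi(\alpha-1)}u^{(\alpha-1)/\alpha}\epsilon^{1/2}$. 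The exponents add up to one: $\phi(\alpha-1)+(\alpha-1)=1$. So Young's inequality gives a bound by $c_\alpha(u^{1/\alpha}\epsilon^{1/2}+Lh^\beta)$ when $\epsilon\leq1$; the middle term is treated the same way.

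If $\epsilon>1$, I would not use the theorem. Instead I would bound both $\samfm_h$ and $\popfm_h$ crudely against $m_x$, giving the $u^{1/\alpha}$ term times a constant, which is fine since $\epsilon^{1/2}>1$. This crude bound is the piece I am least sure is clean.

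\textbf{Bias term.} This is the main obstacle. Apply \CPa to $\popdis_h$ and $\mu_x$. Coupling componentwise and using the convexity-type property of $\widetilde W_{\alpha-1}$ over mixtures is problematic, since the diameter factor involves $m_\alpha(\popdis_h)$ rather than $m_{x_i}$.

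I would therefore first show that each $\ol{m_{x_i}}{m_x}\leq c_\alpha b_i$, with $b_i=L\rho(x_i,x)^\beta\leq Lh^\beta$, and only then pass to the mixture. The individual step uses \CPa with only the $\Lambda_\alpha(\mu_x;\cdot)$ term, together with \eqref{eq:power:localfrechet:smooth}. Set $t=\ol{m_{x_i}}{m_x}$. Then $t\,\Lamof{\mu_x}{t}\leq c_\alpha b\,\Lamof{\mu_x}{b}$. The map $t\mapsto t\Lamof{\mu_x}{t}$ is nondecreasing, because $(r+t)^{\alpha-2}t$ increases in $t$. Moreover, for $t\geq b$ we have $t\Lamof{\mu_x}{t}\geq(t/b)^{\alpha-1}b\Lamof{\mu_x}{b}$. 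This forces $t\leq c_\alpha b$.

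For the mixture, I would use the variance inequality \VIa at $m_x$ for $\popdis_h$ together with the quadruple inequality \QIa, comparing each component objective with its own mean. The worry is whether this comparison closes without extra moment terms. If it does not, the fallback is to define $\widetilde W_{\alpha-1}(\popdis_h,\mu_x)$ via the product of component couplings. Then $\diam\{y,z,\popfm_h,m_x\}\geq\ol yz$ still yields an upper bound through $W_{\alpha-1}$-type terms, which can be controlled once the $m_{x_i}$ are localized.
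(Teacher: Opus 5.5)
Your treatment of the stochastic term is sound. So is your localization $\ol{m_{x_i}}{m_x}\le c_\alpha b_i$ of the component means, via \CPa (keeping only the $\Lambda_\alpha(\mu_x;\cdot)$ term) and the scaling property \eqref{eq:power:Psi:scaling} of $\Psi_{\mu_x}(t):=t\,\Lamof{\mu_x}{t}$. The case $\epsilon>1$ needs no separate crude bound: every $w_i\le1$, so your estimates hold verbatim with $w_{\max}\le\min\{1,\epsilon\}$ in place of $\epsilon$.

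The genuine gap is the bias step $\ol{\popfm_h}{m_x}\le c_\alpha Lh^\beta$ for the \emph{mixture} mean. Your decomposition cannot do without it, and neither of your routes delivers it. The fallback fails. The bound $\diam\{y,z,\popfm_h,m_x\}\ge\ol yz$ reduces $\widetilde W_{\alpha-1}(\popdis_h,\mu_x)$ to $\sum_iw_iW_{\alpha-1}(\mu_{x_i},\mu_x)$. But \eqref{eq:power:localfrechet:smooth} controls only the location-aware cost, and no localization of the means controls the plain cost. For the translated two-point laws $\mu_z=\frac12(\delta_{z-1}+\delta_{z+1})$ of \cref{rem:power:smoothness}, the hypothesis holds with $L=\beta=1$. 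Yet $W_{\alpha-1}(\mu_{x_i},\mu_x)=\absof{x_i-x}^{\alpha-1}$ for $\absof{x_i-x}\le\frac12$, so \CPa then only yields a bias of order $h^{\alpha-1}\gg h$. Your first route combines \VIa for $\popdis_h$ at $m_x$ with $\mathcal E_{\popdis_h}(m_x)\le\sum_iw_i\mathcal E_{\mu_{x_i}}(m_x)$ and \QIa. It ends at $t^2\Lamof{\popdis_h}{t}\le c_\alpha Lh^\beta\,\Psi_{\mu_x}(Lh^\beta)$ with $t:=\ol{\popfm_h}{m_x}$. The modulus on the left is the inverse moment of the mixture about its own, not yet localized, mean. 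Concluding $t\le c_\alpha Lh^\beta$ would need a comparison between $\Lamof{\popdis_h}{\cdot}$ and $\Lamof{\mu_x}{\cdot}$ that you do not have. Symmetrizing with \VIa for $\mu_x$ instead brings $\diam\{y,z,\popfm_h,m_x\}$ back through \QIa, which is the original obstacle.

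The missing idea is a case distinction on the unknown position of $\popfm_h$; in the paper this is \cref{lmm:power:cp:mixture}. Put $R:=\max_{i:w_i>0}\ol{m_{x_i}}{m_x}$, which you already bound by $c_\alpha Lh^\beta$. If $\ol{\popfm_h}{m_x}\le R$, you are done. Otherwise $\diam\{y,z,\popfm_h,m_x\}\ge\ol{\popfm_h}{m_x}>R$ for all $y,z$. Hence $\diam\{y,z,m_{x_i},m_x\}\le\diam\{y,z,\popfm_h,m_x\}+R\le2\diam\{y,z,\popfm_h,m_x\}$. Since $\alpha-2\le0$, mixing near-optimal couplings of $\mu_{x_i}$ and $\mu_x$ gives
\[
 \widetilde W_{\alpha-1}(\popdis_h,\mu_x)\le2^{2-\alpha}\sum_iw_i\,\widetilde W_{\alpha-1}(\mu_{x_i},\mu_x)\le2^{2-\alpha}\,\Psi_{\mu_x}(Lh^\beta)
\]
by \eqref{eq:power:localfrechet:smooth} and monotonicity of $\Psi_{\mu_x}$. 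Then \CPa for $(\popdis_h,\mu_x)$, keeping only the $\Psi_{\mu_x}$ term, together with \eqref{eq:power:Psi:scaling}, gives $\ol{\popfm_h}{m_x}\le c_\alpha Lh^\beta$, exactly as in your component step. With this lemma in place, the rest of your argument goes through and is essentially the paper's proof. The paper uses $\normof{w}_2\le1$ and $\sum_iw_i^{\alpha/(\alpha-1)}\le\normof{w}_2^{\alpha/(\alpha-1)}$ where you use $w_{\max}$.
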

For $u>0$, the right-hand side is minimized over $h>0$ by $h_*$ of \eqref{eq:localfrechet:bandwidth} with $v$ replaced by $u^{2/\alpha}$.
If the design condition holds at $h=h_*$, then
\begin{equation}\label{eq:power:localfrechet:optimal}
 \EOf{\ol{\samfm_{h_*}\!(x)}{m_\alpha(\mu_x)}^\alpha}^{1/\alpha}
 \leq c_\alpha\sqrt{\const{rate}}\,
 L^{\frac{D}{2\beta+D}}
 \br{\frac{u^{2/\alpha}\const{Kmi}\const{Kma}\const{dns}}n}^{\frac{\beta}{2\beta+D}},
\end{equation}
with $\const{rate}$ as in \eqref{eq:localfrechet:optimal}.
Thus the $L^\alpha$ risk has the classical nonparametric order $n^{-\beta/(2\beta+D)}$, using only a conditional $\alpha$-moment.
\Cref{rem:power:smoothness} discusses the smoothness condition.

\begin{remark}[Reading the smoothness condition]\label{rem:power:smoothness}
At $\alpha=2$ we have $\Lamof{\mu}{t}=1$ and $\widetilde W_1=W_1$, so \eqref{eq:power:localfrechet:smooth} is the smoothness condition \eqref{eq:two:localreg:smooth} of \cref{cor:two:localreg,cor:pos:localfrechet} with $z\pr=x$.

\emph{The right-hand side.} Condition \eqref{eq:power:localfrechet:smooth} bounds the transport cost between $\mu_z$ and the target law $\mu_x$ by the modulus $b\Lamof{\mu_x}{b}$ of \CPa at the desired distance scale $b$, and that modulus has two regimes.
It is at most $b^{\alpha-1}$ for every law, and at least $2^{-\phi}b^{\alpha-1}$ as soon as $b^{\alpha-1}\geq\sigma_\alpha^{\alpha-1}(\mu_x)$, by Jensen's inequality (see \cref{app:power:moment}); thus, for $b$ beyond the spread of $\mu_x$, it is enough that $W_{\alpha-1}(\mu_z,\mu_x)\leq2^{-\phi}b^{\alpha-1}$, a H\"older condition in the plain cost, since $W_{\alpha-1}$ dominates $\widetilde W_{\alpha-1}$.
For small $b$, and assuming $\Lamof{\mu_x}{0}<\infty$ for simplicity --- a criterion is given in \cref{app:power:moment} --- the modulus is instead of order $b$.
The plain cost $W_{\alpha-1}$ can satisfy this linear bound for smooth families of densities, as in \cref{ex:power:hyperbolic}, but it excludes some elementary families of moving atoms that the location-aware cost admits.

\emph{The left-hand side.} The diameter factor in \eqref{eq:power:Wtilde} reduces the price of moving mass that sits far from the two means, as quantified in \cref{rem:power:diffuse} in the appendix.
For $1<\alpha<2$, replacing $\widetilde W_{\alpha-1}$ by $W_{\alpha-1}$ in \eqref{eq:power:localfrechet:smooth} gives a stronger sufficient condition that can fail even for translations of a two-point law: in $\R$, let $\mu_x:=\frac12(\delta_{x-1}+\delta_{x+1})$.
For $x\neq x\pr$ and $t:=\absof{x-x\pr}$, the coupling of corresponding atoms gives
\begin{equation*}
 \widetilde W_{\alpha-1}(\mu_x,\mu_{x\pr})
 \leq t(1+t)^{\alpha-2}=t\LamOf{\mu_x}{t},
 \qquad W_{\alpha-1}(\mu_x,\mu_{x\pr})=t^{\alpha-1}\quad(t\leq1/2),
\end{equation*}
with equality in the first bound for $t\leq1/2$ (\cref{app:power:euclidean}).
Thus \eqref{eq:power:localfrechet:smooth} holds at every $x$ with $L=1$ and $\beta=1$, whereas with $W_{\alpha-1}$ in place of $\widetilde W_{\alpha-1}$ it fails for every finite $L$ at $\beta=1$, since $t^{\alpha-1}/(Lt\Lamof{\mu_x}{Lt})\to\infty$ as $t\searrow0$.

\emph{A moment-only criterion.} Replacing the inverse moment by its Jensen lower bound turns \eqref{eq:power:localfrechet:smooth} into a condition on the centered moments alone; \cref{lmm:power:sufficient}\,(i) in the appendix gives the Jensen sufficient form with the same $L$, while part\,(ii) gives a two-regime form with $L$ replaced by $2^{\phi/(\alpha-1)}L$.
The latter is the two-regime description above with the slope $\Lamof{\mu_x}{0}$ replaced by the smaller $\max\{\sigma_\alpha^{\alpha-1}(\mu_z),\sigma_\alpha^{\alpha-1}(\mu_x)\}^{-\phi}$.
It is convenient for verification, as in \cref{ex:power:hyperbolic}, but it can fail for additive noise with an atom at its mean (\cref{app:power:euclidean}).
\end{remark}

The next four items illustrate \cref{cor:power:localfrechet}: a minimal check in the noiseless case (\cref{ex:power:noiseless}), additive infinite-variance noise in a Hilbert space (\cref{cor:power:additive}), its specialization to distributions on the line as responses (\cref{ex:power:wasserstein}), and a non-flat example (\cref{ex:power:hyperbolic}).

\begin{example}[Noiseless responses]\label{ex:power:noiseless}
 If $\mu_x=\delta_{\gamma(x)}$ for a map $\gamma\colon\mc X\to\mc Q$, then $\Lamof{\mu_x}{b}=b^{\alpha-2}$ and $\widetilde W_{\alpha-1}(\mu_x,\mu_{x\pr})=\ol{\gamma(x)}{\gamma(x\pr)}^{\alpha-1}$, so \eqref{eq:power:localfrechet:smooth} is equivalent to $\ol{\gamma(x)}{\gamma(x\pr)}\leq L\rho(x,x\pr)^\beta$ for all $x\pr\in\mc X$.
 Every $\beta$-H\"older response map is included; for real predictors, a unit-speed geodesic has $L=1$, $\beta=1$.
 Here $u=0$, the stochastic term vanishes, and \eqref{eq:power:localfrechet} is the deterministic approximation bound $c_\alpha Lh^\beta$; no noise--bias bandwidth balance is required.
\end{example}

\begin{corollary}[Additive noise in Hilbert spaces]\label{cor:power:additive}
 Let $\mc H$ be a Hilbert space, let $\mc Q\subseteq \mc H$ be nonempty, closed, and convex, with the induced metric, and let $\alpha\in(1,2]$.
 Let $\gamma\colon\mc X\to\mc Q$ be $\beta$-H\"older with constant $L_\gamma$, and let $\varepsilon$ be an $\mc H$-valued random variable with $\Eof{\normof\varepsilon^\alpha}<\infty$, with $\alpha$-Fr\'echet mean $m_\alpha[\varepsilon]=0$ in $\mc H$, and with $\gamma(x)+\varepsilon\in\mc Q$ almost surely for every $x\in\mc X$.
 For $\gamma(x)+\varepsilon\sim\mu_x$, we have $m_\alpha(\mu_x)=\gamma(x)$, $\VmpofP{\alpha}{\mu_x}\leq2^{\alpha-1}\Eof{\normof\varepsilon^\alpha}$, and \eqref{eq:power:localfrechet:smooth} holds at every $x\in\mc X$ with $L=2^{\phi/2}L_\gamma$.
 Consequently, under the kernel and design assumptions of \cref{cor:power:localfrechet},
 \begin{equation}\label{eq:power:additive}
  \EOf{\ol{\samfm_h\!(x)}{\gamma(x)}^\alpha}^{1/\alpha}
  \leq c_\alpha\br{L_\gamma h^\beta+\EOf{\normOf{\varepsilon}^\alpha}^{1/\alpha}
  \sqrt{\frac{\const{Kmi}\const{Kma}\const{dns}}{nh^D}}}
  \eqfs
 \end{equation}
\end{corollary}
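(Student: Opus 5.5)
The proof has three steps: identify the conditional mean, bound the metric $\alpha$-variance, and verify the smoothness condition \eqref{eq:power:localfrechet:smooth}; then apply \cref{cor:power:localfrechet}.

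\emph{Conditional mean.} I use translation equivariance in $\mc H$. For any $q\in\mc Q$, write $\EOf{\normof{\gamma(x)+\varepsilon-q}^\alpha-\normof{\varepsilon}^\alpha}=\EOf{\normof{\varepsilon-(q-\gamma(x))}^\alpha-\normof{\varepsilon}^\alpha}$. Since $m_\alpha[\varepsilon]=0$ uniquely in $\mc H$, this is minimized over all of $\mc H$ exactly at $q=\gamma(x)$. That point lies in $\mc Q$, so it is also the minimizer over $\mc Q$. Moreover $\mc Q$ is a closed convex subset of a Hilbert space, hence a Hadamard space, so the earlier results apply there.

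\emph{Metric $\alpha$-variance.} With $\varepsilon\pr$ an independent copy of $\varepsilon$, we have $\VmpofP{\alpha}{\mu_x}=\frac12\EOf{\normof{\varepsilon-\varepsilon\pr}^\alpha}$. Convexity of $t\mapsto t^\alpha$ gives $(a+b)^\alpha\le 2^{\alpha-1}(a^\alpha+b^\alpha)$, which yields the bound $2^{\alpha-1}\EOf{\normof\varepsilon^\alpha}$. Hence $u^{1/\alpha}\le 2^{(\alpha-1)/\alpha}\EOf{\normof\varepsilon^\alpha}^{1/\alpha}$.

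\emph{Smoothness condition.} Fix $z\neq x$ and set $t:=\normof{\gamma(z)-\gamma(x)}\le L_\gamma\rho(z,x)^\beta$. Use the translation coupling $y=\gamma(z)+\varepsilon$, $y\pr=\gamma(x)+\varepsilon$, so that $\normof{y-y\pr}=t$. The diameter of $\{y,y\pr,\gamma(z),\gamma(x)\}$ is at least $\normof{\varepsilon}$ and at least $t$, so it is at least $(\normof\varepsilon+t)/2$. Since $\alpha-2\le0$, this gives
\[
\widetilde W_{\alpha-1}(\mu_z,\mu_x)\le t\,\EOf{\br{(\normof\varepsilon+t)/2}^{\alpha-2}}=2^{2-\alpha}\,t\,\LamOf{\mu_x}{t},
\]
where I used $\ol{y\pr}{m_\alpha(\mu_x)}=\normof\varepsilon$. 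Next, $b\mapsto b\Lamof{\mu_x}{b}=\EOf{b(\normof\varepsilon+b)^{\alpha-2}}$ is nondecreasing, because $b(s+b)^{\alpha-2}$ is increasing in $b$. It also scales subhomogeneously: for $\lambda\ge1$ we have $(\lambda b)\Lamof{\mu_x}{\lambda b}\ge\lambda^{\alpha-1}b\Lamof{\mu_x}{b}$. Taking $b=L\rho(z,x)^\beta$ with $L=\lambda L_\gamma$, we get $b\ge\lambda t$, and therefore $b\Lamof{\mu_x}{b}\ge\lambda^{\alpha-1}t\Lamof{\mu_x}{t}$. It thus suffices that $\lambda^{\alpha-1}\ge2^{2-\alpha}$, i.e. $\lambda=2^{\phi}$. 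The diameter estimate is the lossy step. If I instead use $\diam\ge\max(\normof\varepsilon,t)$ together with $\max(s,t)\ge (s^2+t^2)^{1/2}\ge (s+t)/\sqrt2$, the factor becomes $2^{(2-\alpha)/2}$, which gives $\lambda=2^{\phi/2}$ as stated. Either way the constant depends only on $\alpha$ and is absorbed into $c_\alpha$.

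\emph{Conclusion.} Plugging $L=2^{\phi/2}L_\gamma$ and the bound on $u$ into \eqref{eq:power:localfrechet} gives \eqref{eq:power:additive}. The only delicate point is the diameter lower bound in the location-aware cost, which must be made uniformly in $\varepsilon$; the rest is routine.
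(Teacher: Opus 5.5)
Your route is the paper's: translation equivariance for the mean, the power triangle inequality for $\VmpofP{\alpha}{\mu_x}$, the synchronous coupling, and monotonicity plus the scaling property of $b\mapsto b\LamOf{\mu_x}{b}$. Your first diameter estimate, $\diam\geq\max(\normof\varepsilon,t)\geq(\normof\varepsilon+t)/2$, is correct. It yields \eqref{eq:power:localfrechet:smooth} with $L=2^{\phi}L_\gamma$, which already suffices for \eqref{eq:power:additive} because the factor is absorbed into $c_\alpha$.

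The statement, however, asserts the specific constant $L=2^{\phi/2}L_\gamma$, and your argument for it is wrong. The inequality $\max(s,t)\geq(s^2+t^2)^{1/2}$ fails: it holds in the reverse direction, as $s=t=1$ shows. The weaker claim $\max(s,t)\geq(s+t)/\sqrt2$ fails for the same example. From $\max(\normof\varepsilon,t)$ alone you cannot do better than $(\normof\varepsilon+t)/2$, hence no better than $\lambda=2^{\phi}$.

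The missing idea is to use the two cross distances among the four points. Put $a:=\gamma(z)-\gamma(x)$, so $\normof a=t$. Then $\normof{y-\gamma(x)}=\normof{\varepsilon+a}$ and $\normof{y\pr-\gamma(z)}=\normof{\varepsilon-a}$. The parallelogram law $\normof{\varepsilon+a}^2+\normof{\varepsilon-a}^2=2(\normof\varepsilon^2+\normof a^2)$ gives
\[
\diam\{y,y\pr,\gamma(z),\gamma(x)\}\geq\max\{\normof{\varepsilon+a},\normof{\varepsilon-a}\}\geq(\normof\varepsilon^2+t^2)^{1/2}\geq2^{-1/2}(\normof\varepsilon+t).
\]
This bound is genuinely stronger than $\max(\normof\varepsilon,t)$ and uses the Hilbert structure. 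It produces the factor $2^{(2-\alpha)/2}$ in place of $2^{2-\alpha}$. Then \eqref{eq:power:Psi:scaling} with $\lambda^{\alpha-1}=2^{(2-\alpha)/2}$ gives $\lambda=2^{\phi/2}$, exactly as in the paper's proof.
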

For $1<\alpha<2$ the noise may have infinite variance.
In particular, under the design condition with $D=1$, Lipschitz curves attain $L^\alpha$ risk $\landauO(n^{-1/3})$ as $n\to\infty$.
The proof is given in \cref{app:power:euclidean}.

\begin{example}[Distributions on the line as responses]\label{ex:power:wasserstein}
 Let $\mc Q = \mc P^2(\R)$ be the set of probability measures on $\R$ with finite second moment, equipped with the quadratic Wasserstein distance
 \begin{equation*}
  \overline W_2(\mu, \nu) := \br{\inf_{\pi\in\Pi(\mu,\nu)}\int\abs{y-z}^2\,\dl\pi(y,z)}^{\frac12}
  \eqfs
 \end{equation*}
 The quantile map $\nu\mapsto F_\nu^{-1}$ is an isometry of $\mc Q$ onto the closed convex cone of nondecreasing functions in $L^2(0,1)$ \cite{PanaretosZemel2020Wasserstein-Springer}, so \cref{cor:power:additive} applies with $\mc H=L^2(0,1)$.
 For instance, let $\gamma\colon\mc X\to\mc P^2(\R)$ be $\beta$-H\"older and let $\xi$ be a real random variable with $m_\alpha[\xi]=0$ and $\Eof{\absof\xi^\alpha}<\infty$.
 As response in $\mc P^2(\R)$, take $Y_x:=\gamma(x)\ast\delta_\xi$, where $\ast$ denotes the convolution of measures and $\delta_\xi$ is the Dirac measure at $\xi$.
 Convolution with a Dirac measure is a translation, $Y_x(B)=\gamma(x)(B-\xi)$ for Borel sets $B\subseteq\R$, so $Y_x$ is the law $\gamma(x)$ shifted by the random amount $\xi$; equivalently, $Y_x$ is the conditional law of $Z+\xi$ given $\xi$, where $Z\sim\gamma(x)$ is independent of $\xi$.
 Shifting a measure by $\xi$ shifts its quantile function by $\xi$, i.e., $F_{Y_x}^{-1}=F_{\gamma(x)}^{-1}+\xi$ on $(0,1)$.
 In $L^2(0,1)$, the response therefore has the additive form $Y_x=\gamma(x)+\varepsilon$ with the constant function $\varepsilon:=\xi\ind_{(0,1)}$; the sum $F_{\gamma(x)}^{-1}+\xi$ is nondecreasing and square integrable, so it lies in the cone of $L^2(0,1)$ identified with $\mc P^2(\R)$.
 The $L^2(0,1)$ norm of a constant function is its absolute value, so $\normof\varepsilon=\absof\xi$ and $\Eof{\normof\varepsilon^\alpha}=\Eof{\absof\xi^\alpha}<\infty$.
 It remains to check $m_\alpha[\varepsilon]=0$.
 Write $f\in L^2(0,1)$ as $f=c\ind_{(0,1)}+f^\perp$ with $c\in\R$ and $f^\perp$ orthogonal to the constant functions.
 Then $\normof{\varepsilon-f}^2=(\xi-c)^2+\normof{f^\perp}^2$, so that $\Eof{\normof{\varepsilon-f}^\alpha}\geq\Eof{\absof{\xi-c}^\alpha}$ with equality if and only if $f^\perp=0$.
 Minimizing over $c$ gives $m_\alpha[\varepsilon]=m_\alpha[\xi]\ind_{(0,1)}=0$.
 Hence, \cref{cor:power:additive} applies with $\Eof{\normof\varepsilon^\alpha}=\Eof{\absof\xi^\alpha}$.
 More generally, $\varepsilon$ may be any random element of $L^2(0,1)$ with $\Eof{\normof\varepsilon^\alpha}<\infty$ and $m_\alpha[\varepsilon]=0$ (for instance, symmetric) that keeps $F_{\gamma(x)}^{-1}+\varepsilon$ nondecreasing, e.g., a $\lambda$-Lipschitz perturbation when every $F_{\gamma(x)}^{-1}$ increases with slope at least $\lambda$.
 For $1<\alpha<2$ and $\Eof{\xi^2}=\infty$, the responses have infinite second moment in $\mc Q$, and \eqref{eq:power:additive} still gives the rate $n^{-\beta/(2\beta+D)}$; recall from \cref{ssec:localfrechet} that the entropy condition of \cite{PetersenMueller2019Regression-AOS} fails in this space \cite{ImEtAl2026Entropy-arxiv}, and note that the projection route of \cite{PetersenMueller2026Correction-AOS} is tied to $\alpha=2$.
\end{example}

\begin{example}[Hyperbolic location--scale families]\label{ex:power:hyperbolic}
 Let $\mc Q=\mathbb H^k$, $k\geq2$, be the hyperbolic space of dimension $k$ and constant curvature $-1$, a Hadamard manifold, in its hyperboloid model \cite[Chapter~I.2]{BridsonHaefliger1999NPC-Springer}: with the Minkowski form $\mink pq:=-p_0q_0+\sum_{j=1}^kp_jq_j$ on $\R^{k+1}$,
 \begin{equation*}
  \mathbb H^k:=\cb{p\in\R^{k+1}\colon\mink pp=-1,\ p_0>0}
  \eqcm\qquad
  d(p,q):=\operatorname{arcosh}\brOf{-\mink pq}
  \eqfs
 \end{equation*}
 The tangent space at $p\in\mathbb H^k$ is $T_p\mathbb H^k:=\{v\in\R^{k+1}\colon\mink pv=0\}$.
 The Minkowski form is positive definite on $T_p\mathbb H^k$, so $(T_p\mathbb H^k,\mink\cdot\cdot)$ is a $k$-dimensional Euclidean space with norm $\normof v:=\mink vv^{1/2}$.
 Hence, a linear isometry $I\colon\R^k\to T_p\mathbb H^k$ exists for every $p$; it amounts to choosing an orthonormal basis of $T_p\mathbb H^k$.
 At $o:=(1,0,\dots,0)$, for instance, $T_o\mathbb H^k=\{0\}\times\R^k$ and $I(z):=(0,z)$ is a linear isometry.
 The exponential map $\exp_p\colon T_p\mathbb H^k\to\mathbb H^k$ sends $v\neq0$ to the point at distance $\normof v$ from $p$ on the geodesic ray from $p$ in direction $v$, and $0$ to $p$; explicitly,
 \begin{equation}\label{eq:power:hyperbolic:exp}
  \exp_p(v)=\cosh\brOf{\normOf v}\,p+\frac{\sinh\brOf{\normOf v}}{\normOf v}\,v
  \eqcm
 \end{equation}
 with the quotient read as $1$ at $v=0$.
 It is a diffeomorphism onto $\mathbb H^k$, and $d(\exp_p(v),p)=\normof v$ because $\mink p{\exp_p(v)}=-\cosh(\normof v)$.
 Let $\alpha\in(1,2)$ and choose $\theta\in(\alpha,2)$.
 Let $Z$ have the spherically symmetric Student distribution in $\R^k$ with $\theta$ degrees of freedom.
 Let $\gamma\colon[0,1]\to\mathbb H^k$ and $\zeta\colon[0,1]\to[\zeta_-,\zeta_+]$ be arbitrary $\beta$-H\"older functions, with $\beta\in(0,1]$ and $0<\zeta_-\leq\zeta_+<\infty$; for instance, $\gamma=\exp_o\circ\gamma_0$ for a $\beta$-H\"older function $\gamma_0\colon[0,1]\to T_o\mathbb H^k$, see \cref{app:power:hyperbolic}.
 For every $x\in[0,1]$, fix a linear isometry $I_x\colon\R^k\to T_{\gamma(x)}\mathbb H^k$ and set
 \begin{equation}\label{eq:power:hyperbolic:model}
  Y_x:=\exp_{\gamma(x)}\brOf{\zeta(x)I_xZ}\sim\mu_x\eqfs
 \end{equation}
 This is the exponential-wrapped construction of \cite{NaganoEtAl2019WrappedNormal-ICML}, where its geometric change of variables is described; by spherical symmetry of $Z$, the law $\mu_x$ does not depend on the choice of $I_x$.
 Rotational symmetry gives $m_\alpha(\mu_x)=\gamma(x)$, and
 $\ol{Y_x}{\gamma(x)}=\zeta(x)\euclof Z$.
 The conditional $\alpha$-moments are uniformly finite, every conditional second moment is infinite, and \eqref{eq:power:localfrechet:smooth} holds at every $x\in[0,1]$ with this $\beta$; see \cref{app:power:hyperbolic}.
 Consequently, under the kernel and design assumptions with $D=1$, the $L^\alpha$ risk is $\landauO(n^{-\beta/(2\beta+1)})$ for $h\asymp n^{-1/(2\beta+1)}$ as $n\to\infty$.
\end{example}

The regression theorems in \cite{Schoetz2022Regression-EJS,PetersenMueller2019Regression-AOS} do not cover this heavy-tailed power Fr\'echet mean setting.
They concern the squared-distance target under stronger moment or boundedness assumptions.
Changing the metric to $d^{\alpha/2}$ encodes the power target, but loses the Hadamard structure and can violate the quadratic variance inequality required in \cite{Schoetz2022Regression-EJS}, including for \cref{ex:power:hyperbolic}.
The supporting calculations are given in \cref{app:comparison}.

\subsection{Dirichlet-Process Fr\'echet Means}\label{ssec:power:dirichlet}

The stick-breaking argument of \cref{ssec:dirichlet} also applies to power Fr\'echet means, now under a finite $\alpha$-moment.
Besides the second moment of the weights, the bound involves their moment of order $2+\phi=\alpha/(\alpha-1)$.
For the stick-breaking weights of \cref{ssec:dirichlet} and $s\in\Rpp$, define
\begin{equation}\label{eq:power:dirichlet:weights}
 H_s(a):=\EOf{\sum_{i=1}^\infty W_i^s}
 =\frac{\Gamma(s)\Gamma(a+1)}{\Gamma(a+s)},
 \qquad\text{so that}\qquad H_2(a)=\frac1{a+1},
\end{equation}
where $\Gamma$ is the gamma function.
To state the prior and posterior bounds together, for $b>0$ and $\nu\in\mc P^\alpha(\mc Q)$ put
\begin{equation}\label{eq:power:dirichlet:R}
 \begin{split}
 \mathcal R_\alpha(b,\nu):={}&
 \sigma_\alpha^{\alpha-1}(\nu)^\phi
       \br{\frac{\VmpOfP{2\alpha-2}{\nu}}{b+1}}^{1/2}\\
 &+\sigma_\alpha^{\alpha-1}(\nu)^{\phi/2}
       \br{H_{2+\phi}(b)\VmpOfP{\alpha}{\nu}}^{1/2}
   +\br{H_{2+\phi}(b)\VmpOfP{\alpha}{\nu}}^{1/\alpha}.
 \end{split}
\end{equation}
\begin{corollary}[Dirichlet-process power Fr\'echet means]\label{cor:power:dirichlet}
 Let $\alpha\in(1,2]$, $a>0$, $\mu\in\mc P^\alpha(\mc Q)$, and $P\sim\operatorname{DP}(a,\mu)$.
 Then $m_\alpha(P)$ exists almost surely, and, with constants $c_\alpha$ depending only on $\alpha$,
 \begin{equation}\label{eq:power:dirichlet:prior}
 \br{\EOf{\ol{m_\alpha(P)}{m_\alpha(\mu)}^\alpha}}^{1/\alpha}
 \leq c_\alpha\mathcal R_\alpha(a,\mu).
 \end{equation}
 If $X_1,\ldots,X_n$ are conditionally iid with law $P$, and $\tilde\mu_n$ is the posterior base measure of \cref{ssec:dirichlet}, then almost surely
 \begin{equation}\label{eq:power:dirichlet:posterior}
 \br{\EOf{\ol{m_\alpha(P)}{m_\alpha(\tilde\mu_n)}^\alpha
       \,\middle|\,X_1,\ldots,X_n}}^{1/\alpha}
 \leq c_\alpha\mathcal R_\alpha(a+n,\tilde\mu_n).
 \end{equation}
 Averaging over the observations under this model gives the simpler bound
 \begin{equation}\label{eq:power:dirichlet:bayes}
 \br{\EOf{\ol{m_\alpha(P)}{m_\alpha(\tilde\mu_n)}^\alpha}}^{1/\alpha}
 \leq c_\alpha\br{\frac{a\,(a+n+1)}{(a+1)(a+n)}\VmpOfP{\alpha}{\mu}}^{1/\alpha}\frac1{\sqrt{a+n+1}}.
 \end{equation}
 The same bound with $n=0$ holds for the prior, with $\tilde\mu_0:=\mu$.
\end{corollary}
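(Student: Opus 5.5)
The plan is to condition on the stick-breaking weights and apply \cref{thm:power:main}, exactly as in \cref{ssec:dirichlet}. Given $(W_i)_{i\in\N}$, the atoms $Y_i\sim\mu$ are iid and independent of the weights, so the setting of \cref{ssec:main:bound} holds with $w_i=W_i$ and $\mu_i=\mu$. Hence $\popdis=\mu\in\mc P^\alpha(\mc Q)\subset\mc P^{\alpha-1}(\mc Q)$, and existence of $m_\alpha(P)$ follows almost surely by Tonelli's theorem. Moreover $\popfm=m_\alpha(\mu)$ and $\sigma_{\alpha-1}=\sigma_\alpha^{\alpha-1}(\mu)$ are deterministic, while
\[
\Sigma_{2\alpha-2}=\VmpOfP{2\alpha-2}{\mu}\sum_iW_i^2,
\qquad
\Sigma_\alpha=\VmpOfP{\alpha}{\mu}\sum_iW_i^{2+\phi}.
\]
\Cref{thm:power:main} then bounds the conditional $L^\alpha$ risk by $c_\alpha$ times the three terms with these random $\Sigma$'s.

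To pass to the unconditional bound, I raise the conditional bound to the power $\alpha$, use $(x+y+z)^\alpha\leq3^{\alpha-1}(x^\alpha+y^\alpha+z^\alpha)$, and take expectations over the weights. The three terms become, up to constants,
\[
\sigma^{\phi\alpha}\,\EOf{\Sigma_{2\alpha-2}^{\alpha/2}},
\qquad
\sigma^{\phi\alpha/2}\,\EOf{\Sigma_\alpha^{\alpha/2}},
\qquad
\EOf{\Sigma_\alpha}.
\]
Since $\alpha/2\leq1$, Jensen gives $\Eof{\Sigma_{2\alpha-2}^{\alpha/2}}\leq(\Eof{\Sigma_{2\alpha-2}})^{\alpha/2}$, and likewise for $\Sigma_\alpha$. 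Then $\Eof{\sum W_i^2}=H_2(a)=1/(a+1)$ and $\Eof{\sum W_i^{2+\phi}}=H_{2+\phi}(a)$. Taking the $1/\alpha$ root and splitting again gives $c_\alpha\mathcal R_\alpha(a,\mu)$, which is \eqref{eq:power:dirichlet:prior}.

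I would verify the formula $H_s(a)=\Gamma(s)\Gamma(a+1)/\Gamma(a+s)$ from the stick-breaking structure. With $\Eof{V^s}=\Gamma(s+1)\Gamma(a+1)/\Gamma(a+s+1)$ and $\Eof{(1-V)^s}=a/(a+s)$, summing the geometric series gives
\[
\frac{\Eof{V^s}}{1-a/(a+s)}=\frac{a+s}{s}\,\Eof{V^s},
\]
which simplifies to the stated expression. The posterior bound \eqref{eq:power:dirichlet:posterior} follows by the same argument applied to $\operatorname{DP}(a+n,\tilde\mu_n)$ conditionally on $X_1,\dots,X_n$, with $\tilde\mu_n\in\mc P^\alpha(\mc Q)$ almost surely.

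The averaged bound \eqref{eq:power:dirichlet:bayes} is the main obstacle, because $\sigma_\alpha^{\alpha-1}(\tilde\mu_n)$ is random and centered at a random point. I would first simplify $\mathcal R_\alpha(b,\nu)$ into a bound of the form $c_\alpha(\VmpofP{\alpha}{\nu}/(b+1)^{\alpha/2})^{1/\alpha}$, in four steps:
\begin{enumerate}[label=(\roman*)]
\item By Lyapunov, $\sigma_\alpha^{\alpha-1}(\nu)\leq\sigma_\alpha^\alpha(\nu)^{(\alpha-1)/\alpha}=\VfpofP{\alpha}{\nu}^{(\alpha-1)/\alpha}\leq(2\VmpofP{\alpha}{\nu})^{(\alpha-1)/\alpha}$, using \cref{lmm:variances}.
\item By Hölder/Jensen, $\VmpofP{2\alpha-2}{\nu}\leq c\,\VmpofP{\alpha}{\nu}^{(2\alpha-2)/\alpha}$.
\item The weight moment satisfies $H_{2+\phi}(b)\leq c_\alpha(b+1)^{-1-\phi}$, via Gamma-ratio asymptotics (Gautschi/Wendel inequalities), with uniformity in $b>0$ to be checked.
\item Collecting exponents with $\phi=(2-\alpha)/(\alpha-1)$: the first term is $V^{\phi(\alpha-1)/\alpha+(\alpha-1)/\alpha}(b+1)^{-1/2}=V^{1/\alpha}(b+1)^{-1/2}$. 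The second and third terms carry the powers $(b+1)^{-(1+\phi)/2}$ and $(b+1)^{-(1+\phi)/\alpha}$, both at most $(b+1)^{-1/2}$.
\end{enumerate}
With these, the conditional $\alpha$-th power risk is at most $c_\alpha\VmpofP{\alpha}{\tilde\mu_n}(a+n+1)^{-\alpha/2}$. Taking expectations requires $\Eof{\VmpofP{\alpha}{\tilde\mu_n}}$, which I compute exactly as for \eqref{eq:dirichlet:bayesrisk} with exponent $\alpha$ in place of $2$. That computation used only the decomposition of $\tilde\mu_n$ and $\Eof{\sum_iW_i^2}=1/(a+1)$, not any Hilbert structure, and yields $\frac{a(a+n+1)}{(a+1)(a+n)}\VmpofP{\alpha}{\mu}$. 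Taking the $1/\alpha$ root gives \eqref{eq:power:dirichlet:bayes}, and $n=0$ gives the prior version.
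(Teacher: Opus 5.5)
Your proposal is correct and follows the paper's proof essentially step for step. You condition on the stick-breaking weights and apply \cref{thm:power:main} with $\Sigma_{2\alpha-2}=\VmpOfP{2\alpha-2}{\mu}\sum_iW_i^2$ and $\Sigma_\alpha=\VmpOfP{\alpha}{\mu}\sum_iW_i^{2+\phi}$; you average over the weights using Jensen for the exponent $\alpha/2\leq1$ (the paper uses Minkowski in $L^\alpha$ instead of your power-sum inequality, which makes no difference); you simplify $\mathcal R_\alpha$ via Lyapunov and \cref{lmm:variances}; and you compute $\EOf{\VmpOfP{\alpha}{\tilde\mu_n}}$ as in \eqref{eq:dirichlet:bayesrisk}.

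The uniformity you left open in step (iii) is settled in one line, as in the paper. Put $\tau:=2+\phi=\alpha/(\alpha-1)$ and let $Z$ be Gamma-distributed with shape $b+1$. Then $\Gamma(b+\tau)/\Gamma(b+1)=\EOf{Z^{\tau-1}}\geq(b+1)^{\tau-1}$ by Jensen, since $\tau-1=1/(\alpha-1)\geq1$. This gives $H_{2+\phi}(b)\leq\Gamma(2+\phi)(b+1)^{-1-\phi}$ for every $b>0$.
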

The proof conditions on the weights and applies \cref{thm:power:main}; it is given in \cref{app:power:dirichlet}.
By Markov's inequality, the closed ball centered at $m_\alpha(\tilde\mu_n)$ with radius
$c_\alpha\eta^{-1/\alpha}\mathcal R_\alpha(a+n,\tilde\mu_n)$ has posterior probability at least $1-\eta$, for $\eta\in(0,1)$.
The expectation in \eqref{eq:power:dirichlet:bayes} is with respect to the joint law of $P$ and the observations.
At $\alpha=2$, \cref{ssec:dirichlet} gives sharper constants.

\section*{Declarations}
\paragraph{Use of AI tools.}
OpenAI Codex and Anthropic Claude were used to assist with manuscript organization and editing, literature searches and comparisons, and the development and checking of mathematical arguments.
Responsibility for the mathematical results, references, and final text rests with the author.

\appendix
\section{Tools for Positive Curvature}\label{app:pos:tools}
Here the notation of \cref{ssec:pos:prelim} applies: $\setC$ is a nonempty bounded closed convex set with circumradius $0<\varrho<\pi/(2\sqrt{\kappa})$, and $\Delta=2\varrho\sqrt\kappa\in(0,\pi)$.
Moreover, $\delta=\sqrt\kappa\,\diam(\setC)\leq\Delta$ by \cref{rem:pos:diameter}.
We use the factor of \cref{prp:prelim:qi},
\begin{equation}\label{eq:pos:const:C}
    C_x=x/\sin(x)\quad\text{for}\quad x \in (0,\pi)\qquad\text{and}\qquad C_0 = 1
    \eqcm
\end{equation}
which is increasing, so that $C_\delta\leq C_\Delta$.

\Needspace{18\baselineskip}
\textbf{Notation.}
\begin{center}
    \begin{tabularx}{\linewidth}{@{}l@{\hspace{1.5em}}>{\raggedright\arraybackslash}X@{}}
        \toprule
        $\kappa$, $\setC$, $\varrho$ & curvature bound, data set, and circumradius; \cref{ssec:pos:prelim}\\
        $\Delta=2\varrho\sqrt\kappa$, $\delta=\sqrt\kappa\,\diam(\setC)$ & twice the rescaled circumradius, and the rescaled diameter; \cref{thm:pos:hemisphere:vi}\\
        $C_x=x/\sin(x)$, $C_0=1$ & quadruple-inequality factor; \eqref{eq:pos:const:C}\\
        $A_\Delta^{\mathrm{rad}}$, $A_\Delta^{\mathrm{near}}$, $A_\delta^{\mathrm{diam}}$ & optimal variance constants; \eqref{eq:pos:vi:radius:optimal}, \eqref{eq:pos:vi:near:constant}, \eqref{eq:pos:vi:constants}\\
        $B_\Delta^{\mathrm{rad}}$, $B_\delta^{\mathrm{diam}}$ & optimal contraction constants; \eqref{eq:pos:cp:radius:optimal}, \eqref{eq:pos:cp:diameter:optimal}\\
        $E_\Delta$, $t_\Delta$, $s_\Delta$, $x_\Delta$ & radius profiles and optimizer; see \eqref{eq:pos:vi:radius:functions}--\eqref{eq:pos:vi:radius:optimal}, \eqref{eq:pos:vi:radius:ts}\\
        $g$, $H$ & cosine profiles in \cref{lmm:pos:cosine-average}\\
        $U$, $K$, $z=K/U$, $\lambda$ & reweighting quantities in the proof of \cref{thm:pos:hemisphere:vi}\\
        $\theta_\delta$, $T_\delta$ & diameter profiles; see \eqref{eq:pos:vi:constants}, \eqref{eq:pos:cp:local:diameter}\\
        $m_\eta=m_2(\eta)$, $K_\eta$ & mean and cosine-growth coefficient of $\eta$; proof of \cref{thm:pos:hemisphere:cp}\\
        \bottomrule
    \end{tabularx}
\end{center}

The quadruple inequality \eqref{eq:2fmcatk:qi} follows from \cref{prp:prelim:qi} or equivalently \cite[Theorem 1.4]{Schoetz2026Trapezoid-arxiv}.
For the constant, we use $\diam(\seg yz\cup\seg qp) \leq \diam(\setC)$, since both segments stay in $\setC$ as $\setC$ is convex.
This gives \eqref{eq:2fmcatk:qi} with constant $C_\delta$ (\cref{rem:pos:sharper}), which in turn implies \eqref{eq:2fmcatk:qi} with $C_\Delta$.

The variance inequality \eqref{eq:2fmcatk:vi} is proved in \cref{thm:pos:hemisphere:vi} and the contraction property \eqref{eq:2fmcatk:CP} in \cref{thm:pos:hemisphere:cp}.
The weighted-sample and local-regression arguments of \cref{sec:two} remain unchanged apart from their explicit geometric constants.

\subsection{The Variance Inequality}\label{ssec:pos:vi}
Strong convexity, on which most earlier inequalities of \cref{rem:pos:literature} rest, is available only for $\Delta<\pi/2$ or $\delta<\pi/2$; to obtain a quadratic variance inequality throughout $\Delta<\pi$, we instead establish quadratic growth of the objective \emph{away from its minimizer}.
We first illustrate the proof of our variance inequality on a Hilbert sphere in \cref{prp:sphere:hemisphere:vi}, and then extend the proof to general \catk{} spaces in \cref{thm:pos:hemisphere:vi}.
The supporting lemmas are collected in \cref{ssec:pos:vi:lemmas}.

\begin{proposition}[Variance inequality on a Hilbert sphere]\label{prp:sphere:hemisphere:vi}
    Let $\mc H$ be a real Hilbert space and equip its unit sphere
    $\mc Q:=\{u\in\mc H:\normof u=1\}$ with the spherical distance
    $d(u,v):=\arccos(\ip uv)$.
    Let $\setC\subseteq\mc Q$ be nonempty, closed, and geodesically convex, and suppose that
    $\setC\subset B(o,R)$ for some $o\in\mc Q$ and $0<R<\pi/2$.
    Define
    \begin{equation}\label{eq:sphere:vi:constant}
        A_{2R}^{\mathrm{rad}}:=\max_{0\leq x\leq R}
        \frac{(R+x)^2}{
        \arccos\brOf{\frac{\cos(R)\cos(R+x)}{\cos(x)}}^2
        -\arccos\brOf{\frac{\cos(R)}{\cos(x)}}^2}
        \eqfs
    \end{equation}
    Then $A_{2R}^{\mathrm{rad}} \in [1,\infty)$ and, for every $\mu\in\mc P(\setC)$ and $q\in\setC$,
    \begin{equation}\label{eq:sphere:vi:radius}
        \ol{m_2(\mu)}q^2
        \leq A_{2R}^{\mathrm{rad}}
        \int\br{\ol yq^2-\ol y{m_2(\mu)}^2}\,\dl\mu(y)
        \eqfs
    \end{equation}
\end{proposition}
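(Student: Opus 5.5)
The plan is to reduce \eqref{eq:sphere:vi:radius} to a two-dimensional extremal problem by combining the first-order condition at $m:=m_2(\mu)$ with a Lagrangian (duality) argument, and then to solve that problem by spherical trigonometry. The reduction to the constant \eqref{eq:sphere:vi:constant} is the part I am least sure of.

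\emph{Setup.} Since $\setC\subset B(o,R)$ with $R<\pi/2$, all points of $\setC$ lie in an open hemisphere. Hence $\log_m$ is well defined on $\setC$, and $m\in\setC$ is a critical point of the smooth objective, so $\int\log_m(y)\,\dl\mu(y)=0$. Write $t:=\ol mq$ and $q=\cos(t)m+\sin(t)e$ with $e\in T_m\mc Q$ a unit vector. Write $y=\cos(r)m+\sin(r)u$ with $r=\ol ym$ and $u$ a unit tangent vector. Then
\[
\cos\ol yq=\cos r\cos t+\sin r\sin t\,\ip ue ,
\]
so the integrand depends on $y$ only through $r$ and $\ip ue$. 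Projecting $\log_m y$ onto $e$, the first-order condition gives the scalar constraint $\int h\,\dl\mu=0$ with $h(y):=r\ip ue$. Consequently
\[
\int\br{\ol yq^2-\ol ym^2}\dl\mu=\int\br{F(y)-\lambda h(y)}\dl\mu
\]
for every $\lambda\in\R$, where $F(y):=\ol yq^2-r^2$.

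\emph{Duality.} Because the constraint is linear and the functional is linear in $\mu$, the infimum of the excess over admissible $\mu$ is attained, by a Carathéodory-type argument, on measures with at most two atoms. Equivalently, it equals $\sup_\lambda\inf_y\br{F(y)-\lambda h(y)}$, with the infimum over admissible $y$. So it suffices to find one $\lambda=\lambda(t)$, presumably a deformation of the Euclidean value $-2t$, such that
\[
F(y)-\lambda h(y)\geq t^2/A_{2R}^{\mathrm{rad}}
\]
for all $y$ in the region allowed by $m,q,y\in\overline B(o,R)$. (The paper's hint of a change of measure plus Jensen for a jointly convex function suggests an alternative route via reweighting by $r/\sin r$-type factors; I would fall back on that if the multiplier does not close.)

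\emph{Pointwise minimization.} I would show that the pointwise minimum is attained at $y$ on the boundary sphere $\ol yo=R$, in the plane spanned by $m$, $q$ and the relevant direction, with $o$ placed as unfavourably as possible relative to $m$ and $q$. The resulting configuration is a spherical right triangle. The foot of the perpendicular is $m$; $q$ lies on the line at distance $t=R+x$; and $y$ lies on the perpendicular at leg length $b$ with $\cos b=\cos R/\cos x$ (hypotenuse $R$ and leg $x$ from $o$). The hypotenuse from $y$ to $q$ then has cosine $\cos b\cos(R+x)=\cos R\cos(R+x)/\cos x$. This reproduces exactly the ratio in \eqref{eq:sphere:vi:constant}, with $x\in[0,R]$ parametrizing the configuration, and maximizing over $x$ gives the constant.

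The main obstacle is this last step: choosing $\lambda$ correctly and proving that the extremal $y$ and $o$ are this right-triangle configuration. This is where I would invoke Yokota's comparison \cite[Proposition 23]{Yokota2016Barycenter-JMSJ} and monotonicity in $r$ along spherical triangles. Finally, the bounds $A_{2R}^{\mathrm{rad}}\in[1,\infty)$ are elementary. The denominator is positive on the compact set $[0,R]$, since a hypotenuse exceeds its leg, so the maximum is finite. At $x=0$ the ratio is $R^2/\br{\arccos(\cos^2R)^2-R^2}\geq1$, because the spherical Pythagorean law gives $\arccos(\cos^2R)^2\leq2R^2$.
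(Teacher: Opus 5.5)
Your duality step has a genuine gap. You note the full first-order condition $\int\log_m y\,\dl\mu(y)=0$ but keep only its $e$-component $\int h\,\dl\mu=0$. The quantity $\sup_\lambda\inf_y\bigl(F(y)-\lambda h(y)\bigr)$ is the value of this relaxed one-constraint problem, not of the problem over measures with mean $m$, so the claimed equivalence is false. In Euclidean space nothing is lost, because the excess depends on $y$ only through $h$. On the sphere, however, the excess of a point displaced orthogonally to $e$ decreases as the displacement grows. Only the balance in the orthogonal directions, in particular toward $o$, keeps $\mu$ from putting mass there, and you discarded it. Concretely, take $0<x<R$, a unit tangent vector $v\perp e$ at $m$, and
\[
o:=\cos(x)\,m+\sin(x)\,v,\qquad y:=\cos(R+x)\,m+\sin(R+x)\,v,\qquad q:=\cos(t)\,m+\sin(t)\,e,\quad \cos t=\frac{\cos R}{\cos x}.
\]
Then $m,q,y\in\overline B(o,R)$; perturb slightly to get into the open ball, since the violations below are strict. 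Moreover $h(y)=0$ and $F(y)=\arccos\bigl(\cos(t)\cos(R+x)\bigr)^2-(R+x)^2$, so $F(y)-\lambda h(y)=F(y)$ for every $\lambda$.

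If $R>\pi/4$ and $R+x>\pi/2$, then $F(y)<0$. For $R=1$, $x=0.3$ one gets $\ol mq^2/F(y)\approx2.9$, whereas $A_2^{\mathrm{rad}}\approx1.605$. In fact this is the paper's extremal right triangle with the roles of $\ol mq$ and $\ol my$ interchanged. Whenever $F(y)>0$, the strict inequality $\arccos(\cos a\cos b)^2<a^2+b^2$ for $0<a,b<\pi/2$ (\cref{lmm:pos:cosine-conversion}\,(ii)) shows that $\ol mq^2/F(y)$ exceeds the ratio in \eqref{eq:sphere:vi:constant} at the same $x$. At the maximizing $x$ it therefore exceeds $A_{2R}^{\mathrm{rad}}$, for every $R\in(0,\pi/2)$. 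So the pointwise certificate you seek does not exist for such $(o,m,q)$, even though $m$ is the mean of admissible measures such as $\delta_m$. Your right-triangle identification is correct for the true problem, but it cannot be derived inside the relaxed one.

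The missing ingredient is the full first-order condition combined with the enclosing ball; your fallback remark points to it, and it is not optional. The paper writes the condition as $\int y\,\frac{\ol ym}{\sin\ol ym}\,\dl\mu(y)=Km$ and tests it against $q$ and against $o$. Put $U:=\int\frac{\ol ym}{\sin\ol ym}\,\dl\mu(y)$ and $z:=K/U$. The test against $o$, together with $\ip yo>\cos R$, gives $z\geq\cos R/\ip mo\geq\cos R/\cos\bigl((\ol mq-R)_+\bigr)$, and this is exactly where the position of $o$ enters.

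The remaining steps use $g=\arccos^2$ and $H(v)=\sqrt{1-v^2}/\arccos(v)$:
\begin{itemize}
\item the change of measure $\dl\lambda\propto\dl\mu/H(\ip ym)$;
\item Jensen's inequality for the jointly convex map $(x,v)\mapsto H(x)\bigl(g(v)-g(x)\bigr)$;
\item the bound $UH(z)\geq1$ from concavity of $H$;
\item monotonicity of $z\mapsto g(az)-g(z)$.
\end{itemize}
Together they yield $\int(\ol yq^2-\ol ym^2)\,\dl\mu(y)\geq g(z\ip mq)-g(z)$, evaluated at the lower bound on $z$. For $\ol mq\geq R$ this is the denominator of \eqref{eq:sphere:vi:constant} at $x=\ol mq-R$. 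The range $\ol mq<R$, which your parametrization $t=R+x$ omits, is handled by $z\geq\cos R$ and \cref{lmm:pos:cosine-conversion}\,(ii).

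A vector-valued multiplier could in principle rescue your duality. The certificate would then have to depend on $o$, and the Jensen argument is precisely such a certificate.
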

In \cref{thm:pos:hemisphere:vi} we establish optimality in terms of the circumradius and show that $A_{2R}^{\mathrm{rad}}\leq C_{2R}$, with $C_x$ from \eqref{eq:pos:const:C}.

The main idea is to apply Jensen's inequality jointly to the two squared distances in the excess risk.
Writing $m=m_2(\mu)$ and $g=\arccos^2$, the spherical distance gives $\ol yq^2=g(\ip yq)$ and $\ol ym^2=g(\ip ym)$.
The map $(x,v)\mapsto g(v)-g(x)$ is not convex, so Jensen's inequality does not apply directly to their difference.
However, for $H(x)=\sin(\arccos(x))/\arccos(x)$, with $H(1)=1$, the map
\begin{equation*}
    (x,v)\mapsto H(x)\br{g(v)-g(x)}
\end{equation*}
is jointly convex, as proved in \cref{lmm:pos:cosine-conversion}\,(i).
A change of measure, with density proportional to $1/H(\ip ym)$, inserts the factor $H(\ip ym)$ into the integrand and makes this joint Jensen inequality applicable.
\begin{proof}
    Put $m:=m_2(\mu)$ and $F(q):=\int\ol yq^2\,\dl\mu(y)$.
    By \cite[Theorem B and Corollary 16]{Yokota2016Barycenter-JMSJ}, $m$ is the unique mean in the whole sphere and belongs to $\setC$.
    Use $g(v)=\arccos(v)^2$ and
    \begin{equation*}
        H(v)=\frac{\sin(\arccos(v))}{\arccos(v)}
        =\frac{\sqrt{1-v^2}}{\arccos(v)}
        \qquad(-1<v<1)\eqcm\qquad H(1)=1
        \eqcm
    \end{equation*}
    from \cref{lmm:pos:cosine-average}.
    Since $\ol ym\leq2R<\pi$, all weights
    $1/H(\ip ym)=\ol ym/\sqrt{1-\ip ym^2}$ below are bounded, with value $1$ at $y=m$.
    Define the normalizing constant $U$ and the probability measure $\lambda$ by
    \begin{equation*}
        U:=\int\frac1{H(\ip ym)}\,\dl\mu(y)
        \eqcm\qquad
        \dl\lambda(y):=\frac{\dl\mu(y)}{UH(\ip ym)}
        \eqfs
    \end{equation*}
    Thus $\dl\mu(y)=UH(\ip ym)\,\dl\lambda(y)$, and the change of measure gives
    \begin{equation}\label{eq:sphere:vi:change}
        \begin{split}
            F(q)-F(m)
            &=\int\br{g(\ip yq)-g(\ip ym)}\,\dl\mu(y)\\
            &=U\int H(\ip ym)\br{g(\ip yq)-g(\ip ym)}\,\dl\lambda(y)
            \eqfs
        \end{split}
    \end{equation}
    Applying the joint Jensen inequality of \cref{lmm:pos:cosine-conversion}\,(i) now yields
    \begin{equation}\label{eq:sphere:vi:jensen}
        \begin{split}
            F(q)-F(m)
            &\geq U H\brOf{\int\ip ym\,\dl\lambda(y)}\\
            &\qquad{}\times\br{g\brOf{\int\ip yq\,\dl\lambda(y)}
            -g\brOf{\int\ip ym\,\dl\lambda(y)}}
            \eqfs
        \end{split}
    \end{equation}

    To evaluate these averages, we use the minimizing property of $m$.
    Put
    \begin{equation*}
        K:=\int\frac{\ip ym}{H(\ip ym)}\,\dl\mu(y)
        \eqcm\qquad z:=\frac KU
        \eqfs
    \end{equation*}
    For any unit vector $h\in\mc H$ with $\ip mh=0$, the great circle
    $\gamma(s):=\cos(s)m+\sin(s)h$ lies on the sphere and satisfies $\gamma(0)=m$ and $\gamma'(0)=h$.
    Since $m$ minimizes $F$ on the whole sphere, the derivative of $F\circ\gamma$ at zero vanishes.
    The bound $\ol y{\gamma(s)}\leq2R+|s|<\pi$ for small $|s|$ justifies differentiation under the integral.
    Using $g'=-2/H$ gives
    \begin{equation*}
        0=\left.\frac{\dl}{\dl s}F(\gamma(s))\right|_{s=0}
        =-2\int\frac{\ip yh}{H(\ip ym)}\,\dl\mu(y)
        \eqfs
    \end{equation*}
    Thus $\int y/H(\ip ym)\,\dl\mu(y)$ is orthogonal to every tangent direction $h\perp m$, so it is a multiple of $m$.
    Taking its inner product with $m$ identifies that multiple as $K$, and hence
    \begin{equation*}
        \int\frac{y}{H(\ip ym)}\,\dl\mu(y)=Km
        \eqfs
    \end{equation*}
    Taking the Hilbert inner product with $p\in\mc Q$ yields the identity
    \begin{equation}\label{eq:sphere:vi:comparison}
        \int\frac{\ip yp}{H(\ip ym)}\,\dl\mu(y)
        =K\ip mp
        \eqfs
    \end{equation}
    By the definition of $z$ and \eqref{eq:sphere:vi:comparison},
    \begin{equation}\label{eq:sphere:vi:averages}
        \int\ip ym\,\dl\lambda(y)=z
        \eqcm\qquad
        \int\ip yq\,\dl\lambda(y)=z\ip mq
        \eqfs
    \end{equation}
    At $p=o$, \eqref{eq:sphere:vi:comparison} implies $\cos(R)U\leq\ip mo\,K$.
    Thus, using $\ol mo\geq(\ol mq-R)_+$,
    \begin{equation}\label{eq:sphere:vi:z}
        z\geq\frac{\cos(R)}{\ip mo}
        \geq\frac{\cos(R)}{\cos((\ol mq-R)_+)}\geq\cos(R)>0
        \eqfs
    \end{equation}
    In particular, $\int y\,\dl\lambda(y)=zm$, so $z\in(0,1]$ is the Hilbert norm of this weighted average.
    Concavity of $H$ gives
    \begin{equation}\label{eq:sphere:vi:concavity}
        1=U\int H(\ip ym)\,\dl\lambda(y)\leq UH(z)
        \eqfs
    \end{equation}
    Substituting \eqref{eq:sphere:vi:averages} into \eqref{eq:sphere:vi:jensen} and using \eqref{eq:sphere:vi:concavity}, we obtain
    \begin{equation}\label{eq:sphere:vi:nonlinear}
        \begin{split}
            F(q)-F(m)
            &\geq UH(z)\br{g(z\ip mq)-g(z)}\\
            &\geq g(z\ip mq)-g(z)
            \eqcm
        \end{split}
    \end{equation}
    where $g(z\ip mq)-g(z)\geq0$ because $z\ip mq\leq z$ and $g$ is decreasing.
    For $-1<a\leq1$, the function $z\mapsto g(az)-g(z)$ is nondecreasing on $(0,1]$:
    its derivative is $2/H(z)-2a/H(az)\geq0$, because for $a\geq0$ concavity of $H$ and $H(0)=2/\pi>0$ give $H(az)\geq aH(z)+(1-a)H(0)\geq aH(z)$, and for $a<0$ the sign is immediate.
    Substitution of \eqref{eq:sphere:vi:z} into \eqref{eq:sphere:vi:nonlinear} therefore yields
    \begin{equation}\label{eq:sphere:vi:master}
        F(q)-F(m)
        \geq g\brOf{\frac{\cos(R)\ip mq}{\cos((\ol mq-R)_+)}}
        -g\brOf{\frac{\cos(R)}{\cos((\ol mq-R)_+)}}
        \eqfs
    \end{equation}

    To obtain \eqref{eq:sphere:vi:radius}, the case $q=m$ is immediate.
    For $0\leq x\leq R$, write
    \begin{equation*}
        E_{2R}(x):=g\brOf{\frac{\cos(R)\cos(R+x)}{\cos(x)}}
        -g\brOf{\frac{\cos(R)}{\cos(x)}}
        \eqfs
    \end{equation*}
    This is continuous and positive on $[0,R]$, since $g$ is decreasing and $\cos(R+x)<1$, so the maximum in \eqref{eq:sphere:vi:constant} is finite.
    At $x=R$, we have $E_{2R}(R)=4R^2$, so $A_{2R}^{\mathrm{rad}}\geq1$.
    If $0<\ol mq\leq R$, \cref{lmm:pos:cosine-conversion}\,(ii) gives
    \begin{equation*}
        \frac{F(q)-F(m)}{\ol mq^2}
        \geq\frac{g(\cos(R)\ip mq)-g(\cos(R))}{\ol mq^2}
        \geq\frac{E_{2R}(0)}{R^2}
        \eqfs
    \end{equation*}
    If $R\leq\ol mq\leq2R$, put $x:=\ol mq-R$.
    Then \eqref{eq:sphere:vi:master} reads
    $F(q)-F(m)\geq E_{2R}(x)$.
    The definition \eqref{eq:sphere:vi:constant} now gives \eqref{eq:sphere:vi:radius} in both ranges.
\end{proof}
The proof of \cref{thm:pos:hemisphere:vi} below carries this argument over to complete \catk{} spaces, with $\cos(\ol xy)$ in place of $\ip xy$ and the curved first-order condition of \cref{lmm:pos:curved:foc} in place of the identity \eqref{eq:sphere:vi:comparison}, and then lets $R\searrow\varrho$.

\begin{theorem}[Variance inequality under positive curvature upper bound]\label{thm:pos:hemisphere:vi}
    Let $\kappa\in\Rpp$ and let $(\mc Q,d)$ be a complete \catk{} space.
    Let $\setC\subseteq\mc Q$ be a nonempty bounded closed convex set, and let $\varrho:=\inf_{o\in\mc Q}\sup_{y\in\setC}d(o,y)$ be its circumradius.
    Set $\Delta := 2\varrho\sqrt{\kappa}$.
    Suppose $0<\Delta<\pi$.
    
    \begin{enumerate}[label=(\roman*)]
        \item
        Define, for $0\leq x\leq\Delta/2$,
        \begin{equation}\label{eq:pos:vi:radius:functions}
            E_\Delta(x):=
            \arccos\brOf{\cos(\Delta/2)^2-\cos(\Delta/2)\sin(\Delta/2)\tan(x)}^2
            -\arccos\brOf{\frac{\cos(\Delta/2)}{\cos(x)}}^2
            \eqfs
        \end{equation}
        There is a unique $x_\Delta\in(0,\Delta/2)$ satisfying
        \begin{equation}\label{eq:pos:vi:radius:root}
            (\Delta/2+x_\Delta)E_\Delta'(x_\Delta)=2E_\Delta(x_\Delta).
        \end{equation}
        Set
        \begin{equation}\label{eq:pos:vi:radius:optimal}
            A_\Delta^{\mathrm{rad}}:=\frac{(\Delta/2+x_\Delta)^2}{E_\Delta(x_\Delta)}
            =\max_{0\leq x\leq\Delta/2}\frac{(\Delta/2+x)^2}{E_\Delta(x)}.
        \end{equation}
        Then, for every $\mu\in\mc P(\setC)$ and $q\in\setC$,
        \begin{equation}\label{eq:pos:vi}
            \ol{m_2(\mu)}q^2
            \leq
            A_\Delta^{\mathrm{rad}}
            \int\br{\ol yq^2-\ol y{m_2(\mu)}^2}\,\dl\mu(y)
            \eqfs
        \end{equation}
        The constant is optimal among constants depending only on $\Delta$, uniformly over all such spaces, sets, and measures, for each positive value of $\Delta$.
        Moreover,
        \begin{equation}\label{eq:pos:vi:radius:upper}
            A_\Delta^{\mathrm{rad}}<\frac1{\cos(\Delta/2)}<C_\Delta
            \eqcm
        \end{equation}
        so \eqref{eq:2fmcatk:vi} follows.
        \item 
        On the smaller and mean-dependent range $\ol{m_2(\mu)}q\leq\varrho$, the optimal radius-only constant is explicit and smaller. Writing
        \begin{equation}\label{eq:pos:vi:near:constant}
            A_\Delta^{\mathrm{near}}:=\frac{(\Delta/2)^2}{E_\Delta(0)}
            =\frac{(\Delta/2)^2}{\arccos\brOf{\cos(\Delta/2)^2}^2-(\Delta/2)^2}
            \eqcm
        \end{equation}
        one has $1\leq A_\Delta^{\mathrm{near}}<A_\Delta^{\mathrm{rad}}$ and
        \begin{equation}\label{eq:pos:vi:near}
            \ol{m_2(\mu)}q^2
            \leq
            A_\Delta^{\mathrm{near}}
            \int\br{\ol yq^2-\ol y{m_2(\mu)}^2}\,\dl\mu(y)
            \qquad\text{whenever}\quad\ol{m_2(\mu)}q\leq\varrho
            \eqcm
        \end{equation}
        and $A_\Delta^{\mathrm{near}}$ is optimal among constants depending only on $\Delta$ on that range.
        \item 
        For diameter alone, set $\delta:=\sqrt\kappa\,\diam(\setC)$ and define
        \begin{equation}\label{eq:pos:vi:constants}
            A_\delta^{\mathrm{diam}}:=
            \frac{\theta_\delta^2}{\delta^2-\theta_\delta^2},
            \qquad
            \theta_\delta:=\arccos\brOf{\sqrt{\cos(\delta)}}
            \quad(0<\delta<\pi/2).
        \end{equation}
        Set $A_\delta^{\mathrm{diam}}:=+\infty$ for $\pi/2\leq\delta<\pi$.
        Then, for $0<\delta<\pi/2$, every $\mu\in\mc P(\setC)$ and $q\in\setC$ satisfy
        \begin{equation}\label{eq:pos:vi:diameter}
            \ol{m_2(\mu)}q^2
            \leq
            A_\delta^{\mathrm{diam}}
            \int\br{\ol yq^2-\ol y{m_2(\mu)}^2}\,\dl\mu(y)
            \eqfs
        \end{equation}
        The constant $A_\delta^{\mathrm{diam}}$ is optimal among constants depending only on $\delta$, uniformly over all such spaces, sets, and measures, for each positive value of $\delta$.
        Moreover, no finite diameter-only constant exists at any fixed $\pi/2\leq\delta<\pi$ (\cref{prp:pos:diameterobstruction}\,\ref{itm:pos:obstruction:vi}).
    \end{enumerate} 
\end{theorem}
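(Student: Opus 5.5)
The plan is to transplant the Hilbert-sphere argument of \cref{prp:sphere:hemisphere:vi} into the \catk{} setting, after rescaling so that $\kappa=1$. Fix $R$ with $\varrho<R<\pi/2$ and a point $o$ with $\setC\subset\overline B(o,R)$; apply the argument at this $R$ and let $R\searrow\varrho$ at the end, using continuity of the constants in $\Delta$. Put $m:=m_2(\mu)$, which lies in $\setC$ by Yokota's Theorem B. The Hilbert inner products are replaced by the cosines $\cos(\ol ym)$ and $\cos(\ol yq)$. The spherical identity \eqref{eq:sphere:vi:comparison} is replaced by the curved first-order condition of \cref{lmm:pos:curved:foc}. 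I expect that lemma to give the one-sided inequality
\[
\int\frac{\cos(\ol yp)}{H(\cos\ol ym)}\,\dl\mu(y)\leq K\cos(\ol mp),
\qquad K:=\int\frac{\cos(\ol ym)}{H(\cos\ol ym)}\,\dl\mu(y),
\]
for $p\in\setC$. This is the comparison direction given by Yokota's Proposition 23: directional derivatives of $y\mapsto\ol yp^2$ along geodesics from $m$ are compared with the model sphere. Taking $p=o$ in the reverse-type estimate gives $z:=K/U\geq\cos(R)/\cos(\ol mo)$, where $\ol mo\geq(\ol mq-R)_+$. Taking $p=q$ bounds the $\lambda$-average of $\cos\ol yq$ by $z\cos\ol mq$, and this is the correct direction because $g=\arccos^2$ is decreasing.

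With these ingredients, the steps of the proof of \cref{prp:sphere:hemisphere:vi} go through unchanged. First, change measure by the density proportional to $1/H(\cos\ol ym)$. Second, apply the joint Jensen inequality of \cref{lmm:pos:cosine-conversion}(i) to $(x,v)\mapsto H(x)(g(v)-g(x))$. Since $g$ is decreasing in $v$, the upper bound on the $v$-average suffices, because monotonicity lets the inequality pass through. Third, use concavity of $H$ to get $UH(z)\geq1$. Fourth, use monotonicity of $z\mapsto g(az)-g(z)$. The resulting master inequality has the form \eqref{eq:sphere:vi:master}.

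The next step is converting that master inequality into the profile $E_\Delta$. In a general space the sphere bound $\cos(\ol mq)$ must be combined with the worst-case geometry. The apparent source of the $\tan(x)$ form in \eqref{eq:pos:vi:radius:functions} is to optimize jointly over $\ol mo$ and $\ol mq$, using the comparison triangle $o,m,q$ with $\ol oq\leq R$ rather than the crude bound $\ol mq\leq\ol mo+R$. Minimizing the master lower bound over the admissible $\ol mo$ gives $E_\Delta(x)$ at $\ol mq=\Delta/2+x$. Near the mean, where $\ol mq\leq\varrho$, the argument instead uses \cref{lmm:pos:cosine-conversion}(ii) and gives $E_\Delta(0)$ and hence $A_\Delta^{\mathrm{near}}$. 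The maximization then yields \eqref{eq:pos:vi:radius:optimal}. Existence and uniqueness of $x_\Delta$ follow from a sign analysis of $\frac{\dl}{\dl x}\log\bigl((\Delta/2+x)^2/E_\Delta(x)\bigr)$. The bound \eqref{eq:pos:vi:radius:upper} reduces to the scalar inequality $E_\Delta(x)>\cos(\Delta/2)(\Delta/2+x)^2$, which I would prove by elementary calculus.

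For the diameter version (iii), replace the center $o$ by $q$ itself: with $\ol yq\leq\delta$ and the same Jensen argument, optimize over $z$ to obtain $\theta_\delta$. Optimality in (i)–(iii) requires explicit examples on the $2$-sphere. Take a two- or three-point measure placed on the boundary of a cap of radius $R$, together with a point $q$ realizing the extremal configuration. Equality in the Jensen step forces degenerate measures, so the worst case is approached by such atoms. The nonexistence statement for $\delta\geq\pi/2$ is deferred to \cref{prp:pos:diameterobstruction}. I expect the main obstacle to be obtaining the curved first-order inequality with the correct sign for both $p=o$ and $p=q$ simultaneously, and then showing that the worst case over $\ol mo$ gives exactly $E_\Delta$ and the examples match it.
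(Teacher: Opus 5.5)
Your plan for (i) and (ii) follows the paper's route: rescale to $\kappa=1$, enclose $\setC$ in $B(o,R)$ with $\varrho<R<\pi/2$, reweight by $1/H(\cos\ol ym)$, apply the joint Jensen inequality, control the averages with \cref{lmm:pos:curved:foc}, use $UH(z)\geq1$ and monotonicity in $z$, apply \cref{lmm:pos:cosine-conversion}\,(ii) on $\ol mq\leq R$, and let $R\searrow\varrho$. Three points in that part are off.
\begin{itemize}
\item You state the first-order inequality only for $p\in\setC$, but you use it at $p=o$, which need not lie in $\setC$. \Cref{lmm:pos:curved:foc} does cover all $p\in\overline B(o,R)$. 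At $p=o$ it is the same one-sided inequality combined with $\cos(\ol yo)\geq\cos(R)$, not a reverse estimate.
\item The $\tan(x)$ in $E_\Delta$ does not come from a comparison-triangle optimization. The paper uses exactly the triangle inequality $\ol mo\geq(\ol mq-R)_+$. With $x=\ol mq-R$, the identity $\cos(R)\cos(R+x)/\cos(x)=\cos(R)^2-\cos(R)\sin(R)\tan(x)$ then turns the master inequality into $E_{2R}(x)$. Your minimization over admissible $\ol mo$ collapses to this anyway, since the triangle inequality is the only constraint.
\item ``A sign analysis'' does not yet give uniqueness of $x_\Delta$. You need $J_\Delta:=(\Delta/2+x)E_\Delta'/2-E_\Delta$ to be strictly increasing with $J_\Delta(0)<0<J_\Delta(\Delta/2)$. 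The paper obtains the monotonicity by showing that $E_\Delta'(x)/(\Delta/2+x)$ increases, via $\arccos(z)/\sqrt{1-z^2}=\int_0^\infty(\cosh u+z)^{-1}\,\dl u$.
\end{itemize}

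The genuine gap is in (iii), both the bound and its sharpness.

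\emph{The bound.} Centering at $q$ gives only $z\geq\cos(\delta)/\cos(\ol mq)$, and this degenerates to $\cos(\delta)$ as $\ol mq\to0$. The resulting coefficient is $(\delta^2-t^2)/\ol mq^2$ with $t=\arccos(\cos(\delta)/\cos(\ol mq))$, and it tends to $\delta\cot(\delta)$. So this route yields at best Ohta's constant $\tan(\delta)/\delta$, which is strictly larger than $A_\delta^{\mathrm{diam}}$ (about $1.56$ versus $1.25$ at $\delta=1$). No optimization over $z$ recovers $\theta_\delta$ from it.

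The missing idea is a lower bound on $z$ that does not depend on $q$. Integrate \cref{lmm:pos:curved:foc} at $p=r\in\setC$ against $\frac{\ol rm}{\sin(\ol rm)}\,\dl\mu(r)$. The right-hand side becomes $K^2$. Since $\cos(\ol yr)\geq\cos(\delta)$ on $\setC$, the left-hand side is at least $\cos(\delta)U^2$, so $z\geq\sqrt{\cos(\delta)}=\cos(\theta_\delta)$. With $z\geq\max\{\cos(\theta_\delta),\cos(\delta)/\cos(\ol mq)\}$ the argument splits into two ranges. On $\ol mq\leq\theta_\delta$, \cref{lmm:pos:cosine-conversion}\,(ii) applies. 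On $[\theta_\delta,\delta]$, one shows that $(\delta^2-t^2)/\ol mq^2$ is nondecreasing, using $t\leq\ol mq$ there. The minimum therefore sits at $\ol mq=\theta_\delta$.

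\emph{Sharpness.} Your optimality plan also fails for this constant. On a sphere, $z$ is the norm of the reweighted average. So for a measure with $n$ atoms the diagonal terms force $z^2\geq\cos(\delta)+(1-\cos(\delta))/n$, which keeps two- or three-point examples bounded away from $A_\delta^{\mathrm{diam}}$. The paper instead takes $n$ vertices of a regular simplex at mutual distance $\delta$ in a Hilbert sphere and lets $n\to\infty$. The two-point construction on $S^2$ suffices only for $A_\Delta^{\mathrm{rad}}$ and $A_\Delta^{\mathrm{near}}$.
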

\begin{proof}[Proof of \cref{thm:pos:hemisphere:vi}]
    Rescaling the metric by $\sqrt\kappa$ multiplies the squared distances on both sides of the desired inequality by $\kappa$.
    We may therefore assume $\kappa=1$, so that $\Delta=2\varrho$.
    Fix any $R\in(\varrho,\pi/2)$ and choose $o\in\mc Q$ such that $\setC\subset B(o,R)$.
    We first prove the bounds with parameter $2R$ and then let $R\searrow\varrho$ to recover the parameter $\Delta=2\varrho$.
    Put $m:=m_2(\mu)$ and $F(q):=\int\ol yq^2\,\dl\mu(y)$.
    Existence, uniqueness, and $m\in\setC\subset B(o,R)$ follow from \cite[Theorem B and Corollary 16]{Yokota2016Barycenter-JMSJ}.

    \textbf{Part 1: Nonlinear lower bound.}

    \emph{Change of measure and Jensen.}
    Use $g=\arccos^2$ and $H$ from \cref{lmm:pos:cosine-average}, and define
    \begin{equation*}
        U:=\int\frac1{H(\cos(\ol ym))}\,\dl\mu(y)
        \eqcm\qquad
        \dl\lambda(y):=\frac{\dl\mu(y)}{UH(\cos(\ol ym))}
        \eqfs
    \end{equation*}
    Here $1/H(\cos(\ol ym))=\ol ym/\sin(\ol ym)$ is bounded because $\ol ym\leq2R<\pi$, with value $1$ at $y=m$.
    The change of measure and joint Jensen steps \eqref{eq:sphere:vi:change}--\eqref{eq:sphere:vi:jensen} apply with each inner product replaced by the cosine of the corresponding distance.
    They give
    \begin{equation}\label{eq:pos:curved:jensen}
        \begin{split}
            F(q)-F(m)
            &\geq UH\brOf{\int\cos(\ol ym)\,\dl\lambda(y)}\\
            &\qquad{}\times\br{g\brOf{\int\cos(\ol yq)\,\dl\lambda(y)}
            -g\brOf{\int\cos(\ol ym)\,\dl\lambda(y)}}
            \eqfs
        \end{split}
    \end{equation}

    \emph{Curved comparison.}
    To bound these averages, put
    \begin{equation}\label{eq:pos:curved:AU}
        K:=\int\frac{\cos(\ol ym)}{H(\cos(\ol ym))}\,\dl\mu(y)
        \eqcm\qquad z:=\frac KU
        \eqfs
    \end{equation}
    In place of the spherical identity \eqref{eq:sphere:vi:comparison}, \cref{lmm:pos:curved:foc} (a version of \cite[Proposition 23]{Yokota2016Barycenter-JMSJ}) gives, for every $p\in\overline B(o,R)$,
    \begin{equation}\label{eq:pos:curved:comparison}
        \int\frac{\cos(\ol yp)}{H(\cos(\ol ym))}\,\dl\mu(y)
        \leq K\cos(\ol mp)
        \eqfs
    \end{equation}
    Thus the averages in \eqref{eq:pos:curved:jensen} satisfy
    \begin{equation}\label{eq:pos:curved:averages}
        \int\cos(\ol ym)\,\dl\lambda(y)=z
        \eqcm\qquad
        \int\cos(\ol yq)\,\dl\lambda(y)\leq z\cos(\ol mq)
        \eqfs
    \end{equation}
    At $p=o$, \eqref{eq:pos:curved:comparison} gives $\cos(R)U\leq K\cos(\ol mo)$.
    The enclosing-ball argument of \eqref{eq:sphere:vi:z} therefore applies unchanged and yields
    \begin{equation}\label{eq:pos:curved:z}
        z\geq\frac{\cos(R)}{\cos(\ol mo)}
        \geq\frac{\cos(R)}{\cos((\ol mq-R)_+)}\geq\cos(R)>0
        \eqfs
    \end{equation}
    Also $z\leq1$ by the first identity in \eqref{eq:pos:curved:averages}.
    The concavity argument of \eqref{eq:sphere:vi:concavity} still gives $UH(z)\geq1$.
    Since $g$ is decreasing, the inequality for the second average in \eqref{eq:pos:curved:averages} has the required direction in \eqref{eq:pos:curved:jensen}.
    As in \eqref{eq:sphere:vi:nonlinear}, we conclude that
    \begin{equation}\label{eq:pos:curved:nonlinear}
        \begin{split}
            F(q)-F(m)
            &\geq UH(z)\br{g(z\cos(\ol mq))-g(z)}\\
            &\geq g(z\cos(\ol mq))-g(z)
            \eqfs
        \end{split}
    \end{equation}

    \textbf{Part 2: Optimal radius constant.}

    The identity
    $\cos(R)\cos(R+x)/\cos(x)=\cos(R)^2-\cos(R)\sin(R)\tan(x)$
    identifies $E_{2R}$ from \eqref{eq:pos:vi:radius:functions} with the function used in the proof of \cref{prp:sphere:hemisphere:vi}.
    The monotonicity in $z$ and the two distance ranges in that proof apply unchanged to \eqref{eq:pos:curved:nonlinear}, using \eqref{eq:pos:curved:z}.
    They give
    \begin{equation}\label{eq:pos:vi:radius:near}
        F(q)-F(m)\geq\frac{E_{2R}(0)}{R^2}\ol mq^2
        \qquad\text{if }\ol mq\leq R
        \eqfs
    \end{equation}
    Both ranges, including the immediate case $q=m$, are covered by
    \begin{equation}\label{eq:pos:vi:radius:enclosing}
        \ol mq^2\leq A_{2R}^{\mathrm{rad}}\br{F(q)-F(m)}
        \eqcm\qquad
        A_{2R}^{\mathrm{rad}}=\max_{0\leq x\leq R}\frac{(R+x)^2}{E_{2R}(x)}
        \eqcm
    \end{equation}
    with the same finite maximum as in \eqref{eq:sphere:vi:constant}.
    After the substitution $x=Ru$, the maximum is taken over the fixed interval $u\in[0,1]$, and the maximum theorem shows that $R\mapsto A_{2R}^{\mathrm{rad}}$ is continuous.
    Letting $R\searrow\varrho$ in \eqref{eq:pos:vi:radius:enclosing} therefore gives $\ol mq^2\leq A_\Delta^{\mathrm{rad}}(F(q)-F(m))$, which is \eqref{eq:pos:vi}.
    If $\ol mq\leq\varrho$, then $q$ lies in the first range for every $R>\varrho$.
    Letting $R\searrow\varrho$ in \eqref{eq:pos:vi:radius:near}, and using the continuity of $R^2/E_{2R}(0)$, gives \eqref{eq:pos:vi:near}.
    Finally, \cref{lmm:pos:vi:root} with parameter $\Delta$ gives the unique-root characterization \eqref{eq:pos:vi:radius:root} and the maximum formula \eqref{eq:pos:vi:radius:optimal}.
    
    \textbf{Part 3: Optimal diameter constant.}

    Suppose $0<\delta<\pi/2$; the case $q=m$ is immediate.
    Integrate \eqref{eq:pos:curved:comparison} with $p=r$ against $\frac{\ol rm}{\sin(\ol rm)}\,\dl\mu(r)$.
    Since $\cos(\ol yr)\geq \cos(\delta)$ for $y,r\in\setC$, this gives $\cos(\delta)U^2\leq K^2$.
    The same pointwise bound in \eqref{eq:pos:curved:comparison} gives $\cos(\delta)U\leq\cos(\ol mq)K$ for each $q\in\setC$.
    Consequently,
    \begin{equation}\label{eq:pos:curved:diameter}
        z\geq b(\ol mq):=\max\{\sqrt{\cos(\delta)},\cos(\delta)/\cos(\ol mq)\}.
    \end{equation}
    By the monotonicity in $z$ used in Part 2, we may replace $z$ in \eqref{eq:pos:curved:nonlinear} by $b(\ol mq)$.

    For $0<\ol mq\leq\theta_\delta$, we have $b(\ol mq)=\cos(\theta_\delta)$.
    By \cref{lmm:pos:cosine-conversion}\,(ii),
    \begin{equation*}
        \frac{g(b(\ol mq)\cos(\ol mq))-g(b(\ol mq))}{\ol mq^2}
        \geq\frac{g(\cos(\theta_\delta)^2)-\theta_\delta^2}{\theta_\delta^2}
        =\frac{\delta^2-\theta_\delta^2}{\theta_\delta^2}.
    \end{equation*}
    For $\theta_\delta\leq\ol mq\leq\delta$, put $t:=\arccos(\cos(\delta)/\cos(\ol mq))\leq\ol mq$, regarded as a function of $\ol mq$.
    The coefficient is now $(\delta^2-t^2)/\ol mq^2$, and it is nondecreasing in $\ol mq$.
    Indeed, in the interior of this interval, with $u=t^2$ and $v=\ol mq^2$,
    \begin{equation*}
        -\frac{\dl u}{\dl v}=\frac{t\cot(t)}{\ol mq\cot(\ol mq)}\geq1
        \qquad\text{and}\qquad
        \frac{\delta^2-u}{v}\leq1
        \eqfs
    \end{equation*}
    The first bound follows from $t\leq\ol mq$ and the monotonicity of $s\mapsto s\cot(s)$ in \cref{lmm:pos:cosine-average}\,(ii).
    The second follows from \cref{lmm:pos:cosine-conversion}\,(ii), since $\cos(t)\cos(\ol mq)=\cos(\delta)$.
    Differentiating $(\delta^2-u)/v$ with respect to $v$ shows that its derivative is nonnegative precisely when $-\,\dl u/\dl v\geq(\delta^2-u)/v$, which follows from the two displayed inequalities.
    By continuity at the endpoints, its minimum therefore again occurs at $\ol mq=\theta_\delta$.
    This proves \eqref{eq:pos:vi:diameter}.

    \textbf{Part 4: Sharpness and bounds on the constants.}
    
    Both $\Delta$ and $\delta$ are invariant under rescaling, so examples with $\kappa=1$ suffice.
    \Cref{lmm:pos:vi:sharp:radius} realizes, in a spherical ball of circumradius exactly $\Delta/2$, the ratio $(\Delta/2+x)^2/E_\Delta(x)$ at every $x\in[0,\Delta/2]$, with $\ol mq=\Delta/2+x$.
    Taking $x=x_\Delta$ attains $A_\Delta^{\mathrm{rad}}$ and proves radius-only optimality.
    Taking $x=0$ gives $\ol mq=\Delta/2=\varrho$ and attains $A_\Delta^{\mathrm{near}}$, proving optimality on the restricted range $\ol mq\leq\varrho$.
    
    The proof of \cref{thm:pos:hemisphere:cp} uses two further consequences of Part~1. Convexity of $g$ and $g'=-2/H$ applied to the first inequality in \eqref{eq:pos:curved:nonlinear} give
    \begin{equation}\label{eq:pos:curved:intermediate}
        \begin{split}
            F(q)-F(m)
            &\geq UH(z)g'(z)z\br{\cos(\ol mq)-1}\\
            &=2K\br{1-\cos(\ol mq)}
            \eqfs
        \end{split}
    \end{equation}
    Set $t:=\arccos(\cos(R)/\cos(\ol mo))\in[0,R]$, with $t\cot(t)=1$ at $t=0$.
    Since $z\geq\cos(t)$ by \eqref{eq:pos:curved:z}, \cref{lmm:pos:cosine-average}\,(iii), applied to $X=\cos(\ol Ym)$ with $Y\sim\mu$, gives
    \begin{equation}\label{eq:pos:curved:A-lower}
        K=Uz\geq\frac z{H(z)}\geq t\cot(t)>0
        \eqfs
    \end{equation}

    For the strict upper bound on $A_\Delta^{\mathrm{rad}}$, let $x\in[0,\Delta/2]$, put $u_x:=(\Delta/2+x)/2$, and let $t_x:=t_\Delta(x)$ be as in \eqref{eq:pos:vi:radius:ts}. Then $\cos(t_x)\cos(2u_x)$ is the argument of the first arccosine in \eqref{eq:pos:vi:radius:functions}, so $E_\Delta(x)=g(\cos(t_x)\cos(2u_x))-g(\cos(t_x))$.
    Convexity of $g$ and $g'(\cos(t_x))=-2C_{t_x}$ (\cref{lmm:pos:cosine-average}\,(i)) give
    \begin{equation*}
        E_\Delta(x)
        \geq2t_x\cot(t_x)\br{1-\cos(2u_x)}
        =t_x\cot(t_x)\br{\frac{\sin(u_x)}{u_x}}^2\br{\frac\Delta2+x}^2
        >\cos(\Delta/2)\br{\frac\Delta2+x}^2
        \eqcm
    \end{equation*}
    with $t_x\cot(t_x)=1$ if $t_x=0$, where the last inequality is the strict form of \cref{lmm:pos:radius-scalar} with $a=x$ and $\Delta/2>0$ in place of $R$.
    Hence the maximand in \eqref{eq:pos:vi:radius:optimal} is less than $1/\cos(\Delta/2)$ on the compact interval $[0,\Delta/2]$, so $A_\Delta^{\mathrm{rad}}<1/\cos(\Delta/2)$, and $\sin(\Delta/2)<\Delta/2$ turns this into \eqref{eq:pos:vi:radius:upper}.
    Finally $E_\Delta(\Delta/2)=\Delta^2$ by \eqref{eq:pos:vi:radius:functions}, so the maximand in \eqref{eq:pos:vi:radius:optimal} equals $1$ there and $A_\Delta^{\mathrm{rad}}\geq1$.
    Together with $A_\Delta^{\mathrm{rad}}<1/\cos(\Delta/2)$, this gives its continuous extension with value $1$ at $\Delta=0$.
    For $R=\Delta/2$, \cref{lmm:pos:cosine-conversion}\,(ii) gives $E_\Delta(0)=g(\cos(R)^2)-R^2\leq R^2$, hence $A_\Delta^{\mathrm{near}}\geq1$.
    Since $x_\Delta>0$ is the unique maximizer in \eqref{eq:pos:vi:radius:optimal}, the value $A_\Delta^{\mathrm{near}}$ of the maximand at $x=0$ is strictly smaller than $A_\Delta^{\mathrm{rad}}$; and $1\leq A_\Delta^{\mathrm{near}}\leq A_\Delta^{\mathrm{rad}}$ shows that it also tends to $1$ as $\Delta\searrow0$.
    Diameter-only optimality at every $0<\delta<\pi/2$ is \cref{lmm:pos:vi:sharp:diameter}, and at each fixed $\pi/2\leq\delta<\pi$, \cref{prp:pos:diameterobstruction}\,\ref{itm:pos:obstruction:vi} below shows that no finite variance constant depending only on $\delta$ exists.
\end{proof}

\Needspace{8\baselineskip}
\subsection{Lemmas for the Variance Inequality}\label{ssec:pos:vi:lemmas}
We first establish the curved first-order condition and the scalar properties used in \cref{ssec:pos:vi}, then give the sharpness constructions.

\begin{lemma}[Curved first-order condition {\cite[Proposition 23]{Yokota2016Barycenter-JMSJ}}]\label{lmm:pos:curved:foc}
    Let $(\mc Q,d)$ be a complete \catof{1} space, $o\in\mc Q$, $0<R<\pi/2$, let $\mu$ be a probability measure concentrated on $B(o,R)$, and let $m:=m_2(\mu)$. Then, with $t/\sin(t):=1$ at $t=0$, every $p\in\overline B(o,R)$ satisfies
    \begin{equation*}
        \int\cos(\ol yp)\,\frac{\ol ym}{\sin(\ol ym)}\,\dl\mu(y)
        \leq\cos(\ol mp)\int\cos(\ol ym)\,\frac{\ol ym}{\sin(\ol ym)}\,\dl\mu(y)
        \eqfs
    \end{equation*}
\end{lemma}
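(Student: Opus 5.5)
The plan is to derive the inequality from the first-order optimality of $m$ along geodesics from $m$ to $p$, combined with the CAT(1) comparison for the function $q\mapsto\cos(\ol yq)$. Fix $p\in\overline B(o,R)$ with $p\neq m$; the case $p=m$ is trivial. Let $\gamma\colon[0,\ol mp]\to\mc Q$ be the unit-speed geodesic from $m$ to $p$. It is unique because $\ol mp<2R<\pi$, and it stays in $\overline B(o,R)$ because balls of radius below $\pi/2$ are convex in CAT(1) spaces. Since $m$ minimizes $F(q)=\int\ol yq^2\,\dl\mu(y)$, we have $F(\gamma(s))\geq F(m)$ for small $s>0$. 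The one-sided derivative of $s\mapsto\ol y{\gamma(s)}$ at $s=0$ is $-\cos\angle_m(y,p)$, the Alexandrov angle at $m$ between the segments to $y$ and to $p$. The distances are bounded away from $\pi$ (all points lie in $B(o,R)$), so dominated convergence lets us differentiate under the integral. This gives the first-order condition
\begin{equation*}
 \int \ol ym\cos\angle_m(y,p)\,\dl\mu(y)\leq 0.
\end{equation*}
Points $y=m$ contribute zero and can be ignored.

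Next we use spherical comparison. For each $y\neq m$, apply the CAT(1) angle comparison to the triangle $(m,y,p)$; its perimeter is less than $4R<2\pi$. The Alexandrov angle at $m$ is at most the comparison angle $\bar\angle_m$ in the model sphere. By the spherical law of cosines,
\begin{equation*}
 \cos(\ol yp)=\cos(\ol ym)\cos(\ol mp)+\sin(\ol ym)\sin(\ol mp)\cos\bar\angle_m(y,p),
\end{equation*}
and $\cos\bar\angle_m\leq\cos\angle_m$. Multiplying by $\ol ym/\sin(\ol ym)>0$ and integrating against $\mu$ yields
\begin{equation*}
 \int\cos(\ol yp)\frac{\ol ym}{\sin(\ol ym)}\dl\mu(y)\leq\cos(\ol mp)\int\cos(\ol ym)\frac{\ol ym}{\sin(\ol ym)}\dl\mu(y)+\sin(\ol mp)\int\ol ym\cos\angle_m(y,p)\,\dl\mu(y).
\end{equation*}
Since $\sin(\ol mp)\geq0$, the first-order condition makes the last term nonpositive, which gives the claim. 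The weight $\ol ym/\sin(\ol ym)$ is exactly what turns $\sin(\ol ym)$ into $\ol ym$, so that the derivative of the squared distance appears.

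The main obstacle is justifying the first-variation formula $\frac{\dl}{\dl s}\big|_{s=0^+}\ol y{\gamma(s)}=-\cos\angle_m(y,p)$ in a general CAT(1) space, along with the interchange of derivative and integral. I would cite the first variation formula for CAT($\kappa$) spaces (Bridson--Haefliger, or Burago--Burago--Ivanov for spaces with curvature bounded above). Alternatively, one can avoid angles: use the comparison-triangle definition directly to bound $\ol y{\gamma(s)}$ above by the model distance $\overline{\bar y\bar\gamma(s)}$, and differentiate that in the model sphere. This produces $-\cos\bar\angle_m$ directly, and the inequality points in the correct direction because only an upper bound on $F(\gamma(s))-F(m)$ is needed. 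That route is the one I would take, since it uses only the stated CAT($\kappa$) definition; the difference quotients are bounded by $2\ol ym\leq4R$, so dominated convergence applies.
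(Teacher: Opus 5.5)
\textbf{Gap: the perimeter bound.} Your strategy is the paper's: the first variation at $m$ along $[m,p]$ gives $\int\ol ym\cos(\angle_m(y,p))\,\dl\mu(y)\leq0$, then angle comparison and the spherical law of cosines are applied pointwise, weighted by $\ol ym/\sin(\ol ym)$, and integrated. The gap is your claim that the triangle $(m,y,p)$ has perimeter less than $4R$, which is false. Each side is less than $2R$, so you only get perimeter $<6R$, and this can reach $2\pi$ once $R>\pi/3$.

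A concrete example: take a tripod with legs of length $r<R$ about $o$. It is an $\R$-tree, hence $\mathsf{CAT}(0)$ and so $\mathsf{CAT}(1)$. Let $\mu=(1-\epsilon)\delta_{y_1}+\epsilon\delta_{y_2}$ with $y_1,y_2$ the ends of two legs, and let $p$ be the end of the third leg. Then $m$ lies on the first leg at distance $2r\epsilon$ from $y_1$. The triangle $(m,y_2,p)$ has perimeter $6r-4r\epsilon$, which exceeds $2\pi$ for $r$ near $\pi/2$ and small $\epsilon$.

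For such triangles the $\mathsf{CAT}(1)$ condition gives nothing. A spherical triangle with sides less than $\pi$ has perimeter at most $2\pi$, so no comparison triangle exists. Hence neither $\angle_m\leq\bar\angle_m$ nor the distance comparison in your alternative route is available, and the pointwise inequality you integrate is unjustified for those $y$. This is exactly why the paper reproves Yokota's proposition instead of citing it.

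\textbf{The fix.} Put $a=\ol mp$, $c=\ol ym$, $e=\ol yp$, all in $[0,\pi)$. Suppose $a+c+e\geq2\pi$. Since $e<\pi$, this forces $a+c>\pi$, so $e$ and $2\pi-a-c$ both lie in $[0,\pi]$ with $e\geq2\pi-a-c$. Therefore
\[
\cos(e)\leq\cos(a+c)=\cos(a)\cos(c)-\sin(a)\sin(c)\leq\cos(a)\cos(c)+\sin(a)\sin(c)\cos(\angle_m(y,p)),
\]
using only $\cos(\angle_m(y,p))\geq-1$. So $\sin(a)\sin(c)\cos(\angle_m(y,p))\geq\cos(e)-\cos(a)\cos(c)$ holds for every $y\neq m$: by comparison when the perimeter is below $2\pi$, and by this bound otherwise. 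The rest of your first route then goes through unchanged.

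In your alternative route the same patch works. For those $y$, use $\ol y{\gamma(s)}\leq\ol ym+s$, which amounts to setting the comparison cosine to $-1$, and the argument leads to the same inequality.
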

The statement is a special case of \cite[Proposition 23]{Yokota2016Barycenter-JMSJ}, which allows every $p$ with $\ol mp<\pi$. We include a proof because Yokota derives it from \cite[(22)]{Yokota2016Barycenter-JMSJ}, an inequality in which the comparison inner product is set to $-\infty$ for triangles of perimeter at least $2\pi$; such triangles occur here once $R>\pi/3$, and for them the proposition needs the pointwise bound in the proof below.
\begin{proof}
    The mean $m$ exists, is unique and lies in $B(o,R)$ \cite[Theorem B and Corollary 16]{Yokota2016Barycenter-JMSJ}. Put $a:=\ol mp$; if $a=0$ there is nothing to prove. All distances between points of $\overline B(o,R)$ are at most $2R<\pi$, so geodesics between them are unique. By the first variation formula \cite[Corollary II.3.6]{BridsonHaefliger1999NPC-Springer} and dominated convergence (the difference quotients of $\ol yq^2$ along $[m,p]$ are bounded by $4R$, since $[m,p]\subset\overline B(o,R)$ by convexity of balls of radius less than $\pi/2$ \cite[Proposition II.1.4(3)]{BridsonHaefliger1999NPC-Springer}), the right derivative at $m$ of $q\mapsto\frac12\int\ol yq^2\,\dl\mu(y)$ along $[m,p]$ is $-\int\ol ym\cos(\angle_m(p,y))\,\dl\mu(y)$, with the integrand read as $0$ at $y=m$. Minimality of $m$ gives
    \begin{equation*}
        \int\ol ym\cos(\angle_m(p,y))\,\dl\mu(y)\leq0
        \eqfs
    \end{equation*}
    Fix $y\neq m$ and put $c:=\ol ym$ and $e:=\ol yp$, so that $a,c\in(0,\pi)$ and $e\in[0,\pi)$. We claim
    \begin{equation*}
        \sin(a)\sin(c)\cos(\angle_m(p,y))\geq\cos(e)-\cos(a)\cos(c)
        \eqfs
    \end{equation*}
    If $a+c+e<2\pi$, this is the comparison $\angle_m(p,y)\leq\tilde\angle_m(p,y)$ \cite[Proposition II.1.7(4)]{BridsonHaefliger1999NPC-Springer} together with the spherical law of cosines.
    If $a+c+e\geq2\pi$, then $a+c>\pi$ because $e<\pi$, so $e$ and $2\pi-a-c$ both lie in $[0,\pi]$ with $e\geq2\pi-a-c$; hence $\cos(e)\leq\cos(a+c)=\cos(a)\cos(c)-\sin(a)\sin(c)$, and the claim follows from $\cos(\angle_m(p,y))\geq-1$.
    Multiplying the claim by $c/\sin(c)>0$, integrating, and noting that both integrands vanish at $y=m$, we obtain
    \begin{equation*}
        \int\frac{\ol ym}{\sin(\ol ym)}\br{\cos(\ol yp)-\cos(a)\cos(\ol ym)}\dl\mu(y)
        \leq\sin(a)\int\ol ym\cos(\angle_m(p,y))\,\dl\mu(y)
        \leq0
        \eqcm
    \end{equation*}
    which is the assertion. All integrands are bounded because $\ol ym/\sin(\ol ym)\leq2R/\sin(2R)$.
\end{proof}

\begin{lemma}[The cosine profile]\label{lmm:pos:cosine-average}
    Use $C_x$ as in \eqref{eq:pos:const:C}.
    Let $g(v):=\arccos(v)^2$ and
    \begin{equation*}
        H(v):=\frac{\sin(\arccos(v))}{\arccos(v)}
        =\frac{\sqrt{1-v^2}}{\arccos(v)}
        \qquad(-1<v<1)\eqcm
    \end{equation*}
    with $H(1):=1$ and derivatives at $v=1$ understood from the left.
    \begin{enumerate}[label=(\roman*)]
        \item $g$ is nonnegative, decreasing, and convex, with $g(1)=0$, $g\pr=-2/H$ and $g\pr(\cos(x))=-2C_x$ for $0\leq x<\pi$, and $H$ is positive, increasing, and concave.
        \item The map $v\mapsto v/H(v)$ is increasing, with value $t\cot(t)$ at $v=\cos(t)$, $t\in(0,\pi)$, and it tends to $1$ as $v\nearrow1$.
        \item Let $X$ be a random variable with values in $(-1,1]$ such that $U:=\Eof{1/H(X)}<\infty$, and put $K:=\Eof{X/H(X)}$ and $z:=K/U$. Then $z\in(-1,1]$, and if $z>0$, then
        \begin{equation*}
            K\geq\frac z{H(z)}
            \eqfs
        \end{equation*}
        In particular, $z\geq\cos(t)$ for some $t\in[0,\pi/2)$ implies $K\geq t\cot(t)>0$, with $t\cot(t)=1$ at $t=0$.
    \end{enumerate}
\end{lemma}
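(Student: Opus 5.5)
The three parts reduce to one-variable calculus after the substitution $v=\cos(x)$, $x=\arccos(v)\in[0,\pi)$, together with a Jensen argument for (iii). The identity that does most of the work is
\[
H(\cos(x))=\frac{\sin(x)}{x},
\]
with value $1$ at $x=0$.

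\emph{Part (i).} Differentiate $g(v)=\arccos(v)^2$ to get
\[
g'(v)=-\frac{2\arccos(v)}{\sqrt{1-v^2}}=-\frac{2}{H(v)}.
\]
At $v=\cos(x)$ this equals $-2x/\sin(x)=-2C_x$; at $v=1$ the left derivative is $-2=-2C_0$. This gives nonnegativity, the decreasing property, and $g(1)=0$ directly. $H$ is positive on $(-1,1]$ because $\sin(x)/x>0$ on $[0,\pi)$. $H$ is increasing because $x\mapsto\sin(x)/x$ decreases on $(0,\pi)$ while $x$ decreases in $v$. Convexity of $g$ then follows, since $g'=-2/H$ is increasing.

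The remaining claim, concavity of $H$, is the main obstacle. By the chain rule with $\dl x/\dl v=-1/\sin(x)$,
\[
H'(v)=\frac{\sin(x)-x\cos(x)}{x^2\sin(x)}=\frac1{x^2}-\frac{\cot(x)}{x}=:k(x).
\]
Concavity of $H$ in $v$ is equivalent to $k$ being nondecreasing in $x$ on $(0,\pi)$. I would show this with the partial-fraction expansion
\[
\cot(x)=\frac1x+\sum_{n\geq1}\frac{2x}{x^2-n^2\pi^2},
\]
which gives
\[
k(x)=\sum_{n\geq1}\frac{2}{n^2\pi^2-x^2}.
\]
Each summand is positive and increasing on $(0,\pi)$, so $k$ is increasing. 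Continuity at $v=1$ (where $k(0^+)=1/3$) handles the endpoint. As a fallback, one could differentiate $k$ directly and reduce to an elementary inequality such as $2\sin^2(x)-x^2-x\sin(x)\cos(x)\le0$ type estimates, but the series argument is cleaner.

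\emph{Part (ii).} At $v=\cos(t)$,
\[
\frac{v}{H(v)}=\frac{t\cos(t)}{\sin(t)}=t\cot(t).
\]
The function $t\cot(t)$ is strictly decreasing on $(0,\pi)$, since $(t\cot(t))'=(\sin(t)\cos(t)-t)/\sin^2(t)<0$, and it tends to $1$ as $t\searrow0$. Because $t$ decreases as $v$ increases, $v\mapsto v/H(v)$ is increasing with limit $1$ as $v\nearrow1$.

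\emph{Part (iii).} Since $1/H(X)>0$ and $U<\infty$, define the probability measure
\[
\dl\lambda:=\frac{1}{U\,H(X)}\,\dl\Pr.
\]
Then $z=\int X\,\dl\lambda$ is a $\lambda$-average of values in $(-1,1]$, so $z\in(-1,1]$. If $z>0$, extend $H$ continuously and concavely to $[-1,1]$ by $H(-1)=0$ and apply Jensen's inequality with the concavity from (i):
\[
\frac1U=\int H(X)\,\dl\lambda\leq H(z).
\]
Hence
\[
K=zU\geq\frac{z}{H(z)}.
\]
Finally, if $z\geq\cos(t)$ with $t\in[0,\pi/2)$, the monotonicity in (ii) gives
\[
\frac{z}{H(z)}\geq\frac{\cos(t)}{H(\cos(t))}=t\cot(t),
\]
which is positive because $t<\pi/2$, with the value $1$ at $t=0$.
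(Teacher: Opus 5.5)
Your proof is correct. Parts (ii) and (iii) follow the paper essentially verbatim: reweight by $1/(UH(X))$, apply Jensen to the concave $H$, then use monotonicity of $v/H(v)$. Your extension $H(-1)=0$ is harmless but unnecessary, since $X$ and $z$ already lie in $(-1,1]$, where $H$ is concave.

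The genuine difference is in (i).
\begin{itemize}
\item \emph{Convexity of $g$.} You get it from the fact that $g'=-2/H$ is increasing, because $H$ is positive and increasing. The paper computes $g''(\cos t)=2(\sin t-t\cos t)/\sin^3 t>0$.
\item \emph{Concavity of $H$}, the real obstacle. You invoke Euler's partial-fraction expansion of $\cot$ to write $H'(\cos x)=\sum_{n\geq1}2/(n^2\pi^2-x^2)$. This is visibly positive and increasing in $x$, so it gives monotonicity and strict concavity of $H$ in one stroke. The paper instead stays elementary. It computes $H''(\cos t)=-G(t)/(t^3\sin^3 t)$ with $G(t)=t^2+t\sin t\cos t-2\sin^2 t$. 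It then shows $G>0$ from $G(0)=G'(0)=0$ and $G''(t)=4\sin t(\sin t-t\cos t)>0$. That is exactly the inequality you mention as your fallback.
\end{itemize}

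What each route buys: yours is shorter and uses the same representation $H'(\cos t)=2S(t^2)$, with $S(w)=\sum_{n\geq1}(n^2\pi^2-w)^{-1}$, that the paper adopts in \cref{lmm:pos:cosine-conversion} to show $-HH''/(H')^2$ is decreasing. The paper's route avoids the infinite series and its citation, keeping this lemma self-contained.
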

\begin{proof}
    \emph{Proof of (i).}
    Nonnegativity and monotonicity of $g$ follow from those of $\arccos$, and
    $g\pr(v)=-2\arccos(v)/\sqrt{1-v^2}=-2/H(v)$, which equals $-2C_x$ at $v=\cos(x)$.
    At $v=\cos(t)$ with $t\in(0,\pi)$,
    \begin{equation*}
        g^{\prime\prime}(\cos(t))=\frac{2\br{\sin(t)-t\cos(t)}}{\sin(t)^3}>0
        \eqcm
    \end{equation*}
    since $\sin(t)-t\cos(t)$ vanishes at zero and has derivative $t\sin(t)>0$; hence $g$ is convex.
    Next, $H(\cos(t))=\sin(t)/t$ is positive and decreasing in $t\in(0,\pi)$, so $H$ is positive and increasing.
    For concavity,
    \begin{equation*}
        H^{\prime\prime}(\cos(t))=-\frac{G(t)}{t^3\sin(t)^3}
        \eqcm\qquad
        G(t):=t^2+t\sin(t)\cos(t)-2\sin(t)^2
        \eqcm
    \end{equation*}
    where $G(0)=G\pr(0)=0$ and $G^{\prime\prime}(t)=4\sin(t)(\sin(t)-t\cos(t))>0$ on $(0,\pi)$, so that $G>0$ and $H^{\prime\prime}<0$ there.

    \emph{Proof of (ii).}
    At $v=\cos(t)$ the quotient equals $\cos(t)\arccos(\cos(t))/\sqrt{1-\cos(t)^2}=t\cot(t)$, whose derivative in $t$ is $(\sin(t)\cos(t)-t)/\sin(t)^2<0$ on $(0,\pi)$ because $\sin(2t)<2t$.
    As $t\mapsto\cos(t)$ is decreasing, the quotient is increasing in $v$.

    \emph{Proof of (iii).}
    Since $1/H>0$, the law $\lambda$ of $X$ reweighted by the density $1/(UH(X))$ is a probability measure, and $z=\E_\lambda[X]$ is an average of values in $(-1,1]$, hence lies in $(-1,1]$.
    Jensen's inequality for the concave $H$ gives
    \begin{equation*}
        \frac1U=\E_\lambda\abOf{H(X)}\leq H\brOf{\E_\lambda\abOf{X}}=H(z)
        \eqcm
    \end{equation*}
    so that $U\geq1/H(z)$ and therefore $K=Uz\geq z/H(z)$ whenever $z>0$.
    By (ii), $z\geq\cos(t)$ with $0\leq t<\pi/2$ gives $z/H(z)\geq\cos(t)/H(\cos(t))=t\cot(t)>0$.
\end{proof}

\begin{lemma}[Conversion from cosine comparison]\label{lmm:pos:cosine-conversion}
    Let $g$ and $H$ be as in \cref{lmm:pos:cosine-average}.
    \begin{enumerate}[label=(\roman*)]
        \item The map $\Phi(x,v):=H(x)(g(v)-g(x))$ is jointly convex on $(-1,1]^2$.
        Consequently, if $X,V$ are random variables taking values in a compact subinterval of $(-1,1]$, then
        \begin{equation}\label{eq:pos:scalar:jensen}
            \EOf{H(X)\br{g(V)-g(X)}}
            \geq H\brOf{\EOf X}\br{g\brOf{\EOf V}-g\brOf{\EOf X}}
            \eqfs
        \end{equation}
        \item For $0\leq t<\pi/2$, the function
        $G_t(\ell):=g(\cos(t)\cos(\ell))$ is concave as a function of $\ell^2$ on $0\leq \ell<\pi/2$.
        Concavity is strict for $t>0$.
        Consequently $(G_t(\ell)-t^2)/\ell^2$ is nonincreasing for $0<\ell<\pi/2$, and $G_t(\ell)\leq t^2+\ell^2$ for $0\leq \ell<\pi/2$.
    \end{enumerate}
\end{lemma}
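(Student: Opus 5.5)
The plan is to treat the two parts separately, each by a second-order computation in the variable that makes the structure visible. I use the facts from \cref{lmm:pos:cosine-average}: $g'=-2/H$, $H>0$, $H'>0$, $H''<0$, and $g''>0$.

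\emph{Part (i).} I would first prove joint convexity of $\Phi$ on the open square $(-1,1)^2$ by checking that its Hessian is positive semidefinite, and then extend to $(-1,1]^2$ by continuity: $\Phi$ is continuous up to $x=1$ or $v=1$, and convexity is preserved under pointwise limits. The identity $Hg'=-2$ gives $(Hg)'=H'g-2$, and hence
\begin{equation*}
  \Phi_{xx}=H''(x)\br{g(v)-g(x)}+\frac{2H'(x)}{H(x)},\qquad
  \Phi_{vv}=H(x)g''(v),\qquad
  \Phi_{xv}=-\frac{2H'(x)}{H(v)}.
\end{equation*}
It remains to show $\Phi_{vv}\geq0$, $\Phi_{xx}\geq0$ and $\Phi_{xx}\Phi_{vv}\geq\Phi_{xv}^2$.
\begin{itemize}
  \item $\Phi_{vv}\geq0$ is immediate from $H>0$ and $g''>0$.
  \item When $v\geq x$, one has $g(v)\leq g(x)$ and $H''<0$, so $\Phi_{xx}\geq 2H'/H>0$.
  \item The delicate region is $v<x$, where the term $H''(x)(g(v)-g(x))$ is negative.
\end{itemize}
For the determinant I would substitute $x=\cos s$, $v=\cos r$ with $s,r\in[0,\pi)$. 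There
\begin{equation*}
  H=\frac{\sin s}{s},\qquad g''(\cos r)=\frac{2(\sin r-r\cos r)}{\sin^3 r},\qquad H''(\cos s)=-\frac{G(s)}{s^3\sin^3 s},
\end{equation*}
with $G$ as in \cref{lmm:pos:cosine-average}, and $H'(\cos s)=(\sin s-s\cos s)/(s^2\sin s)$. The determinant condition then becomes an explicit two-variable trigonometric inequality in $(s,r)$. Showing that this inequality holds is the step I expect to be the main obstacle.

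What I would try first is to fix $s$ and regard the determinant as a function of $r$. At $r=s$ the quantity $g(v)-g(x)$ vanishes, and there the inequality reduces to $H g''(x)\cdot 2H'/H\geq 4H'^2/H^2$, that is, $g''H^2\geq 2H'$. I expect this to be an identity with equality, since $g''=(2/H)'\cdot(-1)=2H'/H^2$. This suggests writing $g''=2H'/H^2$ throughout the determinant and showing that the remaining expression is monotone in $r$ on each side of $s$.

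Once convexity is established, \eqref{eq:pos:scalar:jensen} is the ordinary two-dimensional Jensen inequality for the random vector $(X,V)$. The compactness of its range inside $(-1,1]$ makes $\Phi$ bounded there, so the expectation is finite, and it places $\E X$ and $\E V$ in the same convex domain.

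\emph{Part (ii).} Put $c=\cos t$ and $w(\ell)=\arccos(c\cos\ell)$. Concavity of $G_t$ in $u=\ell^2$ is equivalent to
\begin{equation*}
  \frac{\dl G_t}{\dl u}=\frac{G_t'(\ell)}{2\ell}=\frac{c\sin\ell}{\ell}\cdot\frac{w}{\sin w}
\end{equation*}
being nonincreasing in $\ell$, where I used $g'=-2/H$ and $H(\cos w)=\sin w/w$.

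Geometrically, $w$ is the hypotenuse of a spherical right triangle with legs $t$ and $\ell$. For $t=0$ we have $w=\ell$, the slope is identically $1$, and the function is linear in $u$, which is the non-strict case. For $t>0$ I would differentiate $\log$ of the slope in $\ell$, using $w'=c\sin\ell/\sin w$, and reduce strict negativity to a comparison between $\ell\cot\ell$ and $w\cot w$ together with $w>\ell$. Here I would use \cref{lmm:pos:cosine-average}\,(ii), which says $s\mapsto s\cot s$ is decreasing.

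The consequences then follow in two steps.
\begin{itemize}
  \item Since $G_t=t^2$ at $u=0$, concavity in $u$ makes the chord slope $(G_t-t^2)/u$ nonincreasing.
  \item As $\ell\to0$ we have $w\to t$, so the slope at $u=0$ equals $c\,t/\sin t=t\cot t\leq1$. Concavity bounds every chord slope by this initial slope, which gives $G_t(\ell)\leq t^2+\ell^2$.
\end{itemize}
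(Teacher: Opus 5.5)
\textbf{Part (i): the determinant inequality is not proved.} The gap sits exactly where you expect it. The off-diagonal inequality $\Phi_{xx}\Phi_{vv}\geq\Phi_{xv}^2$ is the whole content of the joint convexity, and your proposal does not establish it. Everything around it is fine: the Hessian entries, the diagonal identity $g''=2H'/H^2$, the continuity extension and the Jensen step. Also, $\Phi_{xx}\geq0$ for $v<x$ is not a separate difficulty, since it follows from $\Phi_{vv}>0$ once the determinant is nonnegative.

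Your plan of arguing monotonically in $v$ on each side of the diagonal can be carried out, but it leads to a specific nontrivial fact you have not identified. Write $g(v)-g(x)=2I$ with $I=\int_v^x H(u)^{-1}\,\dl u$. Then $\det\nabla^2\Phi=4H(v)^{-2}F(v)$ with $F(v)=H(x)H''(x)H'(v)I+H'(x)H'(v)-H'(x)^2$ and $F(x)=0$. After dividing by $H'(v)>0$,
$\frac{\dl}{\dl v}\bigl(F(v)/H'(v)\bigr)=\frac{H'(x)^2}{H(v)}\bigl(\chi(x)-\chi(v)\bigr)$, where $\chi:=-HH''/(H')^2$.
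So nonnegativity on both sides of $v=x$ amounts to $\chi$ being decreasing on $(-1,1)$. This does not follow from $H>0$, $H'>0$, $H''<0$.

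The paper proves it with Euler's partial-fraction expansion of $t\cot(t)$. This gives $H'(\cos(t))=2S(t^2)$ with $S(w)=\sum_{n\geq1}(n^2\pi^2-w)^{-1}$, hence $\chi(\cos(t))=S'(w)/S(w)^2$ at $w=t^2$. The $w$-derivative of this has the sign of $SS''-2S'^2$, which is positive by Cauchy--Schwarz. Without this or an equivalent ingredient, part (i) remains unproved.

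\textbf{Part (ii): the announced reduction does not close.} Your setup coincides with the paper's: the slope in $u=\ell^2$ is $\cos(t)\frac{\sin\ell}{\ell}\frac{w}{\sin w}$, and you take a logarithmic derivative. The two consequences are correctly derived from concavity. However, monotonicity of $s\mapsto s\cot(s)$ is not enough.

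Using $\cos(w)=\cos(t)\cos(\ell)$, so that $w'=\tan(\ell)\cot(w)$, the logarithmic derivative equals $\tan(\ell)\bigl(k(w)-k(\ell)\bigr)$. Here $k(u)=\cot(u)/u-\cot(u)^2=p(u)(1-p(u))/u^2$ with $p(u)=u\cot(u)$. You therefore need $k$ to be strictly decreasing on $(0,\pi/2)$, and $p(w)<p(\ell)$ does not give this, for two reasons:
\begin{itemize}
\item it forces $1-p(w)>1-p(\ell)$, which pushes the wrong way;
\item where $p>1/2$, the factor $p(1-p)$ increases with $u$ and competes with $u^{-2}$.
\end{itemize}
An extra estimate is required. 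The paper computes $k'(u)=\frac{\cos(u)}{u^2\sin(u)^3}\bigl(2u^2-u\tan(u)-\sin(u)^2\bigr)$. It then shows the bracket is negative using $\tan(u)\geq u+u^3/3$ and $(\sin(u)/u)^2\geq1-u^2/3$.
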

\begin{proof}
    \emph{A prerequisite for (i).} We show that $\chi:=-HH''/(H')^2$ is decreasing on $(-1,1)$.
    Let $v=\cos(t)$ with $0<t<\pi$, and put $w:=t^2\in(0,\pi^2)$.
    Differentiating $H(\cos(t))=\sin(t)/t$ in $t$ gives $H'(\cos(t))=(1-t\cot(t))/t^2$, and Euler's partial fraction expansion $t\cot(t)=1-2\sum_{n=1}^\infty t^2/(n^2\pi^2-t^2)$ \cite[(4.22.3)]{DLMF2026-NIST} yields
    \begin{equation*}
        H'(\cos(t))=2S(w),\qquad S(w):=\sum_{n=1}^{\infty}\frac1{n^2\pi^2-w}
        \eqfs
    \end{equation*}
    This series and its termwise derivatives converge locally uniformly on $(-\infty,\pi^2)$.
    Since $\dl w/\dl v=-2t/\sin(t)=-2/H(v)$, we get $H''(\cos(t))=-4S'(w)/H(\cos(t))$, hence $\chi(\cos(t))=S'(w)/S(w)^2$, and
    \begin{equation*}
        \begin{aligned}
        \frac{\dl}{\dl w}\frac{S'(w)}{S(w)^2}
        &=\frac{S(w)S''(w)-2S'(w)^2}{S(w)^3}
        \\&=\frac2{S(w)^3}\br{\sum_{n=1}^{\infty}\frac1{n^2\pi^2-w}\sum_{n=1}^{\infty}\frac1{(n^2\pi^2-w)^3}-\br{\sum_{n=1}^{\infty}\frac1{(n^2\pi^2-w)^2}}^2}
        >0
        \end{aligned}
    \end{equation*}
    by the Cauchy--Schwarz inequality, strictly because the numbers $n^2\pi^2-w$ are not all equal.
    Hence $\chi(\cos(t))$ increases with $t$, that is, $\chi$ decreases with its argument.

    \emph{Proof of (i).}
    For $-1<x,v<1$, put $I:=\int_v^xH(u)^{-1}\,\dl u$, so that $g(v)-g(x)=2I$ by $g\pr=-2/H$.
    Since $(1/H')'=-H''/(H')^2=\chi/H$,
    \begin{equation*}
        \frac1{H'(x)}-\frac1{H'(v)}-\chi(x)I
        =\int_v^x\frac{\chi(u)-\chi(x)}{H(u)}\,\dl u
        \geq0
        \eqcm
    \end{equation*}
    in both orientations, because $\chi$ is decreasing: for $v\leq x$ the integrand is nonnegative on $[v,x]$, and for $v>x$ it is nonpositive on $[x,v]$ while the orientation of the integral is reversed.
    By \cref{lmm:pos:cosine-average}\,(i), $g\pr=-2/H$, hence $g''=2H'/H^2$, and $H$ is positive and increasing.
    The second derivatives of $\Phi$ are
    \begin{equation*}
        \Phi_{xx}=2H''(x)I+\frac{2H'(x)}{H(x)}
        \eqcm\qquad
        \Phi_{xv}=-\frac{2H'(x)}{H(v)}
        \eqcm\qquad
        \Phi_{vv}=\frac{2H(x)H'(v)}{H(v)^2}>0
        \eqcm
    \end{equation*}
    and $H(x)H''(x)=-\chi(x)H'(x)^2$ gives
    \begin{equation*}
        \begin{split}
        \frac{\det\nabla^2\Phi(x,v)}{g'(v)^2}
        &=H(x)H''(x)H'(v)I+H'(x)H'(v)-H'(x)^2\\
        &=H'(x)^2H'(v)
        \br{\frac1{H'(x)}-\frac1{H'(v)}-\chi(x)I}\geq0
        \eqfs
        \end{split}
    \end{equation*}
    Thus $\nabla^2\Phi$ is positive semidefinite on $(-1,1)^2$.
    Continuity extends convexity to $(-1,1]^2$, including both edges $x=1$ and $v=1$.
    Jensen's inequality for $\Phi$ applied to $(X,V)$ now gives \eqref{eq:pos:scalar:jensen}.

    \emph{Proof of (ii).}
    The case $t=0$ is immediate.
    Otherwise set $s:=\arccos(\cos(t)\cos(\ell))$ and $a(u):=u/\sin(u)$.
    Differentiation yields
    \begin{equation*}
        \frac{\dl G_t}{\dl(\ell^2)}=\cos(t)\,\frac{a(s)}{a(\ell)},\qquad
        \frac{\dl}{\dl \ell}\log\br{\frac{\dl G_t}{\dl(\ell^2)}}
        =\tan(\ell)\br{k(s)-k(\ell)},
        \quad k(u):=\frac{\cot(u)}{u}-\cot(u)^2.
    \end{equation*}
    On $(0,\pi/2)$,
    \begin{equation*}
        k'(u)=\frac{\cos(u)}{u^2\sin(u)^3}
        \br{2u^2-u\tan(u)-\sin(u)^2}<0
        \eqcm
    \end{equation*}
    because
    \begin{equation*}
        u\tan(u)+\sin(u)^2
        \geq u^2\br{1+\frac{u^2}3}+u^2\br{1-\frac{u^2}3}
        =2u^2
        \eqcm
    \end{equation*}
    with strict inequality for $u>0$.
    Here $\tan(u)\geq u+u^3/3$ follows by integrating $\tan'(u)=1+\tan(u)^2\geq1+u^2$, strictly for $u>0$, and $(\sin(u)/u)^2\geq(1-u^2/6)^2\geq1-u^2/3$ follows by squaring $\sin(u)\geq u-u^3/6$, whose right-hand side is nonnegative for $0<u<\pi/2$.
    Since $\ell<s<\pi/2$, the logarithmic derivative is strictly negative, proving strict concavity.
    The secant slopes from zero therefore decrease strictly when $t>0$ and are constant when $t=0$.
    Finally the derivative at $\ell^2=0$ is $t\cot(t)\leq1$, so $G_t(\ell)\leq G_t(0)+\ell^2=t^2+\ell^2$.
\end{proof}

\Needspace{10\baselineskip}
\begin{lemma}[Unique-root characterization of the radius constant]\label{lmm:pos:vi:root}
    Let $0<\Delta<\pi$ and let $E_\Delta$ be as in \eqref{eq:pos:vi:radius:functions}, so that $E_\Delta=s_\Delta^2-t_\Delta^2$ for
    \begin{equation}\label{eq:pos:vi:radius:ts}
        t_\Delta(x):=\arccos\brOf{\frac{\cos(\Delta/2)}{\cos(x)}}
        \eqcm\qquad
        s_\Delta(x):=\arccos\brOf{\cos(\Delta/2)^2-\cos(\Delta/2)\sin(\Delta/2)\tan(x)}
        \eqcm
    \end{equation}
    defined for $0\leq x\leq\Delta/2$.
    Then \eqref{eq:pos:vi:radius:root} has a unique solution $x_\Delta\in(0,\Delta/2)$, and $x_\Delta$ is the unique maximizer of $x\mapsto(\Delta/2+x)^2/E_\Delta(x)$ on $[0,\Delta/2]$, so that \eqref{eq:pos:vi:radius:optimal} holds.
\end{lemma}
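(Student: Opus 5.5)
Write $R:=\Delta/2$, $\ell:=R+x\in[R,2R]$ and $v:=\ell^2$, and regard $E_\Delta$, $s_\Delta$, $t_\Delta$ as functions of $v\in[R^2,4R^2]$. The maximand is then $r(v)=v/E_\Delta$. Recall from the proof of \cref{thm:pos:hemisphere:vi} that $E_\Delta>0$ on the whole interval, with $E_\Delta(\Delta/2)=\Delta^2$. Consequently
\[
r'(v)=\frac{E_\Delta-vE_\Delta'(v)}{E_\Delta^2},
\qquad
E_\Delta'(v)=\frac{\dl E_\Delta/\dl x}{2\ell}.
\]
So the sign of $r'$ is the sign of $h(v):=E_\Delta-v\,\dl E_\Delta/\dl v$. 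In the variable $x$, $2h$ is exactly $2E_\Delta(x)-(\Delta/2+x)E_\Delta'(x)$. Hence the roots of \eqref{eq:pos:vi:radius:root} are precisely the critical points of $r$. Both claims of the lemma then follow from three facts: $h>0$ at $x=0$, $h<0$ at $x=\Delta/2$, and $h$ changes sign exactly once. The single crossing gives a unique root $x_\Delta\in(0,\Delta/2)$ at which $r$ switches from increasing to decreasing. That root is therefore the unique maximizer and yields \eqref{eq:pos:vi:radius:optimal}.

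\emph{Endpoints.} I will compute both endpoint derivatives explicitly from \eqref{eq:pos:vi:radius:ts}.
\begin{itemize}
\item Derivative of $t_\Delta$: $\dl(t_\Delta^2)/\dl x=-2t_\Delta\tan(x)\cos(\Delta/2)/(\cos(x)\sin(t_\Delta))$. This equals $0$ at $x=0$. At $x=\Delta/2$ it tends to the finite value $-2\tan(\Delta/2)$, since $t_\Delta/\sin t_\Delta\to1$ and $\cos(\Delta/2)/\cos(x)\to1$.
\item Derivative of $s_\Delta$: $\dl(s_\Delta^2)/\dl x=2s_\Delta\cos(\Delta/2)\sin(\Delta/2)\sec^2(x)/\sin(s_\Delta)$.
\end{itemize}
At $x=\Delta/2$ we have $s_\Delta=\Delta$, $t_\Delta=0$ and $E_\Delta=\Delta^2$. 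The condition $h<0$ then reduces to the scalar inequality
\[
\Delta\bigl(C_\Delta\sin(\Delta)/\cos(\Delta/2)^2\cdot\tfrac12+\tan(\Delta/2)\bigr)>2\Delta,
\]
which I will check using $C_\Delta>1$ and $\tan(y)>y$. At $x=0$, the condition $h>0$ reads $2\bigl(s_0^2-R^2\bigr)>R\cdot2s_0\cos R\sin R/\sin s_0$, where $\cos s_0=\cos^2R$. I expect to verify this via \cref{lmm:pos:cosine-conversion}\,(ii) and a one-variable inequality in $R$. It is also plausible that \cref{lmm:pos:cosine-conversion}\,(ii) gives the comparison $G_t$ versus $\ell^2$ directly.

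\emph{Single crossing (main obstacle).} My first attempt will be to show that $E_\Delta$ is strictly convex as a function of $v$. Then $\dl h/\dl v=-v\,\dl^2E_\Delta/\dl v^2<0$, so $h$ is strictly decreasing and the single crossing is immediate. For this I would split $E_\Delta=s^2-t^2$. For $-t^2$, I would use the relation $\cos t\cos x=\cos(\Delta/2)$ together with the monotonicity of $u\mapsto u\cot u$ and of $k$ from \cref{lmm:pos:cosine-conversion}, mimicking the computation in Part~3 of the proof of \cref{thm:pos:hemisphere:vi}. For $s^2$, the argument of the arccosine is affine in $\tan x$, which should allow a similar logarithmic-derivative computation.

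If global convexity fails, I will fall back on transversality. Here I would show that $\dl h/\dl x<0$ at every zero of $h$; equivalently, at any point where $E_\Delta'=2E_\Delta/\ell$, one has $E_\Delta''>2E_\Delta'/\ell-2E_\Delta/\ell^2$, i.e., $E_\Delta''>E_\Delta'/\ell$. This again excludes a second sign change. Deriving a usable expression for $E_\Delta''$ and proving this inequality uniformly for $0<\Delta<\pi$ is where I expect most of the work. Near $\Delta\to\pi$ the quantities degenerate, because $\tan(\Delta/2)\to\infty$, so the estimates must be kept scale-free.
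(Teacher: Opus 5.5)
Your reduction is the paper's. With $R=\Delta/2$, your $h$ equals $-J_\Delta$ for the paper's $J_\Delta(x)=(R+x)E_\Delta'(x)/2-E_\Delta(x)$. The paper's identity $J_\Delta'(x)=\frac{(R+x)^2}{2}\frac{\mathrm{d}}{\mathrm{d}x}\frac{E_\Delta'(x)}{R+x}$ shows that $J_\Delta'>0$ is exactly strict convexity of $E_\Delta$ in $v=(R+x)^2$, your "first attempt". Your endpoint checks are also the ones used there: strict concavity from \cref{lmm:pos:cosine-conversion}\,(ii) at $x=0$, and the explicit derivative at $x=R$.

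There is one slip at $x=\Delta/2$: the right-hand side must be $E_\Delta(\Delta/2)=\Delta^2$, not $2\Delta$. The correct inequality $\Delta/(2\cos^2R)+\tan R>\Delta$ follows from $\cos^2R<1$ and $\tan R>R$. Since $C_\Delta\sin\Delta=\Delta$ identically, $C_\Delta>1$ plays no role. Your version with $2\Delta$ is false for small $\Delta$.

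The genuine gap is the single-crossing step, which is the substance of the lemma, and the route you propose for it fails.
\begin{itemize}
\item Convexity of $-t_\Delta^2$ in $v$ is fine: its $v$-derivative $t_\Delta\cot(t_\Delta)\tan(x)/(R+x)$ is a product of nonnegative nondecreasing factors.
\item $s_\Delta^2$ is not convex in $v$. For small $\Delta$ one has $s_\Delta^2\approx 2R\sqrt v$ and $-t_\Delta^2\approx v-2R\sqrt v$. Hence $E_\Delta\approx v$, and its convexity is a higher-order remainder left after two strongly curved $\sqrt v$ terms cancel.
\item Already at $\Delta=1$, the $v$-slope of $s_\Delta^2$ drops from about $0.91$ at $x=0$ to about $0.65$ at $x=1/2$.
\end{itemize}
So the two arccosines cannot be treated separately. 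Your transversality fallback is the same inequality $E_\Delta''>E_\Delta'/(R+x)$ imposed at one point, and you give no mechanism for proving it.

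The missing idea in the paper is a representation that handles $s_\Delta^2-t_\Delta^2$ jointly.
\begin{itemize}
\item Write $E_\Delta=g(z_1)-g(z_2)$ with $z_1=\cos R\cos(R+x)/\cos x$ and $z_2=\cos R/\cos x$.
\item Use $\arccos(z)/\sqrt{1-z^2}=\int_0^\infty(\cosh u+z)^{-1}\,\mathrm{d}u$ to get $E_\Delta(x)=2\int_0^\infty f_{\cosh u}(x)\,\mathrm{d}u$, where $f_a(x)=\log\frac{a\cos x+\cos R}{a\cos x+\cos R\cos(R+x)}$.
\item Half-angle identities give $f_a'(x)/\tan((R+x)/2)=\cos R/N_a+(a+1)\cos^2R/(N_aM_a)$, with $N_a,M_a$ the numerator and denominator. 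This is positive and increasing in $x$.
\item Since $\tan(s/2)/s$ is also increasing, each $f_a'(x)/(R+x)$ is increasing.
\item An $O(1/a)$ bound on $f_a,f_a',f_a''$ justifies differentiating under the integral.
\end{itemize}
Without this, or some other argument that treats the two terms together, your proposal does not establish uniqueness of the root.
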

\begin{proof}
    Write $R:=\Delta/2$.
    Derivatives of $E_\Delta$ at the endpoints are understood from within $[0,R]$.

    \emph{Positivity.}
    Differentiating $\cos(s_\Delta(x))=\cos(R)^2-\cos(R)\sin(R)\tan(x)$ and $\cos(t_\Delta(x))=\cos(R)/\cos(x)$ gives
    \begin{equation*}
        s_\Delta'(x)=\frac{\cos(R)\sin(R)}{\cos(x)^2\sin(s_\Delta(x))}
        \eqcm\qquad
        t_\Delta'(x)=-\cot(t_\Delta(x))\tan(x)
        \eqcm
    \end{equation*}
    and $E_\Delta'/2=s_\Delta s_\Delta'-t_\Delta t_\Delta'$ becomes
    \begin{equation}\label{eq:pos:vi:radius:derivative}
        \frac{E_\Delta'(x)}2
        =\frac{\cos(R)\sin(R)}{\cos(x)^2}\frac{s_\Delta(x)}{\sin(s_\Delta(x))}
        +t_\Delta(x)\cot(t_\Delta(x))\tan(x)
        \eqcm
    \end{equation}
    where $t\cot(t)=1$ at $t=0$.
    On $[0,R]$ we have $s_\Delta(x)\in(0,\pi)$ and $t_\Delta(x)\in[0,R]\subset[0,\pi/2)$, so the first summand is positive and the second is nonnegative; hence $E_\Delta$ is strictly increasing there.
    Since $\cos(R)^2<\cos(R)$, we have $E_\Delta(0)=\arccos(\cos(R)^2)^2-R^2>0$, and therefore $E_\Delta>0$ on $[0,R]$.

    \emph{Monotonicity of $J_\Delta$.}
    Put $J_\Delta(x):=(R+x)E_\Delta'(x)/2-E_\Delta(x)$.
    With $g$ as in \cref{lmm:pos:cosine-average}, the identity
    \begin{equation*}
        \frac{\arccos(z)}{\sqrt{1-z^2}}
        =\int_0^\infty\frac{1}{\cosh(u)+z}\,\dl u
        \qquad(-1<z\leq1)
    \end{equation*}
    follows by the substitution $v=\tanh(u/2)$, with the value at $z=1$ interpreted continuously.
    Integrating $g\pr(z)=-2\arccos(z)/\sqrt{1-z^2}$ yields
    \begin{equation*}
        E_\Delta(x)=2\int_0^\infty f_{\cosh(u)}(x)\,\dl u,
        \qquad
        f_a(x):=\log\frac{a\cos(x)+\cos(R)}{a\cos(x)+\cos(R)\cos(R+x)}.
    \end{equation*}
    For $a\geq1$, put $N_a:=a\cos(x)+\cos(R)$ and $M_a:=a\cos(x)+\cos(R)\cos(R+x)$.
    Both are positive and nonincreasing on $[0,R]$, with $M_a$ strictly decreasing.
    Moreover $M_a(x)\geq M_1(x)\geq M_1(R)=2\cos(R)^3>0$.
    Differentiation gives
    \begin{equation*}
        f_a'(x)=\frac{N_a'}{N_a}-\frac{M_a'}{M_a}
        =\frac{\cos(R)\br{a\sin(x)+a\sin(R)+\cos(R)\sin(R+x)}}{N_aM_a}
        \eqfs
    \end{equation*}
    With $v:=(R+x)/2$, the identities $\sin(x)+\sin(R)=2\sin(v)\cos((R-x)/2)$, $\sin(R+x)=2\sin(v)\cos(v)$, $2\cos(v)\cos((R-x)/2)=\cos(R)+\cos(x)$, and $2\cos(v)^2=1+\cos(R+x)$ give
    \begin{equation*}
        \begin{split}
        a\br{\sin(x)+\sin(R)}+\cos(R)\sin(R+x)
        &=\tan(v)\br{a\br{\cos(R)+\cos(x)}+\cos(R)\br{1+\cos(R+x)}}\\
        &=\tan(v)\br{M_a+(a+1)\cos(R)}
        \eqcm
        \end{split}
    \end{equation*}
    and therefore
    \begin{equation*}
        \frac{f_a'(x)}{\tan((R+x)/2)}
        =\frac{\cos(R)}{N_a}+\frac{\cos(R)^2(a+1)}{N_aM_a}.
    \end{equation*}
    The right-hand side is positive and has a positive derivative in $x$, since $N_a$ and $M_a$ are positive, $N_a'\leq0$, and $M_a'(x)=-a\sin(x)-\cos(R)\sin(R+x)<0$.
    Also $s\mapsto\tan(s/2)/s$ is positive with a positive derivative for $0<s<\pi$.
    Hence $f_a'(x)/(R+x)$, the product of $f_a'(x)/\tan((R+x)/2)$ and $\tan((R+x)/2)/(R+x)$, has a positive derivative in $x$.
    To differentiate under the integral, write $f_a=\log(1+D/M_a)$ with $D(x):=\cos(R)(1-\cos(R+x))$, which does not depend on $a$.
    For $a\geq1$, we have $M_a\geq a\cos(R)^3$: if $\cos(R+x)\geq0$, then $M_a\geq a\cos(x)\geq a\cos(R)$, and otherwise $M_a\geq aM_1(x)\geq2a\cos(R)^3$; moreover, $M_a'$ and $M_a''$ are bounded by $a+1$ in absolute value; since $D$, $D'$, and $D''$ are bounded, $f_a$, $f_a'$, and $f_a''$ are of order $1/a$ as $a\to\infty$, uniformly in $x\in[0,R]$.
    With $a=\cosh(u)\geq\euler^u/2$, this is an integrable $\landauO(\euler^{-u})$ bound, so $E_\Delta'(x)=2\int_0^\infty f_{\cosh(u)}'(x)\,\dl u$ and similarly for $E_\Delta''$.
    Consequently, $E_\Delta'(x)/(R+x)=2\int_0^\infty f_{\cosh(u)}'(x)/(R+x)\,\dl u$ has a positive derivative, and
    \begin{equation*}
        J_\Delta'(x)=\frac{(R+x)E_\Delta''(x)-E_\Delta'(x)}2
        =\frac{(R+x)^2}2\,\frac{\dl}{\dl x}\frac{E_\Delta'(x)}{R+x}>0.
    \end{equation*}

    \emph{The endpoints.}
    At $x=0$, $t_\Delta'(0)=0$.
    The strict concavity in \cref{lmm:pos:cosine-conversion}\,(ii) for $t=R>0$ compares the secant slope of $v\mapsto g(\cos(R)\cos(\sqrt v))$ on $[0,R^2]$ with its derivative at $R^2$, giving $J_\Delta(0)<0$.
    At the other endpoint, \eqref{eq:pos:vi:radius:derivative} gives
    \begin{equation*}
        J_\Delta(R)=2R\bigl(\tan(R)-R+R\tan(R)^2\bigr)>0
        \eqcm
    \end{equation*}
    because $\tan(R)>R$.

    \emph{Conclusion.}
    \begin{equation*}
        \frac{\dl}{\dl x}\frac{(R+x)^2}{E_\Delta(x)}
        =-\frac{2(R+x)J_\Delta(x)}{E_\Delta(x)^2}.
    \end{equation*}
    Thus the unique zero $x_\Delta$ of $J_\Delta$ is the unique maximizer of $x\mapsto(R+x)^2/E_\Delta(x)$ on $[0,R]$.
\end{proof}

\begingroup\mdfsetup{nobreak=true}\begin{lemma}[A scalar radius bound]\label{lmm:pos:radius-scalar}
    Let $0\leq a\leq R<\pi/2$, put $t:=\arccos(\cos(R)/\cos(a))$ and $u:=(R+a)/2$. Then
    \begin{equation}\label{eq:pos:scalar:radius}
        t\cot(t)\br{\frac{\sin(u)}u}^2\geq\cos(R)
        \eqcm
    \end{equation}
    with all ratios at zero understood by continuity. The inequality is strict for $R>0$.
\end{lemma}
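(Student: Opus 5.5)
I will prove \eqref{eq:pos:scalar:radius} by splitting the left-hand side into two factors and bounding each by an explicit function of $R$ and $a$. The product of these two bounds should then be comparable to $\cos(R)$ through an elementary trigonometric inequality.

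\emph{The factor $t\cot(t)$.} Since $\cos(t)=\cos(R)/\cos(a)$, we have $\sin(t)^2=(\cos(a)^2-\cos(R)^2)/\cos(a)^2$. The bound $t\cot(t)\geq\cos(t)\cdot(\text{something})$ is too crude, so I will use instead the standard estimate $t\cot(t)\geq 1-t^2/3\cdot(\ldots)$. A cleaner route is the inequality $t\cot(t)\geq\cos(t)^{2/3}$ or $t\cot(t)\geq\cos(t)$ on $[0,\pi/2)$. The weaker $t\cot(t)\geq\cos(t)$ follows from $t\leq\tan(t)$ after multiplying by $\cos(t)/\sin(t)$... but actually $t\cot t\ge \cos t$ is equivalent to $t\ge \sin t$, which is true. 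This gives $t\cot(t)\geq\cos(R)/\cos(a)$.

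\emph{The factor $(\sin(u)/u)^2$.} It then suffices to show
\[
(\sin(u)/u)^2\geq\cos(a).
\]
This cannot hold for all $a\le R$: at $a=0$ it would require $\sin(R/2)^2/(R/2)^2\geq1$, which fails. So the crude bound on the first factor loses too much when $a$ is small, and I will instead keep $t$ exactly and treat the inequality as a two-variable problem.

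\emph{Reduction to a one-variable monotonicity.} Fix $R$ and view both sides as functions of $a\in[0,R]$. At $a=R$ we have $t=0$ and $u=R$, and the claim becomes $(\sin R/R)^2\geq\cos R$. This follows from $\sin(R)/R\geq\cos(R)^{1/3}$, a classical inequality (the Adamović–Mitrinović bound), which gives $(\sin R/R)^2\geq\cos(R)^{2/3}\geq\cos R$. At $a=0$ we have $\cos(t)=\cos(R)$, so $t=R$ and $u=R/2$, and the claim becomes $R\cot R\,(\sin(R/2)/(R/2))^2\geq\cos R$, that is, $4\sin(R/2)^2\geq R\sin(R)$. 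Writing $4\sin(R/2)^2=2(1-\cos R)$, this is $2(1-\cos R)\geq R\sin R$, equivalently $\tan(R/2)\geq R/2$, which holds.

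To cover intermediate $a$, I will show that the logarithm of the left-hand side is concave in $a$, or more simply that it has no interior minimum below $\log\cos R$. The two ingredients are
\[
\frac{\dl}{\dl a}\log(t\cot t)=\bigl(1/t-2/\sin(2t)\bigr)t'(a),\qquad t'(a)=-\cot(t)\tan(a),
\]
as computed in \cref{lmm:pos:vi:root}, and $\frac{\dl}{\dl a}\log(\sin u/u)^2=\cot(u)-1/u$. The first derivative is nonnegative, because both $1/t-2/\sin(2t)$ and $t'$ are negative. The second derivative is negative.

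\emph{Main obstacle.} Controlling the sign of the total derivative is the hard step. If endpoint checks plus concavity prove too delicate, I will fall back on substituting $c=\cos a$ and treating $t$ as a function of $c$, using $t\cot t\geq1-t^2/3-t^4/45$ together with power-series lower bounds for $\sin u/u$, and reducing to a polynomial inequality in $R,a$.

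Strictness for $R>0$ comes from the strict forms $\sin R<R$ and $\tan(R/2)>R/2$ at the endpoints, and from strict monotonicity or concavity in the interior.
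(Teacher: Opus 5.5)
Your endpoint checks are correct ($a=R$ via $\sin(R)/R\ge\cos(R)^{1/3}$, and $a=0$ via $\tan(R/2)\ge R/2$), and so are the two logarithmic derivatives. The gap is the interior, which is the entire content of the lemma, and your plan for it cannot work. Put $F(a):=\log\bigl(t\cot(t)\bigr)+2\log\bigl(\sin(u)/u\bigr)$. Your formulas combine to
\[
F'(a)=\Bigl(\frac1{\sin(t)^2}-\frac{\cot(t)}{t}\Bigr)\tan(a)+\cot(u)-\frac1u .
\]
Hence $F'(0)=\cot(R/2)-2/R<0$, whereas $F'(R)=\tfrac23\tan(R)+\cot(R)-1/R>0$. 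For the latter, the bracket tends to $2/3$ as $t\searrow0$, and the Taylor coefficients of $\tfrac23\tan$ dominate those of $\frac1R-\cot$.

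So $F$ is neither concave nor monotone. Its minimum on $[0,R]$ is attained strictly inside and lies strictly below both endpoint values. For $R=1$, the ratio of the left-hand side to $\cos(R)$ is about $1.093$ at $a=0$ and $1.311$ at $a=R$, but about $1.067$ at $a=0.25$. The endpoint checks therefore say nothing about the interior, and ``no interior minimum below $\log\cos(R)$'' merely restates the claim.

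The fallback is also unsound as written. In $t\cot(t)=1-\frac{t^2}3-\frac{t^4}{45}-\frac{2t^6}{945}-\cdots$ all further coefficients are negative, so $1-t^2/3-t^4/45$ is an upper bound, not a lower bound. Similarly, Mitrinovi\'c--Adamovi\'c gives $t\cot(t)\le\cos(t)^{2/3}$, the reverse of what you wrote.

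The missing idea is already latent in your first attempt. There, $t\cot(t)\ge\cos(t)$ amounts to $\sin(t)/t\le1$, which loses too much at $a=0$. Use instead $\sin(t)/t\le(2+\cos(t))/3$, which is exact to second order: the difference $t(2+\cos(t))-3\sin(t)$ and its first derivative vanish at $0$, and its second derivative is $\sin(t)-t\cos(t)\ge0$.

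Since $t\cot(t)=\cos(R)/\bigl(\cos(a)\sin(t)/t\bigr)$, it then suffices to show $(2\cos(a)+\cos(R))/3\le(\sin(u)/u)^2$, and $t$ has disappeared. Writing $a=u-v$ and $R=u+v$, Cauchy--Schwarz gives $2\cos(a)+\cos(R)=3\cos(u)\cos(v)+\sin(u)\sin(v)\le\sqrt{1+8\cos(u)^2}\le1+2\cos(u)$, which eliminates $v$. The remaining one-variable inequality $(1+2\cos(u))/3\le(\sin(u)/u)^2$ on $[0,\pi/2]$ follows from alternating Taylor bounds. The difference is at least $u^4(1/60-u^2/315)>0$ for $u>0$, which also yields strictness for $R>0$. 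This is the route the paper takes.
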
\endgroup
\begin{proof}
    Put $v:=(R-a)/2$, so that $a=u-v$ and $R=u+v$ with $0\leq v\leq u<\pi/2$. Then
    \begin{equation*}
        \begin{split}
            \cos(a)\,\frac{\sin(t)}{t}
            &\leq\frac{2\cos(a)+\cos(R)}{3}
            =\frac{3\cos(u)\cos(v)+\sin(u)\sin(v)}{3}\\
            &\leq\frac{\sqrt{1+8\cos(u)^2}}3
            \leq\frac{1+2\cos(u)}{3}
            \leq\br{\frac{\sin(u)}{u}}^2.
        \end{split}
    \end{equation*}
    The first step uses $\cos(t)=\cos(R)/\cos(a)$ together with $\sin(t)/t\leq(2+\cos(t))/3$: the difference
    $t(2+\cos(t))-3\sin(t)$ and its first derivative vanish at zero, and its second derivative is $\sin(t)-t\cos(t)\geq0$.
    The next two inequalities follow from Cauchy--Schwarz and squaring, respectively.
    For the last, the alternating Taylor bounds on $[0,\pi/2]$ give
    \begin{equation*}
        \br{\frac{\sin(u)}{u}}^2-\frac{1+2\cos(u)}{3}
        \geq
        \br{1-\frac{u^2}3+\frac{2u^4}{45}-\frac{u^6}{315}}
        -\br{1-\frac{u^2}3+\frac{u^4}{36}}
        =u^4\br{\frac1{60}-\frac{u^2}{315}}\geq0
        \eqcm
    \end{equation*}
    which is strict for $u>0$, hence for $R>0$.
    For $t>0$, this proves \eqref{eq:pos:scalar:radius} by the identity
    \[
      t\cot(t)=\frac{\cos(R)}{\cos(a)\sin(t)/t}.
    \]
    At $t=0$, that is, at $a=R$ and $u=R$, the required bound reads $\cos(R)\leq(\sin(R)/R)^2$, which follows from $\cos(R)\leq(1+2\cos(R))/3$ and the last inequality of the chain.
\end{proof}

\begin{lemma}[Sharpness of the radius constant]\label{lmm:pos:vi:sharp:radius}
    Let $0<\Delta<\pi$, let $0\leq x\leq\Delta/2$, and let $E_\Delta$ be as in \eqref{eq:pos:vi:radius:functions}.
    On the unit sphere in $\R^3$, a compact \catof{1} space, there are a closed ball $\setC=\overline B(o,\Delta/2)$, which is convex with circumradius exactly $\Delta/2$, a measure $\mu\in\mc P(\setC)$ and a point $q\in\setC$ such that $m:=m_2(\mu)$ satisfies $\ol mo=x$, $\ol mq=\Delta/2+x$, and
    \begin{equation}\label{eq:pos:vi:sharp:radius}
        \ol mq^2=\frac{(\Delta/2+x)^2}{E_\Delta(x)}\int\br{\ol yq^2-\ol ym^2}\,\dl\mu(y)
        \eqfs
    \end{equation}
    In particular, \eqref{eq:pos:vi:radius:optimal} gives $A_\Delta^{\mathrm{rad}}$ at $x=x_\Delta$ and \eqref{eq:pos:vi:near:constant} gives $A_\Delta^{\mathrm{near}}$ at $x=0$.
\end{lemma}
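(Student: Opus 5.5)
The plan is to build the extremal measure explicitly. The configuration is chosen so that every inequality in the proof of \cref{thm:pos:hemisphere:vi} becomes an equality.

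\textbf{Equality conditions.} Reading off the proof of \cref{thm:pos:hemisphere:vi}, equality in \eqref{eq:pos:curved:z} needs $\cos(\ol yo)=\cos(R)$ on the support of $\mu$, with $R=\Delta/2$; that is, the mass sits on the boundary circle $\partial B(o,R)$. Equality in the concavity step $UH(z)\geq1$ and in the joint Jensen step needs $\ol ym$ to be constant on the support. On the sphere, the comparison \eqref{eq:pos:curved:comparison} is the identity \eqref{eq:sphere:vi:comparison}.

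\textbf{Construction.} These conditions suggest the following setup. Work on $S^2\subset\R^3$, fix $o$ and $R=\Delta/2<\pi/2$, and set $\setC=\overline B(o,R)$. This cap is convex, and its circumradius is exactly $R$: it is contained in $\overline B(o,R)$, and no smaller ball contains a spherical cap of radius $R$, for instance because two antipodal boundary points of the cap are at distance $2R$.

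Choose a great circle $\Gamma$ through $o$. Let $m\in\Gamma$ be the point with $\ol mo=x$, and let $q\in\Gamma$ be the point on the opposite side of $o$ with $\ol oq=R$, so that $\ol mq=R+x$ and $q\in\setC$.

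Let $y_1,y_2$ be the two points of $\partial B(o,R)$ on the great circle through $m$ orthogonal to $\Gamma$; they are reflections of each other across $\Gamma$. For $x=0$, take any pair of antipodal points of the boundary circle. Put $\mu:=\frac12(\delta_{y_1}+\delta_{y_2})$.

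\textbf{Steps.} First, the $2$-Fr\'echet mean of two points at distance less than $\pi$ is their geodesic midpoint. By the reflection symmetry, this midpoint is $m$, and the right angle at $m$ between $\seg{y_1}{y_2}$ and $\Gamma$ holds by construction.

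Second, apply the spherical Pythagorean theorem in the right triangle $o,m,y_i$. It gives $\cos(R)=\cos(x)\cos(\ol{y_i}m)$, hence $\ol{y_i}m=t_\Delta(x)$.

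Third, apply it again in the right triangle $m,q,y_i$, whose right angle is also at $m$ because $q\in\Gamma$. This gives
\[
\cos(\ol{y_i}q)=\cos(t_\Delta(x))\cos(R+x)=\frac{\cos(R)\cos(R+x)}{\cos(x)}=\cos(R)^2-\cos(R)\sin(R)\tan(x).
\]
Consequently,
\[
\int\br{\ol yq^2-\ol ym^2}\,\dl\mu(y)=s_\Delta(x)^2-t_\Delta(x)^2=E_\Delta(x).
\]
Combined with $\ol mq^2=(\Delta/2+x)^2$, this yields \eqref{eq:pos:vi:sharp:radius}.

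\textbf{Main obstacle.} The delicate point is the degenerate geometry at the endpoints, together with justifying that the mean is the midpoint; the rest is trigonometry.
\begin{itemize}
\item At $x=\Delta/2$, $t_\Delta=0$, so $y_1=y_2=m$ and $\mu=\delta_m$.
\item At $x=0$, $m=o$, and the orthogonality argument must be replaced by choosing any antipodal pair on the boundary circle.
\end{itemize}
Uniqueness of the mean in $\setC$ holds by \cite[Theorem B]{Yokota2016Barycenter-JMSJ}, and the midpoint is a critical point by symmetry. The final sentence then follows by evaluating at $x=x_\Delta$ and at $x=0$, using \eqref{eq:pos:vi:radius:optimal} and \eqref{eq:pos:vi:near:constant}.
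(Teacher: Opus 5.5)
Your construction is the paper's configuration: in coordinates it is $m=(0,0,1)$, $o=(\sin(x),0,\cos(x))$, $y_\pm=(0,\pm\sin(t),\cos(t))$ with $t=t_\Delta(x)$, and $q=(\sin(R+x),0,\cos(R+x))$, where $R=\Delta/2$. Your two applications of the spherical Pythagorean theorem are exactly the identities $\ip{y_\pm}{o}=\cos(t)\cos(x)=\cos(R)$ and $\ip{y_\pm}{q}=\cos(t)\cos(R+x)$ that the paper reads off, so for $0<x\leq\Delta/2$ the computation is right.

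\textbf{The exception at $x=0$ fails.} This is the step that breaks, and it is precisely the case meant to attain $A_\Delta^{\mathrm{near}}$. Take $m=o$ and an arbitrary antipodal pair $y_1,y_2$ on $\partial B(o,R)$, making angles $\theta$ and $\pi-\theta$ at $o$ with the direction of $q$. The spherical law of cosines gives $\cos(\ol{y_1}q)=\cos(R)^2+\sin(R)^2\cos(\theta)$ and $\cos(\ol{y_2}q)=\cos(R)^2-\sin(R)^2\cos(\theta)$. By strict convexity of $\arccos^2$, the excess $\frac12(\ol{y_1}q^2+\ol{y_2}q^2)-R^2$ is therefore strictly larger than $E_\Delta(0)=\arccos(\cos(R)^2)^2-R^2$ unless $\theta=\pi/2$. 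For example, the pair through $q$ gives excess $R^2$ and ratio $1$, so \eqref{eq:pos:vi:sharp:radius} fails for that choice.

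No exception is needed. At $x=0$ the great circle $\Gamma$ still has a definite direction at $m=o$, so the great circle through $m$ orthogonal to it is well defined. Its two points on $\partial B(o,R)$ give $\ol{y_i}m=R=t_\Delta(0)$ and $\cos(\ol{y_i}q)=\cos(R)^2$, as required.

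\textbf{Identifying $m$ as the mean.} Your justification here is also incomplete. Being a critical point of $F(p)=\frac12(\ol{y_1}p^2+\ol{y_2}p^2)$, together with uniqueness of the minimizer, does not identify the minimizer, since $-m$ is a critical point of $F$ on the sphere too. Argue as the paper does: $F(p)\geq\frac14(\ol{y_1}p+\ol{y_2}p)^2\geq\frac14\ol{y_1}{y_2}^2=t^2=F(m)$. Equality holds only at a midpoint of $y_1,y_2$, which is unique because $2t<\pi$.

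Alternatively, keep your symmetry idea but complete it. The unique mean is fixed by the two reflections preserving $\mu$, whose only common fixed points on the sphere are $\pm m$. The mean also lies in $\setC$, which excludes $-m$ because $\ol{o}{-m}=\pi-x>R$.
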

\begin{proof}
    Write $R:=\Delta/2$ and $t:=t_\Delta(x)$, with $t_\Delta$ and $s_\Delta$ as in \eqref{eq:pos:vi:radius:ts}, and take the following points on the unit sphere in $\R^3$:
    \begin{equation*}
        \begin{split}
            m&=(0,0,1),\qquad o=(\sin(x),0,\cos(x)),\\
            y_\pm&=(0,\pm\sin(t),\cos(t)),\qquad
            q=(\sin(R+x),0,\cos(R+x)).
        \end{split}
    \end{equation*}
    The closed spherical ball $\setC=\overline B(o,R)$ is convex and has circumradius exactly $R$, since its diameter is $2R$.
    The points $y_\pm$ and $q$ lie on its boundary.
    Put $\mu:=(\delta_{y_+}+\delta_{y_-})/2$ and $F(p):=\int\ol yp^2\,\dl\mu(y)$.
    Since $m$ is the midpoint of the segment from $y_-$ to $y_+$ of length $2t<\pi$, it is the mean of $\mu$.
    Indeed, for every $p$ on the sphere, the triangle inequality gives
    \begin{equation*}
        F(p)=\tfrac12\br{\ol{y_+}p^2+\ol{y_-}p^2}
        \geq\tfrac14\ol{y_+}{y_-}^2=t^2=F(m)
        \eqfs
    \end{equation*}
    Equality forces $\ol{y_+}p=\ol{y_-}p=t$, so $p$ is a midpoint of $y_-$ and $y_+$, which is $m$ because the geodesic between them is unique, its length $2t$ being less than $\pi$; when $t=0$, $\mu$ is the point mass at $m$.
    Consequently
    \begin{equation*}
        F(m)=t^2,\qquad F(q)=s_\Delta(x)^2,\qquad
        \frac{\ol mq^2}{F(q)-F(m)}
        =\frac{(R+x)^2}{E_\Delta(x)}
        \eqcm
    \end{equation*}
    which is \eqref{eq:pos:vi:sharp:radius}.
\end{proof}

\begin{lemma}[Sharpness of the diameter constant]\label{lmm:pos:vi:sharp:diameter}
    Let $0<\delta<\pi/2$, and let $\theta_\delta$ and $A_\delta^{\mathrm{diam}}$ be as in \eqref{eq:pos:vi:constants}.
    There are compact \catof{1} spaces $\mc Q_n$, $n\geq2$, realized in a common Hilbert sphere, each of diameter exactly $\delta$ and circumradius at most $\theta_\delta<\pi/2$, measures $\mu_n\in\mc P(\mc Q_n)$, and fixed points $m,q\in\bigcap_{n\geq2}\mc Q_n$ such that $m_2(\mu_n)=m$ and
    \begin{equation*}
        \frac{\ol mq^2}{\int\br{\ol yq^2-\ol ym^2}\,\dl\mu_n(y)}
        \longrightarrow A_\delta^{\mathrm{diam}}
        \qquad(n\to\infty)
        \eqfs
    \end{equation*}
\end{lemma}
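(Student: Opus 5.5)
The plan is to realize the equality conditions of Part~3 of the proof of \cref{thm:pos:hemisphere:vi} as closely as possible, and then pass to a limit. The lower bound there is attained only if every step is tight. This means $\ol mq=\theta_\delta$, and the reweighted Jensen step \eqref{eq:pos:curved:jensen} is an equality, which forces $\cos(\ol ym)$ and $\cos(\ol yq)$ to be $\lambda$-a.s.\ constant. It also means $z=\sqrt{\cos(\delta)}=\cos(\theta_\delta)$, so every atom $y$ satisfies $\ol ym=\theta_\delta$ and $\cos(\ol yq)=\cos(\theta_\delta)^2=\cos(\delta)$, i.e.\ $\ol yq=\delta$. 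Finally, the weighted Euclidean average of the atoms must be $\cos(\theta_\delta)m$, which requires many atoms spread symmetrically in directions orthogonal to $m$. No finite configuration achieves all of this exactly within diameter $\delta$, so I will use a regular simplex with $n$ vertices and slightly perturbed angles.

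\textbf{Construction.} Work in the unit sphere of $\ell^2$. Fix orthonormal vectors $m$ and $f$, and for each $n$ choose unit vectors $e^{(n)}_1,\dots,e^{(n)}_n$ orthogonal to $m$ and $f$ that form a centered regular simplex: $\sum_ie^{(n)}_i=0$ and $\ip{e^{(n)}_i}{e^{(n)}_j}=-1/(n-1)$ for $i\neq j$. Let $\theta_n\in(0,\theta_\delta)$ solve $\cos(\theta_n)^2-\sin(\theta_n)^2/(n-1)=\cos(\delta)$. This has a unique solution because the left-hand side is decreasing in $\theta$, equals $1$ at $\theta=0$, and is smaller than $\cos(\delta)=\cos(\theta_\delta)^2$ at $\theta_\delta$. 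Moreover $\theta_n\to\theta_\delta$. Put
\[
y_i:=\cos(\theta_n)m+\sin(\theta_n)e^{(n)}_i,\qquad q:=\cos(\theta_\delta)m+\sin(\theta_\delta)f,\qquad \mu_n:=\tfrac1n\textstyle\sum_i\delta_{y_i},
\]
and let $\mc Q_n$ be the closed spherical convex hull of $\{y_1,\dots,y_n,q,m\}$. Then $\ol{y_i}{y_j}=\delta$, $\ol{y_i}q=\arccos(\cos(\theta_n)\cos(\theta_\delta))<\delta$, and all generators lie in $\overline B(m,\theta_\delta)$. That ball is convex because $\theta_\delta<\pi/2$, so $\mc Q_n$ has circumradius at most $\theta_\delta$. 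Being a closed convex subset of a $\mathsf{CAT}(1)$ sphere with diameter $<\pi$, $\mc Q_n$ is itself compact and $\mathsf{CAT}(1)$. Compactness holds because the generators span a finite-dimensional subsphere.

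For the diameter, I will use that on a ball of radius $<\pi/2$ the function $p\mapsto\cos(\ol pa)$ is the restriction of the linear map $\ip\cdot a$. Consequently, superlevel sets $\{p:\ol pa\leq r\}$ with $r<\pi/2$ are geodesically convex. Applying this twice shows that the diameter of the hull equals the maximal pairwise distance of the generators, which is $\delta$. The family is nested in a common Hilbert sphere by choosing the simplices inside an increasing chain of subspaces, with $m$ and $q$ fixed.

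\textbf{Mean and ratio.} The Euclidean sum is $\sum_iy_i=n\cos(\theta_n)m$, so the first-order identity \eqref{eq:sphere:vi:comparison} holds at $m$. I expect this identification of the mean to be the main point needing care. Since $\mu_n$ is supported in a convex set of circumradius $<\pi/2$, the mean is unique by \cite[Theorem B]{Yokota2016Barycenter-JMSJ}. The permutations of the simplex extend to isometries of the sphere fixing $m$ and $f$ and preserving $\mu_n$, so they fix $m_2(\mu_n)$. I will then check directly that $m$ is a critical point and that the objective is convex on $\overline B(m,\theta_\delta)$; the latter follows from the strong convexity in \cite[Proposition 3.1(i)]{Ohta2007Convexities-GD} on the diameter-$\delta<\pi/2$ set. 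Together these give $m_2(\mu_n)=m$.

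With the mean identified, $F(m)=\theta_n^2$ and $F(q)=\arccos(\cos(\theta_n)\cos(\theta_\delta))^2$. As $n\to\infty$, $F(q)\to\arccos(\cos(\theta_\delta)^2)^2=\delta^2$ and $F(m)\to\theta_\delta^2$. Hence
\[
\frac{\ol mq^2}{F(q)-F(m)}=\frac{\theta_\delta^2}{F(q)-F(m)}\longrightarrow\frac{\theta_\delta^2}{\delta^2-\theta_\delta^2}=A_\delta^{\mathrm{diam}},
\]
which is the claimed limit.
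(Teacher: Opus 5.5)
Your construction is the paper's construction. Your $\theta_n$ is the paper's $t_n$: both are fixed by $\ip{y_i}{y_j}=\cos(\delta)$, i.e.\ $\cos(t_n)^2=(1+(n-1)\cos(\delta))/n$. Your diameter, circumradius, and limit computations also match the paper's.

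The one place you diverge is the identification of the mean. The paper uses Jensen's inequality for the convex decreasing $g=\arccos^2$: $F_n(p)=\frac1n\sum_i g(\ip{y_i}p)\geq g(\cos(t_n)\ip mp)\geq t_n^2=F_n(m)$, with equality only at $p=m$. This settles existence, uniqueness, and location on the whole sphere in one line, without symmetry, Ohta, or Yokota.

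Your critical-point-plus-convexity argument also works, but only if the convexity is asserted on $\mc Q_n$ rather than on $\overline B(m,\theta_\delta)$. On $\mc Q_n$ (diameter $\delta<\pi/2$) Ohta's strong convexity applies and gives uniqueness as well. The ball $\overline B(m,\theta_\delta)$, by contrast, has diameter $2\theta_\delta>\pi/2$ as soon as $\delta>\pi/3$, so Ohta's result does not cover it. For $\delta$ near $\pi/2$ the objective is in fact not convex there. Near the boundary point $\cos(\theta_\delta)m-\sin(\theta_\delta)e^{(n)}_1$, the transverse second derivative contains the term $r\cot(r)$ with $r=\theta_n+\theta_\delta$ close to $\pi$, and that term dominates. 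Since $m_2(\mu_n)$ is the minimizer over $\mc Q_n$, convexity on $\mc Q_n$ is all you need, so replace the ball by $\mc Q_n$ in that step.
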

\begin{proof}
    Write $\theta:=\theta_\delta$ and use $g$ from \cref{lmm:pos:cosine-average}.
    In $\ell^2$, fix orthonormal vectors $m,e$.
    For each $n\geq2$, choose a regular simplex of unit vectors $v_1,\ldots,v_n$ perpendicular to both, with $\sum_i v_i=0$ and $\ip{v_i}{v_j}=-1/(n-1)$ for $i\ne j$.
    Put
    \begin{equation*}
        q=\cos(\theta)\,m+\sin(\theta)\,e,\qquad
        y_i=\cos(t_n)\,m+\sin(t_n)\,v_i,\qquad
        \cos(t_n)^2=\frac{1+(n-1)\cos(\delta)}{n},
    \end{equation*}
    where $0<t_n<\theta$.
    Let $\mu_n$ be uniform on the $y_i$, with $F_n(p):=\int\ol yp^2\,\dl\mu_n(y)$.
    The spherical convex hull of the $y_i$ together with $q$ is a compact \catof{1} space $\mc Q_n$.
    All vertex inner products are at least $\cos(\delta)$, and normalization of nonnegative linear combinations preserves this lower bound.
    Hence $\diam(\mc Q_n)=\delta$, with equality already between two $y_i$.
    Also $\sum_i y_i=n\cos(t_n)m$, so $m\in\mc Q_n$.
    All vertices lie in $\overline B(m,\theta)$, which is convex, so the circumradius of $\mc Q_n$ is at most $\theta<\pi/2$.
    Convexity and monotonicity of $g$ give, for every $p\in\mc Q_n$,
    \begin{equation*}
        F_n(p)=\frac1n\sum_{i=1}^{n} g(\ip{y_i}p)
        \geq g(\cos(t_n)\ip mp)\geq t_n^2,
    \end{equation*}
    with equality only if $\ip mp=1$, since $g$ is strictly decreasing; so $m$ is its unique mean, independently of $n$, and the same bound for every $p$ on the whole sphere shows that $m$ is also the unique mean there.
    As $n\to\infty$, $\cos(t_n)^2=\cos(\delta)+(1-\cos(\delta))/n$ decreases to $\cos(\theta)^2$, so $t_n\nearrow\theta$ and therefore
    \begin{equation*}
        \frac{\ol mq^2}{F_n(q)-F_n(m)}
        =\frac{\theta^2}{g(\cos(t_n)\cos(\theta))-t_n^2}
        \longrightarrow\frac{\theta^2}{g(\cos(\theta)^2)-\theta^2}
        =\frac{\theta^2}{\delta^2-\theta^2}
        =A_\delta^{\mathrm{diam}}
        \eqcm
    \end{equation*}
    using $\cos(\theta)^2=\cos(\delta)$ in the last step but one.
\end{proof}

\Needspace{10\baselineskip}
\subsection{Variance Comparison}

\begin{lemma}[Quadratic variances under positive curvature]\label{lmm:pos:variances}
    In the setting of \cref{ssec:pos:prelim}, let $Y\sim\mu\in\mc P(\setC)$.
    If \eqref{eq:2fmcatk:vi} holds for $\mu$ with a constant $b>0$ in place of $C_\Delta$, then
    \begin{equation*}
        \VfOf{Y}\leq\frac{2b}{1+b}\,\VmOf{Y}
        \qquad\text{and}\qquad
        \VmOf{Y}\leq2\,\VfOf{Y}
        \eqfs
    \end{equation*}
    In particular, $b=C_\Delta$ gives \eqref{eq:pos:variances}, and $b=A_{\Delta,\delta}$ gives the sharper comparison in \cref{rem:pos:sharper}.
\end{lemma}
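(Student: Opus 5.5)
The upper bound $\Vmof{Y}\leq2\Vfof{Y}$ holds in any metric space and is already contained in \cref{lmm:variances} with $s=2$. It can also be checked directly: for any $q$, the triangle inequality and $(a+b)^2\leq2a^2+2b^2$ give $\ol y{y\pr}^2\leq2\ol yq^2+2\ol{y\pr}q^2$. Integrating against $\mu\otimes\mu$ and halving yields $\VmofP{\mu}\leq2\int\ol yq^2\,\dl\mu(y)$, and taking the infimum over $q$ gives the claim.

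For the first inequality I write $m:=m_2(\mu)\in\setC$, which is guaranteed by \cite[Theorem B and Corollary 16]{Yokota2016Barycenter-JMSJ}, and I use $\Vfof{Y}=\int\ol ym^2\,\dl\mu(y)$. The variance inequality \eqref{eq:2fmcatk:vi} with constant $b$ holds at every point of $\setC$. Applying it with $q=y\pr\in\setC$ for a fixed $y\pr$ in the support gives
\[
\ol m{y\pr}^2\leq b\br{\int\ol y{y\pr}^2\,\dl\mu(y)-\Vfof{Y}}.
\]
Integrating over $y\pr\sim\mu$ then gives $\Vfof{Y}\leq b\br{2\Vmof{Y}-\Vfof{Y}}$, since $\iint\ol y{y\pr}^2\,\dl\mu\,\dl\mu=2\Vmof{Y}$. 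Rearranging, $(1+b)\Vfof{Y}\leq2b\Vmof{Y}$, which is the assertion.

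The only points to watch are that $y\pr$ ranges over $\setC$, so that the variance inequality applies there, and integrability. Integrability is automatic because $\mu$ has bounded support. Otherwise the argument is a direct computation with no real obstacle. Finally, substituting $b=C_\Delta$ from \eqref{eq:2fmcatk:vi} gives \eqref{eq:pos:variances}. Substituting $b=A_{\Delta,\delta}$ is justified because \cref{thm:pos:hemisphere:vi} yields the variance inequality with $\min\{A^{\mathrm{rad}}_\Delta,A^{\mathrm{diam}}_\delta\}\leq A_{\Delta,\delta}$, and this gives the sharpened comparison.
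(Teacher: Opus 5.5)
Your proposal is correct and follows essentially the same route as the paper: integrating the variance inequality at $q=Y\pr$ gives $\Vfof{Y}\leq b\,(2\Vmof{Y}-\Vfof{Y})$, and rearranging yields the first bound, while the second is \cref{lmm:variances} with $s=2$. Your justification of the substitution $b=A_{\Delta,\delta}$ via $\min\{A^{\mathrm{rad}}_\Delta,A^{\mathrm{diam}}_\delta\}\leq A_{\Delta,\delta}$ is also correct.
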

\begin{proof}
    Let $Y\pr$ be an independent copy of $Y$ and put $m=m_2(\mu)$.
    Integrating the assumed variance inequality with respect to $q=Y\pr$ gives
    \begin{equation*}
        \VfOf{Y}=\EOf{\ol m{Y\pr}^2}
        \leq b\,\EOf{\ol Y{Y\pr}^2-\ol Ym^2}
        =b\br{2\VmOf{Y}-\VfOf{Y}}.
    \end{equation*}
    Rearranging proves the first bound; the second follows from \cref{lmm:variances} with $s=2$.
\end{proof}

\subsection{The Contraction Property}

The contraction property already follows from the variance and quadruple inequalities.
\begin{lemma}[Product bound]\label{lmm:pos:cp:product}
    In the setting of \cref{ssec:pos:prelim}, all $\mu,\nu\in\mc P(\setC)$ satisfy
    \begin{equation}\label{eq:pos:cp:product}
        \ol{m_2(\mu)}{m_2(\nu)}
        \leq C_\delta\min\{A_\Delta^{\mathrm{rad}},A_\delta^{\mathrm{diam}}\}\,W_1(\mu,\nu)
        \eqfs
    \end{equation}
\end{lemma}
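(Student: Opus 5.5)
The proof combines the variance inequality at the two means with the quadruple inequality \eqref{eq:2fmcatk:qi}, in its diameter form with constant $C_\delta$, integrated against an optimal coupling. Write $m:=m_2(\mu)$, $m':=m_2(\nu)$, and let $\pi\in\Pi(\mu,\nu)$ be a coupling. A near-optimal coupling suffices, followed by an infimum at the end. Since $\setC$ is bounded, all integrals below are finite.

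\emph{Step 1: apply the variance inequality at both means.} By \cref{thm:pos:hemisphere:vi}, with $A:=\min\{A_\Delta^{\mathrm{rad}},A_\delta^{\mathrm{diam}}\}$ (parts (i) and (iii)), and with $m,m'\in\setC$,
\[
\ol m{m'}^2\leq A\int\br{\ol y{m'}^2-\ol ym^2}\,\dl\mu(y),
\qquad
\ol m{m'}^2\leq A\int\br{\ol zm^2-\ol z{m'}^2}\,\dl\nu(z)
\eqfs
\]
Adding the two and writing both integrals against $\pi$ gives
\[
2\,\ol m{m'}^2\leq A\int\br{\ol y{m'}^2-\ol ym^2-\ol z{m'}^2+\ol zm^2}\,\dl\pi(y,z)
\eqfs
\]

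\emph{Step 2: bound the integrand by the quadruple inequality.} All four points $y,z,m,m'$ lie in $\setC$, so the integrand is at most $2C_\delta\,\ol m{m'}\,\ol yz$. Hence $2\ol m{m'}^2\leq 2AC_\delta\,\ol m{m'}\int\ol yz\,\dl\pi$. If $m=m'$ there is nothing to show; otherwise we divide by $2\ol m{m'}$ and take the infimum over $\pi$, which gives \eqref{eq:pos:cp:product}.

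\emph{Main obstacle.} The only delicate point is the constant $C_\delta$ in the quadruple inequality. It requires $\sqrt\kappa\,\diam(\seg yz\cup\seg m{m'})\leq\delta<\pi$, which holds because $\setC$ is convex, so the segments stay in $\setC$, and because $\delta\leq\Delta<\pi$. This was already noted at the start of this appendix.

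The minimum over the two variance constants is legitimate because each inequality holds separately with its own constant; in the case $\delta\geq\pi/2$, the diameter constant is $+\infty$ and the minimum is simply $A_\Delta^{\mathrm{rad}}$. Measurability of $\ol yz$ under $\pi$ follows from the separability convention of \cref{rem:separability}.
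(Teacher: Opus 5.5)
Your proposal is correct and follows the paper's proof essentially step for step. You add the variance inequalities for $\mu$ at $m_2(\nu)$ and for $\nu$ at $m_2(\mu)$ with $A=\min\{A_\Delta^{\mathrm{rad}},A_\delta^{\mathrm{diam}}\}$, bound the integrand by the quadratic quadruple inequality with constant $C_\delta$ (valid by convexity of $\setC$ and monotonicity of $C_x$), then divide and take the infimum over couplings.
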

\begin{proof}
    Put $m_\mu:=m_2(\mu)$, $m_\nu:=m_2(\nu)$, and $A:=\min\{A_\Delta^{\mathrm{rad}},A_\delta^{\mathrm{diam}}\}$; the case $m_\mu=m_\nu$ is trivial.
    Let $\pi$ be a coupling of $\mu$ and $\nu$.
    \Cref{thm:pos:hemisphere:vi} for $\mu$ at $q=m_\nu$ and for $\nu$ at $q=m_\mu$, added, and \cref{prp:prelim:qi} with $\alpha=2$ give
    \begin{equation*}
        2\,\ol{m_\mu}{m_\nu}^2
        \leq A\int\br{\ol y{m_\nu}^2-\ol y{m_\mu}^2-\ol z{m_\nu}^2+\ol z{m_\mu}^2}\dl\pi(y,z)
        \leq 2AC_\delta\,\ol{m_\mu}{m_\nu}\int\ol yz\,\dl\pi(y,z)
        \eqcm
    \end{equation*}
    because all four points lie in the convex set $\setC$, so that the geodesic segments between them have $\sqrt\kappa\,\diam(\seg yz\cup\seg{m_\mu}{m_\nu})\leq\delta$, and $C_x$ is increasing.
    Dividing by $2\,\ol{m_\mu}{m_\nu}$ and taking the infimum over $\pi$ proves \eqref{eq:pos:cp:product}.
\end{proof}
This uses the strongest global variance bounds established above and the quadruple constant $C_\delta$.
Using only the radius gives $C_\Delta A_\Delta^{\mathrm{rad}}$, and using only the diameter gives $C_\delta A_\delta^{\mathrm{diam}}$ when $\delta<\pi/2$.
These products do not in general give the optimal contraction constant depending only on the radius or only on the diameter.
The following theorem determines these two constants.

The main idea is to retain the dependence of both comparison estimates on the location of the mean.
Starting from a coupling, move each observation along the geodesic joining its two endpoints, obtaining a path of probability measures.
For nearby measures on this path, combine the cosine form of the variance inequality from \cref{ssec:pos:vi} with the quadruple inequality applied to the short observation and mean segments.
This gives a local contraction estimate.
The geometric bounds from \cref{ssec:pos:vi} control its coefficient, and summing over small steps gives the contraction property for the original measures.

\begin{theorem}[Contraction of Fr\'echet means]\label{thm:pos:hemisphere:cp}
    In the setting of \cref{ssec:pos:prelim}, suppose $0<\Delta<\pi$, and let $t_\Delta(x)=\arccos(\cos(\Delta/2)/\cos(x))$ be as in \eqref{eq:pos:vi:radius:ts}.
    Define
    \begin{equation}\label{eq:pos:cp:radius:optimal}
        B_\Delta^{\mathrm{rad}}
        :=\max_{0\leq x\leq\Delta/2}
        C_{\Delta/2+x}\,\frac{\tan(t_\Delta(x))}{t_\Delta(x)},
    \end{equation}
    with $\tan(t)/t:=1$ at zero, and
    \begin{equation}\label{eq:pos:cp:diameter:optimal}
        B_\delta^{\mathrm{diam}}:=
        \begin{cases}
            1/\sqrt{\cos(\delta)},&0<\delta<\pi/2,\\
            +\infty,&\pi/2\leq\delta<\pi.
        \end{cases}
    \end{equation}
    Set $B_0^{\mathrm{rad}}=B_0^{\mathrm{diam}}:=1$ by continuity.
    Then, for all $\mu,\nu\in\mc P(\setC)$,
    \begin{equation}\label{eq:pos:cp}
        \ol{m_2(\mu)}{m_2(\nu)}
        \leq\min\{B_\Delta^{\mathrm{rad}},B_\delta^{\mathrm{diam}}\}\,W_1(\mu,\nu)
        \eqfs
    \end{equation}
    The two constants are optimal among constants depending only on $\Delta$ and only on $\delta$, respectively, uniformly over all such spaces, sets, and measures, at each positive value of the corresponding parameter.
    The radius constant satisfies
    \begin{equation}\label{eq:pos:cp:radius:upper}
        C_\Delta<B_\Delta^{\mathrm{rad}}<\min\cb{\frac1{\cos(\Delta/2)^2},\,2C_\Delta}
        \qquad\text{and}\qquad
        \frac1{\cos(\Delta/2)^2}<C_\Delta^2
        \qquad(0<\Delta<\pi).
    \end{equation}
    The maximum in \eqref{eq:pos:cp:radius:optimal} is attained in $(0,\Delta/2)$; its maximizer need not be the $x_\Delta$ in \eqref{eq:pos:vi:radius:root}.
\end{theorem}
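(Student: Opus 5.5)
We follow the strategy sketched just before the statement: prove a local contraction estimate for nearby measures, then integrate it along a geodesic interpolation of a coupling.

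\textbf{Interpolation path.} As in the proof of \cref{thm:pos:hemisphere:vi}, rescale to $\kappa=1$. Fix $R\in(\varrho,\pi/2)$ and a point $o$ with $\setC\subset B(o,R)$; at the end let $R\searrow\varrho$, using continuity of the constants. Let $\pi$ be a near-optimal coupling of $\mu$ and $\nu$. For $s\in[0,1]$, let $\eta_s$ be the law of $\gamma_{y,z}(s)$ under $\pi$, where $\gamma_{y,z}$ is the geodesic from $y$ to $z$; it stays in $\setC$ by convexity. Then $W_1(\eta_s,\eta_{s'})\leq|s-s'|\int\ol yz\,\dl\pi$, and $s\mapsto m_{\eta_s}$ is continuous by \cref{lmm:pos:cp:product}. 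It therefore suffices to show a local estimate: for each $\eta\in\mc P(\setC)$ and $\eta'$ close to $\eta$,
\[
\ol{m_\eta}{m_{\eta'}}\leq(B+o(1))\,W_1(\eta,\eta'),
\]
uniformly along the path. Summing over a fine partition of $[0,1]$ then gives \eqref{eq:pos:cp}.

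\textbf{Local estimate.} Instead of the quadratic variance inequality we use the sharper cosine form \eqref{eq:pos:curved:intermediate}, $F_\eta(q)-F_\eta(m_\eta)\geq 2K_\eta(1-\cos\ol{m_\eta}q)$, which behaves like $K_\eta\,\ol{m_\eta}q^2$ for $q$ near $m_\eta$. We add this for $\eta$ at $m_{\eta'}$ and for $\eta'$ at $m_\eta$, and bound the resulting four-point sum by \cref{prp:prelim:qi}. On short segments, the factor $C_{\sqrt\kappa\,\diam}$ is evaluated at the diameter of $\seg yz\cup\seg{m_\eta}{m_{\eta'}}$, and as the step size tends to zero this becomes $C_{\ol y{m_\eta}}$ at the location of the observation. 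To keep this location dependence, replace the crude global quadruple bound by its first-order form through the first variation formula. This yields a local coefficient of roughly
\[
\sup_y\frac{C_{\ol y{m_\eta}}}{K_\eta}.
\]
Then bound $\ol y{m_\eta}\leq R+\ol{m_\eta}o$, and bound $K_\eta\geq t\cot t$ with $t=t_\Delta(\ol{m_\eta}o)$ by \eqref{eq:pos:curved:A-lower}. With $x=\ol{m_\eta}o$ this gives the factor $C_{R+x}\tan(t)/t$, whose maximum over $x$ is $B^{\mathrm{rad}}_\Delta$.

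\textbf{Diameter constant.} For $B^{\mathrm{diam}}_\delta$, use instead $z\geq\sqrt{\cos\delta}$ from \eqref{eq:pos:curved:diameter} to get $K\geq\theta_\delta\cot\theta_\delta$. The point needing care is that the right quadruple factor must come out as $1/\sqrt{\cos\delta}$ after combining with $z$. Here I would aim for a direct first-order computation rather than \cref{prp:prelim:qi}: $\int\cos(\ol yp)/H$ is controlled via \cref{lmm:pos:curved:foc}, and this gives the combined ratio $z^{-1}\leq 1/\sqrt{\cos\delta}$.

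\textbf{Optimality and scalar bounds.} For optimality, construct spherical examples analogous to \cref{lmm:pos:vi:sharp:radius,lmm:pos:vi:sharp:diameter}. Take two symmetric atoms at distance $t$ from $m$ inside a ball $\overline B(o,\Delta/2)$ with $\ol mo=x$, and move one atom infinitesimally along the direction that realizes the quadruple equality; the derivative of the mean then equals the local coefficient. For the diameter case, use the simplex family. The scalar bounds \eqref{eq:pos:cp:radius:upper} follow from \cref{lmm:pos:radius-scalar}, $t\leq x$-type estimates, and $\sin(\Delta/2)<\Delta/2$. The bound $B^{\mathrm{rad}}_\Delta>C_\Delta$ comes from evaluating the maximand at $x=\Delta/2$, where $t=0$, and using interior attainment.

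\textbf{Main obstacle.} The hardest step is the local estimate with the location-dependent quadruple constant. A global diameter bound loses exactly the factor that makes the constant optimal, so a pointwise first-order comparison in $\mathsf{CAT}(1)$ is required, and it must be justified carefully in the limit as the step size tends to zero.
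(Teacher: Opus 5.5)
Your upper bound for the radius constant follows the paper's route. You interpolate a coupling along geodesics, add the cosine form \eqref{eq:pos:curved:intermediate} at the two means, apply the quadruple inequality, and use $K_\eta\geq t\cot(t)$ from \eqref{eq:pos:curved:A-lower} together with $\sup_{v\in\setC}\ol{m_\eta}v\leq R+x$.

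The ``first variation'' replacement you single out as the main obstacle is not needed. The constant in \cref{prp:prelim:qi} is already $C$ evaluated at $\sqrt\kappa\,\diam(\seg yz\cup\seg qp)$. For the short segments $\seg{\gamma_{y,z}(s)}{\gamma_{y,z}(t)}$ and $\seg{m_s}{m_t}$ produced by the interpolated coupling, this is at most $\sup_{v\in\setC}\ol{m_s}v+(1+L_0)\delta(t-s)$. For the same reason the local bound should carry $I(t-s)$ from that coupling, not $W_1(\eta,\eta')$ with an arbitrary optimal coupling, which may move a little mass far.

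\textbf{Gap 1: the diameter constant.} Using only $z\geq\sqrt{\cos\delta}$ gives $K_\eta\geq\theta_\delta\cot(\theta_\delta)=C_{\theta_\delta}\sqrt{\cos\delta}$. With quadruple factors up to $C_\delta$, this yields $(C_\delta/C_{\theta_\delta})/\sqrt{\cos\delta}$, which is strictly larger than $B^{\mathrm{diam}}_\delta$.

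Your remedy, a first-order computation giving the combined ratio $z^{-1}$, cannot work. Take two atoms at distance $t$ from $m$ plus an atom of vanishing mass at distance $\ell>t$, moved transversally as in \cref{lmm:pos:cp:sensitivity}, all inside a small spherical cap. The contraction ratios tend to $C_\ell\tan(t)/t>1/\cos(t)$, while $z\to\cos(t)$.

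The missing idea is that the same $\ell:=\sup_{v\in\setC}\ol{m_\eta}v$ that enters the quadruple factor also improves the bound on $K_\eta$. It does so through the second branch $z\geq\cos(\delta)/\cos(\ell)$ of \eqref{eq:pos:curved:diameter}, which gives $K_\eta\geq T_\delta(\ell)\cot(T_\delta(\ell))$. You then need the scalar maximization of \cref{lmm:pos:cp:scalar}:
\[
\max_{0\leq\ell\leq\delta}\frac{C_\ell}{T_\delta(\ell)\cot(T_\delta(\ell))}=\frac{1}{\sqrt{\cos\delta}}.
\]
Here $T_\delta\equiv\theta_\delta$ on $[0,\theta_\delta]$, where $C_\ell$ increases. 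For $\ell>\theta_\delta$ the logarithmic derivative is $\tan(\ell)\bigl(k(\ell)+k(T_\delta(\ell))-1\bigr)<0$, with $k(u)=\cot(u)/u-\cot(u)^2\leq1/3$.

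\textbf{Gap 2: the radius sharpness construction.} Moving one of the two symmetric atoms at distance $t=t_\Delta(x)$ from $m$ only sees the quadruple factor at that atom. The ratio is therefore $C_t/(t\cot t)=1/\cos(t)\leq1/\cos(\Delta/2)<C_\Delta<B^{\mathrm{rad}}_\Delta$. The factor $C_{\Delta/2+x}$ appears only when you move mass located at distance $\ell=\Delta/2+x$ from $m$. That mass must also be negligible, so that $K$ stays near $t\cot(t)$.

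The paper does this as follows (\cref{lmm:pos:cp:sensitivity,lmm:pos:cp:sharp:radius}):
\begin{enumerate}
\item Add an atom of mass $\epsilon$ at the far boundary point $\cos(\ell)m+\sin(\ell)e$.
\item Move it along the boundary sphere of $\overline B(o,\Delta/2)$, orthogonally to $m$, $e$, and the two atoms.
\item Use symmetry to reduce the mean to a one-parameter implicit-function problem.
\item Let $u\to0$ before $\epsilon\searrow0$, obtaining the ratio $C_\ell/(t\cot t)$.
\end{enumerate}

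For the diameter constant, by contrast, moving a single simplex atom does suffice in the limit. There the extremal distance is $\ell=\theta_\delta=\lim_n t_n$, which is the distance of the atoms themselves.
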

\begin{proof}
    For this proof and the following lemmas, we write $C(x):=C_x$ for the factor \eqref{eq:pos:const:C}.
    Rescaling permits $\kappa=1$, so that $\Delta=2\varrho$ and $\delta=\diam(\setC)<\pi$.
    For $\eta\in\mc P(\setC)$, write $m_\eta:=m_2(\eta)$.
    Choose $R\in(\varrho,\pi/2)$ and $o\in\mc Q$ such that $\setC\subset B(o,R)$; the estimates of the proof of \cref{thm:pos:hemisphere:vi} used below refer to this enclosing ball.

    \emph{Transport interpolation and local comparison.}
    Fix a coupling $\varpi$ of $\mu,\nu$ with cost $I:=\int\ol yz\,\dl\varpi(y,z)$; the case $I=0$ is immediate.
    Let $\gamma_{y,z}$ be the constant-speed geodesic from $y$ to $z$, and let $\eta_s$ be the distribution of $\gamma_{y,z}(s)$ under $\varpi$, for $s\in[0,1]$.
    These geodesics are unique, depend continuously on their endpoints, and stay in $\setC$.
    Put $m_s:=m_{\eta_s}$.
    The coupling $(\gamma_{y,z}(s),\gamma_{y,z}(t))_\#\varpi$ of $\eta_s$ and $\eta_t$ gives the first bound below, and the preliminary bound \eqref{eq:pos:cp:product} of \cref{lmm:pos:cp:product}, with $L_0:=C(\delta)\min\{A_\Delta^{\mathrm{rad}},A_\delta^{\mathrm{diam}}\}<\infty$, then gives the second:
    \begin{equation*}
        W_1(\eta_s,\eta_t)\leq I|t-s|,
        \qquad \ol{m_s}{m_t}\leq L_0I|t-s|.
    \end{equation*}
    As in \eqref{eq:pos:curved:AU}, define
    \begin{equation*}
        K_\eta:=\int\ol y{m_\eta}\cot(\ol y{m_\eta})\,\dl\eta(y),
    \end{equation*}
    with $r\cot(r):=1$ at zero.
    The cosine form \eqref{eq:pos:curved:intermediate} of the variance inequality gives
    \begin{equation}\label{eq:pos:cp:cosine}
        \int\br{\ol yq^2-\ol y{m_\eta}^2}\,\dl\eta(y)
        \geq2K_\eta\br{1-\cos(\ol{m_\eta}q)}.
    \end{equation}
    To apply the quadruple inequality to two nearby times, take $0\leq s<t\leq1$.
    Since $I\leq\delta$, the relevant segments satisfy
    \begin{equation*}
        \diam\bigl(\seg{\gamma_{y,z}(s)}{\gamma_{y,z}(t)}
                   \cup\seg{m_s}{m_t}\bigr)
        \leq\sup_{v\in\setC}\ol{m_s}v+(1+L_0)\delta(t-s)<\pi
    \end{equation*}
    for all sufficiently small $t-s>0$, uniformly in $s,y,z$.
    Apply \eqref{eq:pos:cp:cosine} to $\eta_s,\eta_t$, using each mean as the other measure's comparison point, and add.
    Integrating the quadruple inequality of \cref{prp:prelim:qi} against $\varpi$ then yields
    \begin{equation}\label{eq:pos:cp:local:transport}
        \begin{split}
            &2(K_{\eta_s}+K_{\eta_t})\br{1-\cos(\ol{m_s}{m_t})}\\
            &\qquad\leq2C\br{\sup_{v\in\setC}\ol{m_s}v+(1+L_0)\delta(t-s)}\,\ol{m_s}{m_t}\,I(t-s).
        \end{split}
    \end{equation}
    The function $r\mapsto r\cot(r)$ is continuously differentiable on $[0,\delta]$, and both the observation paths and the mean path are uniformly Lipschitz.
    Thus $s\mapsto K_{\eta_s}$ is Lipschitz.
    It is positive by \eqref{eq:pos:curved:A-lower}, hence bounded away from zero on $[0,1]$.
    For $m_s\ne m_t$, divide \eqref{eq:pos:cp:local:transport} and apply the half-angle identity
    \begin{equation*}
        2(1-\cos(x))=4\sin(x/2)^2
        =x^2\br{\frac{\sin(x/2)}{x/2}}^2
        \qquad(x>0)
    \end{equation*}
    with $x=\ol{m_s}{m_t}$ to obtain
    \begin{equation}\label{eq:pos:cp:local:ratio}
        \ol{m_s}{m_t}\leq\br{\frac{C\br{\sup_{v\in\setC}\ol{m_s}v}}{K_{\eta_s}}+\mathrm{o}(1)}I(t-s),
    \end{equation}
    where $\mathrm{o}(1)\to0$ as $t-s\searrow0$, uniformly for $0\leq s<t\leq1$.
    The same bound holds when $m_s=m_t$.

    \emph{Geometric bounds and summation.}
    For $x:=\ol o{m_\eta}$, \eqref{eq:pos:curved:A-lower} and the triangle inequality give
    \begin{equation*}
        K_\eta\geq t_{2R}(x)\cot(t_{2R}(x)),\qquad
        \sup_{v\in\setC}\ol{m_\eta}v\leq R+x.
    \end{equation*}
    Since $C(u)$ increases with $u$, this bounds $C(\sup_{v\in\setC}\ol{m_\eta}v)/K_\eta$ by $B_{2R}^{\mathrm{rad}}$.
    If $\delta<\pi/2$, combine the bound $K\geq z/H(z)$ in \eqref{eq:pos:curved:A-lower} with \eqref{eq:pos:curved:diameter} and the monotonicity of $z/H(z)$ from \cref{lmm:pos:cosine-average} to obtain
    \begin{equation}\label{eq:pos:cp:local:diameter}
        \begin{split}
            K_\eta&\geq T_\delta\brOf{\sup_{v\in\setC}\ol{m_\eta}v}\cot\brOf{T_\delta\brOf{\sup_{v\in\setC}\ol{m_\eta}v}},\\
            T_\delta(\ell)&:=\arccos\brOf{\max\{\sqrt{\cos(\delta)},\cos(\delta)/\cos(\ell)\}}.
        \end{split}
    \end{equation}
    The comparison holds for every $q\in\setC$, so taking the supremum of $\ol{m_\eta}q$ gives this bound without requiring a farthest point.
    By \cref{lmm:pos:cp:scalar}, it bounds $C(\sup_{v\in\setC}\ol{m_\eta}v)/K_\eta$ by $B_\delta^{\mathrm{diam}}$.
    Therefore \eqref{eq:pos:cp:local:ratio} gives
    \begin{equation*}
        \ol{m_s}{m_t}
        \leq\br{\min\{B_{2R}^{\mathrm{rad}},B_\delta^{\mathrm{diam}}\}+\mathrm{o}(1)}I(t-s),
    \end{equation*}
    where again $\mathrm{o}(1)\to0$ as $t-s\searrow0$, uniformly for $0\leq s<t\leq1$.
    This also covers $B_\delta^{\mathrm{diam}}=+\infty$.
    Partition $[0,1]$ into equal intervals, sum, and pass to the limit to get
    \begin{equation*}
        \ol{m_\mu}{m_\nu}
        \leq\min\{B_{2R}^{\mathrm{rad}},B_\delta^{\mathrm{diam}}\}\,I.
    \end{equation*}
    Take the infimum over couplings and let $R\searrow\varrho$.
    Continuity of the radius maximum follows by writing $x=Ru$, $u\in[0,1]$, as in the variance proof.
    This proves \eqref{eq:pos:cp}.
    The remaining claims follow from \cref{lmm:pos:cp:scalar,lmm:pos:cp:sharp:radius,lmm:pos:cp:sharp:diameter}.
\end{proof}

\Needspace{14\baselineskip}
\begin{lemma}[Scalar contraction constants]\label{lmm:pos:cp:scalar}
    For $0<\delta<\pi/2$ and $T_\delta$ from \eqref{eq:pos:cp:local:diameter},
    \begin{equation*}
        \max_{0\leq \ell\leq\delta}
        \frac{C(\ell)}{T_\delta(\ell)\cot(T_\delta(\ell))}
        =\frac1{\sqrt{\cos(\delta)}}.
    \end{equation*}
    The radius constant satisfies \eqref{eq:pos:cp:radius:upper}, and every maximizer in \eqref{eq:pos:cp:radius:optimal} lies in $(0,\Delta/2)$.
    Also $B_\Delta^{\mathrm{rad}}\to1$ as $\Delta\searrow0$ and $B_\delta^{\mathrm{diam}}\to1$ as $\delta\searrow0$.
\end{lemma}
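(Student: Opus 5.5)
The plan is to treat the diameter identity, the radius bounds \eqref{eq:pos:cp:radius:upper}, the location of the maximizers and the two limits one after another. Each reduces to one-variable calculus, and the inputs are \cref{lmm:pos:cosine-average}, \cref{lmm:pos:cosine-conversion} and the scalar bound \cref{lmm:pos:radius-scalar}.

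\emph{Diameter identity.} Put $\theta:=\theta_\delta$, so $\cos(\theta)^2=\cos(\delta)$. By definition $T_\delta(\ell)=\theta$ for $0\leq\ell\leq\theta$.
\begin{itemize}
\item On $[0,\theta]$ the denominator $T_\delta(\ell)\cot(T_\delta(\ell))$ is constant and $C(\ell)$ is increasing. The maximum over this interval is therefore taken at $\ell=\theta$, where the ratio is $C(\theta)\tan(\theta)/\theta=1/\cos(\theta)=1/\sqrt{\cos(\delta)}$.
\item For $\theta\leq\ell\leq\delta$ put $t:=T_\delta(\ell)\leq\theta\leq\ell$, so that $\cos(t)\cos(\ell)=\cos(\delta)$. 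Write $a(u):=-\log\cos(u)$ and $\psi(u):=\log(\tan(u)/u)$. Since $\log(u/\sin(u))=a(u)-\psi(u)$, the logarithm of the ratio is
\[
a(\ell)-\psi(\ell)+\psi(t)=a(\ell)+a(t)-\bigl(\psi(\ell)-\tfrac12a(\ell)\bigr)-\bigl(\tfrac12a(t)-\psi(t)\bigr)-\tfrac12\bigl(a(\ell)+a(t)\bigr)+\dots
\]
which, after using $a(\ell)+a(t)=-\log\cos(\delta)$, is at most $-\tfrac12\log\cos(\delta)$ if and only if $\psi(\ell)-\tfrac12a(\ell)\geq\psi(t)-\tfrac12a(t)$.
\item Because $t\leq\ell$, this last inequality holds as soon as $u\mapsto\psi(u)-\tfrac12a(u)$ is nondecreasing on $(0,\pi/2)$. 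Its derivative is $1/(\sin(u)\cos(u))-1/u-\tfrac12\tan(u)$. After multiplying by $\sin(u)\cos(u)$, the claim becomes $1-\sin(2u)/(2u)-\tfrac12\sin(u)^2\geq0$.
\item I would prove this with the alternating Taylor bounds used in \cref{lmm:pos:radius-scalar}, splitting off the range near $\pi/2$ if necessary.
\end{itemize}
This monotonicity inequality is the step I expect to be the main obstacle.

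\emph{Radius bounds.} Write $R:=\Delta/2$, $t:=t_\Delta(x)$ and $u:=(R+x)/2$. The maximand of \eqref{eq:pos:cp:radius:optimal} is $C(2u)/(t\cot(t))$.
\begin{itemize}
\item The bound $1/\cos(R)^2<C_\Delta^2$ is equivalent to $\sin(R)<R$, which holds for $R>0$.
\item For $B_\Delta^{\mathrm{rad}}<1/\cos(R)^2$, I would use \cref{lmm:pos:radius-scalar} in the form $1/(t\cot(t))<(\sin(u)/u)^2/\cos(R)$. This gives
\[
\frac{C(2u)}{t\cot(t)}<\frac{\tan(u)}{u}\cdot\frac{1}{\cos(R)}\cdot\frac{\sin(u)^2}{\cos(u)\,\tan(u)\,u}\cdots
\]
and I would then compare the remaining factor with $1/\cos(R)$ using $u\leq R$. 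If that comparison falls short, I would go back to the chain of inequalities inside the proof of \cref{lmm:pos:radius-scalar} and sharpen it.
\item For $B_\Delta^{\mathrm{rad}}<2C_\Delta$, I would use that $C(2u)\leq C_\Delta$ and bound $\tan(t)/t$ jointly with the deficit $C(2u)/C_\Delta$. Where $t$ is large, $x$ is small and $2u\approx R$, so the ratio $C(2u)/C_\Delta$ absorbs the growth of $\tan(t)/t$.
\end{itemize}

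\emph{Interior maximizers.}
\begin{itemize}
\item At $x=0$ we have $t'(0)=0$ and $C$ is increasing, so the maximand has positive derivative there.
\item At $x=R$, $t\to0$ and $t^2$ behaves like $2\tan(R)(R-x)$. Hence $\tan(t)/t\approx1+t^2/3$ decreases linearly to $1$ as $x\nearrow R$.
\item Comparing this linear decrease with $C'(2R)$, I expect the derivative of the maximand at $x=R$ to be negative. This also gives the strict bound $B_\Delta^{\mathrm{rad}}>C_\Delta$, since $C_\Delta$ is the value of the maximand at the endpoint $x=R$.
\end{itemize}

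\emph{Limits.} Both constants are at least $1$. By the upper bound $1/\cos(\Delta/2)^2$ and the explicit formula $1/\sqrt{\cos(\delta)}$, both tend to $1$ as the parameter tends to $0$.
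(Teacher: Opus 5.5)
Your diameter argument is correct, and it takes a different route from the paper. The paper differentiates the log-ratio in $\ell$ using $t'=-\cot(t)\tan(\ell)$, and gets $\tan(\ell)(k(\ell)+k(t)-1)<0$ from the decreasing function $k(u)=\cot(u)/u-\cot(u)^2\leq\tfrac13$ of \cref{lmm:pos:cosine-conversion}. You instead use the additive constraint $a(\ell)+a(t)=-\log\cos(\delta)$ to reduce everything to monotonicity of $\psi-\tfrac12a$, which avoids implicit differentiation. The step you flagged as the main obstacle is easy: with $v=2u\in(0,\pi)$ it reads $\sin(v)/v\leq(3+\cos(v))/4$. This follows from $\sin(v)/v\leq(2+\cos(v))/3$, proved inside \cref{lmm:pos:radius-scalar}, together with $\cos(v)\leq1$. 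Your route to $B_\Delta^{\mathrm{rad}}<1/\cos(R)^2$ also works once the display is corrected to $C(2u)/(t\cot t)<\tan(u)/(u\cos R)\leq\tan(R)/(R\cos R)<1/\cos(R)^2$. The paper's route is simpler: it bounds $C(R+x)\leq C(2R)$ and $t\cot t\geq R\cot R$ separately, and their equality cases $x=R$ and $x=0$ are incompatible.

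The genuine gaps are elsewhere in the radius part. First, $B_\Delta^{\mathrm{rad}}<2C_\Delta$ is only motivated, not proved. The \cref{lmm:pos:radius-scalar} route cannot give it: it yields at best $\tan(R)/(R\cos R)$, which exceeds $2C_\Delta=2R/(\sin R\cos R)$ as $R\nearrow\pi/2$, exactly where this bound is the sharper one. The missing idea is $t\cot t\geq\cos t$, i.e. $\tan(t)/t\leq1/\cos(t)=\cos(x)/\cos(R)$, combined with $\sin(R+x)\geq\sin(R)\cos(x)$. Then the factor $\cos(x)$ cancels in the maximand $f(x):=C(R+x)\tan(t)/t$, giving $f(x)\leq(R+x)/(\sin R\cos R)\leq4R/\sin(2R)=2C_\Delta$. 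The first step is strict for every $x$, because equality in the sine bound needs $x=0$ while equality in $t\cot t\geq\cos t$ needs $t=0$, i.e. $x=R$.

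Second, you only expect the derivative at $x=R$ to be negative; you do not prove it. Your linearization is right and gives the paper's value $f'(R)/f(R)=\frac1{2R}-\cot(2R)-\frac23\tan(R)=(3-3R\cot R-R\tan R)/(6R)$. Its negativity is delicate, however: the terms of order $R$ cancel, and the expression equals $-\tfrac2{45}R^3+\landauO(R^5)$ as $R\searrow0$. So no rough comparison of the slope contribution $-\tfrac23\tan(R)\,C(2R)$ with $C'(2R)$ decides it. You need an exact argument. For instance, with $T:=\tan R$ the sign condition is equivalent to $\arctan(T)>3T/(T^2+3)$. The difference of the two sides vanishes at $T=0$ and has derivative $4T^4/((1+T^2)(T^2+3)^2)>0$. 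Without this step, neither the interiority of every maximizer nor the strict lower bound $C_\Delta<B_\Delta^{\mathrm{rad}}$ in \eqref{eq:pos:cp:radius:upper} is established.
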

\begin{proof}
    Recall from the proof of \cref{lmm:pos:cosine-conversion} that
    $k(u):=\cot(u)/u-\cot(u)^2$, with $k(0):=1/3$, is strictly decreasing on $(0,\pi/2)$.

    For the diameter maximum, put $\theta:=\theta_\delta$.
    On $0\leq \ell\leq\theta$, $T_\delta(\ell)=\theta$ and $C(\ell)$ increases, giving the maximum $C(\theta)/(\theta\cot(\theta))=1/\cos(\theta)$.
    For $\theta<\ell<\delta$, write $t:=T_\delta(\ell)$.
    Differentiating $\cos(t)=\cos(\delta)/\cos(\ell)$ gives $t'=-\cot(t)\tan(\ell)$, hence
    \begin{equation*}
        \frac{\dl}{\dl \ell}\log\frac{C(\ell)}{t\cot(t)}
        =\tan(\ell)\,\br{k(\ell)+k(t)-1}<0.
    \end{equation*}
    The sign follows from $k(\ell),k(t)\leq1/3$.
    The maximum is therefore $1/\cos(\theta)=1/\sqrt{\cos(\delta)}$.

    For the radius, put $R:=\Delta/2$ and
    $f_{2R}(x):=C(R+x)/(t_{2R}(x)\cot(t_{2R}(x)))$.
    Since $C(u)$ increases, $t\cot(t)$ decreases, and $0\leq t_{2R}(x)\leq R$,
    \begin{equation*}
        f_{2R}(x)\leq\frac{C(2R)}{R\cot(R)}=\frac1{\cos(R)^2}.
    \end{equation*}
    Equality in the numerator bound requires $x=R$, whereas equality in the denominator bound requires $x=0$.
    For $R>0$, compactness therefore makes the upper bound strict.
    Also $1/\cos(R)^2<C(2R)^2$ follows from $\sin(R)<R$.
    For the second upper bound, $t:=t_{2R}(x)$ satisfies $\cos(t)=\cos(R)/\cos(x)$, so $t\cot(t)\geq\cos(t)$ gives $\tan(t)/t\leq\cos(x)/\cos(R)$; with $\sin(R+x)\geq\sin(R)\cos(x)$, equality only at $x=0$, this yields $f_{2R}(x)\leq(R+x)/(\sin(R)\cos(R))\leq4R/\sin(2R)=2C(2R)$, equality in the last step only at $x=R$. Hence $B_\Delta^{\mathrm{rad}}<2C_\Delta$.
    Using $t_{2R}'(x)=-\cot(t_{2R}(x))\tan(x)$ as above gives
    \begin{equation*}
        \frac{f_{2R}'(x)}{f_{2R}(x)}
        =\frac1{R+x}-\cot(R+x)-\tan(x)\,\br{1-k(t_{2R}(x))}.
    \end{equation*}
    Its continuous endpoint value at $x=0$ is positive, whereas at $x=R$ it is
    $(3-3R\cot(R)-R\tan(R))/(6R)<0$.
    For the latter sign, put $T:=\tan(R)>0$; multiplying by $6RT$ shows that the sign condition is equivalent to $R(T^2+3)>3T$, that is, to $\arctan(T)>3T/(T^2+3)$, and the function
    $\arctan(T)-3T/(T^2+3)$ vanishes at zero and has derivative
    $4T^4/((1+T^2)(T^2+3)^2)>0$.
    Thus every maximizer is interior, and the maximum strictly exceeds $f_{2R}(R)=C(2R)$.
    This proves \eqref{eq:pos:cp:radius:upper}; squeezing gives the limit at zero.
    The diameter limit is immediate from its formula.
\end{proof}

\Needspace{12\baselineskip}
\begin{lemma}[Spherical sensitivity]\label{lmm:pos:cp:sensitivity}
    Let $n\geq2$ and $N\geq n+2$. On the unit sphere of $\R^N$, let $m,e$ be orthonormal and let $a_1,\ldots,a_n$ be unit vectors orthogonal to $m,e$ that form a regular simplex with zero sum.
    Fix $0\leq t<\pi/2$ and $0<\ell<\pi$, and set
    \begin{equation*}
        y_j:=\cos(t)\,m+\sin(t)\,a_j,
        \qquad \eta_0:=\frac1n\sum_{j=1}^n\delta_{y_j}.
    \end{equation*}
    Let $p(u)$ be a smooth spherical curve, defined near zero and orthogonal to every $a_j$, such that
    $p(0)=\cos(\ell)\,m+\sin(\ell)\,e$ and $p'(0)\ne0$ is orthogonal to $m,e$.
    For $0<\epsilon<1$, set $\eta_{\epsilon,u}:=(1-\epsilon)\eta_0+\epsilon\delta_{p(u)}$. Then
    \begin{equation}\label{eq:pos:cp:sensitivity}
        \lim_{\epsilon\searrow0}\lim_{u\to0}
        \frac{\ol{m_2(\eta_{\epsilon,-u})}{m_2(\eta_{\epsilon,u})}}
             {W_1(\eta_{\epsilon,-u},\eta_{\epsilon,u})}
        =\frac{C(\ell)}{t\cot(t)},
    \end{equation}
    with $t\cot(t):=1$ at zero.
\end{lemma}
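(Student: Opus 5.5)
The plan is to compute the first-order sensitivity of the mean in the direction $p'(0)$ by the implicit function theorem, and to compute $W_1$ exactly. The limit is then the ratio of a mixed second derivative of the objective to a Hessian entry, evaluated as $\epsilon\searrow0$.

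\emph{Step 1: the denominator.} The coupling that is the identity on the common part $(1-\epsilon)\eta_0$ and moves $p(-u)$ to $p(u)$ gives $W_1(\eta_{\epsilon,-u},\eta_{\epsilon,u})\leq\epsilon\,\ol{p(-u)}{p(u)}$. Kantorovich duality with the $1$-Lipschitz test function $f:=d(\cdot,p(-u))$ gives the reverse inequality, since the common parts cancel. Hence
\[
W_1(\eta_{\epsilon,-u},\eta_{\epsilon,u})=\epsilon\,\ol{p(-u)}{p(u)}=2\epsilon\normof{p'(0)}u+\mathrm o(u).
\]

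\emph{Step 2: the unperturbed mean.} First I identify $m_2(\eta_{\epsilon,0})=:m_\epsilon$. The linear map that fixes $m$ and $e$ and reflects $\mathrm{span}\{m,e,a_1,\ldots,a_n\}^\perp$ preserves $\eta_{\epsilon,0}$. The symmetry group of the regular simplex also preserves it, and its fixed set within the span of the $a_j$ is $\{0\}$ because $\sum_j a_j=0$. By uniqueness of the mean (\cref{thm:pos:hemisphere:vi}, applied in a small ball containing the support when $\epsilon$ is small, which uses $t<\pi/2$ and $\epsilon\to0$), $m_\epsilon$ lies on the great circle through $m$ and $e$, and $m_\epsilon\to m$ as $\epsilon\searrow0$. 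I then view $F_{\epsilon,u}(q):=\int\frac12\ol yq^2\,\dl\eta_{\epsilon,u}(y)$ as a smooth function near $m_\epsilon$; this is smooth since all relevant distances stay below $\pi$.

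\emph{Step 3: the Hessian block.} At $m$ the Hessian of $\frac12\ol{y}{\cdot}^2$ is $\mathrm{id}$ on $\log_m y$ and $\ol ym\cot(\ol ym)$ on the orthogonal complement in $T_m$. So the Hessian of $\frac12\ol{y_j}{\cdot}^2$ at $m$, in any tangent direction $w\perp m,e,a_1,\ldots,a_n$, is $t\cot(t)\normof w^2$, and the mixed terms with other directions vanish. The reflection symmetry of Step 2 shows that $W:=\mathrm{span}\{m,e,a_j\}^\perp$ is an invariant block of $\nabla^2F_{\epsilon,0}(m_\epsilon)$, and $\dim W\geq N-n-1\geq1$ because $N\geq n+2$. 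On $W$ this block equals $(1-\epsilon)t\cot(t)+\mathrm O(\epsilon)$, which is positive since $t<\pi/2$ (read as $1$ when $t=0$).

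\emph{Step 4: the mixed derivative and the implicit function theorem.} Differentiating the gradient $-\epsilon\log_q p(u)$ in $u$ at $u=0$, $q=m$, with $\log_m p=\frac{\ell}{\sin\ell}(p-\cos(\ell)m)$, gives $-\epsilon\,C(\ell)\,p'(0)$, a vector in $W$. The implicit function theorem applied to $\nabla F_{\epsilon,u}(m(\epsilon,u))=0$ then yields
\[
\partial_u m(\epsilon,0)=\epsilon\,\frac{C(\ell)}{t\cot(t)}\,p'(0)+\mathrm O(\epsilon^2).
\]
Consequently $\ol{m_2(\eta_{\epsilon,-u})}{m_2(\eta_{\epsilon,u})}=2u\normof{\partial_um(\epsilon,0)}+\mathrm o(u)$. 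Dividing by Step 1, first letting $u\to0$ and then $\epsilon\searrow0$, gives \eqref{eq:pos:cp:sensitivity}.

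The main obstacle is Steps 2--4 done rigorously on the sphere. One must justify uniqueness and smooth dependence of the mean, including that the minimizer over the whole sphere is the local critical point the implicit function theorem tracks; for this I would rely on \cref{thm:pos:hemisphere:vi} or Yokota's uniqueness on a small ball. One must also control the $\mathrm O(\epsilon)$ error in the Hessian uniformly, so that the iterated limit exists. I would handle the latter by working in normal coordinates at $m$, where all quantities are explicit smooth functions of $(\epsilon,u,q)$.
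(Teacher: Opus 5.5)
Your proposal is correct, but it extracts the sensitivity by a different mechanism than the paper. The paper uses the symmetry for \emph{every} $u$. Because $p(u)\perp a_j$ for all $u$, the simplex permutations and the reflections fixing $V+E(u)$ preserve $\eta_{\epsilon,u}$, where $V=\mathrm{span}\{a_j\}$, $E(u)=\mathrm{span}\{m,e(u)\}$ and $e(u)=(p(u)-\cos(\ell_u)m)/\sin(\ell_u)$. Hence $m_2(\eta_{\epsilon,u})=\cos(\vartheta)m+\sin(\vartheta)e(u)$, with $\vartheta$ a critical point of the scalar function $(1-\epsilon)\arccos(\cos(t)\cos(\vartheta))^2+\epsilon(\ell_u-\vartheta)^2$. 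A one-variable implicit function theorem at $(\epsilon,\vartheta)=(0,0)$, whose only nondegeneracy input is $2t\cot(t)>0$, gives $\vartheta_\epsilon=\epsilon\ell/(t\cot(t))+\landauO(\epsilon^2)$, smoothly in $(\epsilon,\ell)$. Since $\ell_u$ is stationary at $u=0$ and $e'(0)=p'(0)/\sin(\ell)$, the mean is carried along by the rotating plane, and the inner limit is the lever arm $\sin(\vartheta_\epsilon)/(\epsilon\sin(\ell))$; no Riemannian Hessian appears.

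You instead symmetrize only at $u=0$ and do first-order perturbation theory at fixed $\epsilon$ in the whole tangent space, so your inner limit is the mixed derivative divided by the transverse eigenvalue on $W$. The two agree exactly, not only as $\epsilon\searrow0$. Put $s:=\arccos(\cos(t)\cos(\vartheta_\epsilon))$. Your eigenvalue is $\lambda_W=(1-\epsilon)s\cot(s)+\epsilon(\ell-\vartheta_\epsilon)\cot(\ell-\vartheta_\epsilon)$, and your mixed derivative at $m_\epsilon$ is $\epsilon C(\ell-\vartheta_\epsilon)p'(0)$. The first-order condition $(1-\epsilon)s\cot(s)\tan(\vartheta_\epsilon)=\epsilon(\ell-\vartheta_\epsilon)$ turns $\epsilon C(\ell-\vartheta_\epsilon)/\lambda_W$ into $\sin(\vartheta_\epsilon)/\sin(\ell)$. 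Your route needs only the symmetry of $\eta_{\epsilon,0}$, hence only $p'(0)\in W$ rather than $p(u)\perp a_j$ for all $u$. The paper's route is one-dimensional and avoids all Hessian bookkeeping.

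Four points in your write-up need repair, none serious.
\begin{enumerate}
\item The implicit function theorem needs the \emph{full} Hessian of $F_{\epsilon,0}$ at $m_\epsilon$ to be invertible, not just its $W$-block. On the complementary block $T_{m_\epsilon}\cap\mathrm{span}\{m,e,a_j\}$, each $y_j$-term has eigenvalues $1$ and $s\cot(s)>0$, and the $p$-term is $\landauO(\epsilon)$. So the full Hessian is invertible for small $\epsilon$, but you should say so.
\item Uniqueness of the means does not come from a ball that shrinks with $\epsilon$. The atom $p(u)$ stays at distance about $\ell$ from $m$, possibly beyond $\pi/2$, for every $\epsilon$. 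What works is the paper's localization: all atoms lie in the open hemisphere about $c=\cos(\omega)m+\sin(\omega)e$, with $0<\omega<\pi/2$ and $\absof{\ell-\omega}<\pi/2$.
\item Global uniqueness alone does not identify $m_2(\eta_{\epsilon,u})$ with your IFT branch. You also need continuity in $u$, which follows from \cref{lmm:pos:cp:product} and $W_1(\eta_{\epsilon,u},\eta_{\epsilon,0})\leq\epsilon\,\ol{p(u)}{p(0)}$. Combine this with the first-order condition at the minimizer, where $F_{\epsilon,u}$ is smooth, and the local-uniqueness clause of the IFT.
\item No uniform control of the $\landauO(\epsilon)$ error is needed, because the limit is iterated. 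For each fixed $\epsilon$, the inner limit is exactly $\normof{\partial_u m(\epsilon,0)}/(\epsilon\normof{p'(0)})$.
\end{enumerate}
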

\begin{proof}
    \emph{Localization.}
    Choose $\omega\in(0,\pi/2)$ such that $\absof{\ell-\omega}<\pi/2$, and put $c:=\cos(\omega)\,m+\sin(\omega)\,e$.
    Then $\ip c{y_j}=\cos(\omega)\cos(t)>0$ and $\ip c{p(0)}=\cos(\ell-\omega)>0$, so for sufficiently small $\absof u$ all observations lie in a closed spherical ball $\overline B(c,r)$ with $r<\pi/2$, which is convex with circumradius at most $r$.
    The means of $\eta_0$ and $\eta_{\epsilon,u}$ are therefore unique and lie in this ball, and \cref{lmm:pos:cp:product} applies there.
    The Jensen argument from \cref{lmm:pos:vi:sharp:diameter} gives $m_2(\eta_0)=m$, and \eqref{eq:pos:cp:product} with $W_1(\eta_{\epsilon,u},\eta_0)\leq\epsilon\pi$ shows that $m_2(\eta_{\epsilon,u})\to m$ as $\epsilon\to0$, uniformly for small $\absof u$.

    \emph{Symmetry reduction.}
    Put $\ell_u:=\ol m{p(u)}$ and $e(u):=(p(u)-\cos(\ell_u)m)/\sin(\ell_u)$, so that $e(u)$ is a unit vector orthogonal to $m$ and to every $a_j$, and $p(u)=\cos(\ell_u)m+\sin(\ell_u)e(u)$.
    Let $V:=\operatorname{span}\{a_1,\ldots,a_n\}$ and $E:=\operatorname{span}\{m,e(u)\}$, which are orthogonal.
    Every orthogonal map of $\R^N$ that fixes $E$ pointwise and maps $V$ onto itself by permuting the $a_j$ preserves $\eta_{\epsilon,u}$, and so does every reflection in a hyperplane containing $V+E$.
    By uniqueness, these maps fix $m_2(\eta_{\epsilon,u})$.
    The permutations of the $a_j$ fix no nonzero vector of $V$, since a fixed vector $w\in V$ has $\ip w{a_j}$ independent of $j$, hence $\ip w{a_j}=\frac1n\ip w{\sum_ia_i}=0$ for all $j$; and the reflections remove every component orthogonal to $V+E$.
    Hence $m_2(\eta_{\epsilon,u})\in E$, that is, $m_2(\eta_{\epsilon,u})=\cos(\vartheta)m+\sin(\vartheta)e(u)$ for an angle $\vartheta$ near zero.

    \emph{Reduced objective.}
    For $q_\vartheta:=\cos(\vartheta)m+\sin(\vartheta)e(u)$, we have $\ip{y_j}{q_\vartheta}=\cos(t)\cos(\vartheta)$ and $\ol{p(u)}{q_\vartheta}=\absof{\ell_u-\vartheta}$, so $\vartheta$ minimizes, near zero,
    \begin{equation*}
        F_{\epsilon,\ell_u}(\vartheta):=(1-\epsilon)\arccos(\cos(t)\cos(\vartheta))^2
        +\epsilon(\ell_u-\vartheta)^2.
    \end{equation*}
    The first term is a smooth even function of $\vartheta$ whose derivative with respect to $\vartheta^2$ at zero is $t\cot(t)$, as in the proof of \cref{lmm:pos:cosine-conversion}\,(ii); so at $\epsilon=\vartheta=0$ the second derivative of $F_{\epsilon,\ell}$ in $\vartheta$ is $2t\cot(t)>0$, while $\partial_\epsilon\partial_\vartheta F_{\epsilon,\ell}(0)=-2\ell$.
    The implicit function theorem applied to $\partial_\vartheta F_{\epsilon,\ell}(\vartheta)=0$ therefore gives a smooth angular coordinate $\vartheta(\epsilon,\ell_u)$, with
    \begin{equation*}
        \vartheta_\epsilon:=\vartheta(\epsilon,\ell)
        =\frac{\epsilon \ell}{t\cot(t)}+\landauO(\epsilon^2)
        \qquad(\epsilon\searrow0).
    \end{equation*}

    \emph{Speed and transport cost.}
    Since $p'(0)$ is orthogonal to $m$ and $e$, the derivative of $u\mapsto\ell_u=\arccos(\ip m{p(u)})$ vanishes at zero, and $e'(0)=p'(0)/\sin(\ell)$.
    Consequently the curve of means
    $u\mapsto\cos(\vartheta(\epsilon,\ell_u))m+\sin(\vartheta(\epsilon,\ell_u))e(u)$
    has velocity $\sin(\vartheta_\epsilon)p'(0)/\sin(\ell)$ at zero, so that $\ol{m_2(\eta_{\epsilon,-u})}{m_2(\eta_{\epsilon,u})}=2\absof u\sin(\vartheta_\epsilon)\normof{p'(0)}/\sin(\ell)+\mathrm o(u)$.
    Moreover,
    \begin{equation*}
        W_1(\eta_{\epsilon,-u},\eta_{\epsilon,u})
        =\epsilon\,\ol{p(-u)}{p(u)}
        =2\epsilon\abs u\normOf{p'(0)}+\mathrm o(u)
        \eqcm
    \end{equation*}
    where retaining the common mass gives the upper bound in the first identity, and integrating the $1$-Lipschitz function $q\mapsto\ol q{p(-u)}$ against the two measures gives the lower bound.
    The inner limit in \eqref{eq:pos:cp:sensitivity} is thus $\sin(\vartheta_\epsilon)/(\epsilon\sin(\ell))$.
    Letting $\epsilon\searrow0$ proves the claim, since $\ell/(\sin(\ell)\,t\cot(t))=C(\ell)/(t\cot(t))$.
\end{proof}

\begin{lemma}[Radius sharpness for contraction]\label{lmm:pos:cp:sharp:radius}
    For every $0<R<\pi/2$, no constant smaller than $B_{2R}^{\mathrm{rad}}$ can hold in the contraction property for all probability measures on a closed ball of radius $R$ in the unit $3$-sphere.
    This ball has circumradius exactly $R$.
\end{lemma}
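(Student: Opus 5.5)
The plan is to realize the maximand of \eqref{eq:pos:cp:radius:optimal} at an arbitrary $x\in[0,R]$ by the sensitivity construction of \cref{lmm:pos:cp:sensitivity}, placed inside a closed ball $\overline B(o,R)$ on the unit $3$-sphere. Write $t:=t_{2R}(x)=\arccos(\cos(R)/\cos(x))$ and $\ell:=R+x$. That lemma produces pairs of measures whose mean-distance to $W_1$ ratio tends to $C(\ell)/(t\cot(t))=C(R+x)\tan(t)/t$. Taking the supremum over $x$ then gives exactly $B_{2R}^{\mathrm{rad}}$, so no smaller constant can work.

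The main issue is dimension. \Cref{lmm:pos:cp:sensitivity} requires $N\geq n+2$, whereas the unit $3$-sphere lives in $\R^4$. I would therefore take $n=2$ and $N=4$. Then $a_1=-a_2=:a$, and $m,e,a$ together with a fourth unit vector $f$ form an orthonormal basis. Put
\[
y_\pm=\cos(t)m\pm\sin(t)a,\qquad o=\cos(x)m-\sin(x)e,\qquad p(u)=\cos(\ell)m+\sin(\ell)(\cos(u)e+\sin(u)f).
\]
The curve $p(u)$ is orthogonal to $a$, with $p'(0)=\sin(\ell)f\neq0$ orthogonal to $m$ and $e$, so the lemma's hypotheses are met.

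Next I check that all support points lie in $\overline B(o,R)$:
\[
\ip o{y_\pm}=\cos(x)\cos(t)=\cos(R),\qquad \ip o{p(u)}=\cos(x)\cos(\ell)-\sin(x)\sin(\ell)\cos(u).
\]
At $u=0$ the second expression is $\cos(\ell+x)$, which is the wrong sign choice, so I would instead take $o=\cos(x)m+\sin(x)e$. Then $\ip o{p(0)}=\cos(\ell-x)=\cos(R)$. However, $\ip o{p(u)}=\cos(x)\cos(\ell)+\sin(x)\sin(\ell)\cos(u)<\cos(R)$ for $u\neq0$ when $x>0$, so $p(u)$ leaves the ball. To fix this I would replace $p$ by a curve on the sphere $\partial B(o,R)$ that is orthogonal to $a$ and passes through $p(0)$ with tangent $f$: rotate $p(0)$ about the axis $o$ inside $\operatorname{span}\{m,e,f\}$. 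This curve satisfies $\ip m{p(u)}=\cos(\ell)+\mathrm O(u^2)$ and $p'(0)\parallel f$, and the proof of \cref{lmm:pos:cp:sensitivity} only used $p'(0)\perp m,e$ together with smoothness.

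Here $\ol mo=x$, and the farthest point $p(0)$ has $\ol m{p(0)}=R+x$, matching the variance sharpness in \cref{lmm:pos:vi:sharp:radius}. The localization step of the lemma applies because the whole ball has radius $R<\pi/2$. The circumradius of $\overline B(o,R)$ is exactly $R$, since its diameter is $2R$ (antipodal boundary points along a great circle through $o$), and circumradius is at least half the diameter.

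The main obstacle is verifying the lemma's computation for this modified curve. Specifically, I need that the symmetry reduction still puts the mean in $\operatorname{span}\{m,e(u)\}$; this holds because $p(u)\perp a$ and the reflection $a\mapsto-a$ swaps $y_\pm$. I also need that $\ell_u$ has vanishing first derivative, which follows from $p'(0)\perp m$. With these in place, the limits give the ratio $C(R+x)\tan(t)/t$ for each $x$, and maximizing over $x$ completes the proof.
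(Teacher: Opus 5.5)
Your construction is the paper's: $n=2$ in \cref{lmm:pos:cp:sensitivity} with $a_1=-a_2$, $o=\cos(x)\,m+\sin(x)\,e$, and the curve obtained by rotating $p(0)$ about $o$ on $\partial B(o,R)$. The paper writes that curve as $\cos(R)\,o+\sin(R)(\cos(u)\,w+\sin(u)\,v)$, and your argument is correct.

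Your final ``obstacle'' is not one. \Cref{lmm:pos:cp:sensitivity} is stated for any smooth spherical curve that is orthogonal to the $a_j$, has the given $p(0)$, and satisfies $0\neq p'(0)\perp m,e$. It therefore applies to your modified curve verbatim, with no need to recheck its symmetry reduction. Whether you fix a maximizer $x$ (as the paper does) or take the supremum over $x$ makes no difference.
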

\begin{proof}
    Choose a maximizer $x\in(0,R)$ in \eqref{eq:pos:cp:radius:optimal}, and put $t:=t_{2R}(x)$.
    In \cref{lmm:pos:cp:sensitivity}, take $\ell=R+x$, an opposite pair $a_1=-a_2$, and a unit vector $v$ orthogonal to $m,e,a_1$.
    Put $o:=\cos(x)\,m+\sin(x)\,e$ and $\setC:=\overline B(o,R)$.
    The identities $\ip o{y_j}=\cos(x)\cos(t)=\cos(R)$ and $\ip o{p(0)}=\cos(R)$ place all observations on the boundary.
    With $w:=-\sin(x)\,m+\cos(x)\,e$, the curve
    \begin{equation*}
        p(u):=\cos(R)\,o+\sin(R)(\cos(u)\,w+\sin(u)\,v)
    \end{equation*}
    stays on that boundary, is orthogonal to $a_1,a_2$, and satisfies $p(0)=\cos(R+x)\,m+\sin(R+x)\,e$ and $p'(0)=\sin(R)\,v$.
    Hence \eqref{eq:pos:cp:sensitivity} approaches $B_{2R}^{\mathrm{rad}}$ within this fixed ball.
    The ball has diameter $2R$, so its circumradius is $R$.
\end{proof}

\begin{lemma}[Diameter sharpness for contraction]\label{lmm:pos:cp:sharp:diameter}
    For every $0<\delta<\pi/2$ and $L<B_\delta^{\mathrm{diam}}$, there are a compact \catof{1} space $\mc Q$ of diameter exactly $\delta$ and circumradius less than $\pi/2$, and distinct measures $\mu,\nu\in\mc P(\mc Q)$, such that
    $\ol{m_2(\mu)}{m_2(\nu)}>L\,W_1(\mu,\nu)$.
    The same conclusion holds for every finite $L$ when $\pi/2\leq\delta<\pi$.
\end{lemma}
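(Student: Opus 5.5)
We plan to reuse the spherical sensitivity computation of \cref{lmm:pos:cp:sensitivity}, now placing the data in a convex hull of controlled diameter rather than in a ball. The target is the diameter maximum of \cref{lmm:pos:cp:scalar}, which for $0<\delta<\pi/2$ is attained on the plateau $0\leq\ell\leq\theta_\delta$. At $\ell=\theta_\delta$ with $t=\theta_\delta$ it gives the ratio $C(\theta_\delta)/(\theta_\delta\cot(\theta_\delta))=1/\cos(\theta_\delta)=1/\sqrt{\cos(\delta)}=B_\delta^{\mathrm{diam}}$.

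\textbf{Construction for $0<\delta<\pi/2$.} We follow the simplex construction of \cref{lmm:pos:vi:sharp:diameter}. Take $n$ large and set $y_j=\cos(t_n)m+\sin(t_n)a_j$, where $\cos(t_n)^2=(1+(n-1)\cos(\delta))/n$, so that the pairwise inner products are exactly $\cos(\delta)$ and $t_n\nearrow\theta_\delta$. The perturbing atom is $p(u)=\cos(\ell)m+\sin(\ell)(\cos(u)e+\sin(u)v)$ with $\ell$ slightly below $\theta_\delta$. We must check that all inner products among $y_j$, $p(\pm u)$ are at least $\cos(\delta)$:
\begin{itemize}
\item $\ip{y_j}{p(u)}=\cos(t_n)\cos(\ell)$, which exceeds $\cos(\delta)$ provided $\cos(t_n)\cos(\ell)>\cos(\delta)$; this holds once $\ell<\theta_\delta$ is fixed and $n$ is large.
\item $\ip{p(u)}{p(-u)}$ is close to $1$ for small $u$.
\end{itemize}
We take $\mc Q$ to be the spherical convex hull of these finitely many points. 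As in \cref{lmm:pos:vi:sharp:diameter}, normalized nonnegative combinations keep all inner products at least $\cos(\delta)$, so $\mc Q$ has diameter exactly $\delta$, attained between two $y_j$. Its circumradius is at most $\max\{t_n,\ell\}<\pi/2$, since it lies in $\overline B(m,\theta_\delta)$. It is a compact \catof{1} space, being a closed convex subset of a sphere. The measures are $\mu=\eta_{\epsilon,-u}$ and $\nu=\eta_{\epsilon,u}$ from \cref{lmm:pos:cp:sensitivity}, which live on $\mc Q$.

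By \cref{lmm:pos:cp:sensitivity}, the iterated limit of the ratio is $C(\ell)/(t_n\cot(t_n))$. Letting $n\to\infty$ and then $\ell\nearrow\theta_\delta$ makes this tend to $1/\cos(\theta_\delta)$. So for any $L<B_\delta^{\mathrm{diam}}$ we can first choose $\ell$ and $n$, and then $\epsilon$ and $u$, to get a ratio exceeding $L$. One caution: \cref{lmm:pos:cp:sensitivity} must be applied with the inner limit for fixed $\epsilon$ and then $\epsilon$ small. Since we only need a strict inequality at finite parameters, the order of choices is harmless. A symmetric pair $a_1=-a_2$ cannot be used here, since that would give diameter $2t$; hence $n$ large is needed, and the lemma allows general $n$.

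\textbf{Case $\pi/2\leq\delta<\pi$.} We expect to use the orthant-type family of \cref{prp:pos:diameterobstruction} stated later, or directly a construction of the same kind. Place the $y_j$ with pairwise inner products near $\cos(\delta')$ for some $\delta'\leq\delta$ close to $\pi/2$, so that $t\cot(t)$ at $t\approx\theta$ tends to $0$ as $\theta\to\pi/2$. Then the ratio $C(\ell)/(t\cot(t))$ blows up, while the diameter is fixed at exactly $\delta$ by adding one extra atom of zero mass in the hull, or by taking the hull with an auxiliary point.

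\textbf{Main obstacle.} The delicate part is keeping the circumradius below $\pi/2$ while $t\to\pi/2$ and the diameter equals $\delta$; we would centre the enclosing ball away from $m$ toward $e$, as in the localization step of \cref{lmm:pos:cp:sensitivity}. Verifying these radius and diameter bookkeeping conditions is the step most likely to need care.
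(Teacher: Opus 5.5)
Your case $0<\delta<\pi/2$ is essentially the paper's argument. It uses the simplex configuration with $\cos(t_n)^2=(1+(n-1)\cos(\delta))/n$, the perturbed atom $p(u)$, and \cref{lmm:pos:cp:sensitivity} with the choices made in the order $n$, $\epsilon$, $u$, followed by $n\to\infty$. The detour through $\ell<\theta_\delta$ is harmless but unnecessary. Since $t_n<\theta_\delta$ for every $n$, already $\cos(t_n)\cos(\theta_\delta)\geq\cos(\theta_\delta)^2=\cos(\delta)$, so the paper takes $\ell=\theta_\delta$ directly. You do skip one step. \cref{lmm:pos:cp:sensitivity} computes the means on the whole sphere, so you must note, via \cite[Theorem B and Corollary 16]{Yokota2016Barycenter-JMSJ}, that these lie in your hull $\mc Q$ and are therefore the means in $\mc Q$. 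Otherwise the ratio you compute is not the one in the statement.

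The case $\pi/2\leq\delta<\pi$ is not proved; it is only announced. You correctly observe that the examples at diameters $\delta_0\nearrow\pi/2$ have ratios approaching $1/\sqrt{\cos(\delta_0)}\to\infty$. However, you do not say where the auxiliary point goes or why the diameter then equals exactly $\delta$. More importantly, you never argue that enlarging the space leaves $m_2(\eta_{\epsilon,\pm u})$ and the transport costs unchanged. Without this, the contraction ratios of the small example say nothing about the enlarged space.

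The obstacle you single out, the circumradius, is not the real issue and needs no recentring toward $e$. The paper chooses $\delta_0$ with $2\theta_{\delta_0}\geq\delta$. It then glues an interval of length $\delta-\theta_{\delta_0}\leq\theta_{\delta_0}$ to the hull at $m$, which lies in the hull because $\sum_jy_j=n\cos(t_n)m$. The resulting space has the following properties.
\begin{itemize}
\item Reshetnyak's gluing theorem \cite[Theorem II.11.1]{BridsonHaefliger1999NPC-Springer} keeps it \catof{1}.
\item It lies in the ball of radius $\theta_{\delta_0}<\pi/2$ about $m$.
\item Its diameter is exactly $\delta$, attained between the new endpoint and $p(u)$, because $\ol m{p(u)}=\ell=\theta_{\delta_0}$.
\item At a point at distance $h$ along the interval, each objective equals $\int(h+\ol ym)^2\,\mathrm{d}\eta(y)\geq\int\ol ym^2\,\mathrm{d}\eta(y)$, so the means do not move.
\item Distances between observations, and hence transport costs, are unchanged.
\end{itemize}
A purely spherical enlargement could also be made to work: a vertex $\cos(s)m-\sin(s)e$ with $s\leq\theta_{\delta_0}$, tuned by continuity so that the diameter is $\delta$, with mean invariance again from Yokota's convex-hull property. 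But these verifications are exactly what your proposal leaves open.
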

\begin{proof}
    First suppose $0<\delta<\pi/2$, and put $\theta:=\theta_\delta$.
    In \cref{lmm:pos:cp:sensitivity}, use $n$ simplex directions and choose $t=t_n$ and $\ell=\theta$, where
    \begin{equation*}
        \cos(t_n)^2=\frac{1+(n-1)\cos(\delta)}n,
        \qquad
        p(u):=\cos(\theta)\,m+\sin(\theta)(\cos(u)\,e+\sin(u)\,v),
    \end{equation*}
    with $v$ a unit vector orthogonal to $m,e,a_1,\ldots,a_n$.
    Distinct base observations have distance $\delta$, and
    $\ip{y_j}{p(u)}=\cos(t_n)\cos(\theta)\geq\cos(\delta)$.
    For sufficiently small $u\ne0$, the spherical convex hull of the base observations and $p(-u),p(u)$ has diameter exactly $\delta$ and lies in $\overline B(m,\theta)$.
    Indeed, all vertex inner products are at least $\cos(\delta)>0$, and normalization of nonnegative combinations preserves this lower bound, as in \cref{lmm:pos:vi:sharp:diameter}.
    The choices are made in this order: first $n$, then $\epsilon$, then $u$; the space $\mc Q$ is the convex hull for this $u$, which depends on it, and $\mu,\nu$ are $\eta_{\epsilon,-u},\eta_{\epsilon,u}$.
    Since $\sum_jy_j=n\cos(t_n)m$, the hull contains $m$, so its circumradius is at most $\theta<\pi/2$; and by \cite[Theorem B and Corollary 16]{Yokota2016Barycenter-JMSJ}, the spherical means of $\eta_{\epsilon,\pm u}$ are unique and lie in $\mc Q$, so they are also the means in $\mc Q$.
    By \eqref{eq:pos:cp:sensitivity}, followed by $n\to\infty$, these choices give contraction ratios approaching
    \begin{equation*}
        \lim_{n\to\infty}\frac{C(\theta)}{t_n\cot(t_n)}
        =\frac1{\cos(\theta)}=B_\delta^{\mathrm{diam}}.
    \end{equation*}

    Now fix $\pi/2\leq\delta<\pi$ and take these examples at diameters $\delta_0\nearrow\pi/2$, with contraction ratios tending to infinity.
    Choose $\delta_0$ so that $2\theta_{\delta_0}\geq\delta$.
    The preceding convex hull contains $m$; attach an interval of length $\delta-\theta_{\delta_0}$ there.
    By the gluing theorem \cite[Theorem II.11.1]{BridsonHaefliger1999NPC-Springer}, the resulting space is compact \catof{1}.
    It has diameter exactly $\delta$, attained between the new endpoint and $p(u)$, and is contained in the ball of radius $\theta_{\delta_0}<\pi/2$ about $m$.
    At every point of the added interval, each squared-distance objective is at least its value at $m$, so the original means are unchanged.
    Distances between observations, and hence transport costs, are unchanged as well.
    Thus no finite contraction constant exists at this fixed diameter.
\end{proof}

\subsection{Obstructions at the Hemisphere Threshold}

For $N\geq2$, let
\begin{gather*}
    \setC_N:=\cb{x\in\R^{N+1}:\euclOf{x}=1,\ x_j\geq0\text{ for all }j}
    \eqcm\\
    \setC_\infty:=\cb{x\in\ell^2:\normOf{x}_{\ell^2}=1,\ x_j\geq0\text{ for all }j}
    \eqcm
\end{gather*}
with the angular distance $d(x,y)=\arccos(\ip xy)$, and put $R_N:=\arccos(1/\sqrt{N+1})$ and $\epsilon_N:=\pi/2-R_N=\arcsin(1/\sqrt{N+1})$.

\begin{proposition}[Obstructions to diameter-only bounds and at the threshold]\label{prp:pos:diameterobstruction}
    The following hold.
    \begin{enumerate}[label=(\roman*)]
        \item\label{itm:pos:obstruction:vi} For every fixed $\delta\in[\pi/2,\pi)$, no finite constant in the quadratic variance inequality holds uniformly over compact \catof{1} spaces of diameter exactly $\delta$ and circumradius less than $\pi/2$.
        \item\label{itm:pos:obstruction:risk} For such a $\delta$, there is no constant $C$ depending only on $\delta$ with $\Eof{d(\samnfm,m_2(\mu))^2}\leq C\VmofP{\mu}/n$ for all $n\in\N$, all such spaces, and all distributions $\mu$ on them, even for iid samples with uniform weights. For each fixed $n$, the best constant over this class is at least $2n$.
        \item\label{itm:pos:obstruction:order} Each $\setC_N$ is a compact \catof{1} space of diameter $\pi/2$ and circumradius $R_N$. As $N\to\infty$, the optimal radius variance constant $A^{\mathrm{rad}}_{2R_N}$ is of order $\epsilon_N^{-1}$; every coefficient $A_N$ with $\Eof{d(\samnfm,m_2(\mu))^2}\leq A_N\VmofP{\mu}/n$ for all $n\in\N$ and all $\mu\in\mc P(\setC_N)$, for iid samples with uniform weights, satisfies $A_N\geq N/4\asymp\epsilon_N^{-2}$; and the refined multiplier $C_\delta A_{\Delta,\delta}B_{\Delta,\delta}$ of \eqref{eq:pos:infty:refined} for $\setC_N$ is at most $\pi^2/(4\sin(\epsilon_N)^2)\sim\pi^2/(4\epsilon_N^2)$, whereas $C_\Delta A_\Delta^{\mathrm{rad}}B_\Delta^{\mathrm{rad}}$ is of exact order $\epsilon_N^{-3}$ and $C_\Delta^4$ of order $\epsilon_N^{-4}$.
        \item\label{itm:pos:obstruction:boundary} The space $\setC_\infty$ is a complete \catof{1} space of diameter and circumradius $\pi/2$. Every finitely supported $\mu\in\mc P(\setC_\infty)$ has a unique $2$-Fr\'echet mean, but neither a finite constant in the quadratic variance inequality nor a finite $C$ with $\Eof{d(\samnfm,m_2(\mu))^2}\leq C\VmofP{\mu}/n$ holds uniformly over all finitely supported $\mu\in\mc P(\setC_\infty)$ and $n\in\N$.
    \end{enumerate}
\end{proposition}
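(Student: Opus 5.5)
The plan is to build everything from one family of explicit measures on the spherical orthants $\setC_N$: the uniform law $\mu_N$ on the $N+1$ coordinate vertices $e_0,\dots,e_N$. Transfer to other diameters is by the interval-gluing trick of \cref{lmm:pos:cp:sharp:diameter}, and transfer to $\setC_\infty$ by isometric inclusion.

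\emph{Geometry of $\setC_N$.} Each $\setC_N$ is a closed geodesically convex subset of the unit sphere, hence compact \catof{1}. Its diameter is $\pi/2$, since all inner products of points of $\setC_N$ are nonnegative and $\ol{e_i}{e_j}=\pi/2$. For the circumradius, the centre $c:=(N+1)^{-1/2}(1,\dots,1)$ is at distance $R_N$ from every point of $\setC_N$. Conversely, any unit $o$ satisfies $\sum_j o_j\le\sqrt{N+1}$, so some $o_j\le(N+1)^{-1/2}$ and hence $\ol o{e_j}\ge R_N$; for $o$ on the whole sphere the same holds after replacing $o$ by its positive part, normalized.

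By the permutation-symmetry argument of \cref{lmm:pos:cp:sensitivity} together with uniqueness (\cite[Theorem B]{Yokota2016Barycenter-JMSJ}), $m_2(\mu_N)=c$. Moreover $\VmOfP{\mu_N}=\tfrac12\cdot\tfrac N{N+1}(\pi/2)^2\to\pi^2/8$.

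\emph{Variance constant, part (iii) first half.} Evaluate the excess at $q=e_0$:
\[
F(e_0)-F(c)=\tfrac N{N+1}\tfrac{\pi^2}4-(\tfrac\pi2-\epsilon_N)^2=\pi\epsilon_N+\landauO(\epsilon_N^2),
\]
using $\epsilon_N\asymp N^{-1/2}$. Meanwhile $\ol c{e_0}^2=R_N^2\to\pi^2/4$, so the ratio is of order $\epsilon_N^{-1}$. Since $A^{\mathrm{rad}}$ is optimal over all spaces of that radius, this gives $A^{\mathrm{rad}}_{2R_N}\gtrsim\epsilon_N^{-1}$. The matching upper bound comes from \eqref{eq:pos:vi:radius:upper}: $A^{\mathrm{rad}}<1/\cos(R_N)=1/\sin\epsilon_N$.

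\emph{Multipliers in (iii).} Here $\delta=\pi/2$ and $C_\delta=\pi/2$. Take $A_{\Delta,\delta}\le1/\sin\epsilon_N$, and use the entry $B_{\Delta,\delta}\le C_\delta A_{\Delta,\delta}$ of \eqref{eq:pos:cp:best}. This gives
\[
C_\delta A_{\Delta,\delta}B_{\Delta,\delta}\le\frac{\pi^2}{4\sin(\epsilon_N)^2}.
\]
For the radius-only multiplier, $\Delta=\pi-2\epsilon_N$, so $C_\Delta\sim\pi/(2\epsilon_N)$; we already have $A^{\mathrm{rad}}\asymp\epsilon_N^{-1}$; and $C_\Delta<B^{\mathrm{rad}}<2C_\Delta$ by \eqref{eq:pos:cp:radius:upper}. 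Hence $C_\Delta A^{\mathrm{rad}}B^{\mathrm{rad}}\asymp\epsilon_N^{-3}$ and $C_\Delta^4\asymp\epsilon_N^{-4}$.

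\emph{Risk lower bounds, parts (ii) and (iii).}

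For fixed $n$ and $N\to\infty$, an iid sample from $\mu_N$ consists of $n$ distinct vertices with probability tending to one. The empirical mean is then the normalized sum of those vertices, whose inner product with $c$ is $\sqrt{n/(N+1)}\to0$. So $\Eof{d(\samnfm,c)^2}\to\pi^2/4$, while $\VmOfP{\mu_N}/n\to\pi^2/(8n)$. The best constant is therefore at least $2n$, which is the fixed-$n$ claim in (ii).

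For (iii) I will instead let $n\to\infty$ at fixed $N$ and use a smeary-type CLT expansion at $c$. The gradient of $\tfrac12\ol yq^2$ at $q=c$ is $-\log_c(y)$, of length $R_N$. The Hessian of $F$ at $c$ in tangent directions has the small eigenvalue $2R_N\cot R_N\asymp\epsilon_N$ (transverse part), with the radial contribution of order $1/N$. This gives
\[
\liminf_n n\,\Eof{d(\samnfm,c)^2}\gtrsim\frac{R_N^2}{(R_N\cot R_N)^2}\asymp N.
\]
Tracking constants should produce $A_N\ge N/4$. By symmetry the Hessian is a multiple of the identity on the tangent space orthogonal to $c$, so this is a scalar computation.

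This is the main obstacle. One has to justify the asymptotic lower bound rigorously in a stratified space (the orthant boundary) and obtain the explicit constant $1/4$. My first attempt will avoid CLT theory: use a first-order expansion of the empirical first-order condition (as in \cref{lmm:pos:cp:sensitivity}) together with uniform consistency, and apply Fatou to $n\,d(\samnfm,c)^2$.

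\emph{Diameters $\delta\in[\pi/2,\pi)$, parts (i) and (ii).} Attach to $\setC_N$ at $c$ a geodesic interval of length $\delta-R_N$. For $\delta>\pi/2$ the diameter becomes $\delta$, attained between the new endpoint and any vertex $e_j$ at distance $R_N+(\delta-R_N)=\delta$. By \cite[Theorem II.11.1]{BridsonHaefliger1999NPC-Springer} the glued space is compact \catof{1}. It lies in the ball of radius $\max\{R_N,\delta-R_N\}<\pi/2$ about $c$; for large $N$, $R_N\nearrow\pi/2$ and $\delta-R_N\to\delta-\pi/2<\pi/2$, so the circumradius is less than $\pi/2$. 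Means of measures supported in $\setC_N$ are unchanged, because on the interval each squared-distance objective is at least its value at $c$. Therefore the ratios above, which blow up as $N\to\infty$, persist at fixed diameter $\delta$. This yields (i) and both claims of (ii).

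\emph{Part (iv).} $\setC_\infty$ is a closed convex subset of the Hilbert sphere, hence complete \catof{1}. Its diameter and circumradius are both $\pi/2$: the orthonormal vertices are pairwise at distance $\pi/2$, and the circumradius argument above in the limit $N\to\infty$ gives $\pi/2$. Any finitely supported measure lives in an isometric copy of some $\setC_N$, whose circumradius is below $\pi/2$, so Yokota's theorem gives a unique mean there, which is also the unique mean in $\setC_\infty$ by the symmetric-reduction argument. The unbounded ratios from (iii) then transfer directly, so no finite constant exists in either inequality.
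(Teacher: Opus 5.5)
\textbf{Overall.} Most of the proposal is sound. The geometry of $\setC_N$, the gluing transfer, the fixed-$n$ bound $2n$ in (ii), the two-sided order $\epsilon_N^{-1}$ for $A^{\mathrm{rad}}_{2R_N}$, and the multiplier comparisons in (iii) all go through. Your variance witness differs from the paper's. You take the uniform law on all $N+1$ vertices, with mean the circumcenter $c$, and evaluate the excess at a vertex, getting a ratio $\sim\pi/(4\epsilon_N)$. The paper takes the uniform law on $N$ vertices and moves the base point infinitesimally toward the unused vertex. Both give order $\epsilon_N^{-1}$.

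\textbf{The gap: $A_N\geq N/4$ in (iii).} You do not prove this claim. You replace it with a CLT-type expansion as $n\to\infty$ at fixed $N$, but none of its steps is established: not the limit theorem for the empirical barycenter, not the passage to $\liminf_n n\,\EOf{d(\hat m_n,c)^2}$, and not the constant $1/4$ (``tracking constants should produce''). The route is not absurd, since $c$ is an interior point of $\setC_N$ and your concern about the orthant boundary is beside the point. But it is heavy and unexecuted.

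The missing idea is that your argument from (ii) can be made deterministic, which permits $n\asymp N$. As written it needs all $n$ observations to be distinct, and that fails with probability tending to one once $n\asymp N$. Instead argue as follows.
\begin{itemize}
\item For every realization, the empirical barycenter lies in the closed convex hull of its atoms \cite[Corollary 16]{Yokota2016Barycenter-JMSJ}.
\item Hence it lies in the coordinate face $\{x\in\setC_N: x_j=0\text{ for }j\notin S\}$, where $S$ is the set of observed indices and $\#S\leq n$.
\item Cauchy--Schwarz then gives $\ip{c}{\hat m_n}\leq\sqrt{\#S/(N+1)}\leq\sqrt{n/(N+1)}$, regardless of repetitions.
\end{itemize}
With $n=\lfloor(N+1)/2\rfloor$ this yields $d(\hat m_n,c)\geq\pi/4$ surely. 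For your $\mu_N$ this gives
\[
A_N\geq n\,\frac{\pi^2/16}{\VmOfP{\mu_N}}=\frac{n(N+1)}{2N}\geq\frac{N+1}{4}
\eqfs
\]
This is the paper's argument, there with the uniform law on $N$ vertices and $n=\lfloor N/2\rfloor$. The same deterministic bound also streamlines your fixed-$n$ claim in (ii).

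\textbf{A smaller point in (iv).} The atoms of a finitely supported measure on $\setC_\infty$ need not lie in a coordinate face. So the reduction to a copy of $\setC_N$ needs an argument you do not give, and the permutation-symmetry argument does not apply to general measures. Argue directly instead. The atoms have pairwise nonnegative inner products, hence positive inner product with their normalized sum. So they lie in an open ball of radius less than $\pi/2$, and \cite[Theorem B]{Yokota2016Barycenter-JMSJ} gives a unique mean. For the risk part of (iv), transfer your fixed-$n$ bound from (ii) rather than the unproved bound from (iii).
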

\begin{remark}[Reading \cref{prp:pos:diameterobstruction}]\label{rem:pos:obstruction}
    In \ref{itm:pos:obstruction:vi} and \ref{itm:pos:obstruction:risk}, the space must vary: in each individual space of the family, \cref{thm:pos:infty:independent} applies with its own finite factor $C_\Delta^4$.
    By \ref{itm:pos:obstruction:order}, the quadratic divergence of the risk coefficient as $\varrho\sqrt\kappa\nearrow\pi/2$ is order-sharp along a family of fixed diameter, and it is attained by the refined multiplier, which uses the diameter; how the best radius-only multiplier behaves is left open.
    By \ref{itm:pos:obstruction:boundary}, the hypothesis $\varrho<\pi/(2\sqrt\kappa)$ of \cref{ssec:pos:prelim} cannot be weakened to $\varrho\leq\pi/(2\sqrt\kappa)$, even in a fixed space.
    Rescaling gives the corresponding examples for every $\kappa>0$.
\end{remark}
\begin{proof}[Proof of \cref{prp:pos:diameterobstruction}]
    \emph{The orthant spaces.}
    Each $\setC_N$ is a compact geodesically convex subset of the unit sphere, hence a \catof{1} space, and $\diam(\setC_N)=\pi/2$.
    Its circumradius is $R_N<\pi/2$: the normalized all-ones vector $o_N$ gives the upper bound, while for any unit vector at least one coordinate is at most $1/\sqrt{N+1}$.
    Let $e_1,\dots,e_{N+1}$ be the coordinate vectors and
    \begin{equation*}
        \mu_N:=\frac1N\sum_{j=1}^N\delta_{e_j},\qquad
        m_N:=\frac1{\sqrt N}\sum_{j=1}^N e_j,\qquad
        q_t:=\cos(t)\,m_N+\sin(t)\,e_{N+1}.
    \end{equation*}
    The mean of $\mu_N$ is $m_N$, by Jensen's inequality for the decreasing convex function $g(s)=\arccos(s)^2$, followed by $\sum_{j=1}^N x_j\leq\sqrt N$.
    For $0\leq t\leq\pi/2$ we have $q_t\in \setC_N$, $d(q_t,m_N)=t$, and
    \begin{equation*}
        F_N(q_t)=\arccos\brOf{\cos(t)/\sqrt N}^{2},\qquad
        F_N(m_N)=\arccos\brOf{1/\sqrt N}^{2},
    \end{equation*}
    where $F_N(q):=\int d(y,q)^2\,\dl\mu_N(y)$.
    Consequently
    \begin{equation}\label{eq:pos:orthant:margin}
        \lim_{t\searrow0}\frac{F_N(q_t)-F_N(m_N)}{t^2}
        =\frac{\arccos\brOf{1/\sqrt N}}{\sqrt{N-1}}
        \longrightarrow0
        \qquad(N\to\infty).
    \end{equation}

    \emph{Proof of \ref{itm:pos:obstruction:vi}.}
    For $\delta=\pi/2$, \eqref{eq:pos:orthant:margin} rules out a positive quadratic-growth coefficient depending only on the diameter.
    For $\pi/2<\delta<\pi$, choose $N$ large enough that $2R_N\geq\delta$, and attach an interval of length $L_N:=\delta-R_N\leq R_N$ at $o_N$, with the path metric.
    Gluing \catof{1} spaces at a point preserves the \catof{1} property, by Reshetnyak's gluing theorem \cite[Theorem II.11.1]{BridsonHaefliger1999NPC-Springer}.
    The resulting compact space has circumradius $R_N$ and diameter $L_N+R_N=\delta$.
    Every observation $e_j$ has distance $R_N$ from $o_N$, so at a new point at distance $h$ along the interval,
    \begin{equation*}
        F_N(h)=(R_N+h)^2\geq F_N(o_N)\geq F_N(m_N).
    \end{equation*}
    Thus the mean and all variance ratios between original points are unchanged, and the same argument shows that the barycenter of every empirical distribution of observations from $\mu_N$ stays in $\setC_N$.

    \emph{Proof of \ref{itm:pos:obstruction:risk}.}
    Take $n$ iid observations from $\mu_N$, $1\leq n\leq N$, in $\setC_N$ or in the glued space.
    Every realization is supported on $\{e_j:j\in S\}$ for some $S$ with $\cardof S\leq n$, which lies in the open ball of radius $r$ about the normalized sum of these $e_j$ for every $r\in(\arccos(1/\sqrt{\cardof S}),\pi/2)$; so \cite[Corollary 16]{Yokota2016Barycenter-JMSJ} places the empirical barycenter in the closed convex hull of its atoms, and hence in the coordinate face $\{x\in \setC_N:x_j=0\text{ for }j\notin S\}$. Cauchy--Schwarz therefore gives $\ip{m_N}{\samnfm}\leq\sqrt{n/N}$, regardless of repeated observations. Consequently
    \begin{equation}\label{eq:pos:orthant:risk}
        d(\samnfm,m_N)^2\geq\arccos\brOf{\sqrt{n/N}}^2
        \quad\text{almost surely}.
    \end{equation}
    Taking $N=n^2$ and letting $n\to\infty$ makes the lower bound tend to $\pi^2/4$, whereas $\VmofP{\mu_N}=\frac{\pi^2}{8}(1-1/N)$ stays bounded, so no diameter-only coefficient is possible.
    Keeping $n$ fixed and letting $N\to\infty$ instead, the right-hand side of \eqref{eq:pos:orthant:risk} again tends to $\pi^2/4$, so any admissible coefficient obeys
    \begin{equation*}
      C\geq\frac{n\,\arccos\brOf{\sqrt{n/N}}^2}{\frac{\pi^2}8(1-1/N)}\longrightarrow2n
      \qquad(N\to\infty)
      \eqfs
    \end{equation*}

    \emph{Proof of \ref{itm:pos:obstruction:order}.}
    All asymptotic comparisons are as $N\to\infty$, and $\pi-2R_N=2\epsilon_N$.
    A valid variance constant for $\setC_N$ must be at least the reciprocal of \eqref{eq:pos:orthant:margin}, while $A^{\mathrm{rad}}_\Delta<1/\cos(\Delta/2)$ holds for every $\Delta$ by \eqref{eq:pos:vi:radius:upper}. Hence
    \begin{equation}\label{eq:pos:Arad:order}
        \frac{\sqrt{N-1}}{\arccos\brOf{1/\sqrt N}}
        \leq A^{\mathrm{rad}}_{2R_N}
        <\frac1{\sin(\epsilon_N)}
        \eqcm
    \end{equation}
    and both bounds are of order $\epsilon_N^{-1}$.
    For the risk coefficient, take $n=\lfloor N/2\rfloor$; then \eqref{eq:pos:orthant:risk} is at least $\pi^2/16$, and $\Eof{d(\samnfm,m_N)^2}\leq A_N\VmofP{\mu_N}/n$ forces
    \begin{equation*}
      A_N\geq\frac{n}{2(1-1/N)}\geq\frac N4
      \asymp\epsilon_N^{-2}.
    \end{equation*}
    Here $\delta=\pi/2$, so $A_\delta^{\mathrm{diam}}=+\infty$ and $A_{\Delta,\delta}=1/\cos(\Delta/2)$, and $\Delta=\pi-2\epsilon_N$.
    Since $B_{\Delta,\delta}\leq C_\delta A_{\Delta,\delta}$, the refined multiplier is therefore at most
    \begin{equation*}
      C_\delta^2 A_{\Delta,\delta}^2=\frac{C_\delta^2}{\cos(\Delta/2)^2}=\frac{\pi^2}{4\sin(\epsilon_N)^2}
      \sim\frac{\pi^2}{4\epsilon_N^2}.
    \end{equation*}
    Finally, $C_\Delta=(\pi-2\epsilon_N)/\sin(2\epsilon_N)\asymp\epsilon_N^{-1}$, so $C_\Delta^4\asymp\epsilon_N^{-4}$, and $C_\Delta A_\Delta^{\mathrm{rad}}B_\Delta^{\mathrm{rad}}$ is of exact order $\epsilon_N^{-3}$, by \eqref{eq:pos:Arad:order} and $C_\Delta<B_\Delta^{\mathrm{rad}}<2C_\Delta$ from \eqref{eq:pos:cp:radius:upper}.

    \emph{Proof of \ref{itm:pos:obstruction:boundary}.}
    Completeness of $\setC_\infty$ follows from equivalence of angular and chordal distances on the unit sphere, and the \catof{1} comparison reduces, for any triangle, to its finite-dimensional linear span.
    Its diameter is $\pi/2$, as all inner products are nonnegative, and so is its circumradius, because $x_j\to0$ as $j\to\infty$ for every $x\in\ell^2$, so that every point has distance arbitrarily close to $\pi/2$ from some $e_j$.
    The atoms of a finitely supported $\mu\in\mc P(\setC_\infty)$ have nonnegative inner products with each other, so they have positive inner product with their normalized sum and lie in an open ball of radius less than $\pi/2$; hence $\mu$ has a unique mean \cite[Theorem B]{Yokota2016Barycenter-JMSJ}.
    Each $\setC_N$ embeds isometrically into $\setC_\infty$ as a closed convex face, and the argument in the proof of \ref{itm:pos:obstruction:risk}, applied in $\setC_\infty$, shows that the means of $\mu_N$ and of its empirical distributions are the same as in $\setC_N$.
    Hence \eqref{eq:pos:orthant:margin} and \eqref{eq:pos:orthant:risk} hold in the fixed space $\setC_\infty$, which proves the claim.
\end{proof}

\section{Tools for Power Fr\'echet Means}\label{app:tools}
This appendix collects the geometric inequalities used in \cref{sec:two,sec:main}: the moment forms of the variance and contraction inequalities, refinements of the contraction property, and the comparison of the two variances. Unless explicitly stated otherwise, the space is Hadamard and $\alpha\in(1,2]$, with the notation of \cref{sec:prelim,ssec:power:prelim}.

\textbf{Notation.}
\begin{center}
\begin{tabular}{ll}
\toprule
$m_\alpha(\mu)$ & $\alpha$-Fr\'echet mean of $\mu$; \eqref{eq:power:fmdef}\\
$\phi=(2-\alpha)/(\alpha-1)$ & exponent for $\alpha<2$; \cref{sec:prelim}\\
$\sigma_\alpha^{\alpha-1}(\mu)$ & $(\alpha-1)$-moment about $m_\alpha(\mu)$; \eqref{eq:sigma:def}\\
$\Lamof{\mu}{t}$ & regularized inverse moment, \eqref{eq:power:Lambda}\\
$\Psi_\mu(t)=t\Lamof{\mu}{t}$ & contraction modulus; $\Psi_\mu(0)=0$; \cref{lmm:power:cp}\\
$W_{\alpha-1}$, $\widetilde W_{\alpha-1}$ & plain and location-aware costs; \eqref{eq:W1}, \eqref{eq:power:Wtilde}\\
$\VfpofP{s}{\mu}$, $\VmpofP{s}{\mu}$ & Fr\'echet and metric $s$-variances; \cref{ssec:prelim:moments}\\
\bottomrule
\end{tabular}
\end{center}

\subsection{Moment Forms and the Dirac Case}\label{app:power:moment}

The statistical proofs use \QIa, \VIa, and \CPa of \cref{ssec:power:prelim}, the latter two proved in \cref{lmm:power:vi,lmm:power:cp} below; recall $\alpha \in (1,2]$ and $\phi = \frac{2-\alpha}{\alpha-1}$.
A Dirac mass $\delta_q$, $q\in\mc Q$, lies in $\mc P^{\alpha-1}(\mc Q)$, has mean $m_\alpha(\delta_q) = q$ and inverse moment $\Lamof{\delta_q}{t} = t^{\alpha-2}$, and admits only the coupling $\mu\otimes\delta_q$, so that $\widetilde W_{\alpha-1}(\mu,\delta_q)\leq W_{\alpha-1}(\mu,\delta_q) = \Eof{\ol Yq^{\alpha-1}}$.
Hence \CPa with $\nu = \delta_q$, after dropping the nonnegative term $\ol mq\,\Lamof{\mu}{\ol mq}$, bounds the distance from the mean to any point by the $(\alpha-1)$-moment about that point:
\begin{equation}\label{eq:power:dirac}
	\ol mq^{\alpha-1}
	\leq
	\frac{2^{3-\alpha}}{\alpha-1}\,\EOf{\ol Yq^{\alpha-1}}
	\qquad\text{for all } q\in\mc Q
	\eqfs
\end{equation}
For $\alpha<2$, the moduli in \VIa and \CPa are inverse moments.
The statistical proofs use them through the ordinary moment $\sigma_\alpha^{\alpha-1}$, at the cost of one application of Jensen's inequality: since $t\mapsto t^{-\phi}$ is convex and $-\phi(\alpha-1)=\alpha-2$, Jensen's inequality and \cref{lmm:power:subadd:gen} give, for every $\eta\in\mc P^{\alpha-1}(\mc Q)$, every $t\in\Rp$, and $Y\sim\eta$,
\begin{equation}\label{eq:power:cp:jensen}
	\begin{aligned}
		\LamOf{\eta}{t}
		&=
		\EOf{\br{\br{\ol Y{m_\alpha(\eta)} + t}^{\alpha-1}}^{-\phi}}
		\\&\geq
		\EOf{\br{\ol Y{m_\alpha(\eta)} + t}^{\alpha-1}}^{-\phi}
		\geq
		\br{\sigma_\alpha^{\alpha-1}(\eta) + t^{\alpha-1}}^{-\phi}
		\eqfs
	\end{aligned}
\end{equation}
Inserting \eqref{eq:power:cp:jensen} into \VIa gives the \emph{moment form of the variance inequality} \cite[Lemma S7.8]{Schoetz2026Transformed-arxiv}: for all $q\in\mc Q$,
\begin{equation}\label{eq:power:vi:moment}
	\frac{\alpha(\alpha-1)}{2}\,
	\frac{\ol mq^2}{\br{\sigma_\alpha^{\alpha-1}(\mu) +\ol mq^{\alpha-1}}^{\phi}}
	\leq
	\EOf{\ol Yq^\alpha-\ol Ym^\alpha}
	\eqcm
\end{equation}
where the left-hand side is read as zero for $q=m$.
Inserting \eqref{eq:power:cp:jensen} into \CPa and using $u^{-1}+v^{-1}\geq4/(u+v)$ for $u,v\in\Rpp$ gives the \emph{moment form of the contraction property}:
\begin{equation}\label{eq:power:cp:moment}
	\begin{aligned}
	\frac{\ol{m_\alpha(\mu)}{m_\alpha(\nu)}}{\br{\sigma_\alpha^{\alpha-1}(\mu)+\ol{m_\alpha(\mu)}{m_\alpha(\nu)}^{\alpha-1}}^{\phi}+\br{\sigma_\alpha^{\alpha-1}(\nu)+\ol{m_\alpha(\mu)}{m_\alpha(\nu)}^{\alpha-1}}^{\phi}}
	&\leq
	\frac{2^{1-\alpha}}{\alpha-1}\,\widetilde W_{\alpha-1}(\mu,\nu)
	\\&\leq
	\frac{2^{1-\alpha}}{\alpha-1}\,W_{\alpha-1}(\mu,\nu)
	\eqcm
	\end{aligned}
\end{equation}
where the left-hand side is read as zero when $m_\alpha(\mu)=m_\alpha(\nu)$.
These moduli depend on $\sigma_\alpha^{\alpha-1}$ and on the distance: the one in \eqref{eq:power:vi:moment} is quadratic in $\ol mq$ below the distance scale $\sigma_\alpha^{\alpha-1}(\mu)^{1/(\alpha-1)}$ and of order $\ol mq^\alpha$ above it. They retain metric-rescaling homogeneity but are not distribution-independent quadratic moduli.
For $\alpha<2$, the layer-cake formula also gives the finite-inverse-moment criterion used in \cref{rem:power:smoothness}:
\begin{equation*}
 \LamOf{\mu}{0}<\infty
 \qquad\Longleftrightarrow\qquad
 \int_0^1 s^{\alpha-3}\,\mu\brOf{\overline B(m_\alpha(\mu),s)}\,\dl s<\infty
 \eqfs
\end{equation*}
The elementary inequalities for power functions used repeatedly are collected in \cref{sec:powfun}.
\subsection{The Contraction Property and Its Refinements}\label{app:power:cp}

\begin{lemma}[Power variance inequality]\label{lmm:power:vi}
	The property \VIa holds.
\end{lemma}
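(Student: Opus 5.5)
I plan to reduce the inequality to a one-dimensional statement along the geodesic from $m$ to $q$, using the first-order optimality of $m$ together with a pointwise lower bound on the second-order growth of $t\mapsto \ol y{\gamma_t}^\alpha$. Let $\gamma\colon[0,1]\to\mc Q$ be the geodesic from $m$ to $q$, put $\ell:=\ol mq$, and let $F(p):=\EOf{\ol Yp^\alpha-\ol Ym^\alpha}$. The case $q=m$ is trivial, so assume $\ell>0$.

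The first step is a pointwise comparison in a Hadamard space: for each fixed $y$, the function $f_y(t):=\ol y{\gamma_t}^\alpha$ is compared with the corresponding function in the Euclidean comparison triangle. In $\R^2$, with $r_t:=|\bar y-\bar\gamma_t|$, the function $t\mapsto r_t^2$ satisfies $(r_t^2)''=2\ell^2$. In a \cato{} space, $r_t^2$ is $2\ell^2$-strongly convex, meaning it lies below the chord minus $\ell^2t(1-t)$. Since $s\mapsto s^{\alpha/2}$ is increasing and concave, I cannot simply compose with it. Instead, I intend to use the Euclidean computation $(r^\alpha)''=\alpha r^{\alpha-2}(|r'|^2(\alpha-2)/1+\ldots)$. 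Concretely, in the plane, $\frac{d^2}{dt^2}|\bar y-\bar\gamma_t|^\alpha\ge\alpha(\alpha-1)\ell^2|\bar y-\bar\gamma_t|^{\alpha-2}$. The Hadamard comparison should give the same lower bound in the distributional (chord) sense:
\[
f_y(t)\le(1-t)f_y(0)+tf_y(1)-\alpha(\alpha-1)\ell^2\int_0^1 G(t,s)\ol y{\gamma_s}^{\alpha-2}\,\dl s,
\]
where $G$ is the Green's function. I expect to justify this by showing that $f_y$ is a convex function of $t$ whose second derivative, where it exists, satisfies the same bound. This should follow from $r_t$ being convex and $r_t r_t''+ (r_t')^2\ge\ell^2$ in the viscosity sense (the \cato{} condition), combined with $\frac{d^2}{dt^2}r^\alpha=\alpha r^{\alpha-2}\bigl(rr''+(\alpha-1)(r')^2\bigr)\ge\alpha(\alpha-1)r^{\alpha-2}(rr''+(r')^2)$.

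Next I integrate over $Y$ and use minimality of $m$. Taking expectations with the renormalization at $m$ and letting $t\searrow0$ (the right derivative of $F\circ\gamma$ at $0$ is $\geq0$) gives
\[
F(q)=F(\gamma_1)-F(\gamma_0)\ge \alpha(\alpha-1)\ell^2\int_0^1(1-s)\,\EOf{\ol Y{\gamma_s}^{\alpha-2}}\dl s,
\]
which is Taylor's formula with integral remainder. Then I bound $\ol Y{\gamma_s}\le\ol Ym+s\ell\le\ol Ym+\ell$, and since $\alpha-2\le0$ this gives $\EOf{\ol Y{\gamma_s}^{\alpha-2}}\ge\LamOf{\mu}{\ell}$. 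Finally $\int_0^1(1-s)\,\dl s=1/2$, which yields exactly \VIa with constant $\alpha(\alpha-1)/2$.

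The main obstacle is making the second-order comparison rigorous when $y$ lies on the geodesic or near it, where $r_t$ can vanish and $r^{\alpha-2}$ blows up, and justifying the exchange of derivative and expectation under only an $(\alpha-1)$-moment. For this I would work with the integrated (chord) form throughout rather than pointwise derivatives. This means proving, for fixed $y$, the inequality $f_y(1)-f_y(0)-f_y'(0^+)\ge\alpha(\alpha-1)\ell^2\int_0^1(1-s)\ol y{\gamma_s}^{\alpha-2}ds$ by reducing to the Euclidean comparison triangle via monotonicity, where it is an explicit calculus fact. I would then use Fatou together with the bound $|f_y(t)-f_y(0)|\le\alpha t\ell(\ol ym+\ell)^{\alpha-1}$ for dominated convergence of the difference quotients. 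Alternatively, I could simply cite \cite[Theorem 5.4]{Schoetz2025VarianceInequalities-EJP}, as the paper does.
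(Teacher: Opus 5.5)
Your argument is correct in substance, but it is not the paper's route. The paper does no geodesic computation. It invokes \cite[Theorem 5.4]{Schoetz2025VarianceInequalities-EJP} for the loss $\tau(t)=t^\alpha$, which needs only a finite $(\alpha-1)$-moment and gives the excess-risk bound with modulus $\frac{\alpha(\alpha-1)}2\,\ol mq^2\,\max\{\ol ym,\ol yq\}^{\alpha-2}$. It then weakens $\max\{\ol ym,\ol yq\}\le\ol ym+\ol mq$ using $\alpha-2\le0$. You instead reprove this special case from first principles: first-order optimality of $m$ along $\gamma$, Taylor's formula with integral remainder, and a second-order lower bound from \cato{} comparison. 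This gives a self-contained, elementary proof. Your integrability bookkeeping is sound under only an $(\alpha-1)$-moment: the envelope $\alpha\ell(\ol ym+\ell)^{\alpha-1}$ for the difference quotients gives $\E[f_Y'(0^+)]=(F\circ\gamma)'(0^+)\ge0$, and Tonelli handles the remainder. The paper's citation is shorter and inherits the generality of transformed losses.

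One step does not work as you phrase it. The \cato{} inequality $\ol y{\gamma_s}\le|\bar y-\bar\gamma_s|$, with equality at $s=0,1$, gives $f_y'(0^+)\le\bar f_y'(0^+)$. Hence $f_y(1)-f_y(0)-f_y'(0^+)\ge\bar f_y(1)-\bar f_y(0)-\bar f_y'(0^+)$, which is the right direction. On the remainder side, however, it runs the wrong way: $\alpha-2\le0$ gives $\ol y{\gamma_s}^{\alpha-2}\ge|\bar y-\bar\gamma_s|^{\alpha-2}$. So reduction to the comparison triangle proves the remainder bound only with the comparison distances $|\bar y-\bar\gamma_s|$, not with $\ol y{\gamma_s}$.

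For the lemma this is harmless. Bound $|\bar y-\bar\gamma_s|\le\ol ym+s\ell\le\ol ym+\ell$ in the plane \emph{before} comparing. Alternatively, use convexity to bound it by $\max\{\ol ym,\ol yq\}$, which reproduces exactly the modulus of the cited theorem.

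The intermediate inequality with $\ol y{\gamma_s}$ itself does hold, but it needs your other justification. That is the chain rule for the convex function $r_s=\ol y{\gamma_s}$ combined with the CN inequality $(r^2)''\ge2\ell^2$. Note that $r$ vanishes at most at one point $s_0$, where $r_s=\ell|s-s_0|$ and $|s-s_0|^{\alpha-2}$ is integrable.

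Your first formula for $(r^\alpha)''$ is garbled. The correct form, $\alpha r^{\alpha-2}(rr''+(\alpha-1)(r')^2)$, appears later in your text.
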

\begin{proof}
	This is the power case of the renormalized-objective theorem \cite[Theorem 5.4, p.~22]{Schoetz2025VarianceInequalities-EJP}; its hypothesis is $\Eof{\tau\pr(d(Y,o))}<\infty$ for $\tau(t)=t^\alpha$, that is, a finite $(\alpha-1)$-moment, and not finiteness of the unrenormalized objective; when $\alpha<2$, $\tau^{\prime\prime}(0)$ is not finite and is not used.
	The cited theorem evaluates the second-derivative modulus $\tau^{\prime\prime}/2$ at $\max\{\ol ym,\ol yq\}$.
	The triangle inequality bounds this argument by $\ol ym+\ol qm$, and the nonpositive exponent $\alpha-2$ gives, for $q\neq m$,
	\begin{equation*}
		\int\br{\ol yq^\alpha-\ol ym^\alpha}\,\dl\mu(y)
		\geq\frac{\alpha(\alpha-1)}2\,\ol qm^2\int\br{\ol ym+\ol qm}^{\alpha-2}\,\dl\mu(y)
		=\frac{\alpha(\alpha-1)}2\,\ol qm^2\,\LamOf{\mu}{\ol qm}
		\eqcm
	\end{equation*}
	whose left-hand side is finite by \cref{lmm:power:ana:diff}.
	This is \VIa; at $q=m$, both sides are zero, which is the continuous extension since $t^2\Lamof{\mu}{t}\leq t^\alpha$ for $t>0$.
\end{proof}
\begin{lemma}[The location-aware transport cost]\label{lmm:power:Wtilde}
    Let $\alpha\in(1,2]$ and $\mu,\nu\in\mc P^{\alpha-1}(\mc Q)$.
    \begin{enumerate}[label=(\roman*)]
        \item $\widetilde W_{\alpha-1}(\mu,\nu)=\widetilde W_{\alpha-1}(\nu,\mu)\leq W_{\alpha-1}(\mu,\nu)$, with equality $\widetilde W_1=W_1$ at $\alpha=2$; the infimum in \eqref{eq:power:Wtilde} is attained; and $\widetilde W_{\alpha-1}(\mu,\nu)=0$ if and only if $\mu=\nu$.
        \item For $\alpha<2$, $\widetilde W_{\alpha-1}$ violates the triangle inequality: on $\mc Q=\R$, the laws $\mu=\frac12(\delta_{-1}+\delta_1)$, $\eta=\frac12(\delta_0+\delta_1)$ and $\nu=\delta_0$ satisfy
        \begin{equation*}
            \widetilde W_{\alpha-1}(\mu,\eta)+\widetilde W_{\alpha-1}(\eta,\nu)
            =\frac12\br{\frac23}^{2-\alpha}+\frac12
            <1=\widetilde W_{\alpha-1}(\mu,\nu)
            \eqfs
        \end{equation*}
    \end{enumerate}
\end{lemma}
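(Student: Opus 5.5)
Part (i) follows from pointwise properties of the integrand $c(y,z):=\ol yz\,\diam\{y,z,m_\alpha(\mu),m_\alpha(\nu)\}^{\alpha-2}$ together with a compactness argument for couplings; part (ii) is a finite computation.

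\emph{Symmetry and comparison with $W_{\alpha-1}$.} Symmetry holds because $c$ and the set $\{y,z,m_\alpha(\mu),m_\alpha(\nu)\}$ are invariant under exchanging $(y,\mu)$ with $(z,\nu)$. Pushing a coupling forward under the flip $(y,z)\mapsto(z,y)$ is a bijection $\Pi(\mu,\nu)\to\Pi(\nu,\mu)$ that preserves the cost.

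For the bound by $W_{\alpha-1}$, note that $\diam\{y,z,\dots\}\geq\ol yz$ and $\alpha-2\leq0$. Hence $c(y,z)\leq\ol yz\,\ol yz^{\alpha-2}=\ol yz^{\alpha-1}$ whenever $y\neq z$, and both sides vanish when $y=z$. Integrating against any coupling and taking infima gives $\widetilde W_{\alpha-1}\leq W_{\alpha-1}$. At $\alpha=2$ the exponent vanishes, so $c(y,z)=\ol yz$ and $\widetilde W_1=W_1$.

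\emph{Attainment.} Both marginals are concentrated on separable subsets of the complete space $\mc Q$. The closures of these subsets are Polish, so $\mu$ and $\nu$ are tight, hence $\Pi(\mu,\nu)$ is tight and weakly closed. By Prokhorov's theorem it is weakly compact.

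The cost $c$ is nonnegative and lower semicontinuous:
\begin{itemize}
\item on $\{y\neq z\}$ it is continuous, because there the diameter is positive and continuous;
\item at points with $y=z$ it takes the value $0$, which lies below every nearby value.
\end{itemize}
For a nonnegative lower semicontinuous integrand, $\pi\mapsto\int c\,\dl\pi$ is weakly lower semicontinuous (approximate $c$ from below by bounded continuous functions). A minimizing sequence of couplings therefore has a weak limit point that attains the infimum. I expect this step to be the only delicate point, since it requires care with the separability convention of \cref{rem:separability}.

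\emph{Zero iff equal.} If $\mu=\nu$, the diagonal coupling has cost $0$. Conversely, suppose $\widetilde W_{\alpha-1}(\mu,\nu)=0$, and take an optimal coupling $\pi$. Then $c=0$ holds $\pi$-almost surely. For $y\neq z$ the diameter is finite and positive, so $c(y,z)>0$; therefore $y=z$ holds $\pi$-almost surely, which gives $\mu=\nu$.

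\emph{Part (ii).} First identify the means. By symmetry and uniqueness, $m_\alpha(\mu)=0$; likewise $m_\alpha(\eta)=1/2$, and $m_\alpha(\nu)=0$.

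Whenever one of the two laws is a Dirac mass, only the product coupling is available. For $(\mu,\nu)$ this gives
\[
\widetilde W_{\alpha-1}(\mu,\nu)=\tfrac12\cdot1\cdot1^{\alpha-2}+\tfrac12\cdot1\cdot1^{\alpha-2}=1,
\]
since all relevant diameters equal $1$. For $(\eta,\nu)$ the means are $0$ and $1/2$, and
\[
\widetilde W_{\alpha-1}(\eta,\nu)=\tfrac12\cdot0+\tfrac12\cdot1\cdot1^{\alpha-2}=\tfrac12.
\]

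For $(\mu,\eta)$ the means are $0$ and $1/2$. The couplings form a one-parameter segment, and the cost is linear in the transported masses. It therefore suffices to compare the two extreme couplings.
\begin{itemize}
\item Sending $-1\to0$ and $1\to1$: the only moving atom has $\diam\{-1,0,0,1/2\}=3/2$, so the cost is $\tfrac12(3/2)^{\alpha-2}=\tfrac12(2/3)^{2-\alpha}$.
\item Sending $-1\to1$ and $1\to0$: the cost is $\tfrac12\cdot2\cdot2^{\alpha-2}+\tfrac12\cdot1\cdot1^{\alpha-2}=2^{\alpha-2}+\tfrac12$, which is larger than the first.
\end{itemize}
Hence $\widetilde W_{\alpha-1}(\mu,\eta)=\tfrac12(2/3)^{2-\alpha}$. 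For $\alpha<2$ we have $(2/3)^{2-\alpha}<1$, so
\[
\widetilde W_{\alpha-1}(\mu,\eta)+\widetilde W_{\alpha-1}(\eta,\nu)<1=\widetilde W_{\alpha-1}(\mu,\nu),
\]
which is the claimed violation of the triangle inequality.
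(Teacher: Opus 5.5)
Your proof is correct and follows the paper's argument essentially step for step. The common ingredients are the pointwise bound $0\leq c(y,z)\leq\ol yz^{\alpha-1}$ with $c>0$ off the diagonal, and tightness of the marginals together with lower semicontinuity of the cost, which gives attainment and the zero case. In (ii) you likewise identify the means by symmetry and uniqueness, use the product coupling against the Dirac mass $\nu$, and minimize an affine cost over the one-parameter family of couplings of $\mu$ and $\eta$. As a small aside, $c$ is in fact continuous everywhere, since $c(y,z)\leq\ol yz^{\alpha-1}\to0$ near the diagonal, although lower semicontinuity is all the argument needs.
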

\begin{proof}
    \emph{(i).} The integrand $c(y,z):=\ol yz\,\diam\{y,z,m_\alpha(\mu),m_\alpha(\nu)\}^{\alpha-2}$, read as $0$ at $y=z$, is symmetric in $(\mu,\nu)$ and in $(y,z)$. It satisfies $0\leq c\leq\ol yz^{\alpha-1}$ because the diameter is at least $\ol yz$, with $c=\ol yz$ at $\alpha=2$. It is lower semicontinuous, being continuous off the diagonal and zero on it, and it is positive off the diagonal. The laws are concentrated on separable Borel subsets of the complete space $\mc Q$, hence tight, so $\Pi(\mu,\nu)$ is weakly compact and $\pi\mapsto\int c\,\dl\pi$ is lower semicontinuous; the infimum is attained. If it is $0$, an optimal coupling is concentrated on $\{c=0\}$, the diagonal, so $\mu=\nu$; the converse is the diagonal coupling.

    \emph{(ii).} By symmetry and uniqueness, $m_\alpha(\mu)=m_\alpha(\nu)=0$ and $m_\alpha(\eta)=\frac12$. Since $\nu$ is a Dirac measure, $\widetilde W_{\alpha-1}(\mu,\nu)=\frac12(1+1)=1$ and $\widetilde W_{\alpha-1}(\eta,\nu)=\frac12$. The couplings of $\mu$ and $\eta$ put mass $s\in[0,\frac12]$ on each of $(-1,0),(1,1)$ and $\frac12-s$ on each of $(-1,1),(1,0)$, with costs $(\frac32)^{\alpha-2}$, $0$, $2^{\alpha-1}$ and $1$. The total cost is affine in $s$ and minimal at $s=\frac12$, because $(\frac32)^{\alpha-2}\leq1<2^{\alpha-1}+1$; so $\widetilde W_{\alpha-1}(\mu,\eta)=\frac12(\frac23)^{2-\alpha}$, which is less than $\frac12$ for $\alpha<2$.
\end{proof}
The contraction property \CPa is implicit: the distance $\ol{m_\alpha(\mu)}{m_\alpha(\nu)}$ occurs inside both inverse moments on its left-hand side.
The following lemma proves \CPa and inverts it.
\begin{lemma}[Contraction property and its inversion]\label{lmm:power:cp}
	For $\eta\in\mc P^{\alpha-1}(\mc Q)$, define
	\begin{equation*}
		\Psi_\eta(t) := t\,\LamOf{\eta}{t}\quad(t>0),\qquad\Psi_\eta(0):=0
		\eqfs
	\end{equation*}
	Then $\Psi_\eta$ is a continuous, strictly increasing bijection of $\Rp$ onto itself, and
	\begin{equation}\label{eq:power:Psi:scaling}
		\Psi_\eta(ct)\geq c^{\alpha-1}\Psi_\eta(t),\qquad c\geq1,\quad t\geq0.
	\end{equation}
	For all $\mu,\nu\in\mc P^{\alpha-1}(\mc Q)$, \CPa reads
	\begin{equation}\label{eq:power:cp:sharp}
	\begin{aligned}
		\Psi_\mu\brOf{\ol{m_\alpha(\mu)}{m_\alpha(\nu)}} + \Psi_\nu\brOf{\ol{m_\alpha(\mu)}{m_\alpha(\nu)}}
		&\leq
		\frac{2^{3-\alpha}}{\alpha-1}\, \widetilde W_{\alpha-1}(\mu, \nu)
		\eqcm\\
		\ol{m_\alpha(\mu)}{m_\alpha(\nu)}
		&\leq
		\Psi_\mu^{-1}\brOf{\frac{2^{3-\alpha}}{\alpha-1}\, \widetilde W_{\alpha-1}(\mu, \nu)}
		\eqfs
	\end{aligned}
	\end{equation}
\end{lemma}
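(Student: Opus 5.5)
The plan has four steps: establish the properties of $\Psi_\eta$, derive the first inequality in \eqref{eq:power:cp:sharp} from \VIa and \QIa, invert it using monotonicity, and check the corner cases.

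\textbf{Properties of $\Psi_\eta$.} Write $m_\eta:=m_\alpha(\eta)$. For $t>0$ and $Y\sim\eta$,
\[
\Psi_\eta(t)=\EOf{t\br{\ol Y{m_\eta}+t}^{\alpha-2}}.
\]
For each fixed $r\geq0$, the map $t\mapsto t(r+t)^{\alpha-2}$ is continuous and strictly increasing, since its derivative is $(r+t)^{\alpha-3}(r+(\alpha-1)t)>0$. It is bounded by $t^{\alpha-1}$.
\begin{itemize}
\item Dominated convergence gives continuity of $\Psi_\eta$ on $\Rpp$. The bound $\Psi_\eta(t)\leq t^{\alpha-1}$ gives continuity at $0$.
\item Strict monotonicity of the integrand gives strict monotonicity of $\Psi_\eta$.
\item $\Psi_\eta(t)\to\infty$ as $t\to\infty$, by monotone convergence, since $t(r+t)^{\alpha-2}\geq t^{\alpha-1}2^{\alpha-2}$ once $t\geq r$.
\end{itemize}
These three facts make $\Psi_\eta$ a bijection of $\Rp$ onto itself.

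For the scaling \eqref{eq:power:Psi:scaling}, use $ct(r+ct)^{\alpha-2}\geq c\,t\,(c(r+t))^{\alpha-2}=c^{\alpha-1}t(r+t)^{\alpha-2}$. This holds because $r+ct\leq c(r+t)$ for $c\geq1$ and the exponent $\alpha-2$ is nonpositive.

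\textbf{The contraction inequality.} Fix $\mu,\nu$, write $m_\mu:=m_\alpha(\mu)$, $m_\nu:=m_\alpha(\nu)$, $D:=\ol{m_\mu}{m_\nu}>0$, and take any coupling $\pi$ of $\mu$ and $\nu$.
\begin{itemize}
\item Apply \VIa to $\mu$ at $q=m_\nu$ and to $\nu$ at $q=m_\mu$, and add. The left-hand side becomes $\frac{\alpha(\alpha-1)}2 D\br{\Psi_\mu(D)+\Psi_\nu(D)}$.
\item The right-hand side is the integral against $\pi$ of $\ol y{m_\nu}^\alpha-\ol y{m_\mu}^\alpha-\ol z{m_\nu}^\alpha+\ol z{m_\mu}^\alpha$. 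Both expectations are finite by the renormalized-objective integrability in \cref{ssec:prelim:fm}.
\item Bound this integrand by \QIa with $(q,p)=(m_\nu,m_\mu)$:
\[
\leq 2^{2-\alpha}\alpha\, D\, \ol yz\,\diam\{y,z,m_\mu,m_\nu\}^{\alpha-2}.
\]
\item Divide by $\frac{\alpha(\alpha-1)}2D$ to get
\[
\Psi_\mu(D)+\Psi_\nu(D)\leq \frac{2^{3-\alpha}}{\alpha-1}\int \ol yz\,\diam\{y,z,m_\mu,m_\nu\}^{\alpha-2}\,\dl\pi .
\]
Taking the infimum over $\pi$ gives the first line of \eqref{eq:power:cp:sharp}.
\end{itemize}
At $D=0$ the left-hand side is zero, so the inequality holds trivially there.

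\textbf{Inversion.} Drop the nonnegative term $\Psi_\nu(D)$. Then apply $\Psi_\mu^{-1}$, which is increasing, to obtain the second line. The same inequality is \CPa by the definition of $\Psi$.

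\textbf{Main obstacle.} The delicate point is the cases where the diameter factor or the inverse moments degenerate. This happens when $\Lamof{\mu}{0}=\infty$, or when $y=z$, where the integrand is read as $0$. The $D>0$ case only needs $\Lamof{\mu}{D}<\infty$, which holds because $D>0$. Where \QIa has zero right-hand side, the left-hand side also vanishes, so the bound is consistent with the convention. Finally, I need integrability to split the sum of the two \VIa right-hand sides into a single $\pi$-integral; this is guaranteed by \cref{lmm:power:ana:diff}.
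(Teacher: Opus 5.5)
Your proposal is correct and follows the paper's proof essentially step for step: the same derivative computation and dominated-convergence argument for $\Psi_\eta$, the same scaling via $r+ct\leq c(r+t)$, and the same combination of \VIa at the two means with \QIa, division by $\frac{\alpha(\alpha-1)}{2}D$, infimum over couplings, and inversion after dropping $\Psi_\nu$. The only deviation is minor. You obtain $\Psi_\eta(t)\to\infty$ by monotone convergence from the pointwise bound $t(r+t)^{\alpha-2}\geq2^{\alpha-2}t^{\alpha-1}$ for $t\geq r$, whereas the paper uses the Jensen lower bound \eqref{eq:power:cp:jensen}; both arguments work.
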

\begin{proof}
	\emph{Step 1: the map $\Psi_\eta$.}
	Let $\eta\in\mc P^{\alpha-1}(\mc Q)$.
	As $\alpha-2\leq0$, the integrand in \eqref{eq:power:Lambda} is bounded by $t^{\alpha-2}$ for $t\in\Rpp$, so $\Lamof{\eta}{t}<\infty$ and $\Psi_\eta(t)\leq t^{\alpha-1}$; in particular $\Psi_\eta(t)\to0 = \Psi_\eta(0)$ as $t\searrow0$.
	Since
	\begin{equation*}
		\frac{\dl}{\dl t}\br{t\,(r+t)^{\alpha-2}}
		=
		(r+t)^{\alpha-3}\br{r + (\alpha-1)t}
		>
		0
		\eqcm\qquad r\in\Rp,\ t\in\Rpp
		\eqcm
	\end{equation*}
	the map $\Psi_\eta$ is strictly increasing, and it is continuous on $\Rpp$ by dominated convergence.
	By \eqref{eq:power:cp:jensen} and $(\alpha-1)\phi = 2-\alpha$ we have $\Psi_\eta(t)\geq t(\sigma_\alpha^{\alpha-1}(\eta)+t^{\alpha-1})^{-\phi}\to\infty$ as $t\to\infty$, so $\Psi_\eta$ is a bijection of $\Rp$ onto itself.
	For $c\geq1$ and $t>0$, the inequalities $r+ct\leq c(r+t)$ and $\alpha-2\leq0$ give
	$c t(r+ct)^{\alpha-2}\geq c^{\alpha-1}t(r+t)^{\alpha-2}$.
	Integration with $r=\ol y{m_\alpha(\eta)}$ proves \eqref{eq:power:Psi:scaling}; at $t=0$ both sides vanish.

	\emph{Step 2: the inequality \eqref{eq:power:cp:sharp}.}
	Let $\pi\in\Pi(\mu,\nu)$; the case $m_\alpha(\mu)=m_\alpha(\nu)$ is trivial, so assume that the means differ.
	Applying \VIa for $\mu$ at $q = m_\alpha(\nu)$ and for $\nu$ at $q = m_\alpha(\mu)$ and adding, we obtain
	\begin{equation*}
	\begin{aligned}
		&\frac{\alpha(\alpha-1)}{2}\,\ol{m_\alpha(\mu)}{m_\alpha(\nu)}^2
		\br{\LamOf{\mu}{\ol{m_\alpha(\mu)}{m_\alpha(\nu)}} + \LamOf{\nu}{\ol{m_\alpha(\mu)}{m_\alpha(\nu)}}}
		\\&\qquad\leq
		\int \ol y{m_\alpha(\nu)}^\alpha - \ol y{m_\alpha(\mu)}^\alpha + \ol z{m_\alpha(\mu)}^\alpha - \ol z{m_\alpha(\nu)}^\alpha \,\dl\pi(y,z)
		\eqfs
	\end{aligned}
	\end{equation*}
	Apply \eqref{eq:power:qi} to the integrand, with $q = m_\alpha(\nu)$ and $p = m_\alpha(\mu)$:
	\begin{equation*}
	\begin{aligned}
		&\ol y{m_\alpha(\nu)}^\alpha - \ol y{m_\alpha(\mu)}^\alpha + \ol z{m_\alpha(\mu)}^\alpha - \ol z{m_\alpha(\nu)}^\alpha
		\\&\qquad\leq
		2^{2-\alpha}\alpha\,\ol{m_\alpha(\mu)}{m_\alpha(\nu)}\,\ol yz\ \diam\cb{y, z, m_\alpha(\mu), m_\alpha(\nu)}^{\alpha-2}
		\eqcm
	\end{aligned}
	\end{equation*}
	which is $2^{2-\alpha}\alpha\ol{m_\alpha(\mu)}{m_\alpha(\nu)}$ times the integrand of \eqref{eq:power:Wtilde}.
	Dividing by $\frac{\alpha(\alpha-1)}{2}\ol{m_\alpha(\mu)}{m_\alpha(\nu)}$ and taking the infimum over $\pi$ yields the first inequality of \eqref{eq:power:cp:sharp}, which is \CPa; the second follows by dropping the nonnegative $\Psi_\nu$ term and inverting the increasing map $\Psi_\mu$ of Step 1.
	Exchanging $\mu$ and $\nu$ gives the same bound with $\Psi_\nu^{-1}$, used below.
\end{proof}
\begin{remark}[Sharpness of \CPa]\label{rem:power:cp:sharp}
 On $\R$, let $\mu:=\frac12(\delta_{-1}+\delta_1)$ and $\nu_w:=\frac{1-w}2\delta_{-1}+\frac{1+w}2\delta_1$ for $w\in(0,1)$.
 Then $m_\alpha(\mu)=0$, and $D_w:=m_\alpha(\nu_w)\in(0,1)$ solves $((1+D_w)/(1-D_w))^{\alpha-1}=(1+w)/(1-w)$, so $D_w\sim w/(\alpha-1)$ as $w\searrow0$.
 Every coupling moves mass at least $w/2$ from $-1$ to $1$, and moving exactly this mass is optimal, so $\widetilde W_{\alpha-1}(\mu,\nu_w)=2^{\alpha-2}w$.
 Since $\Lamof{\mu}{D_w}$ and $\Lamof{\nu_w}{D_w}$ tend to $1$, the ratio of the left-hand side of \CPa to its right-hand side tends to $1$ as $w\searrow0$; the same holds for \eqref{eq:power:cp:moment} at each fixed $\alpha$.
\end{remark}
The following bound quantifies the reduction in transport cost discussed in \cref{rem:power:smoothness}.
\begin{remark}[Diffuse transport]\label{rem:power:diffuse}
	Let $\mu,\nu\in\mc P^{\alpha-1}(\mc Q)$ and $\epsilon\in\Rpp$, and suppose some $\pi\in\Pi(\mu,\nu)$ moves no mass further than $\epsilon$, i.e., $\ol yz\leq\epsilon$ for $\pi$-almost every $(y,z)$. Then
	\begin{equation*}
		\widetilde W_{\alpha-1}(\mu,\nu)
		\leq
		\epsilon\int \ol y{m_\alpha(\mu)}^{\,\alpha-2}\,\dl\mu(y)
		\eqcm
	\end{equation*}
	an \emph{inverse} moment of $\mu$ times $\epsilon$, whereas $W_{\alpha-1}(\mu,\nu)$ is only bounded by $\epsilon^{\alpha-1}$.
	Indeed, $\diam\{y,z,m_\alpha(\mu),m_\alpha(\nu)\}\geq\ol y{m_\alpha(\mu)}$ and $\alpha-2\leq0$ make the integrand of \eqref{eq:power:Wtilde} at most $\ol yz\,\ol y{m_\alpha(\mu)}^{\,\alpha-2}\leq\epsilon\,\ol y{m_\alpha(\mu)}^{\,\alpha-2}$ for $\pi$-almost every $(y,z)$, and the first marginal of $\pi$ is $\mu$.
	For $\alpha<2$, this bound is infinite if $\mu$ has an atom at its mean. An always finite form is
	\begin{equation*}
		\widetilde W_{\alpha-1}(\mu,\nu)\leq2^{2-\alpha}\min\cb{\Psi_\mu(\epsilon),\Psi_\nu(\epsilon)}
		\eqcm
	\end{equation*}
	with $\Psi$ as in \cref{lmm:power:cp}: for $r:=\ol y{m_\alpha(\mu)}$, the diameter is at least $\max\{\ol yz,r\}$, the map $a\mapsto a\max\{a,r\}^{\alpha-2}$ is nondecreasing, and $\max\{\epsilon,r\}\geq(\epsilon+r)/2$, so the integrand is at most $2^{2-\alpha}\epsilon(r+\epsilon)^{\alpha-2}$; the same holds with $\nu$ and the second marginal.
	Consequently, by \eqref{eq:power:Psi:scaling}, in the setting of \cref{cor:power:localfrechet}, if for all $z,z\pr\in\mc X$ some coupling of $\mu_z$ and $\mu_{z\pr}$ moves no mass further than $L\pr\rho(z,z\pr)^\beta$, then \eqref{eq:power:localfrechet:smooth} holds with $L=2^\phi L\pr$.
\end{remark}

The following one-sided form of \CPa gives continuity of means in the statistical proofs.
\begin{lemma}[One-sided location-aware contraction]\label{lmm:power:cp:onesided}
 Let $\mu,\nu\in\mc P^{\alpha-1}(\mc Q)$. Then
 \begin{equation}\label{eq:power:cp:onesided}
 \ol{m_\alpha(\mu)}{m_\alpha(\nu)}
 \leq c_\alpha\br{\sigma_\alpha^{\alpha-1}(\nu)^\phi\widetilde W_{\alpha-1}(\mu,\nu)
                         +\widetilde W_{\alpha-1}(\mu,\nu)^{1/(\alpha-1)}}.
 \end{equation}
\end{lemma}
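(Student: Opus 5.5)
We start from the second inequality of \eqref{eq:power:cp:sharp} in \cref{lmm:power:cp}, with the roles of $\mu$ and $\nu$ exchanged. This keeps the modulus of $\nu$: writing $D:=\ol{m_\alpha(\mu)}{m_\alpha(\nu)}$ and $W:=\widetilde W_{\alpha-1}(\mu,\nu)$, we have
\[
 \Psi_\nu(D)\leq\frac{2^{3-\alpha}}{\alpha-1}\,W.
\]
The task is then to invert $\Psi_\nu$ explicitly, using only the moment $\sigma:=\sigma_\alpha^{\alpha-1}(\nu)$.

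For the lower bound on $\Psi_\nu$ we use the Jensen bound \eqref{eq:power:cp:jensen}:
\[
 \Psi_\nu(t)\geq t\,(\sigma+t^{\alpha-1})^{-\phi}\geq t\,\bigl(2\max\{\sigma,t^{\alpha-1}\}\bigr)^{-\phi}.
\]
We then split into two regimes, according to which term attains the maximum at $t=D$.

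\emph{Case $D^{\alpha-1}\leq\sigma$.} Here $\Psi_\nu(D)\geq 2^{-\phi}\sigma^{-\phi}D$. Hence
\[
 D\leq 2^{\phi}\frac{2^{3-\alpha}}{\alpha-1}\,\sigma^{\phi}W,
\]
which is the first term of \eqref{eq:power:cp:onesided}.

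\emph{Case $D^{\alpha-1}>\sigma$.} Here $\Psi_\nu(D)\geq 2^{-\phi}D\cdot D^{-(\alpha-1)\phi}=2^{-\phi}D^{\alpha-1}$, since $(\alpha-1)\phi=2-\alpha$. Hence
\[
 D\leq\Bigl(2^\phi\frac{2^{3-\alpha}}{\alpha-1}W\Bigr)^{1/(\alpha-1)},
\]
which is the second term.

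In both cases $D$ is bounded by the sum of the two terms, with $c_\alpha:=\max\bigl\{2^\phi 2^{3-\alpha}/(\alpha-1),\,(2^\phi 2^{3-\alpha}/(\alpha-1))^{1/(\alpha-1)}\bigr\}$. The degenerate cases ($D=0$, or $\sigma=0$, so that $\nu$ is a Dirac mass and $\Lambda_\alpha(\nu;t)=t^{\alpha-2}$) fall under the second case, or hold trivially. At $\alpha=2$ we have $\phi=0$, the convention $s^0=1$ applies, and the bound reduces to \eqref{eq:cat0:2fm:CP} up to a constant.

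The only delicate point is which mean the moment refers to. Using $\Psi_\nu$ rather than $\Psi_\mu$, which \cref{lmm:power:cp} permits by symmetry as its closing sentence notes, yields $\sigma_\alpha^{\alpha-1}(\nu)$ as stated. Beyond that, the argument is routine bookkeeping of constants.
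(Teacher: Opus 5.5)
Your proof is correct and matches the paper's argument: keep only the $\nu$-term of \CPa, lower-bound $\Psi_\nu$ via the Jensen estimate \eqref{eq:power:cp:jensen}, and split into cases according to whether $\ol{m_\alpha(\mu)}{m_\alpha(\nu)}^{\alpha-1}\leq\sigma_\alpha^{\alpha-1}(\nu)$. Each case then yields one of the two terms, using $(\alpha-1)\phi=2-\alpha$.
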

\begin{proof}
 Write $T:=\widetilde W_{\alpha-1}(\mu,\nu)$ and $S:=\sigma_\alpha^{\alpha-1}(\nu)$.
 The case $m_\alpha(\mu)=m_\alpha(\nu)$ is immediate. Otherwise, \cref{lmm:power:cp}, keeping only the term associated with $\nu$, and \eqref{eq:power:cp:jensen} give
 \begin{equation*}
 \frac{\ol{m_\alpha(\mu)}{m_\alpha(\nu)}}{\br{S+\ol{m_\alpha(\mu)}{m_\alpha(\nu)}^{\alpha-1}}^\phi}\leq c_\alpha T.
 \end{equation*}
 If $\ol{m_\alpha(\mu)}{m_\alpha(\nu)}^{\alpha-1}\leq S$, then $\ol{m_\alpha(\mu)}{m_\alpha(\nu)}\leq c_\alpha2^\phi S^\phi T$.
 Otherwise $\ol{m_\alpha(\mu)}{m_\alpha(\nu)}^{\alpha-1}\leq c_\alpha2^\phi T$, since $(\alpha-1)\phi=2-\alpha$.
 Combining the two cases proves \eqref{eq:power:cp:onesided}.
\end{proof}

The next two lemmas connect explicit moment criteria to the inverse-moment condition \eqref{eq:power:localfrechet:smooth} and control the bias of local mixtures.
\begin{lemma}[A moment-only sufficient condition]\label{lmm:power:sufficient}
 Let $\mu,\nu\in\mc P^{\alpha-1}(\mc Q)$ and $b\in\Rpp$, and set
 \[
  \bar\sigma:=\max\{\sigma_\alpha^{\alpha-1}(\mu),\sigma_\alpha^{\alpha-1}(\nu)\}.
 \]
 \begin{enumerate}[label=(\roman*)]
  \item\label{lmm:power:sufficient:itm:jensen} If
  \begin{equation}\label{eq:power:localfrechet:jensen}
   \widetilde W_{\alpha-1}(\mu,\nu)\leq\frac b{\br{\bar\sigma+b^{\alpha-1}}^{\phi}}
   \eqcm
  \end{equation}
  then $\widetilde W_{\alpha-1}(\mu,\nu)\leq\min\{\Psi_\mu(b),\Psi_\nu(b)\}$.
  In particular, if \eqref{eq:power:localfrechet:jensen} holds for $\mu=\mu_z$, $\nu=\mu_{z\pr}$, and $b=L\rho(z,z\pr)^\beta$ for all distinct $z,z\pr\in\mc X$, then \eqref{eq:power:localfrechet:smooth} holds at every $x\in\mc X$ with the same $L$.
  \item\label{lmm:power:sufficient:itm:tworegime} Reading $\bar\sigma^{-\phi}b$ as $\infty$ if $\bar\sigma=0$ and $\alpha<2$, the two-regime condition
  \begin{equation}\label{eq:power:localfrechet:sufficient}
   \widetilde W_{\alpha-1}(\mu,\nu)\leq\min\cb{\bar\sigma^{-\phi}\,b,\ b^{\alpha-1}}
   \eqcm
  \end{equation}
  implies \eqref{eq:power:localfrechet:jensen} with $b$ replaced by $2^{\phi/(\alpha-1)}b$, and hence \eqref{eq:power:localfrechet:smooth} with $L$ replaced by $2^{\phi/(\alpha-1)}L$.
 \end{enumerate}
\end{lemma}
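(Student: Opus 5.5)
For part (i), I will compare $\Psi_\eta(b)=b\,\Lamof{\eta}{b}$ with the right-hand side of \eqref{eq:power:localfrechet:jensen}. The Jensen bound \eqref{eq:power:cp:jensen} with $t=b$ gives
\begin{equation*}
 \Psi_\eta(b)\geq b\br{\sigma_\alpha^{\alpha-1}(\eta)+b^{\alpha-1}}^{-\phi}
 \geq b\br{\bar\sigma+b^{\alpha-1}}^{-\phi}
 \qquad(\eta\in\{\mu,\nu\}).
\end{equation*}
The second inequality holds because $\sigma_\alpha^{\alpha-1}(\eta)\leq\bar\sigma$ and $u\mapsto u^{-\phi}$ is nonincreasing, with $\phi\geq0$. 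Hence \eqref{eq:power:localfrechet:jensen} yields $\widetilde W_{\alpha-1}(\mu,\nu)\leq\min\{\Psi_\mu(b),\Psi_\nu(b)\}$.

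For the regression statement, I will take $\mu=\mu_z$, $\nu=\mu_x$ and $b=L\rho(z,x)^\beta$ with $z\neq x$. The bound $\widetilde W_{\alpha-1}(\mu_z,\mu_x)\leq\Psi_{\mu_x}(b)=b\,\LamOf{\mu_x}{b}$ is then exactly \eqref{eq:power:localfrechet:smooth}. Since this holds for all distinct pairs, it holds at every $x$.

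For part (ii), put $b':=2^{\phi/(\alpha-1)}b$, so that $b'^{\alpha-1}=2^\phi b^{\alpha-1}$. The target is
\begin{equation*}
 \widetilde W_{\alpha-1}(\mu,\nu)\leq b'\br{\bar\sigma+b'^{\alpha-1}}^{-\phi}.
\end{equation*}
At $\alpha=2$ we have $\phi=0$, so the target reads $\widetilde W_{\alpha-1}\leq b$, which the hypothesis gives directly. For $\alpha<2$ I will split into two cases.

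In the case $\bar\sigma\leq b'^{\alpha-1}$, the target's right-hand side is at least $b'\br{2b'^{\alpha-1}}^{-\phi}$. Using $(\alpha-1)\phi=2-\alpha$ and $b'=2^{\phi/(\alpha-1)}b$, this equals
\begin{equation*}
 2^{-\phi}\,2^{\phi/(\alpha-1)}\,2^{-\phi(2-\alpha)/(\alpha-1)}\,b^{\alpha-1}.
\end{equation*}
The exponent of $2$ is $\phi\br{-1+\tfrac{1-(2-\alpha)}{\alpha-1}}=0$, so the right-hand side is at least $b^{\alpha-1}$. The hypothesis $\widetilde W_{\alpha-1}\leq b^{\alpha-1}$ therefore suffices.

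In the case $\bar\sigma>b'^{\alpha-1}$ (so $\bar\sigma>0$), the target's right-hand side is at least $b'(2\bar\sigma)^{-\phi}=2^{\phi/(\alpha-1)-\phi}\bar\sigma^{-\phi}b$. Since $\phi/(\alpha-1)\geq\phi$, this is at least $\bar\sigma^{-\phi}b$, and the hypothesis $\widetilde W_{\alpha-1}\leq\bar\sigma^{-\phi}b$ suffices.

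If $\bar\sigma=0$, only the $b^{\alpha-1}$ bound is active, per the stated convention, and the first case applies. Applying part (i) with $b'=2^{\phi/(\alpha-1)}L\rho(z,x)^\beta$ then gives \eqref{eq:power:localfrechet:smooth} with $L$ replaced by $2^{\phi/(\alpha-1)}L$.

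The main obstacle is only the exponent bookkeeping in the first case of part (ii), in particular checking that the constant $2^{\phi/(\alpha-1)}$ exactly cancels the factor $2^{-\phi}$ coming from $\bar\sigma\leq b'^{\alpha-1}$.
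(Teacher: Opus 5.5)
Your proof is correct and follows the paper's argument. Part (i) is the same Jensen comparison via \eqref{eq:power:cp:jensen}, and part (ii) uses the same bound $\bar\sigma+t\leq2\max\{\bar\sigma,t\}$ together with $(\alpha-1)\phi=2-\alpha$. The only difference is organizational: you split cases directly at the rescaled $b'=2^{\phi/(\alpha-1)}b$, whereas the paper first proves $b(\bar\sigma+b^{\alpha-1})^{-\phi}\geq2^{-\phi}\min\{\bar\sigma^{-\phi}b,b^{\alpha-1}\}$ at scale $b$ and then transfers it to $cb$ with the scaling inequality $cb(\bar\sigma+(cb)^{\alpha-1})^{-\phi}\geq c^{\alpha-1}b(\bar\sigma+b^{\alpha-1})^{-\phi}$ for $c\geq1$.
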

At the same $b$, the Jensen criterion is stronger and preserves $L$; the simpler two-regime criterion requires the stated enlargement of $L$.
\begin{proof}
 Let $\eta\in\{\mu,\nu\}$, so that $\sigma_\alpha^{\alpha-1}(\eta)\leq\bar\sigma$. Jensen's bound \eqref{eq:power:cp:jensen} gives
 \[
  \Psi_\eta(b)\geq\frac{b}{\br{\sigma_\alpha^{\alpha-1}(\eta)+b^{\alpha-1}}^\phi}\geq\frac{b}{\br{\bar\sigma+b^{\alpha-1}}^\phi}
  \eqcm
 \]
 which is \ref{lmm:power:sufficient:itm:jensen}.
 For \ref{lmm:power:sufficient:itm:tworegime}, $\bar\sigma+b^{\alpha-1}\leq2\max(\bar\sigma,b^{\alpha-1})$ and $1-(\alpha-1)\phi=\alpha-1$ give
 \[
 b\br{\bar\sigma+b^{\alpha-1}}^{-\phi}
 \geq2^{-\phi}\min\{\bar\sigma^{-\phi}b,b^{\alpha-1}\}.
 \]
 For $c\geq1$, moreover,
 \[
 \frac{cb}{\br{\bar\sigma+(cb)^{\alpha-1}}^\phi}
 \geq c^{\alpha-1}\frac{b}{\br{\bar\sigma+b^{\alpha-1}}^\phi}.
 \]
 Taking $c=2^{\phi/(\alpha-1)}$ proves the stated rescaling. Conversely, $\bar\sigma+b^{\alpha-1}\geq\max(\bar\sigma,b^{\alpha-1})$ shows that \eqref{eq:power:localfrechet:jensen} implies \eqref{eq:power:localfrechet:sufficient} without rescaling.
\end{proof}

\begin{lemma}[Stability under mixing]\label{lmm:power:cp:mixture}
 Let $\nu,\mu_1,\ldots,\mu_N\in\mc P^{\alpha-1}(\mc Q)$, and let $a_i\geq0$ with $\sum_i a_i=1$.
 Set $\eta:=\sum_i a_i\mu_i$ and
 $\epsilon:=\max_{i:a_i>0}\widetilde W_{\alpha-1}(\mu_i,\nu)$.
 With $\Psi_\nu$ as in \cref{lmm:power:cp},
 \begin{equation}\label{eq:power:cp:mixture}
 \max_{i:a_i>0}\ol{m_\alpha(\mu_i)}{m_\alpha(\nu)}
 +\ol{m_\alpha(\eta)}{m_\alpha(\nu)}
 \leq c_\alpha\Psi_\nu^{-1}(\epsilon).
 \end{equation}
\end{lemma}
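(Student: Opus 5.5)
The plan is to bound the two summands on the left of \eqref{eq:power:cp:mixture} separately, with $\Psi_\nu$ from \cref{lmm:power:cp}. Throughout put $R:=\Psi_\nu^{-1}(\epsilon)$. The one device used repeatedly is the scaling property \eqref{eq:power:Psi:scaling}, which inverts to
\[
\Psi_\nu^{-1}(c\,s)\leq c^{1/(\alpha-1)}\Psi_\nu^{-1}(s)\qquad\text{for } c\geq1 .
\]
Hence any bound of the form $\Psi_\nu(D)\leq c_\alpha\epsilon$ becomes $D\leq c_\alpha R$.

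\emph{Components.} For each $i$ with $a_i>0$, apply the second inequality of \eqref{eq:power:cp:sharp} with the roles chosen so that $\Psi_\nu^{-1}$ appears; the proof of \cref{lmm:power:cp} notes that this is allowed. It gives
\[
r_i:=\ol{m_\alpha(\mu_i)}{m_\alpha(\nu)}\leq\Psi_\nu^{-1}\brOf{c_\alpha\,\widetilde W_{\alpha-1}(\mu_i,\nu)}\leq\Psi_\nu^{-1}(c_\alpha\epsilon)\leq c_\alpha R .
\]
This handles the maximum over $i$.

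\emph{Mixture.} Put $D:=\ol{m_\alpha(\eta)}{m_\alpha(\nu)}$. If $D\leq R$ there is nothing to show, so assume $D>R$. Rerun Step~2 of the proof of \cref{lmm:power:cp} for the pair $(\eta,\nu)$ with the coupling $\pi:=\sum_ia_i\pi_i$, where $\pi_i$ is an optimal coupling for $\widetilde W_{\alpha-1}(\mu_i,\nu)$; optimal couplings exist by \cref{lmm:power:Wtilde}. Adding \VIa for $\eta$ and $\nu$, discarding the nonnegative $\Psi_\eta$ term, and applying \QIa yields
\[
\Psi_\nu(D)\leq c_\alpha\sum_ia_i\int\ol yz\,\diam\cb{y,z,m_\alpha(\eta),m_\alpha(\nu)}^{\alpha-2}\,\dl\pi_i(y,z).
\]
The task is to compare this integrand with the integrand of $\widetilde W_{\alpha-1}(\mu_i,\nu)$, whose diameter involves $m_\alpha(\mu_i)$ instead of $m_\alpha(\eta)$. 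On one hand,
\[
\diam\cb{y,z,m_\alpha(\eta),m_\alpha(\nu)}\geq\max\cb{D,\ \diam\cb{y,z,m_\alpha(\nu)}}.
\]
On the other hand, by the triangle inequality $\diam\cb{y,z,m_\alpha(\mu_i),m_\alpha(\nu)}\leq\diam\cb{y,z,m_\alpha(\nu)}+r_i$. Since $r_i\leq c_\alpha R<c_\alpha D$, these combine to
\[
\diam\cb{y,z,m_\alpha(\eta),m_\alpha(\nu)}\geq c_\alpha^{-1}\diam\cb{y,z,m_\alpha(\mu_i),m_\alpha(\nu)} .
\]
Because $\alpha-2\leq0$, the integrand is therefore at most $c_\alpha$ times the $\widetilde W_{\alpha-1}(\mu_i,\nu)$ integrand. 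Summing over $i$ gives $\Psi_\nu(D)\leq c_\alpha\sum_ia_i\widetilde W_{\alpha-1}(\mu_i,\nu)\leq c_\alpha\epsilon$, hence $D\leq c_\alpha R$ by the inverted scaling. Adding the two estimates proves \eqref{eq:power:cp:mixture}.

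The main obstacle is this diameter comparison. $\widetilde W_{\alpha-1}$ is not convex under mixing in any direct sense, because its weight depends on the means, and $m_\alpha(\eta)$ differs from every $m_\alpha(\mu_i)$. The argument works only because the component bound is established first, so that $r_i=\landauO(R)$, and because the reduction to $D>R$ lets $D$ absorb the displacement $r_i$ whenever $y,z$ lie close to $m_\alpha(\nu)$. If this comparison turned out to be too lossy, the fallback would be the cruder bound $\diam\geq\tfrac12\br{\ol z{m_\alpha(\nu)}+\ol yz}$ from \cref{rem:power:diffuse}, with a case split on whether $\diam\cb{y,z,m_\alpha(\nu)}\geq r_i$.
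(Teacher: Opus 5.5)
Your proof is correct and follows the paper's argument. You bound each component displacement by \cref{lmm:power:cp} with $\Psi_\nu^{-1}$, reduce to the case where $\ol{m_\alpha(\eta)}{m_\alpha(\nu)}$ dominates those displacements, and use this to compare $\diam\{y,z,m_\alpha(\mu_i),m_\alpha(\nu)\}$ with $\diam\{y,z,m_\alpha(\eta),m_\alpha(\nu)\}$ under the mixed coupling $\sum_i a_i\pi_i$, finishing with \eqref{eq:power:Psi:scaling}. The remaining differences are cosmetic: the paper thresholds at $\max_i r_i$, which gives a factor $2$ in the diameter comparison, and it states the mixed-coupling step as $\widetilde W_{\alpha-1}(\eta,\nu)\leq2^{2-\alpha}\epsilon$ before invoking \cref{lmm:power:cp}, whereas you threshold at $\Psi_\nu^{-1}(\epsilon)$ and rerun Step~2 inline.
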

\begin{proof}
 Write $m_0:=m_\alpha(\nu)$, $m_i:=m_\alpha(\mu_i)$, $m_*:=m_\alpha(\eta)$,
 and $R:=\max_{i:a_i>0}\ol{m_i}{m_0}$.
 By \cref{lmm:power:cp},
 $R\leq\Psi_\nu^{-1}(\frac{2^{3-\alpha}}{\alpha-1}\epsilon)\leq c_\alpha\Psi_\nu^{-1}(\epsilon)$,
 where the second inequality follows from \eqref{eq:power:Psi:scaling}.
 If $\ol{m_*}{m_0}\leq R$, this proves the assertion.
 Otherwise $\ol{m_*}{m_0}>R$, and, for every active $i$ and every $y,z$,
 \begin{equation*}
 \diam\cb{y,z,m_i,m_0}
 \leq\diam\cb{y,z,m_*,m_0}+R
 \leq2\diam\cb{y,z,m_*,m_0}.
 \end{equation*}
 Since $\alpha-2\leq0$, mixing approximately optimal couplings of $\mu_i$ and $\nu$ gives
 \begin{equation*}
 \widetilde W_{\alpha-1}(\eta,\nu)
 \leq2^{2-\alpha}\sum_{i=1}^{N} a_i\widetilde W_{\alpha-1}(\mu_i,\nu)
 \leq2^{2-\alpha}\epsilon.
 \end{equation*}
 Applying \cref{lmm:power:cp} to $\eta,\nu$ gives
 $\ol{m_*}{m_0}\leq\Psi_\nu^{-1}(\frac{2^{5-2\alpha}}{\alpha-1}\epsilon)\leq c_\alpha\Psi_\nu^{-1}(\epsilon)$,
 again by \eqref{eq:power:Psi:scaling}.
 Combining the bounds on $\ol{m_*}{m_0}$ and $R$ proves \eqref{eq:power:cp:mixture}.
\end{proof}
\subsection{Comparison of the Variances}\label{app:variances}
\begin{proof}[Proof of \cref{lmm:variances}]
	\emph{Upper bound.}
	For every $q\in\mc Q$ we have $\VfpofP{s}{\mu}\leq\int\ol yq^{s}\,\dl\mu(y)$.
	Integrating over $q$ with respect to $\mu$ gives $\VfpofP{s}{\mu}\leq\iint\ol yq^{s}\,\dl\mu(y)\,\dl\mu(q) = 2\VmpofP{s}{\mu}$.

	\emph{Lower bound.}
	Let $q\in\mc Q$. By the triangle inequality and \cref{lmm:power:subadd:gen},
	$\ol y{y\pr}^{s}\leq(\ol yq + \ol{y\pr}q)^{s}\leq2^{\max(0,s-1)}(\ol yq^{s} + \ol{y\pr}q^{s})$.
	Integrating over $\mu\otimes\mu$ yields $2\VmpofP{s}{\mu}\leq2^{\max(0,s-1)}\cdot2\int\ol yq^{s}\,\dl\mu(y)$, and taking the infimum over $q$ gives $\VmpofP{s}{\mu}\leq2^{\max(0,s-1)}\VfpofP{s}{\mu}$, which is the first inequality of \eqref{eq:variances:comparison} because $-\max(0,s-1) = \min(1-s,0)$.

\end{proof}
\begin{remark}[Sharpness of \cref{lmm:variances}]\label{rem:app:variances:sharp}
 The upper constant is attained on the two-point space $\{a,b\}$ with $\ol ab=1$ and $\mu$ uniform, where $\VfpofP{s}{\mu}=\frac12$ and $\VmpofP{s}{\mu}=\frac14$.
 For $s\geq1$, the lower constant is approached on a star $\R$-tree with $n\geq2$ unit-length edges and $\mu$ uniform on the leaves. For a point at distance $r\in[0,1]$ from the center, Jensen's inequality gives $\frac1n(1-r)^s+\frac{n-1}n(1+r)^s\geq(1+\frac{n-2}nr)^s\geq1$, so the center minimizes the $s$-objective, and the ratio $\VfpofP{s}{\mu}/\VmpofP{s}{\mu}=2^{1-s}(1-1/n)^{-1}$ tends to $2^{1-s}$ as $n\to\infty$.
 For $0<s<1$, take $\mu=(1-\epsilon)\delta_a+\epsilon\delta_b$ on the two-point space, with $0<\epsilon<1/2$. The ratio is then $(1-\epsilon)^{-1}\to1$ as $\epsilon\searrow0$, proving sharpness of the lower constant in this range as well.
\end{remark}
\begin{lemma}[Quadratic variances in Hadamard spaces]\label{lmm:two:variances}
 For $Y\sim\mu\in\mc P^2(\mc Q)$ in a Hadamard space,
 $\Vfof{Y}\leq\Vmof{Y}\leq2\Vfof{Y}$. If $\mc Q$ is a Hilbert space, then $\Vfof{Y}=\Vmof{Y}$.
\end{lemma}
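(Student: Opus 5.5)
The two inequalities come from the two results already available for the quadratic case: the upper bound from \cref{lmm:variances}, and the lower bound from the variance inequality \eqref{eq:cat0:2fm:vi} integrated against an independent copy. This is the same device as in \cref{lmm:pos:variances}, now with constant $b=1$.

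\emph{Upper bound.} I will take $s=2$ in \cref{lmm:variances}. There $2^{\min(1-s,0)}=1/2$, so the lemma reads $\tfrac12\Vmof{Y}\leq\Vfof{Y}$, which is $\Vmof{Y}\leq2\Vfof{Y}$. This step uses no curvature.

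\emph{Lower bound.} Let $m:=m_2(\mu)$. Since $\mu\in\mc P^2(\mc Q)$, the unrenormalized objective $q\mapsto\Eof{\ol Yq^2}$ is finite and minimized at $m$. Hence $\Vfof{Y}=\Eof{\ol Ym^2}<\infty$. Let $Y\pr$ be an independent copy of $Y$. I will apply \eqref{eq:cat0:2fm:vi} at $q=Y\pr(\omega)$ for each fixed outcome, giving $\ol m{Y\pr}^2\leq\EOf{\ol Y{Y\pr}^2-\ol Ym^2\mid Y\pr}$. Taking expectations and using Fubini (all terms are integrable because both second moments are finite, and the separability in \cref{rem:separability} makes $\ol Y{Y\pr}$ measurable) gives
\[
\Vfof{Y}=\EOf{\ol m{Y\pr}^2}\leq\EOf{\ol Y{Y\pr}^2}-\EOf{\ol Ym^2}=2\Vmof{Y}-\Vfof{Y}.
\]
Rearranging yields $\Vfof{Y}\leq\Vmof{Y}$.

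\emph{Hilbert case.} Here $m=\Eof{Y}$, and expanding the squared norm gives
\[
\EOf{\normof{Y-Y\pr}^2}=\EOf{\normof{(Y-m)-(Y\pr-m)}^2}=2\EOf{\normof{Y-m}^2}-2\normof{\EOf{Y-m}}^2=2\EOf{\normof{Y-m}^2}.
\]
The cross term is handled by independence and $\Eof{Y-m}=0$. This gives $\Vmof{Y}=\Vfof{Y}$.

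The only point needing care is the integrability and measurability justification in the lower bound. That step was chosen for it: the finite second moment makes every term finite, so the rearrangement is legitimate. The argument is otherwise routine.
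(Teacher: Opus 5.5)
Your proof is correct and follows the paper's argument step for step. The lower bound $\Vfof{Y}\leq\Vmof{Y}$ comes from integrating \eqref{eq:cat0:2fm:vi} at $q=Y\pr$ and rearranging, the upper bound from \cref{lmm:variances} with $s=2$, and the Hilbert identity from expanding $\normof{(Y-m)-(Y\pr-m)}^2$ with the cross term removed by independence; your integrability and measurability remarks only make explicit what the paper leaves implicit.
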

\begin{proof}
 Put $m:=m_2(\mu)$ and let $Y\pr$ be an independent copy of $Y$.
 Integrating \eqref{eq:cat0:2fm:vi} at $q=Y\pr$ gives
 $\Vfof{Y}\leq2\Vmof{Y}-\Vfof{Y}$, proving the first inequality.
 The second is \cref{lmm:variances} at $s=2$.
 In a Hilbert space, $m=\Eof{Y}$, and independence gives
 $\Eof{\normof{Y-Y\pr}^2}=2\Eof{\normof{Y-m}^2}$ by expanding the squared norm; hence the two variances agree.
\end{proof}

\section{Proofs of the Statistical Results}\label{app:proofs}
The proofs follow the order of the main sections, after a leave-one-out inequality that is common to the three weighted-sample bounds.

\Needspace{15\baselineskip}
\textbf{Notation.} The notation from \cref{sec:prelim} is retained; the following quantities recur in these proofs.
\begin{center}
\begin{tabular}{ll}
\toprule
$\mathcal E_\mu(q)$ & excess risk, \eqref{eq:power:excess:def}\\
$Y_i\pr$, $\samdis^i$, $\samfm^i$ & copy, resampled law, and mean; \cref{ntt:power:proof}\\
$\hat\sigma_s$, $\hat\sigma_s^i$, $\hat\sigma_s^{-i}$ & empirical moments about $\popfm$, \cref{ntt:power:proof}\\
$\sigma_s$, $\Sigma_s$ & population and weighted pairwise moments; \eqref{eq:power:sigma}, \eqref{eq:power:sigmaw}\\
$\phi=(2-\alpha)/(\alpha-1)$ & exponent for $\alpha<2$; \cref{sec:prelim}\\
$C_{\mathrm{QI}}$ & quadruple-inequality constant in \cref{lmm:loo}\\
$g(t)$ & modified distance loss, \eqref{eq:power:gdef}\\
$V^{-i}$ & fluctuation omitting observation $i$; \cref{lmm:power:empmom}\\
$B_1$, $B_2$ & absorption terms, \eqref{eq:power:B}\\
$d_\alpha$ & metric $d^{\alpha/2}$; \eqref{eq:comparison:snowflake}\\
\bottomrule
\end{tabular}
\end{center}
\subsection{The Leave-One-Out Inequality}\label{app:loo}
The inequality below extends the leave-one-out step of the proof of \cite[Theorem 3]{BrunelSerres2026FiniteSample-arxiv}, after \cite[Lemmas 6 and 7]{Escande2024Minimizers-JMLR}, to weighted samples with heterogeneous laws and to $\alpha\in(1,2]$; for iid samples and transformed Fr\'echet means the corresponding step is \cite[Lemma S4.2]{Schoetz2026Transformed-arxiv}.

Let $(\mc Q,d)$ be a metric space and $\alpha\in(1,2]$.
Let $(w_i)_{i\in\N}\subset[0,1]$ with $\sum_{i=1}^\infty w_i=1$, and let $(Y_i)_{i\in\N}$ and $(Y_i\pr)_{i\in\N}$ be independent sequences of independent random variables such that $Y_i$ and $Y_i\pr$ have the same distribution $\mu_i\in\mc P(\mc Q)$ for every $i\in\N$.
Put $\popdis:=\sum_{i=1}^\infty w_i\mu_i$ and $\samdis:=\sum_{i=1}^\infty w_i\delta_{Y_i}$, and write $\popfm:=m_\alpha(\popdis)$ and $\samfm:=m_\alpha(\samdis)$ whenever these means exist and are unique.
Extending the notation of \cref{rem:pos:literature} to general $\alpha$, the \emph{excess risk} at $q\in\mc Q$ of $\mu\in\mc P^{\alpha-1}(\mc Q)$ with a unique $\alpha$-Fr\'echet mean is
\begin{equation}\label{eq:power:excess:def}
 \mathcal E_\mu(q):=\int\br{\ol yq^\alpha-\ol y{m_\alpha(\mu)}^\alpha}\,\dl\mu(y)
 \eqfs
\end{equation}
The integral is finite by \cref{lmm:power:ana:diff} and nonnegative by minimality, even when the two unrenormalized losses have infinite integrals.
\Needspace{12\baselineskip}
\begin{notation}[Leave-one-out quantities]\label{ntt:power:proof}
 We write $\sigma_s:=\sum_jw_j\Eof{\ol{Y_j}{\popfm}^s}$ for the $s$-th moment of $\popdis$ about $\popfm$, as in \eqref{eq:power:sigma}.
	For $i\in\N$, let
	\begin{equation*}
		\samdis^i := w_i\, \delta_{Y_i\pr} + \sum_{j\in\N\setminus\cb i} w_j\, \delta_{Y_j}
		\qquad\text{and}\qquad
		\samfm^i := m_\alpha(\samdis^i)
		\eqcm
	\end{equation*}
	the sample and the sample mean after resampling the $i$-th observation, and define the empirical moments
 \begin{align*}
  \hat\sigma_s&:=\sum_{j=1}^\infty w_j\,\ol{Y_j}{\popfm}^s,\\
  \hat\sigma_s^i&:=w_i\,\ol{Y_i\pr}{\popfm}^s
       +\sum_{j\in\N\setminus\cb i}w_j\,\ol{Y_j}{\popfm}^s,\\
  \hat\sigma_s^{-i}&:=\sum_{j\in\N\setminus\cb i}w_j\,\ol{Y_j}{\popfm}^s,
 \end{align*}
	the last of which is independent of $(Y_i, Y_i\pr)$.
	Note $\Eof{\hat\sigma_s} = \Eof{\hat\sigma_s^i} = \sigma_s$ and $\hat\sigma_s^{-i} \leq \hat\sigma_s$.
\end{notation}
\begin{lemma}[Leave-one-out inequality]\label{lmm:loo}
	Assume $\popdis\in\mc P^{\alpha}(\mc Q)$ and one of the following.
	\begin{enumerate}[label=(\alph*)]
		\item\label{lmm:loo:itm:hadamard} $(\mc Q,d)$ is a Hadamard space; put $C_{\mathrm{QI}}:=2^{2-\alpha}\alpha$.
		\item\label{lmm:loo:itm:catk} $\alpha=2$ and, in the setting of \cref{ssec:pos:prelim}, all $\mu_i$ lie in $\mc P(\setC)$; put $C_{\mathrm{QI}}:=2C_\Delta$, or $C_{\mathrm{QI}}:=2C_\delta$ by \cref{rem:pos:sharper}.
	\end{enumerate}
	Then $\popfm$, $\samfm$, and $\samfm^i$, $i\in\N$, exist and are unique almost surely, and
	\begin{equation}\label{eq:loo}
		\EOf{\mathcal E_{\popdis}(\samfm)}+\EOf{\mathcal E_{\samdis}(\popfm)}
		\leq
		\frac{C_{\mathrm{QI}}}{2}\sum_{i=1}^\infty w_i\, \EOf{\ol{Y_i}{Y_i\pr}^{\alpha-1}\, \ol{\samfm}{\samfm^i}}
		\eqfs
	\end{equation}
\end{lemma}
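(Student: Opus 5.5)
The plan is to write both excess risks through a single centered loss and then symmetrize by exchanging $Y_i$ and $Y_i\pr$. Put $f(y,q):=\ol yq^\alpha-\ol y{\popfm}^\alpha$. Since $f(\cdot,\popfm)=0$, we have
\[
\mathcal E_{\popdis}(\samfm)=\int f(y,\samfm)\,\dl\popdis(y),
\qquad
\mathcal E_{\samdis}(\popfm)=-\int f(y,\samfm)\,\dl\samdis(y)=-\sum_iw_if(Y_i,\samfm).
\]

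\emph{Existence and measurability.} In case \ref{lmm:loo:itm:hadamard} this comes from \cref{ssec:power:prelim}; the empirical measures lie in $\mc P^{\alpha-1}(\mc Q)$ almost surely by Tonelli's theorem. In case \ref{lmm:loo:itm:catk} it comes from \cite[Theorem B]{Yokota2016Barycenter-JMSJ}. Measurability of $\samfm$ and $\samfm^i$ as functions of the sample follows from continuity of the barycenter map, which is given by \CPa, respectively \eqref{eq:2fmcatk:CP}, with respect to $W_{\alpha-1}$.

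\emph{Rewriting the population term.} Since $\samfm$ is independent of $Y_i\pr$ and $Y_i\pr\sim\mu_i$, Fubini gives
\[
\EOf{\int f(y,\samfm)\,\dl\popdis(y)}=\sum_iw_i\EOf{f(Y_i\pr,\samfm)} .
\]
Hence the left-hand side of \eqref{eq:loo} equals $\sum_iw_i\Eof{f(Y_i\pr,\samfm)-f(Y_i,\samfm)}$.

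\emph{Symmetrization.} The joint law of $(Y_i,Y_i\pr,(Y_j)_{j\neq i})$ is invariant under swapping $Y_i$ and $Y_i\pr$, and this swap maps $\samfm$ to $\samfm^i$. Therefore each summand also equals $\Eof{f(Y_i,\samfm^i)-f(Y_i\pr,\samfm^i)}$. Averaging the two expressions, the $\ol\cdot{\popfm}^\alpha$ terms cancel, and the $i$-th summand becomes
\[
\tfrac12\EOf{\ol{Y_i\pr}{\samfm}^\alpha-\ol{Y_i}{\samfm}^\alpha-\ol{Y_i\pr}{\samfm^i}^\alpha+\ol{Y_i}{\samfm^i}^\alpha}.
\]
This is exactly the expression controlled by the quadruple inequality with $y=Y_i\pr$, $z=Y_i$, $q=\samfm$, $p=\samfm^i$.

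\emph{Applying the quadruple inequality.} In case \ref{lmm:loo:itm:hadamard}, \QIa bounds the bracket by $C_{\mathrm{QI}}\,\ol{\samfm}{\samfm^i}\,\ol{Y_i}{Y_i\pr}\,\diam\{\cdots\}^{\alpha-2}$. Since the diameter is at least $\ol{Y_i}{Y_i\pr}$ and $\alpha-2\leq0$, this is at most $C_{\mathrm{QI}}\,\ol{Y_i}{Y_i\pr}^{\alpha-1}\ol{\samfm}{\samfm^i}$. In case \ref{lmm:loo:itm:catk}, \eqref{eq:2fmcatk:qi} applies directly, because all four points lie in $\setC$ (the means do by \cite[Corollary 16]{Yokota2016Barycenter-JMSJ}). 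Summing over $i$ with weights $w_i$ gives \eqref{eq:loo}.

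\emph{Main obstacle: integrability.} The splitting into separate expectations and the swap identities require every term $\Eof{\absof{f(Y_i\pr,\samfm)}}$ and $\Eof{\absof{f(Y_i,\samfm)}}$ to be finite, together with summability in $i$; otherwise the manipulations are of the form $\infty-\infty$. I would bound
\[
\absof{f(y,q)}\leq\alpha\,\ol q{\popfm}\br{\ol y{\popfm}+\ol q{\popfm}}^{\alpha-1}
\]
by the mean value theorem and the triangle inequality. By the Dirac bound \eqref{eq:power:dirac} applied to $\samdis$ at $q=\popfm$,
\[
\ol{\samfm}{\popfm}^{\alpha-1}\leq c_\alpha\hat\sigma_{\alpha-1}.
\]
This yields $\ol{\samfm}{\popfm}^\alpha\leq c_\alpha\hat\sigma_{\alpha-1}^{\alpha/(\alpha-1)}$, which must be controlled in $L^1$. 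Because $\alpha/(\alpha-1)\geq\alpha$, Jensen's inequality does not directly give integrability from $\popdis\in\mc P^\alpha(\mc Q)$.

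I would therefore first aim to avoid needing $\Eof{\ol{\samfm}{\popfm}^\alpha}<\infty$. One route is to use the linear-in-$\ol q{\popfm}$ form of the bound, applying Hölder with exponents $\alpha$ and $\alpha/(\alpha-1)$. If that fails, I would truncate: run the argument for finitely many weights and bounded supports, then pass to the limit using lower semicontinuity of the left-hand side (Fatou) and the nonnegativity of both excess risks. I expect this justification, rather than the algebraic symmetrization, to be the delicate part of the proof.
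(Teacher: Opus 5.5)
Your core argument is the paper's. You write both excess risks through $f(y,q)=\ol yq^\alpha-\ol y{\popfm}^\alpha$, swap $Y_i$ and $Y_i\pr$, and apply \eqref{eq:power:qi} (respectively \eqref{eq:2fmcatk:qi}) with $\diam\{Y_i,Y_i\pr,\samfm,\samfm^i\}\geq\ol{Y_i}{Y_i\pr}$. The paper only applies the quadruple inequality before invoking the swap, which is equivalent.

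The gap is the integrability step in case (a): you leave it open, and your reason for thinking Jensen fails is wrong. Since $\hat\sigma_{\alpha-1}=\sum_jw_j\ol{Y_j}{\popfm}^{\alpha-1}$ is a $w$-weighted average of $(\alpha-1)$-th powers, raising it to a power $p\geq1$ is bounded by the weighted average of the $(\alpha-1)p$-th powers. With $p=\alpha/(\alpha-1)$ this gives
\[
 \ol{\samfm}{\popfm}^{\alpha}\leq c_\alpha\,\hat\sigma_{\alpha-1}^{\alpha/(\alpha-1)}\leq c_\alpha\sum_{j=1}^\infty w_j\,\ol{Y_j}{\popfm}^{\alpha},
\]
whose expectation is $c_\alpha\sigma_\alpha<\infty$. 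The relevant comparison is $(\alpha-1)\cdot\frac{\alpha}{\alpha-1}=\alpha$, not $\frac{\alpha}{\alpha-1}$ versus $\alpha$. Likewise $p=1/(\alpha-1)$ gives $\Eof{\ol{\samfm}{\popfm}}\leq c_\alpha\sigma_1$, and $\hat\sigma_{\alpha-1}\,\ol{\samfm}{\popfm}\leq c_\alpha\hat\sigma_{\alpha-1}^{\alpha/(\alpha-1)}$ is integrable as well.

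Insert these into your mean value bound in the form $\absof{f(y,q)}\leq\alpha\br{\ol q{\popfm}\,\ol y{\popfm}^{\alpha-1}+\ol q{\popfm}^{\alpha}}$. Independence of $Y_i\pr$ and $\samfm$ gives $\sum_iw_i\Eof{\absof{f(Y_i\pr,\samfm)}}\leq\alpha\br{\sigma_{\alpha-1}\Eof{\ol{\samfm}{\popfm}}+\Eof{\ol{\samfm}{\popfm}^\alpha}}<\infty$. Directly, $\sum_iw_i\Eof{\absof{f(Y_i,\samfm)}}\leq\alpha\,\Eof{\hat\sigma_{\alpha-1}\ol{\samfm}{\popfm}+\ol{\samfm}{\popfm}^\alpha}<\infty$. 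The terms with $\samfm^i$ have the same laws by the swap. These bounds are exactly what legitimizes your Fubini step and the splitting of expectations in the symmetrization; they are the paper's \eqref{eq:power:integra}--\eqref{eq:power:individ}.

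Neither of your fallbacks would bypass this. Since $f(\popfm,q)=\ol q{\popfm}^\alpha$, any H\"older route still needs $\Eof{\ol{\samfm}{\popfm}^\alpha}<\infty$. Truncation with Fatou controls only the left-hand side of \eqref{eq:loo}. Passing the right-hand side, which involves $\ol{\samfm}{\samfm^i}$ for the truncated problems, to the limit needs a domination resting on the same moment estimate.

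A minor point: with infinitely many weights, continuity of $\mu\mapsto m_\alpha(\mu)$ must be combined with measurability of $(Y_j)_j\mapsto\samdis$ into $(\mc P^{\alpha-1}(\mc Q),W_{\alpha-1})$. The paper obtains this by finite-coordinate truncation, sending the tail to a fixed point, and almost sure convergence via \cref{lmm:power:cp:onesided}.
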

Both excess risks are nonnegative.
At $\alpha=2$, the variance inequalities \eqref{eq:cat0:2fm:vi} and \eqref{eq:2fmcatk:vi} bound each of them from below by $\ol{\popfm}{\samfm}^2$, respectively by $C_\Delta^{-1}\ol{\popfm}{\samfm}^2$; for $\alpha<2$ we drop the second one.
\begin{proof}
	\emph{Existence and integrability.}
	In case \ref{lmm:loo:itm:catk}, all distances are bounded by $\diam(\setC)$, so every sum and expectation below is finite and may be interchanged freely, and $\popdis,\samdis,\samdis^i\in\mc P(\setC)$ have unique means in $\setC$ by \cref{ssec:pos:prelim}.
	In case \ref{lmm:loo:itm:hadamard}, we have $\sigma_{\alpha-1}<\infty$, as $\popdis\in\mc P^\alpha(\mc Q)$, and $\popfm$ exists uniquely.
	As $\Eof{\hat\sigma_{\alpha-1}} = \sigma_{\alpha-1}$, we have $\hat\sigma_{\alpha-1} < \infty$ almost surely, i.e., $\samdis \in \mc P^{\alpha-1}(\mc Q)$ almost surely; the separability requirement in the definition of $\mc P(\mc Q)$ holds, as all $Y_j$ with $w_j > 0$ almost surely take values in the union of the separable Borel sets on which the corresponding $\mu_j$ are concentrated, cf.\ \cref{rem:separability}.
	Hence, $\samfm$ and, analogously, $\samfm^i$ exist and are unique almost surely.
	They are also measurable functions of the observations, as required by the independence and conditioning steps below.
	Indeed, replacing every coordinate after the $N$-th by a reference point $o$, with $o\in\setC$ in case \ref{lmm:loo:itm:catk}, gives means that depend on finitely many observations and are continuous in them, by \eqref{eq:cat0:2fm:CP} in case \ref{lmm:loo:itm:hadamard} with $\alpha=2$, by \CPp in case \ref{lmm:loo:itm:catk}, and by \cref{lmm:power:cp:onesided} in case \ref{lmm:loo:itm:hadamard} with $\alpha<2$. The truncation couplings send the tail to $o$, so the corresponding transport costs tend to zero almost surely; in case \ref{lmm:loo:itm:catk}, \CPp turns this directly into almost sure convergence of the finite-coordinate means to $\samfm$, respectively $\samfm^i$, and in case \ref{lmm:loo:itm:hadamard} so does \cref{lmm:power:cp:onesided}, applied with $\nu=\samdis$, respectively $\nu=\samdis^i$, fixed. Step 7 of the proof of \cref{prp:power:excess} carries out the same computation in detail.
	In case \ref{lmm:loo:itm:hadamard}, \eqref{eq:power:dirac} applied to $\samdis$ and $\samdis^i$ with $q = \popfm$ gives
	\begin{equation}\label{eq:power:rb}
		\ol{\samfm}{\popfm}^{\alpha-1} \leq c_\alpha\, \hat\sigma_{\alpha-1}
		\qquad\text{and}\qquad
		\ol{\samfm^i}{\popfm}^{\alpha-1} \leq c_\alpha\, \hat\sigma_{\alpha-1}^i
		\qquad\as
	\end{equation}
	As $\frac{1}{\alpha-1} \geq 1$ and $\frac{\alpha}{\alpha-1} = \phi + 2 \geq 2$, Jensen's inequality with weights $(w_j)_{j\in\N}$ yields
	$\hat\sigma_{\alpha-1}^{\frac{1}{\alpha-1}} \leq \sum_{j=1}^\infty w_j\, \ol{Y_j}{\popfm}$ and $\hat\sigma_{\alpha-1}^{\frac{\alpha}{\alpha-1}} \leq \sum_{j=1}^\infty w_j\, \ol{Y_j}{\popfm}^{\alpha}$.
	Together with \eqref{eq:power:rb} and $\hat\sigma_{\alpha-1}\, \ol{\samfm}{\popfm} \leq c_\alpha\, \hat\sigma_{\alpha-1}^{\frac{\alpha}{\alpha-1}}$, this implies
	\begin{equation}\label{eq:power:integra}
		\EOf{\ol{\samfm}{\popfm}} \leq c_\alpha\, \sigma_1
		\eqcm\qquad
		\EOf{\ol{\samfm}{\popfm}^{\alpha}} \leq c_\alpha\, \sigma_\alpha
		\eqcm\qquad
		\EOf{\hat\sigma_{\alpha-1}\, \ol{\samfm}{\popfm}} \leq c_\alpha\, \sigma_\alpha
		\eqcm
	\end{equation}
	all finite; the same bounds hold with $\samfm^i$ and $\hat\sigma_{\alpha-1}^i$ in place of $\samfm$ and $\hat\sigma_{\alpha-1}$.
	By the mean value theorem, the triangle inequality, and \cref{lmm:power:subadd:gen}, for $y \in \mc Q$ and $u \in \{\samfm, \samfm^i\}$,
	\begin{equation}\label{eq:power:mvt}
		\abs{\ol yu^{\alpha} - \ol y{\popfm}^{\alpha}}
		\leq
		\alpha\, \ol{u}{\popfm} \br{\ol y{\popfm}^{\alpha-1} + \ol u{\popfm}^{\alpha-1}}
		=
		\alpha \br{\ol u{\popfm}\, \ol y{\popfm}^{\alpha-1} + \ol u{\popfm}^{\alpha}}
		\eqfs
	\end{equation}
	Combining \eqref{eq:power:integra} and \eqref{eq:power:mvt}, and using that $(Y_j\pr)_{j\in\N}$ is independent of $\samfm$,
	\begin{align}
		\sum_{i=1}^\infty w_i\, \EOf{\abs{\ol{Y_i\pr}{\samfm}^{\alpha} - \ol{Y_i\pr}{\popfm}^{\alpha}}}
		&\leq
		\alpha \br{\EOf{\ol{\samfm}{\popfm}}\, \sigma_{\alpha-1} + \EOf{\ol{\samfm}{\popfm}^{\alpha}}}
		< \infty
		\eqcm
		\label{eq:power:dom1}
		\\
		\sum_{i=1}^\infty w_i\, \EOf{\abs{\ol{Y_i}{\samfm}^{\alpha} - \ol{Y_i}{\popfm}^{\alpha}}}
		&\leq
		\alpha \br{\EOf{\hat\sigma_{\alpha-1}\, \ol{\samfm}{\popfm}} + \EOf{\ol{\samfm}{\popfm}^{\alpha}}}
		< \infty
		\eqfs
		\label{eq:power:dom2}
	\end{align}
	Moreover, for every $i \in \N$ with $w_i > 0$ --- the only indices that contribute to any sum below --- using $\ol Zu^{\alpha} \leq 2^{\alpha-1}(\ol Z{\popfm}^{\alpha} + \ol u{\popfm}^{\alpha})$, \eqref{eq:power:integra}, and $w_i\, \Eof{\ol{Y_i}{\popfm}^{\alpha}} \leq \sigma_\alpha$,
	\begin{equation}\label{eq:power:individ}
		\EOf{\ol Zu^{\alpha}} < \infty
		\qquad\text{for all } Z \in \cb{Y_i, Y_i\pr} \text{ and } u \in \cb{\samfm, \samfm^i}
		\eqfs
	\end{equation}
	In case \ref{lmm:loo:itm:catk}, the bounds \eqref{eq:power:dom1}, \eqref{eq:power:dom2}, and \eqref{eq:power:individ} hold trivially.

	\emph{The main part.}
	By the definition of excess risk, using that $(Y_i\pr)_{i\in\N}$ is independent of $(Y_j)_{j\in\N}$,
	\begin{equation*}
		\mathcal E_{\popdis}(\samfm)
		=
		\sum_{i=1}^\infty w_i\, \EOf{\ol{Y_i\pr}{\samfm}^{\alpha} - \ol{Y_i\pr}{\popfm}^{\alpha} \,\middle|\, (Y_j)_{j\in\N}}
		\qquad\text{and}\qquad
		\mathcal E_{\samdis}(\popfm)
		=
		\sum_{i=1}^\infty w_i \br{\ol{Y_i}{\popfm}^{\alpha} - \ol{Y_i}{\samfm}^{\alpha}}
		\eqcm
	\end{equation*}
	where the second series converges absolutely almost surely by \eqref{eq:power:dom2}.
	Taking expectations --- the interchange of expectation and summation is justified by \eqref{eq:power:dom1} and \eqref{eq:power:dom2} --- and using $\Eof{\ol{Y_i\pr}{\popfm}^{\alpha}} = \Eof{\ol{Y_i}{\popfm}^{\alpha}}$, we obtain
	\begin{equation}\label{eq:power:afterVI}
		\EOf{\mathcal E_{\popdis}(\samfm)}+\EOf{\mathcal E_{\samdis}(\popfm)}
		=
		\sum_{i=1}^\infty w_i\, \EOf{\ol{Y_i\pr}{\samfm}^{\alpha} - \ol{Y_i}{\samfm}^{\alpha}}
		\eqfs
	\end{equation}
	It remains to bound the individual summands.
	We apply the quadruple inequality with $q = \samfm$, $p = \samfm^i$, $y = Y_i\pr$, and $z = Y_i$: in case \ref{lmm:loo:itm:hadamard} this is \QIa, where we bound $\diam\{Y_i, Y_i\pr, \samfm, \samfm^i\}\geq \ol{Y_i}{Y_i\pr}$ and use $\alpha-2\leq0$; in case \ref{lmm:loo:itm:catk} it is \eqref{eq:2fmcatk:qi}, all four points lying in $\setC$.
	Either way,
	\begin{equation*}
		\ol{Y_i\pr}{\samfm}^\alpha - \ol{Y_i}{\samfm}^\alpha
		\leq
		\ol{Y_i\pr}{\samfm^i}^\alpha - \ol{Y_i}{\samfm^i}^\alpha
		+ C_{\mathrm{QI}}\, \ol{\samfm}{\samfm^i}\, \ol{Y_i}{Y_i\pr}^{\alpha-1}
		\eqfs
	\end{equation*}
	Exchanging $Y_i$ and $Y_i\pr$ leaves the joint distribution of the two sequences invariant and maps $(\samfm, \samfm^i)$ to $(\samfm^i, \samfm)$; hence $(Y_i\pr, \samfm^i)$ has the same law as $(Y_i, \samfm)$, and $(Y_i, \samfm^i)$ the same law as $(Y_i\pr, \samfm)$.
	Taking expectations --- all terms are integrable by \eqref{eq:power:individ} --- therefore yields
	\begin{equation*}
		\EOf{\ol{Y_i\pr}{\samfm}^{\alpha} - \ol{Y_i}{\samfm}^{\alpha}}
		\leq
		\frac{C_{\mathrm{QI}}}{2}\, \EOf{\ol{Y_i}{Y_i\pr}^{\alpha-1}\, \ol{\samfm}{\samfm^i}}
		\eqfs
	\end{equation*}
	Inserting this into \eqref{eq:power:afterVI} proves \eqref{eq:loo}.
\end{proof}
\subsection{2-Fr\'echet Means in Hadamard Spaces}\label{app:two}
Throughout this subsection, $(\mc Q,d)$ is a Hadamard space and $\alpha=2$; we use the inequalities \eqref{eq:2fmcat0:qi}, \eqref{eq:cat0:2fm:vi}, and \eqref{eq:cat0:2fm:CP} of \cref{ssec:two:prelim}.
\begin{proof}[Proof of \cref{thm:two:infty:independent}]
    Fix $o\in\mc Q$. Tonelli's theorem and $\popdis\in\mc P^1(\mc Q)$ give
    \[
     \EOf{\sum_{i=1}^{\infty}w_i\ol{Y_i}o}=\int\ol yo\,\dl\popdis(y)<\infty.
    \]
    Hence $\samdis\in\mc P^1(\mc Q)$ almost surely and both means exist.
    The empirical mean is measurable: replace all coordinates after $N$ by $o$; the resulting finite-coordinate means are continuous in the observations by $W_1$ contraction and converge almost surely to $\samfm$ by the tail estimate below.
    If $\sum_iw_i^2\Vmof{Y_i}=\infty$, the claimed inequality is immediate.

    Assume the right-hand side is finite. Every active component then has a finite second moment, since $\Vfof{Y_i}\leq2\Vmof{Y_i}<\infty$ whenever $w_i>0$ by \cref{lmm:variances}.
    Put $r_N:=\sum_{i>N}w_i$ and
    \[
     \popdis_N:=\sum_{i=1}^Nw_i\mu_i+r_N\delta_o,\qquad
     \samdis_N:=\sum_{i=1}^Nw_i\delta_{Y_i}+r_N\delta_o,
    \]
    omitting zero-weight terms. Then $\popdis_N\in\mc P^2(\mc Q)$.
    For independent copies $Y_i\pr$, let $\samdis_N^i$ replace the $i$-th atom of $\samdis_N$ by $Y_i\pr$.
    Apply \cref{lmm:loo}\,\ref{lmm:loo:itm:hadamard} to this finite sample, including the deterministic coordinate at $o$ whose resampling displacement is zero.
    With $\alpha=2$, so that $C_{\mathrm{QI}}=2$, the two variance inequalities \eqref{eq:cat0:2fm:vi} give
    \[
     2\EOf{\ol{m_2(\popdis_N)}{m_2(\samdis_N)}^2}
     \leq\sum_{i=1}^Nw_i\EOf{\ol{Y_i}{Y_i\pr}\,
                    \ol{m_2(\samdis_N)}{m_2(\samdis_N^i)}}.
    \]
    The coupling that moves only the $i$-th atom and \eqref{eq:cat0:2fm:CP} imply
    $\ol{m_2(\samdis_N)}{m_2(\samdis_N^i)}\leq w_i\ol{Y_i}{Y_i\pr}$.
    Therefore
    \[
     \EOf{\ol{m_2(\popdis_N)}{m_2(\samdis_N)}^2}
     \leq\frac12\sum_{i=1}^Nw_i^2\EOf{\ol{Y_i}{Y_i\pr}^2}
     =\sum_{i=1}^Nw_i^2\VmOf{Y_i}.
    \]
    As $N\to\infty$, the couplings that send the tail to $o$ give
    \[
     W_1(\popdis_N,\popdis)\leq\sum_{i=N+1}^{\infty}w_i\EOf{\ol{Y_i}o}\longrightarrow0,\qquad
     W_1(\samdis_N,\samdis)\leq\sum_{i=N+1}^{\infty}w_i\ol{Y_i}o\longrightarrow0
     \quad\text{almost surely}.
    \]
    Contraction implies convergence of the two means.
    Fatou's lemma proves \eqref{eq:two:infty:independent} with constant one.

    In a Hilbert space, $\sum_iw_iY_i$ converges absolutely almost surely and in $L^1$.
    If the weighted variance sum is finite, the centered partial sums converge in $L^2$ and their second moments equal $\sum_{i\leq N}w_i^2\Vmof{Y_i}$, proving equality.
    If the variance sum is infinite, write $S=\samfm-\popfm$.
    Were $\Eof{\normof{S}^2}<\infty$, conditional Jensen applied to
    $\Eof{S\mid Y_1,\ldots,Y_N}=\sum_{i\leq N}w_i(Y_i-\Eof{Y_i})$ would bound every partial variance sum by $\Eof{\normof{S}^2}$, a contradiction.
    Thus equality holds also with extended values.
\end{proof}
\begin{proof}[Proof of \cref{cor:two:localreg}]
 Write $w_i:=w_{i,h}(x)$ and $\popfm(x):=m_2(\sum_iw_i\mu_{x_i})$.
 By \eqref{eq:localfrechet:design}, $N_n(x,h/2)>0$, so the weights are well-defined, sum to one, and vanish unless $\rho(x_i,x)\leq h$.
 The contraction property \eqref{eq:cat0:2fm:CP}, the coupling that couples $\mu_{x_i}$ with $\mu_x$ optimally in each component, and \eqref{eq:two:localreg:smooth} give
 \[
  \ol{\popfm(x)}{m_2(\mu_x)}
  \leq W_1\brOf{\sum_{i=1}^{n}w_i\mu_{x_i},\mu_x}
  \leq\sum_{i=1}^{n}w_iW_1(\mu_{x_i},\mu_x)
  \leq L\sum_{i=1}^{n}w_i\rho(x_i,x)^\beta
  \leq Lh^\beta.
 \]
 By \cref{thm:two:infty:independent},
 $\Eof{\ol{\popfm(x)}{\samfm_h\!(x)}^2}\leq\sum_iw_i^2\Vmof{Y_i}\leq v\normof{w}_2^2$, and the kernel bounds \eqref{eq:localfrechet:kernel} together with \eqref{eq:localfrechet:design} give
 \[
  \normOf{w}_2^2
  =\frac{\sum_{i=1}^{n}K(\rho(x_i,x)/h)^2}{\br{\sum_{j=1}^{n}K(\rho(x_j,x)/h)}^2}
  \leq\frac{\max_iK(\rho(x_i,x)/h)}{\sum_{j=1}^{n}K(\rho(x_j,x)/h)}
  \leq\frac{\const{Kma}}{\const{Kmi}^{-1}N_n(x,h/2)}
  \leq\frac{\const{Kmi}\const{Kma}\const{dns}}{nh^D}.
 \]
 The triangle inequality in $L^2$ proves \eqref{eq:localfrechet:designbound}.
 For $v>0$, the right-hand side of \eqref{eq:localfrechet:designbound} is $h\mapsto Lh^\beta+Ah^{-D/2}$ with $A:=\sqrt{v\const{Kmi}\const{Kma}\const{dns}/n}$; its derivative is negative for small and positive for large $h$ and vanishes only at the $h_*$ determined by $\beta Lh_*^\beta=\frac D2Ah_*^{-D/2}$, which gives \eqref{eq:localfrechet:bandwidth}.
 At $h_*$, the stochastic term $Ah_*^{-D/2}$ equals $\frac{2\beta}D$ times the bias term $Lh_*^\beta$, so the right-hand side of \eqref{eq:localfrechet:designbound} equals $\frac{2\beta+D}{D}Lh_*^\beta$; squaring and inserting \eqref{eq:localfrechet:bandwidth} proves \eqref{eq:localfrechet:optimal}.
\end{proof}
\subsection{\texorpdfstring{2-Fr\'echet Means in $\mathsf{CAT}(\kappa)$ Spaces, $\kappa>0$}{2-Frechet Means in CAT(kappa) Spaces, kappa > 0}}\label{app:pos}
Throughout this subsection we work in the setting of \cref{ssec:pos:prelim}; in particular all points, all distributions and all Fr\'echet means occurring below lie in $\setC$, and $\diam(\setC)\leq2\varrho<\infty$, so that every expectation appearing is finite.
Measurability of the empirical means in the observations is immediate here: the coupling that moves each atom to its counterpart gives $W_1(\sum_iw_i\delta_{y_i},\sum_iw_i\delta_{y_i\pr})\leq\sum_iw_id(y_i,y_i\pr)$, so \CPp makes $\samfm$ and $\samfm^i$ continuous functions of $(y_i)_{i\in\N}\in\setC^{\N}$ for the product topology, as $\setC$ is bounded and $\sum_iw_i=1$.
\begin{proof}[Proof of \cref{thm:pos:infty:independent}]
	Let $(Y_i\pr)_{i\in\N}$, $\samdis^i$ and $\samfm^i$ be as in \cref{ntt:power:proof}.
	By \cref{lmm:loo}\,\ref{lmm:loo:itm:catk} with $C_{\mathrm{QI}}=2C_\Delta$, and by \VIp applied to both excess risks in \eqref{eq:loo},
	\begin{equation}\label{eq:pos:first}
		\EOf{\ol{\popfm}{\samfm}^2}
		\leq
		\frac{C_\Delta^2}2\sum_{i=1}^\infty w_i\, \EOf{\ol{Y_i}{Y_i\pr}\, \ol{\samfm}{\samfm^i}}
		\eqfs
	\end{equation}
	The coupling that is the identity on all points except the $i$-th gives $W_1(\samdis,\samdis^i)\leq w_i\ol{Y_i}{Y_i\pr}$, so that \CPp yields $\ol{\samfm}{\samfm^i}\leq C_\Delta^2\, w_i\, \ol{Y_i}{Y_i\pr}$.
	Hence
	\begin{equation*}
		\EOf{\ol{\popfm}{\samfm}^2}
		\leq
		\frac{C_\Delta^4}2\sum_{i=1}^\infty w_i^2\, \EOf{\ol{Y_i}{Y_i\pr}^2}
		=
		C_\Delta^4\sum_{i=1}^\infty w_i^2\, \VmOf{Y_i}
		\eqfs
	\end{equation*}
    To obtain \eqref{eq:pos:infty:refined}, use $A_{\Delta,\delta}$ in both variance inequalities and $C_{\mathrm{QI}}=2C_\delta$ in \cref{lmm:loo}, so that the prefactor in \eqref{eq:pos:first} becomes $C_\delta A_{\Delta,\delta}/2$.
    The refined contraction bound gives $\ol{\samfm}{\samfm^i}\leq B_{\Delta,\delta}w_i\ol{Y_i}{Y_i\pr}$, and the same calculation yields the multiplier $C_\delta A_{\Delta,\delta}B_{\Delta,\delta}$.
\end{proof}
\begin{proof}[Proof of \cref{cor:pos:concentration}]
    Replacing observation $i$ changes $\ol{\samfm}{\popfm}$ by at most $r_i:=C_\Delta^2w_i\diam(\setC)$, by \eqref{eq:2fmcatk:CP}.
    The Doob martingale $M_N:=\Eof{\ol{\samfm}{\popfm}\mid Y_1,\ldots,Y_N}$, with $M_0:=\Eof{\ol{\samfm}{\popfm}}$, has conditional increment ranges of lengths at most $r_i$. Iterating the conditional Hoeffding bound gives
    \begin{equation*}
        \EOf{\exp\br{\lambda(M_N-\EOf{\ol{\samfm}{\popfm}})}}
        \leq\exp\br{\frac{\lambda^2}{8}\sum_{i=1}^N r_i^2},
        \qquad \lambda>0,\quad N\in\N.
    \end{equation*}
    Since $0\leq M_N\leq \diam(\setC)$ and $M_N\to \ol{\samfm}{\popfm}$ almost surely as $N\to\infty$, dominated convergence gives the same bound with $M_N$ replaced by $\ol{\samfm}{\popfm}$ and the sum extended to infinity.
    Here $\sum_{i=1}^{\infty}r_i^2<\infty$ because $\sum_{i=1}^{\infty}w_i^2\leq1$.
    Exponential Markov and optimization over $\lambda$ yield
    \begin{equation*}
        \PrOf{\ol{\samfm}{\popfm}-\EOf{\ol{\samfm}{\popfm}}\geq t}
        \leq\exp\br{-\frac{2t^2}{\sum_{i=1}^{\infty} r_i^2}},
        \qquad t>0.
    \end{equation*}
    Finally, \cref{thm:pos:infty:independent} gives
    \begin{equation*}
        \EOf{\ol{\samfm}{\popfm}}\leq\br{\EOf{\ol{\samfm}{\popfm}^2}}^{1/2}
        \leq C_\Delta^2\br{\sum_{i=1}^{\infty} w_i^2\VmOf{Y_i}}^{1/2}.
    \end{equation*}
    Since $\sum_{i=1}^{\infty} r_i^2=C_\Delta^4\diam(\setC)^2\normof{w}_2^2$, taking
    $t=C_\Delta^2\diam(\setC)\sqrt{\normof{w}_2^2\log(1/\eta)/2}$ proves \eqref{eq:pos:concentration}.
\end{proof}
\begin{proof}[Proof of \cref{cor:pos:localfrechet}]
	Write $w_i:=w_{i,h}(x)$ and $\popfm(x) := m_2(\sum_i w_i\mu_{x_i})$; by \eqref{eq:localfrechet:design}, the weights are well-defined and sum to one.
	By \CPp, the coupling that maps $\mu_{x_i}$ to $\mu_x$ optimally in each component, and \eqref{eq:two:localreg:smooth},
	\begin{equation*}
	\begin{aligned}
		\ol{\popfm(x)}{m_2(\mu_x)}
		&\leq
		C_\Delta^2\, W_1\brOf{\sum_{i=1}^{n} w_i\mu_{x_i},\ \mu_x}
		\leq
		C_\Delta^2\sum_{i=1}^{n} w_i\, W_1(\mu_{x_i},\mu_x)
		\\&\leq
		C_\Delta^2\, L\sum_{i=1}^{n} w_i\rho(x_i,x)^{\beta}
		\leq
		C_\Delta^2\, L\, h^{\beta}
		\eqcm
	\end{aligned}
	\end{equation*}
	because $w_i = 0$ unless $\rho(x_i,x)\leq h$.
	By \cref{thm:pos:infty:independent}, $\Eof{\ol{\popfm(x)}{\samfm_h\!(x)}^2}\leq C_\Delta^4\sum_i w_i^2\Vmof{Y_i}\leq C_\Delta^4 v\normof{w}_2^2$, and $v\leq\diam(\setC)^2/2<\infty$.
	The triangle inequality in $L^2$ and the bound $\normof{w}_2^2\leq\const{Kmi}\const{Kma}\const{dns}/(nh^D)$ from the proof of \cref{cor:two:localreg} give \eqref{eq:pos:localfrechet}.
\end{proof}
\subsection{Power Fr\'echet Means in Hadamard Spaces}\label{app:power}

The proof of \cref{thm:power:main} combines the leave-one-out inequality (\cref{lmm:loo}) with a bound on the fluctuations of the random modulus $\sum_{i} w_i\, \ol{Y_i}{\popfm}^{\alpha-1}$ of the moment forms \eqref{eq:power:vi:moment} and \eqref{eq:power:cp:moment} for the empirical distribution (\cref{lmm:power:empmom}).
The correction terms produced by these fluctuations are absorbed by the two leading terms, through an interpolation and an absorption inequality for the family $(\Sigma_s)_s$ (\cref{lmm:power:interpol,lmm:power:absorb}), which express moment log-convexity: on the disjoint union of the coordinate probability spaces, $\Sigma_s$ is half the integral of the $s$-th power of $w_i^{1/(\alpha-1)}d(Y_i,Y_i\pr)$.

We work in the weighted-sample setting of \cref{ssec:main:bound}, with the notation of \cref{ntt:power:proof}, so that $\Sigma_s = \frac12\sum_j w_j^{s/(\alpha-1)}\Eof{\ol{Y_j}{Y_j\pr}^s}$. Initially, the leave-one-out argument uses the additional assumption $\popdis\in\mc P^\alpha(\mc Q)$ of \cref{lmm:loo}; the last step of the proof of \cref{prp:power:excess} removes it by coordinate truncation.
The following moment bounds also hold with infinite values.
By the triangle inequality and \cref{lmm:power:subadd:gen}, $\Vmpof{s}{Y_i}=\frac12\Eof{\ol{Y_i}{Y_i\pr}^{s}}\leq2^{\max(s,1)-1}\Eof{\ol{Y_i}{\popfm}^{s}}$, so that, as $w_i \leq 1$ and $\frac{s}{\alpha-1}\geq 1$ for $s \geq \alpha-1$,
\begin{equation}\label{eq:power:sigmaw:bound}
	\Sigma_s \leq 2^{\max(s,1)-1}\, w_{\max}^{\frac{s}{\alpha-1}-1}\sum_{i=1}^\infty w_i\, \EOf{\ol{Y_i}{\popfm}^{s}} \leq 2\, w_{\max}^{\frac{s}{\alpha-1}-1}\, \sigma_s \leq 2\, \sigma_s
	\eqcm\qquad s \in [\alpha-1,\alpha]
	\eqfs
\end{equation}
In particular, $\Sigma_{\alpha-1}\leq\sigma_{\alpha-1}$.
\subsubsection{The Modified Loss}
Define the distance loss, including its endpoints, by
\begin{equation}\label{eq:power:gdef}
 g(0):=0,\qquad
 g(t):=\frac{t^2}{(\sigma_{\alpha-1}+t^{\alpha-1})^\phi},\quad t>0.
\end{equation}
For $\alpha=2$ this is $g(t)=t^2$ by the exponent-zero convention, and for $\alpha<2$ with $\sigma_{\alpha-1}=0$ it is $g(t)=t^\alpha$ on all of $\Rp$.
Every occurrence of the quotient in a risk bound carries these interpretations, here and below.
\begin{proposition}[Excess risk and modified distance losses]\label{prp:power:excess}
	In the setting of \cref{ssec:main:bound}, there is a constant $c_\alpha$ depending only on $\alpha$ such that
	\begin{equation}\label{eq:power:excess:bound}
		\EOf{\mathcal E_{\popdis}(\samfm)}
		\leq c_\alpha\br{\sigma_{\alpha-1}^{\phi}\,\Sigma_{2\alpha-2}+\Sigma_\alpha}.
	\end{equation}
	Consequently, with the inverse moment \eqref{eq:power:Lambda},
	\begin{equation}\label{eq:power:main}
		\EOf{g(\ol{\popfm}{\samfm})}
		\leq \EOf{\ol{\popfm}{\samfm}^2\LamOf{\popdis}{\ol{\popfm}{\samfm}}}
		\leq
		c_\alpha \br{\sigma_{\alpha-1}^{\phi}\, \Sigma_{2\alpha-2} + \Sigma_\alpha}
		\eqfs
	\end{equation}
	The product $\ol{\popfm}{\samfm}^2\Lamof{\popdis}{\ol{\popfm}{\samfm}}$ is zero at $\ol{\popfm}{\samfm}=0$.
\end{proposition}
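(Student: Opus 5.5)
The plan is to start from the leave-one-out inequality, \cref{lmm:loo}\,\ref{lmm:loo:itm:hadamard}, first under the extra assumption $\popdis\in\mc P^\alpha(\mc Q)$. Dropping the nonnegative term $\EOf{\mathcal E_{\samdis}(\popfm)}$ leaves
\[
\EOf{\mathcal E_{\popdis}(\samfm)}\leq c_\alpha\sum_i w_i\,\EOf{\ol{Y_i}{Y_i\pr}^{\alpha-1}\,\ol{\samfm}{\samfm^i}}.
\]
The stability distance $\ol{\samfm}{\samfm^i}$ is then controlled by the one-sided contraction \cref{lmm:power:cp:onesided}, applied with $\mu=\samdis^i$ and $\nu=\samdis$. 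The coupling that moves only atom $i$ gives $\widetilde W_{\alpha-1}(\samdis,\samdis^i)\leq w_i\ol{Y_i}{Y_i\pr}^{\alpha-1}$, and hence
\[
\ol{\samfm}{\samfm^i}\leq c_\alpha\br{\hat S^{\phi}\, w_i\ol{Y_i}{Y_i\pr}^{\alpha-1}+w_i^{1/(\alpha-1)}\ol{Y_i}{Y_i\pr}},
\]
where $\hat S=\sigma_\alpha^{\alpha-1}(\samdis)$. The second summand, multiplied by $w_i\ol{Y_i}{Y_i\pr}^{\alpha-1}$ and summed, gives exactly $c_\alpha\Sigma_\alpha$, because $1+1/(\alpha-1)=\alpha/(\alpha-1)$.

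The first summand produces $\sum_i w_i^2\,\EOf{\hat S^\phi\ol{Y_i}{Y_i\pr}^{2\alpha-2}}$. I would replace $\hat S$ by the moment about $\popfm$, using $\hat S\leq\hat\sigma_{\alpha-1}$ by minimality together with subadditivity of $t^{\alpha-1}$. Then $\hat\sigma_{\alpha-1}\leq\hat\sigma_{\alpha-1}^{-i}+w_i\ol{Y_i}{\popfm}^{\alpha-1}$, and $\hat\sigma^{-i}_{\alpha-1}$ is independent of $(Y_i,Y_i\pr)$. It would be better to use a symmetric version, with $\hat S$ of whichever of $\samdis,\samdis^i$ is convenient, so that the extra piece carries $\ol{Y_i}{Y_i\pr}$ rather than the distance to $\popfm$. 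The independent part contributes $\EOf{(\hat\sigma^{-i}_{\alpha-1})^\phi}\,w_i^2\EOf{\ol{Y_i}{Y_i\pr}^{2\alpha-2}}$. Since $\phi$ may exceed $1$, Jensen does not directly give $\EOf{(\hat\sigma^{-i}_{\alpha-1})^\phi}\leq\sigma_{\alpha-1}^\phi$, so I would split $\hat\sigma^{-i}_{\alpha-1}\leq\sigma_{\alpha-1}+V^{-i}$, where $V^{-i}$ is the centered fluctuation controlled by \cref{lmm:power:empmom}. This yields $\sigma_{\alpha-1}^\phi\Sigma_{2\alpha-2}$ plus correction terms. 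The self-term $w_i^{1+\phi}\ol{Y_i}{Y_i\pr}^{(\alpha-1)(2+\phi)}$ gives $w_i^{2+\phi}\ol{Y_i}{Y_i\pr}^{\alpha}$, i.e.\ $\Sigma_\alpha$.

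The main obstacle is the fluctuation term, which behaves like $\sum_i w_i^2\EOf{\ol{Y_i}{Y_i\pr}^{2\alpha-2}}\,\EOf{|V^{-i}|^\phi}$. Here I would bound $\EOf{|V^{-i}|^{\phi}}$ by a von Bahr--Esseen or Rosenthal-type inequality in terms of the weighted moments $\Sigma_s$. This expresses the correction as products $\Sigma_{2\alpha-2}\Sigma_{s}^{\theta}$, which the interpolation and absorption inequalities \cref{lmm:power:interpol,lmm:power:absorb} (log-convexity of $s\mapsto\Sigma_s$) then bound by $c_\alpha(\sigma_{\alpha-1}^\phi\Sigma_{2\alpha-2}+\Sigma_\alpha)$. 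The delicate part is the bookkeeping of exponents, so that Hölder and Young land exactly on these two endpoints, through the terms $B_1,B_2$.

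Finally I would remove $\popdis\in\mc P^\alpha$ by truncation: replace coordinates beyond $N$ by $\popfm$, use \cref{lmm:power:cp:onesided} for almost sure convergence of the means, and apply Fatou. For \eqref{eq:power:main}, \VIa applied to $\popdis$ at $q=\samfm$ gives the middle quantity up to $c_\alpha$, and \eqref{eq:power:cp:jensen} shows that it dominates $g$.
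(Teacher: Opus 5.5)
\textbf{Where your plan matches the paper, and one good variant.} Your skeleton is the paper's proof: the leave-one-out inequality, a stability bound for $\ol{\samfm}{\samfm^i}$, a split of the empirical modulus into $\sigma_{\alpha-1}$, the fluctuation $V^{-i}$ (handled by \cref{lmm:power:empmom}) and an atom-$i$ piece, absorption via \cref{lmm:power:interpol,lmm:power:absorb}, truncation, and finally \VIa with \eqref{eq:power:cp:jensen} for \eqref{eq:power:main}. Using \cref{lmm:power:cp:onesided} with $\nu=\samdis$ instead of the two-sided moment form \eqref{eq:power:cp:moment} is a sound variant. Its $\widetilde W_{\alpha-1}^{1/(\alpha-1)}$ term gives a multiple of $\Sigma_\alpha$ directly. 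And because only $\sigma_\alpha^{\alpha-1}(\samdis)$ appears, the paper's symmetrization step becomes unnecessary. One correction: $\hat S\leq\hat\sigma_{\alpha-1}$ holds only up to a factor $c_\alpha$, via $\hat S\leq\hat\sigma_{\alpha-1}+\ol{\samfm}{\popfm}^{\alpha-1}$ and \eqref{eq:power:rb}. Minimality of $\samfm$ concerns the $\alpha$-objective, not the $(\alpha-1)$-moment.

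\textbf{The gap: the contribution of atom $i$.} After $\hat S\leq c_\alpha\hat\sigma_{\alpha-1}$, atom $i$ contributes $w_i\ol{Y_i}{\popfm}^{\alpha-1}$. This produces $w_i^{2+\phi}\EOf{\ol{Y_i}{Y_i\pr}^{2\alpha-2}\ol{Y_i}{\popfm}^{2-\alpha}}$, which is not a pairwise quantity and is not bounded by $\Sigma_\alpha$.

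Your remedy does not fix this. Taking the modulus of $\samdis^i$ instead only replaces $Y_i$ by $Y_i\pr$. Nor can the piece be bounded pointwise by a multiple of $w_i\ol{Y_i}{Y_i\pr}^{\alpha-1}$: a component concentrated near a point far from $\popfm$ has $\ol{Y_i}{Y_i\pr}$ small but $\ol{Y_i}{\popfm}$ large. So your self-term $w_i^{2+\phi}\ol{Y_i}{Y_i\pr}^{\alpha}$ is unjustified.

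The missing idea is to center this piece as well. Write $\hat\sigma_{\alpha-1}=\sigma_{\alpha-1}+(\hat\sigma^{-i}_{\alpha-1}-\EOf{\hat\sigma^{-i}_{\alpha-1}})+w_iX_i$, with $X_i:=\xi_i-\EOf{\xi_i}$ and $\xi_i:=\ol{Y_i}{\popfm}^{\alpha-1}$. Then the deterministic part $w_i\EOf{\xi_i}\leq\sigma_{\alpha-1}$ goes into the main term $\sigma_{\alpha-1}^\phi\Sigma_{2\alpha-2}$. The centered part is pairwise only in expectation, in three steps:
\begin{itemize}
\item Write $X_i=\EOf{\xi_i-\xi_i\pr\mid Y_i}$ with $\xi_i\pr:=\ol{Y_i\pr}{\popfm}^{\alpha-1}$.
\item Conditional Jensen and $|a^{\alpha-1}-b^{\alpha-1}|\leq|a-b|^{\alpha-1}$ give $\EOf{|X_i|^r}\leq\EOf{\ol{Y_i}{Y_i\pr}^{(\alpha-1)r}}$ for $r\geq1$.
\item H\"older with exponents $\alpha/(2\alpha-2)$ and $\alpha/(2-\alpha)$ then gives $\EOf{\ol{Y_i}{Y_i\pr}^{2\alpha-2}|X_i|^\phi}\leq\EOf{\ol{Y_i}{Y_i\pr}^{\alpha}}$, which sums to $2\Sigma_\alpha$.
\end{itemize}
With this repair the rest of your plan goes through.

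\textbf{Truncation step.} Convergence of the means alone does not justify Fatou. You must also check three things:
\begin{itemize}
\item Each truncated problem has components with finite $\alpha$-moments; this follows from $\Sigma_\alpha<\infty$.
\item $\sigma_\alpha^{\alpha-1}(\popdis_N)\to\sigma_{\alpha-1}$.
\item $\mathcal E_{\popdis_N}$ evaluated at the truncated empirical mean converges almost surely to $\mathcal E_{\popdis}(\samfm)$. Use dominated convergence with an envelope of order $1+d(y,o)^{\alpha-1}$.
\end{itemize}
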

The second quantity in \eqref{eq:power:main} is the loss by which \VIa bounds the excess risk from below, and the first is its moment form \eqref{eq:power:vi:moment}; the following comparison places $g$ between the quadratic and the power-$\alpha$ loss at the scale set by $\sigma_{\alpha-1}$.
\subsubsection{Conversion to Ordinary Distance Loss}
Write $\sigma:=\sigma_{\alpha-1}$.
For every $\sigma\in\Rp$ the map $t\mapsto t^2(\sigma+t^{\alpha-1})^{-\phi}$ is strictly increasing on $\Rpp$, since its derivative has the sign of $2(\sigma+t^{\alpha-1})-\phi(\alpha-1)\,t^{\alpha-1}=2\sigma+\alpha t^{\alpha-1}>0$ by $\phi(\alpha-1)=2-\alpha$; in particular $g$ is strictly increasing.
For $\sigma>0$, the scale-explicit comparison for $g$ is
\begin{equation*}
 2^{-\phi}\min\{\sigma^{-\phi}t^2,t^\alpha\}
 \leq g(t)\leq\min\{\sigma^{-\phi}t^2,t^\alpha\},\qquad t\geq0,
\end{equation*}
by $\max(\sigma,t^{\alpha-1})\leq\sigma+t^{\alpha-1}\leq2\max(\sigma,t^{\alpha-1})$: the loss is quadratic below the distance scale $\sigma^{1/(\alpha-1)}$ and of power $\alpha$ above it, with the crossover at $t=\sigma^{1/(\alpha-1)}$.
\Cref{prp:app:conversion}, proved in \cref{sec:powfun}, converts an expected $g$-loss into an $L^\alpha$ bound while retaining the scale; Jensen's inequality then covers every positive loss order $0<\gamma\leq\alpha$.
Together with an absorption step, this gives \cref{thm:power:main}; both steps are detailed in its proof below.
\subsubsection{Interpolation, Absorption, and Fluctuation Bounds}
\begin{lemma}[Interpolation]\label{lmm:power:interpol}
	In the setting of \cref{ssec:main:bound}, let $s_0,s_1>0$ and $\lambda\in[0,1]$. Then
	\begin{equation}\label{eq:power:interpol}
		\Sigma_{\lambda s_0 + (1-\lambda)s_1}
		\leq
		\Sigma_{s_0}^{\lambda}\; \Sigma_{s_1}^{1-\lambda}
		\eqcm
	\end{equation}
	i.e., the moment family is log-convex on every interval of finite positive moments. If all moments vanish, read the assertion in its displayed multiplicative form.
	In particular,
	\begin{enumerate}[label=(\roman*)]
		\item \label{lmm:power:interpol:itm:main}
		$\Sigma_{2\alpha-2}^{1+\phi} \leq \Sigma_{\alpha-1}^{\phi}\, \Sigma_{\alpha}$;
		\item \label{lmm:power:interpol:itm:chord}
		if $\phi \geq 2$, then $\Sigma_{2-\alpha}^{\phi}\, \Sigma_{2\alpha-2}^{\phi-1} \leq \Sigma_{\alpha-1}^{\phi}\, \Sigma_{\alpha}^{\phi-1}$.
	\end{enumerate}
\end{lemma}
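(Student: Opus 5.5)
The proof is Hölder's inequality, applied on the disjoint union of the coordinate probability spaces mentioned before the lemma. Set $X_j:=w_j^{1/(\alpha-1)}\ol{Y_j}{Y_j\pr}$, so that $\Sigma_s=\frac12\sum_j\Eof{X_j^s}$, with zero-weight indices contributing nothing. On the measure space obtained as the disjoint union over $j$ of the probability spaces carrying $(Y_j,Y_j\pr)$, each with mass $\frac12$, the family $\Sigma_s$ is the $s$-th moment of a single nonnegative function $X$. Then the log-convexity \eqref{eq:power:interpol} follows from Hölder with exponents $1/\lambda$ and $1/(1-\lambda)$ applied to $X^{\lambda s_0}\cdot X^{(1-\lambda)s_1}$:
\[
\int X^{\lambda s_0+(1-\lambda)s_1}\le\Big(\int X^{s_0}\Big)^{\lambda}\Big(\int X^{s_1}\Big)^{1-\lambda}.
\]
The underlying measure need not be finite, but Hölder does not require this. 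The cases $\lambda\in\{0,1\}$ are trivial. Infinite values are handled by the convention that the right-hand side is $+\infty$ unless one factor is $0$. If $\Sigma_{s_0}=0$ with $\lambda>0$, then $X=0$ almost everywhere on the set where the weights are positive, and every $\Sigma_s$ vanishes. This degenerate case is what the ``multiplicative form'' convention is meant to cover, and it is the only point that needs care: it rules out $0\cdot\infty$ issues.

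For item \ref{lmm:power:interpol:itm:main}, write $2\alpha-2=\lambda(\alpha-1)+(1-\lambda)\alpha$. This gives $\lambda=2-\alpha$ and $1-\lambda=\alpha-1$. Raising \eqref{eq:power:interpol} to the power $1/(\alpha-1)=1+\phi$ yields
\[
\Sigma_{2\alpha-2}^{1+\phi}\le\Sigma_{\alpha-1}^{(2-\alpha)/(\alpha-1)}\Sigma_\alpha=\Sigma_{\alpha-1}^{\phi}\Sigma_\alpha.
\]

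For item \ref{lmm:power:interpol:itm:chord}, first note that $\phi\ge2$ means $\alpha\le4/3$, so $2-\alpha\ge\alpha-1$. Hence $2-\alpha$ and $2\alpha-2$ both lie in $[\alpha-1,\alpha]$, and $2-\alpha\ge2\alpha-2$. The plan is to interpolate each of $\Sigma_{2-\alpha}$ and $\Sigma_{2\alpha-2}$ between the endpoints $\alpha-1$ and $\alpha$. Writing $s=(\alpha-1)+\theta$ with $\theta\in[0,1]$, we get $\Sigma_s\le\Sigma_{\alpha-1}^{1-\theta}\Sigma_\alpha^{\theta}$. This gives $\theta=3-2\alpha$ for $s=2-\alpha$ and $\theta=\alpha-1$ for $s=2\alpha-2$.

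Multiplying the $\phi$-th power of the first bound by the $(\phi-1)$-th power of the second, the exponent of $\Sigma_\alpha$ is
\[
\phi(3-2\alpha)+(\phi-1)(\alpha-1).
\]
Using $\phi(\alpha-1)=2-\alpha$, this equals $\phi(2-\alpha)-\alpha+1$. The target exponent is $\phi-1$, and the difference is $\phi(1-\alpha)+\alpha-2=0$. So the $\Sigma_\alpha$ exponent matches exactly. The $\Sigma_{\alpha-1}$ exponent then also matches, because both sides have total degree $2\phi-1$. This is only a bookkeeping check, and I expect no real obstacle in it. The only subtle part of the lemma is the degenerate zero/infinite case discussed above.
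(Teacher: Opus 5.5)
Your proposal is correct and matches the paper's proof: the paper applies H\"older within each expectation and then to the index sum with weights $\frac12$, which is your H\"older inequality on the disjoint union of the coordinate spaces, and it obtains (i) and (ii) from the same convex combinations ($\lambda=2-\alpha$ for (i), and weights $2\alpha-2$ and $2-\alpha$ on $\alpha-1$ for (ii)) with the same exponent bookkeeping. Your treatment of the degenerate $0\cdot\infty$ case also agrees with the paper's remark that a vanishing $\Sigma_{s_0}$ or $\Sigma_{s_1}$ forces every $\Sigma_s$ to vanish.
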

\begin{proof}
	For all $s>0$, $\Sigma_s=\frac12\sum_{i=1}^\infty\Eof{(w_i^{1/(\alpha-1)}\ol{Y_i}{Y_i\pr})^s}$.
	Writing $s := \lambda s_0 + (1-\lambda)s_1$ and applying H\"older's inequality first within each expectation and then to the sum over $i$ with the weights $\frac12$,
 \begin{align*}
  \Sigma_s
  &=\frac12\sum_{i=1}^\infty
       w_i^{s/(\alpha-1)}\EOf{\ol{Y_i}{Y_i\pr}^{\lambda s_0+(1-\lambda)s_1}}\\
  &\leq\frac12\sum_{i=1}^\infty
       \br{w_i^{s_0/(\alpha-1)}\EOf{\ol{Y_i}{Y_i\pr}^{s_0}}}^{\lambda}
       \br{w_i^{s_1/(\alpha-1)}\EOf{\ol{Y_i}{Y_i\pr}^{s_1}}}^{1-\lambda}\\
  &\leq\Sigma_{s_0}^{\lambda}\,\Sigma_{s_1}^{1-\lambda}
  \eqcm
 \end{align*}
	which is \eqref{eq:power:interpol}; the endpoint cases $\lambda=0,1$ are identities.
	No finiteness hypothesis is needed: the H\"older chain is valid in $[0,\infty]$, and the only indeterminate product it could produce is excluded, since $\Sigma_{s_0}=0$ or $\Sigma_{s_1}=0$ forces $w_i^{1/(\alpha-1)}\ol{Y_i}{Y_i\pr}=0$ almost surely for every $i$ and hence $\Sigma_s=0$ for every $s>0$.
	In the applications below, $\Sigma_{\alpha-1}$ is always finite by \eqref{eq:power:sigmaw:bound}, whereas finiteness of $\Sigma_\alpha$ is established separately at each use.

	\ref{lmm:power:interpol:itm:main}:
	With $\lambda := 2-\alpha \in [0,1)$ we have $\lambda(\alpha-1) + (1-\lambda)\alpha = (\alpha-1)(2-\alpha) + (\alpha-1)\alpha = 2\alpha-2$, so that \eqref{eq:power:interpol} gives $\Sigma_{2\alpha-2}\leq \Sigma_{\alpha-1}^{2-\alpha}\,\Sigma_\alpha^{\alpha-1}$.
	Raising this to the power $\frac1{\alpha-1} = 1 + \phi$ yields the assertion.

	\ref{lmm:power:interpol:itm:chord}:
	Note that $\phi \geq 2$ is equivalent to $\alpha \leq \frac43$, in which case $\alpha - 1 \leq 2\alpha-2 \leq 2-\alpha \leq \alpha$ and $2\alpha-2\in[0,1]$.
	Hence, $2-\alpha$ and $2\alpha-2$ are both convex combinations of $\alpha-1$ and $\alpha$, namely with the weights $\lambda = 2\alpha-2$ and $\lambda = 2-\alpha$ on $\alpha-1$, respectively, and \eqref{eq:power:interpol} gives
	\begin{equation*}
		\Sigma_{2-\alpha} \leq \Sigma_{\alpha-1}^{2\alpha-2}\, \Sigma_\alpha^{3-2\alpha}
		\eqcm\qquad
		\Sigma_{2\alpha-2} \leq \Sigma_{\alpha-1}^{2-\alpha}\, \Sigma_\alpha^{\alpha-1}
		\eqfs
	\end{equation*}
	Multiplying the $\phi$-th power of the first with the $(\phi-1)$-th power of the second gives the assertion, because $\phi(2\alpha-2)+(\phi-1)(2-\alpha)=\phi$ and $\phi(3-2\alpha)+(\phi-1)(\alpha-1)=\phi-1$, as $\phi=\frac{2-\alpha}{\alpha-1}$ and $\phi-1=\frac{3-2\alpha}{\alpha-1}$.
\end{proof}
\begin{lemma}[Absorption]\label{lmm:power:absorb}
	In the setting of \cref{ssec:main:bound}, set
	\begin{equation}\label{eq:power:B}
		B_1 := \Sigma_{\alpha-1}^{\phi}\, \Sigma_{2\alpha-2}
		\eqcm\qquad
		B_2 := \Sigma_\alpha
		\eqfs
	\end{equation}
	Then
	\begin{enumerate}[label=(\roman*)]
		\item \label{lmm:power:absorb:itm:var}
		$\Sigma_{2\alpha-2}^{1+\frac\phi2}\leq\sqrt{B_1B_2}\leq\max(B_1,B_2)$;
		\item \label{lmm:power:absorb:itm:ros}
		if $\phi\geq2$, then $\Sigma_{2\alpha-2}\Sigma_{2-\alpha}\leq B_1^{1/\phi}B_2^{1-1/\phi}\leq\max(B_1,B_2)$.
	\end{enumerate}
\end{lemma}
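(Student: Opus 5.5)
Both parts follow from the interpolation inequalities of \cref{lmm:power:interpol} by elementary algebra, so the plan is to rewrite each target as a product of powers of $B_1$ and $B_2$ and then use $\sqrt{ab}\leq\max(a,b)$, respectively weighted AM--GM in the form $a^{\theta}b^{1-\theta}\leq\max(a,b)$.

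For \ref{lmm:power:absorb:itm:var}, I start from \cref{lmm:power:interpol}\,\ref{lmm:power:interpol:itm:main}, $\Sigma_{2\alpha-2}^{1+\phi}\leq\Sigma_{\alpha-1}^{\phi}\Sigma_\alpha$. Multiplying both sides by $\Sigma_{2\alpha-2}$ gives $\Sigma_{2\alpha-2}^{2+\phi}\leq\Sigma_{\alpha-1}^{\phi}\Sigma_{2\alpha-2}\Sigma_\alpha=B_1B_2$. Taking square roots yields $\Sigma_{2\alpha-2}^{1+\phi/2}\leq\sqrt{B_1B_2}$, and the geometric mean is at most the maximum.

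For \ref{lmm:power:absorb:itm:ros}, I use \cref{lmm:power:interpol}\,\ref{lmm:power:interpol:itm:chord}: $\Sigma_{2-\alpha}^{\phi}\Sigma_{2\alpha-2}^{\phi-1}\leq\Sigma_{\alpha-1}^{\phi}\Sigma_\alpha^{\phi-1}$. Multiplying by $\Sigma_{2\alpha-2}$ gives $(\Sigma_{2\alpha-2}\Sigma_{2-\alpha})^{\phi}\leq(\Sigma_{\alpha-1}^{\phi}\Sigma_{2\alpha-2})\,\Sigma_\alpha^{\phi-1}=B_1B_2^{\phi-1}$. Taking the $1/\phi$-th power (admissible since $\phi\geq2>0$) gives $B_1^{1/\phi}B_2^{1-1/\phi}$. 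Since $1/\phi\in(0,1]$, this is a weighted geometric mean of $B_1$ and $B_2$ and hence at most $\max(B_1,B_2)$.

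The one point needing care is the values in $[0,\infty]$. I would handle it through the convention of \cref{lmm:power:interpol}. If some $\Sigma_s$ vanishes, then all of them vanish and both sides are $0$. If $\Sigma_\alpha=\infty$, the right-hand sides are $\infty$ provided $B_1>0$, and $B_1=0$ forces all $\Sigma_s=0$. Otherwise all quantities are finite, since $\Sigma_{\alpha-1}<\infty$ by \eqref{eq:power:sigmaw:bound} and $\Sigma_{2\alpha-2},\Sigma_{2-\alpha}$ are then finite by interpolation, so multiplying inequalities through is legitimate. I expect no real obstacle; the only thing to verify is the exponent bookkeeping, namely that multiplying by $\Sigma_{2\alpha-2}$ is exactly what makes $B_1$ appear in both parts.
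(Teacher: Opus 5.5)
Your proof is correct and is essentially the paper's argument: multiply the two interpolation inequalities by $\Sigma_{2\alpha-2}$, take the square root (part (i)) or the $\phi$-th root (part (ii)), and bound each weighted geometric mean of $B_1,B_2$ by their maximum. Your explicit handling of zero and infinite values is more detailed than the paper's one-line remark on zero cases, and it is consistent with it.
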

\begin{proof}
 Multiply \cref{lmm:power:interpol}\,\ref{lmm:power:interpol:itm:main} by $\Sigma_{2\alpha-2}$ and take a square root to obtain the first geometric-mean bound.
 For $\phi\geq2$, multiply \cref{lmm:power:interpol}\,\ref{lmm:power:interpol:itm:chord} by $\Sigma_{2\alpha-2}$ and take the $\phi$-th root to obtain the second.
 Each geometric mean is at most the maximum of its factors. Zero cases follow directly, with the exponent-zero convention when $\phi=0$.
\end{proof}
\begin{lemma}[Fluctuations of the empirical modulus]\label{lmm:power:empmom}
	In the setting of \cref{ssec:main:bound}, set
	\begin{equation*}
		V^{-i} := \abs{\hat\sigma_{\alpha-1}^{-i} - \EOf{\hat\sigma_{\alpha-1}^{-i}}}
		\eqcm\quad i\in\N
		\eqfs
	\end{equation*}
	Let $s\in\Rpp$. Then
	\begin{enumerate}[label=(\roman*)]
		\item \label{lmm:power:empmom:itm:low}
		if $s\leq2$, then $\Eof{(V^{-i})^{s}}\leq\Sigma_{2\alpha-2}^{s/2}$;
		\item \label{lmm:power:empmom:itm:high}
		if $s\geq2$, then $\Eof{(V^{-i})^{s}} \leq c_s(\Sigma_{(\alpha-1)s} + \Sigma_{2\alpha-2}^{\frac s2})$, with a finite constant $c_s$ depending only on $s$.
	\end{enumerate}
\end{lemma}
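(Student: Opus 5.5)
The plan is to view $V^{-i}$ as the absolute value of a centred sum of independent random variables and to apply standard moment inequalities for such sums. Write $X_j:=w_j\,\ol{Y_j}{\popfm}^{\alpha-1}$ for $j\neq i$, so that $\hat\sigma_{\alpha-1}^{-i}-\EOf{\hat\sigma_{\alpha-1}^{-i}}=\sum_{j\neq i}(X_j-\EOf{X_j})$. The $X_j$ are independent, and $\EOf{X_j}\leq w_j\EOf{\ol{Y_j}{\popfm}^{\alpha-1}}$ sums to at most $\sigma_{\alpha-1}<\infty$. Convergence of the series, in $L^1$ and almost surely, follows from this summability, because $\sum_j\EOf{\abs{X_j}}<\infty$. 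The key observation for both parts is that the centred moments can be written through independent copies. By Jensen's inequality (conditioning on $Y_j$), $\EOf{\abs{X_j-\EOf{X_j}}^{r}}\leq \EOf{\abs{X_j-X_j\pr}^r}$ for $r\geq1$, where $X_j\pr:=w_j\ol{Y_j\pr}{\popfm}^{\alpha-1}$. Since $t\mapsto t^{\alpha-1}$ is $(\alpha-1)$-Hölder with constant one ($\alpha-1\leq1$), the reverse triangle inequality gives $\abs{X_j-X_j\pr}\leq w_j\ol{Y_j}{Y_j\pr}^{\alpha-1}$.

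\emph{Part (i).} For $s\leq2$, Lyapunov gives $\EOf{(V^{-i})^s}\leq\EOf{(V^{-i})^2}^{s/2}$. For the variance I use independence, truncating to finitely many $j$ and passing to the limit by Fatou:
\[
\EOf{(V^{-i})^2}=\sum_{j\neq i}\operatorname{Var}(X_j)=\frac12\sum_{j\neq i}\EOf{(X_j-X_j\pr)^2}\leq\frac12\sum_{j}w_j^2\EOf{\ol{Y_j}{Y_j\pr}^{2\alpha-2}}=\Sigma_{2\alpha-2}.
\]
This uses $w_j^{(2\alpha-2)/(\alpha-1)}=w_j^2$. It yields (i) with constant one.

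\emph{Part (ii).} For $s\geq2$, I apply Rosenthal's inequality to the finite partial sums and then pass to the limit via Fatou, since the partial sums converge almost surely:
\[
\EOf{(V^{-i})^s}\leq c_s\br{\sum_j\EOf{\abs{X_j-\EOf{X_j}}^s}+\br{\sum_j\operatorname{Var}(X_j)}^{s/2}}.
\]
The second term is at most $c_s\Sigma_{2\alpha-2}^{s/2}$ by the computation in part (i). For the first term I use the copy bound from the opening paragraph:
\[
\EOf{\abs{X_j-\EOf{X_j}}^s}\leq w_j^s\EOf{\ol{Y_j}{Y_j\pr}^{(\alpha-1)s}}.
\]
Summing over $j$ gives $2\Sigma_{(\alpha-1)s}$, because the exponent in $\Sigma_{(\alpha-1)s}$ is $w_j^{(\alpha-1)s/(\alpha-1)}=w_j^s$.

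The only delicate point is justifying Rosenthal and the variance identity for the countably infinite sum, including the case of infinite values. I handle this by working with finite truncations, where everything is finite or the bound is trivial, and then letting the truncation level grow using Fatou's lemma and almost sure convergence.
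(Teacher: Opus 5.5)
Your proposal is correct and follows the paper's proof essentially step for step: the independent-copy bound via conditional Jensen and the concave-power reverse triangle inequality, the variance computation followed by Jensen/Lyapunov for (i), and Rosenthal's inequality on finite truncations with Fatou's lemma for (ii). The only cosmetic difference is in (i). You pass to the limit with Fatou, which gives an inequality, whereas the paper uses $L^2$ convergence of the centred partial sums to obtain the variance as an identity; either suffices for the stated bound.
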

\begin{proof}
	Write $\xi_j := \ol{Y_j}{\popfm}^{\alpha-1}$, $\xi_j\pr := \ol{Y_j\pr}{\popfm}^{\alpha-1}$, and $X_j := \xi_j - \Eof{\xi_j}$ for $j\in\N$ with $w_j>0$.
	The $X_j$ are independent and centered. For each $i$, the partial sums of $\sum_{j\ne i}w_jX_j$ converge almost surely to $\hat\sigma_{\alpha-1}^{-i}-\Eof{\hat\sigma_{\alpha-1}^{-i}}$, because $\Eof{\hat\sigma_{\alpha-1}}=\sigma_{\alpha-1}<\infty$ implies $\hat\sigma_{\alpha-1}<\infty$ almost surely.

	\emph{Individual moments.}
	For every $r\geq1$,
	\begin{equation}\label{eq:power:empmom:individual}
		\EOf{\abs{X_j}^{r}}
		\leq
		\EOf{\ol{Y_j}{Y_j\pr}^{(\alpha-1)r}}
		\eqfs
	\end{equation}
	Indeed, independence and centering give $X_j = \Eof{\xi_j - \xi_j\pr\mid Y_j}$, so that conditional Jensen, \cref{lmm:power:holder}, and the triangle inequality yield
	$\Eof{\absof{X_j}^{r}} \leq \Eof{\absof{\xi_j - \xi_j\pr}^{r}} \leq \Eof{\absof{\ol{Y_j}{\popfm} - \ol{Y_j\pr}{\popfm}}^{(\alpha-1)r}} \leq \Eof{\ol{Y_j}{Y_j\pr}^{(\alpha-1)r}}$.

	\ref{lmm:power:empmom:itm:low}:
	If $\Sigma_{2\alpha-2}=\infty$ there is nothing to prove, so assume $\Sigma_{2\alpha-2}<\infty$.
	The independent-copy identity gives
 \[
 \EOf{X_j^2}=\frac12\EOf{(\xi_j-\xi_j\pr)^2}\leq\frac12\EOf{\ol{Y_j}{Y_j\pr}^{2\alpha-2}}=\VmpOf{2\alpha-2}{Y_j}.
 \]
 Hence $\sum_{j\ne i}w_j^2\Eof{X_j^2}\leq\Sigma_{2\alpha-2}<\infty$, so the centered partial sums converge in $L^2$ to their almost sure limit. Therefore $\Eof{(V^{-i})^2}=\sum_{j\ne i}w_j^2\Eof{X_j^2}\leq\Sigma_{2\alpha-2}$. Jensen gives the stated bound for $s\leq2$.

	\ref{lmm:power:empmom:itm:high}:
	Apply the Rosenthal inequality, \cref{lmm:rosenthal} (bounding the maximum by the sum), to $(w_jX_j)_{j\leq N,\,j\ne i}$ and insert \eqref{eq:power:empmom:individual} with $r = s$ and $r = 2$; since $w_j^{s} = w_j^{(\alpha-1)s/(\alpha-1)}$, the two resulting series are bounded by $2\Sigma_{(\alpha-1)s}$ and $2\Sigma_{2\alpha-2}$.
	Fatou's lemma applied to $\absof{\sum_{j\leq N,\,j\ne i}w_jX_j}^{s}$ transfers the bound to the almost sure limit as $N\to\infty$.
\end{proof}
\subsubsection{Proof of the Weighted-Sample Bound}
\begin{proof}[Proof of \cref{prp:power:excess}]
    The means exist under $\popdis\in\mc P^{\alpha-1}(\mc Q)$ by Tonelli's theorem; the continuity argument in Step 7 also proves measurability under this assumption.
    If $\Sigma_\alpha=\infty$, the claims are immediate. Otherwise $\Sigma_{2\alpha-2}<\infty$ by interpolation and $\Sigma_{\alpha-1}\leq\sigma_{\alpha-1}<\infty$.
    In Steps 1--6, first suppose $\popdis\in\mc P^\alpha(\mc Q)$.

	Fix $i \in \N$ with $w_i > 0$; indices with $w_i = 0$ do not contribute to any of the sums below.

	\emph{Step 1 (leave-one-out bound).}
	We show
	\begin{equation}\label{eq:power:loo}
		\ol{\samfm}{\samfm^i} \leq c_\alpha\, w_i\, \ol{Y_i}{Y_i\pr}^{\alpha-1} \br{\hat\sigma_{\alpha-1}^{\,\phi} + \br{\hat\sigma_{\alpha-1}^i}^{\phi}}
		\eqfs
	\end{equation}
	By the triangle inequality, \cref{lmm:power:subadd:gen}, $\sum_j w_j = 1$, and \eqref{eq:power:rb},
	\begin{equation*}
		\sigma_\alpha^{\alpha-1}(\samdis)
		=
		\sum_{j=1}^\infty w_j\, \ol{Y_j}{\samfm}^{\alpha-1}
		\leq
		\hat\sigma_{\alpha-1} + \ol{\samfm}{\popfm}^{\alpha-1}
		\leq
		c_\alpha \hat\sigma_{\alpha-1}
		\eqcm
	\end{equation*}
	and analogously $\sigma_\alpha^{\alpha-1}(\samdis^i) \leq c_\alpha\,\hat\sigma_{\alpha-1}^i$, as well as
	\begin{equation*}
		\ol{\samfm}{\samfm^i}^{\alpha - 1}
		\leq
		\ol{\samfm}{\popfm}^{\alpha-1} + \ol{\popfm}{\samfm^i}^{\alpha-1}
		\leq
		c_\alpha\br{\hat\sigma_{\alpha-1} + \hat\sigma_{\alpha-1}^i}
		\eqfs
	\end{equation*}
	Hence, using $(x + y)^\phi \leq \max(1, 2^{\phi-1})(x^\phi + y^\phi)$,
	\begin{equation*}
		\br{\sigma_\alpha^{\alpha-1}(\samdis) + \ol{\samfm}{\samfm^i}^{\alpha-1}}^\phi + \br{\sigma_\alpha^{\alpha-1}(\samdis^i) + \ol{\samfm}{\samfm^i}^{\alpha-1}}^\phi
		\leq
		c_\alpha \br{\hat\sigma_{\alpha-1}^{\,\phi} + \br{\hat\sigma_{\alpha-1}^i}^{\phi}}
		\eqfs
	\end{equation*}
	Since the coupling of $\samdis$ and $\samdis^i$ that is the identity on all atoms except the $i$-th one yields $W_{\alpha-1}(\samdis, \samdis^i)\leq w_i\, \ol{Y_i}{Y_i\pr}^{\alpha-1}$, \eqref{eq:power:cp:moment} gives \eqref{eq:power:loo} whenever $\ol{\samfm}{\samfm^i}>0$; for $\ol{\samfm}{\samfm^i}=0$, \eqref{eq:power:loo} holds trivially.

	\emph{Step 2 (symmetrization).}
	Inserting \eqref{eq:power:loo} into \eqref{eq:loo}, after dropping the nonnegative second excess risk there, yields
	\begin{equation*}
		\EOf{\mathcal E_{\popdis}(\samfm)}
		\leq
		c_\alpha \sum_{i=1}^\infty w_i^2\, \EOf{\ol{Y_i}{Y_i\pr}^{2\alpha - 2} \br{\hat\sigma_{\alpha-1}^{\,\phi} + \br{\hat\sigma_{\alpha-1}^i}^{\phi}}}
		\eqfs
	\end{equation*}
	Exchanging $Y_i$ and $Y_i\pr$ leaves $\ol{Y_i}{Y_i\pr}$ invariant, maps $\hat\sigma_{\alpha-1}^i$ to $\hat\sigma_{\alpha-1}$, and does not change the joint distribution.
	Hence, $\Eof{\ol{Y_i}{Y_i\pr}^{2\alpha-2}(\hat\sigma_{\alpha-1}^i)^\phi} = \Eof{\ol{Y_i}{Y_i\pr}^{2\alpha-2}\hat\sigma_{\alpha-1}^{\,\phi}}$ and
	\begin{equation}\label{eq:power:sym}
		\EOf{\mathcal E_{\popdis}(\samfm)}
		\leq
		c_\alpha \sum_{i=1}^\infty w_i^2\, \EOf{\ol{Y_i}{Y_i\pr}^{2\alpha - 2}\, \hat\sigma_{\alpha-1}^{\,\phi}}
		\eqfs
	\end{equation}

	\emph{Step 3 (three-way split of the modulus).}
	For $\alpha = 2$ we have $\phi = 0$ and $\hat\sigma_{\alpha-1}^{\,\phi} = 1$, so \eqref{eq:power:sym} already proves \eqref{eq:power:excess:bound}; the variance inequality and Jensen step below give \eqref{eq:power:main}, so Steps 3--6 may be skipped; assume therefore $\alpha\in(1,2)$, so that $\phi\in\Rpp$.
	Fix $i\in\N$ with $w_i>0$ and set
	\begin{equation*}
		V^{-i} := \abs{\hat\sigma_{\alpha-1}^{-i} - \EOf{\hat\sigma_{\alpha-1}^{-i}}}
		\eqcm\qquad
		X_j := \ol{Y_j}{\popfm}^{\alpha-1} - \EOf{\ol{Y_j}{\popfm}^{\alpha-1}}
		\eqcm\quad j\in\N \text{ with } w_j>0
		\eqfs
	\end{equation*}
	From $\hat\sigma_{\alpha-1} - \sigma_{\alpha-1} = (\hat\sigma_{\alpha-1}^{-i} - \Eof{\hat\sigma_{\alpha-1}^{-i}}) + w_i\, X_i$ and the three-term case of \cref{lmm:power:subadd:gen}, $(x+y+z)^{\phi}\leq\max(1, 3^{\phi-1})(x^{\phi} + y^{\phi} + z^{\phi})$,
	\begin{equation*}
		\hat\sigma_{\alpha-1}^{\,\phi}
		\leq
		c_\alpha\br{\sigma_{\alpha-1}^{\phi} + \br{V^{-i}}^{\phi} + w_i^{\phi}\, \abs{X_i}^{\phi}}
		\eqcm
	\end{equation*}
	and inserting this into \eqref{eq:power:sym} yields
	\begin{equation}\label{eq:power:split}
		\EOf{\mathcal E_{\popdis}(\samfm)}
		\leq
		c_\alpha\sum_{i=1}^\infty w_i^{2}\br{
			\sigma_{\alpha-1}^{\phi}\, \EOf{\ol{Y_i}{Y_i\pr}^{2\alpha-2}}
			+
			\EOf{\ol{Y_i}{Y_i\pr}^{2\alpha-2}\br{V^{-i}}^{\phi}}
			+
			w_i^{\phi}\, \EOf{\ol{Y_i}{Y_i\pr}^{2\alpha-2}\, \abs{X_i}^{\phi}}
		}
		\eqfs
	\end{equation}
	The first summands add up to $2\sigma_{\alpha-1}^{\phi}\, \Sigma_{2\alpha-2}$.

	\emph{Step 4 (leave-one-out fluctuation).}
	Since $V^{-i}$ is independent of $(Y_i, Y_i\pr)$, the expectation $\Eof{\ol{Y_i}{Y_i\pr}^{2\alpha-2}(V^{-i})^{\phi}}$ factorizes into $\Eof{\ol{Y_i}{Y_i\pr}^{2\alpha-2}}\, \Eof{(V^{-i})^{\phi}}$.
	By \cref{lmm:power:empmom}\,\ref{lmm:power:empmom:itm:low} and \ref{lmm:power:empmom:itm:high}, applied to $V^{-i}$ with $s = \phi$ and using $(\alpha-1)\phi = 2-\alpha$,
	\begin{equation*}
		\EOf{\br{V^{-i}}^{\phi}}
		\leq
		\begin{cases}
			\Sigma_{2\alpha-2}^{\frac\phi2} & \text{if } \phi\leq2\eqcm\\[0.5ex]
			c_\alpha\br{\Sigma_{2-\alpha} + \Sigma_{2\alpha-2}^{\frac\phi2}} & \text{if } \phi\geq2\eqfs
		\end{cases}
	\end{equation*}
	Hence, the second summands of \eqref{eq:power:split} add up to at most $c_\alpha(\Sigma_{2\alpha-2}^{1+\frac\phi2} + \ind_{\{\phi>2\}}\, \Sigma_{2\alpha-2}\, \Sigma_{2-\alpha})$.

	\emph{Step 5 (diagonal term).}
	H\"older's inequality with the conjugate exponents $\frac{\alpha}{2\alpha-2}$ and $\frac{\alpha}{2-\alpha}$, followed by \eqref{eq:power:empmom:individual} with $r=\phi\cdot\frac{\alpha}{2-\alpha}=\frac{\alpha}{\alpha-1}\geq1$, yields
	\begin{equation*}
		\EOf{\ol{Y_i}{Y_i\pr}^{2\alpha-2}\, \abs{X_i}^{\phi}}
		\leq
		\EOf{\ol{Y_i}{Y_i\pr}^{\alpha}}^{\frac{2\alpha-2}{\alpha}}\, \EOf{\abs{X_i}^{\frac{\alpha}{\alpha-1}}}^{\frac{2-\alpha}{\alpha}}
		\leq
		\EOf{\ol{Y_i}{Y_i\pr}^{\alpha}}
		\eqcm
	\end{equation*}
	so that the third summands of \eqref{eq:power:split} add up to at most $\sum_{i} w_i^{2+\phi}\, \Eof{\ol{Y_i}{Y_i\pr}^{\alpha}} = 2\Sigma_{\alpha}$.

	\emph{Step 6 (absorption).}
	By \cref{lmm:power:absorb}\,\ref{lmm:power:absorb:itm:var} and, for $\phi\geq2$, \ref{lmm:power:absorb:itm:ros},
	\begin{equation*}
		\Sigma_{2\alpha-2}^{1+\frac\phi2}
		\leq
		\max\brOf{\Sigma_{\alpha-1}^{\phi}\, \Sigma_{2\alpha-2},\ \Sigma_{\alpha}}
		\quad\text{and}\quad
		\Sigma_{2\alpha-2}\, \Sigma_{2-\alpha}
		\leq
		\max\brOf{\Sigma_{\alpha-1}^{\phi}\, \Sigma_{2\alpha-2},\ \Sigma_{\alpha}}
		\eqcm
	\end{equation*}
	and $\Sigma_{\alpha-1}^{\phi}\leq\sigma_{\alpha-1}^{\phi}$, since $\Sigma_{\alpha-1}\leq\sigma_{\alpha-1}$.

	Combining Steps 2--6 proves \eqref{eq:power:excess:bound}.
    Finally, \VIa and Jensen's inequality \eqref{eq:power:cp:jensen} give, pointwise,
    \begin{equation}\label{eq:power:excess:conversion}
        g(\ol{\popfm}{\samfm})\leq \ol{\popfm}{\samfm}^2\LamOf{\popdis}{\ol{\popfm}{\samfm}}
        \leq\frac{2}{\alpha(\alpha-1)}\,\mathcal E_{\popdis}(\samfm).
    \end{equation}
    At $\ol{\popfm}{\samfm}=0$ all three quantities vanish by convention. Taking expectations proves \eqref{eq:power:main}. Centered-moment upper bounds may subsequently be obtained from $\Eof{\ol{Y_i}{Y_i\pr}^s}\leq2^{\max(s,1)}\Eof{\ol{Y_i}{\popfm}^s}$ for $s\in\{2\alpha-2,\alpha\}$.

    \emph{Step 7 (minimal mixture moment).}
 Fix $o\in\mc Q$ and put $r_N=\sum_{i>N}w_i$,
 \[
 \popdis_N=\sum_{i=1}^Nw_i\mu_i+r_N\delta_o,
 \qquad
 \samdis_N=\sum_{i=1}^Nw_i\delta_{Y_i}+r_N\delta_o.
 \]
 Finiteness of a pairwise moment of order $a>0$ implies finiteness of the corresponding ordinary moment: by Fubini there is a fixed $z$ with $\int d(y,z)^a\,\dl\mu_i(y)<\infty$, and the triangle inequality then permits the reference point $o$.
 Thus every active component of each truncated problem has a finite $\alpha$-moment, because $\Sigma_\alpha<\infty$.
 The added coordinate at $o$ is deterministic and has zero pairwise fluctuation.
 The case just proved applies to each truncated problem, with the corresponding partial pairwise sums: writing $\Sigma_s^{(N)}:=\sum_{i=1}^{N}w_i^{s/(\alpha-1)}\Vmpof{s}{Y_i}$ for the truncated weighted moments,
 \begin{equation}\label{eq:power:excess:truncated}
  \EOf{\mathcal E_{\popdis_N}\brOf{m_\alpha(\samdis_N)}}
  \leq c_\alpha\br{\br{\sigma_\alpha^{\alpha-1}(\popdis_N)}^{\phi}\,\Sigma_{2\alpha-2}^{(N)}+\Sigma_{\alpha}^{(N)}}
  \eqcm
 \end{equation}
 with one constant $c_\alpha$ for all $N$; the coordinate added at $o$ is deterministic, so $\Sigma_s^{(N)}\leq\Sigma_s$ for every $s$.

 Let $\eta,\eta_N\in\mc P^{\alpha-1}(\mc Q)$ with $W_{\alpha-1}(\eta_N,\eta)\to0$ as $N\to\infty$. All limits in this step are taken as $N\to\infty$.
 The one-sided contraction \eqref{eq:power:cp:onesided} with $\mu=\eta_N$ and $\nu=\eta$, together with $\sigma_\alpha^{\alpha-1}(\eta)<\infty$ and $\widetilde W_{\alpha-1}\leq W_{\alpha-1}$, gives $m_\alpha(\eta_N)\to m_\alpha(\eta)$.
 Moreover, \cref{lmm:power:holder}, the triangle inequality, and \cref{lmm:power:subadd:gen} give $|d(y,q)^{\alpha-1}-d(z,q\pr)^{\alpha-1}|\leq d(y,z)^{\alpha-1}+d(q,q\pr)^{\alpha-1}$ for all $y,z,q,q\pr\in\mc Q$; integrating against a coupling of $\eta_N$ and $\eta$ shows
 \[
 |\sigma_\alpha^{\alpha-1}(\eta_N)-\sigma_\alpha^{\alpha-1}(\eta)|
 \leq W_{\alpha-1}(\eta_N,\eta)+d(m_\alpha(\eta_N),m_\alpha(\eta))^{\alpha-1}\longrightarrow0.
 \]
 For $\alpha=2$, the usual $W_1$ contraction also gives the mean convergence directly.

 In the present truncation,
 \[
 \begin{aligned}
 W_{\alpha-1}(\popdis_N,\popdis)&\leq\sum_{i=N+1}^{\infty}w_i\EOf{\ol{Y_i}o^{\alpha-1}}\to0,\\
 W_{\alpha-1}(\samdis_N,\samdis)&\leq\sum_{i=N+1}^{\infty}w_i\ol{Y_i}o^{\alpha-1}\to0
 \quad\text{almost surely}.
 \end{aligned}
 \]
 The latter follows from Tonelli. Hence both truncated means converge and the population centered $(\alpha-1)$-moments converge. Finite-coordinate means are measurable by continuity, so the limiting empirical mean is measurable as well. This continuity argument uses only $\popdis\in\mc P^{\alpha-1}(\mc Q)$ and also applies when $\Sigma_\alpha=\infty$.
 Here $\Sigma_{\alpha-1}\leq\sigma_{\alpha-1}<\infty$ and H\"older interpolation makes $\Sigma_{2\alpha-2}<\infty$ from $\Sigma_\alpha<\infty$.
 To pass the excess risk to the limit, work on an outcome where the empirical means converge and put
 $q_N:=m_\alpha(\samdis_N)$, $m_N:=m_\alpha(\popdis_N)$,
 $h_N(y):=d(y,q_N)^\alpha-d(y,m_N)^\alpha$.
 The two sequences of centers are bounded. The mean value theorem and the power triangle inequality therefore give
 \[
  |h_N(y)|\leq C(1+d(y,o)^{\alpha-1})
 \]
 with a finite outcome-dependent $C$ independent of $N,y$.
 This envelope is $\popdis$-integrable. Dominated convergence yields
 $\int h_N\,\dl\popdis\to\mathcal E_{\popdis}(\samfm)$, and the omitted coordinates satisfy
 \[
  \left|\int h_N\,\dl\popdis_N-\int h_N\,\dl\popdis\right|
  \leq r_N|h_N(o)|+
       C\sum_{i=N+1}^{\infty}w_i\int(1+d(y,o)^{\alpha-1})\,\dl\mu_i(y)\longrightarrow0.
 \]
 Hence $\mathcal E_{\popdis_N}(q_N)\to\mathcal E_{\popdis}(\samfm)$ almost surely. These excess risks are nonnegative, so Fatou's lemma applied to \eqref{eq:power:excess:truncated}, together with $\Sigma_s^{(N)}\leq\Sigma_s$ and $\sigma_\alpha^{\alpha-1}(\popdis_N)\to\sigma_{\alpha-1}$, proves \eqref{eq:power:excess:bound} in the limit.
 The pointwise comparison \eqref{eq:power:excess:conversion}, valid under an $(\alpha-1)$-moment, then gives both bounds in \eqref{eq:power:main}, without requiring continuity of the inverse moment at zero.

\end{proof}
\begin{proof}[Proof of \cref{thm:power:main}]
    If $\Sigma_\alpha=\infty$, the bound is immediate; otherwise interpolation gives $\Sigma_{2\alpha-2}<\infty$.

	If $\sigma_{\alpha-1} = 0$, then $Y_i = \popfm$ almost surely for every $i$ with $w_i>0$, hence $\samfm = \popfm$ almost surely and there is nothing to show; so assume $\sigma_{\alpha-1}\in\Rpp$.
	Write $\mathcal G := \Eof{g(\ol{\popfm}{\samfm})}$, and set $A_1 := \sigma_{\alpha-1}^{\phi}\, \Sigma_{2\alpha-2}$ and $A_2 := \Sigma_\alpha$, so that $\mathcal G \leq c_\alpha(A_1 + A_2)$ by \cref{prp:power:excess}.

	\emph{Step 1 (conversion of the loss).}
	We apply \cref{prp:app:conversion} with $b = \sigma_{\alpha-1}$, so that $\psi = g$, and take the $\alpha$-th root of the resulting bound.
	Using $(1+x)^{\frac \phi\alpha}\leq c_\alpha(1 + x^{\frac \phi\alpha})$ for $x\in\Rp$ together with the exponent identities $\frac\phi2 - \frac\alpha2\cdot\frac \phi\alpha = 0$ and $\frac12 + \frac{\alpha-1}{2}\cdot\frac \phi\alpha = \frac1\alpha$, which hold because $(\alpha-1)\phi = 2-\alpha$, we obtain
	\begin{equation}\label{eq:power:sharp:phi}
		\br{\EOf{\ol{\popfm}{\samfm}^\alpha}}^{\frac1\alpha}
		\leq
		c_\alpha\, \Phi(\mathcal G)
		\eqcm\qquad\text{where}\qquad
		\Phi\colon\Rp\to\Rp,\ P\mapsto \sigma_{\alpha-1}^{\frac\phi2}\, P^{\frac12} + P^{\frac1\alpha}
		\eqfs
	\end{equation}
	As $t\mapsto t^{1/2}$ and $t\mapsto t^{1/\alpha}$ are nondecreasing and subadditive, so is $\Phi$, and $\Phi(cP)\leq\max(c^{\frac12}, c^{\frac1\alpha})\Phi(P)$ for $c\geq0$.
	Hence, $\mathcal G \leq c_\alpha(A_1+A_2)$ and \eqref{eq:power:sharp:phi} give
	\begin{equation*}
	\begin{aligned}
		\br{\EOf{\ol{\popfm}{\samfm}^\alpha}}^{\frac1\alpha}
		&\leq
		c_\alpha\br{\Phi(A_1) + \Phi(A_2)}
		\eqcm\\
		\Phi(A_1) &= \sigma_{\alpha-1}^{\phi}\, \Sigma_{2\alpha-2}^{\frac12} + A_1^{\frac1\alpha}
		\eqcm\\
		\Phi(A_2) &= \sigma_{\alpha-1}^{\frac\phi2}\, \Sigma_\alpha^{\frac12} + \Sigma_\alpha^{\frac1\alpha}
		\eqfs
	\end{aligned}
	\end{equation*}
	It remains to bound $A_1^{1/\alpha}$ by the right-hand side of \eqref{eq:power:rate:sharp}.

	\emph{Step 2 (absorption of $A_1^{1/\alpha}$).}
	Set $\vartheta := \sigma_{\alpha-1}^{\frac{\alpha}{\alpha-1}}$.
	By \eqref{eq:power:interpol} with $s_0=\alpha-1$, $s_1=\alpha$, and $\lambda=2-\alpha$, $\Sigma_{2\alpha-2}\leq\Sigma_{\alpha-1}^{2-\alpha}\Sigma_\alpha^{\alpha-1}$, together with $\Sigma_{\alpha-1}\leq\sigma_{\alpha-1}$ and the identity $\phi + 2 - \alpha = \phi\alpha = \frac{\alpha(2-\alpha)}{\alpha-1}$,
	\begin{equation*}
		A_1
		=
		\sigma_{\alpha-1}^{\phi}\, \Sigma_{2\alpha-2}
		\leq
		\sigma_{\alpha-1}^{\phi\alpha}\, \Sigma_\alpha^{\alpha-1}
		=
		\vartheta^{2-\alpha}\, A_2^{\alpha-1}
		\leq
		\max\brOf{\vartheta, A_2}
		\eqcm
	\end{equation*}
	the last step being the weighted arithmetic--geometric mean inequality with the weights $2-\alpha$ and $\alpha-1$, which are nonnegative and sum to $1$.
	If $A_1\leq A_2$, then $A_1^{\frac1\alpha}\leq\Sigma_\alpha^{\frac1\alpha}$.
	Otherwise $A_1\leq\vartheta$, and, using $\frac1\alpha = \frac12 + \frac{2-\alpha}{2\alpha}$ and $\vartheta^{\frac{2-\alpha}{2\alpha}} = \sigma_{\alpha-1}^{\frac\phi2}$,
	\begin{equation*}
		A_1^{\frac1\alpha}
		=
		A_1^{\frac12}\, A_1^{\frac{2-\alpha}{2\alpha}}
		\leq
		\sigma_{\alpha-1}^{\frac\phi2}\, A_1^{\frac12}
		=
		\sigma_{\alpha-1}^{\phi}\, \Sigma_{2\alpha-2}^{\frac12}
		\eqfs
	\end{equation*}
	This completes the three-term bound.
\end{proof}
\begin{proof}[Proof of \cref{cor:power:rate}]
 Extend the finite sample by zero-weight coordinates. With $w_i=n^{-1}\ind_{\{i\leq n\}}$, the identity is
 \begin{equation*}
 \Sigma_s=n^{-s/(\alpha-1)}\sum_{i=1}^n\VmpOf{s}{Y_i}
          =n^{1-s/(\alpha-1)}\bar V_s.
 \end{equation*}
 Apply \cref{thm:power:main}.
 The three terms are, up to constants depending only on $\alpha$,
 \[
 \sigma_{\alpha-1}^{\phi}\bar V_{2\alpha-2}^{1/2}n^{-1/2},\quad
 \sigma_{\alpha-1}^{\phi/2}\bar V_\alpha^{1/2}n^{-1/(2(\alpha-1))},\quad
 \bar V_\alpha^{1/\alpha}n^{-1/(\alpha(\alpha-1))}.
 \]
 This is the pairwise bound \eqref{eq:power:rate:iid}. The exponent comparisons stated in the corollary prove the rate under its uniform moment bounds. Centered-moment versions follow from $\bar V_s\leq2^{\max(s,1)-1}\sigma_s$.
\end{proof}
\subsubsection{The Roles of the Three Moments}
Denote the three summands of \eqref{eq:power:rate:sharp} by
\[
 T_1:=\sigma_{\alpha-1}^\phi\Sigma_{2\alpha-2}^{1/2},\qquad
 T_2:=\sigma_{\alpha-1}^{\phi/2}\Sigma_\alpha^{1/2},\qquad
 T_3:=\Sigma_\alpha^{1/\alpha}.
\]
For two deterministic observations at $-R$ and $R$ with equal weights, $\sigma_{\alpha-1}=R^{\alpha-1}$ while every $\Sigma_s$ is zero: separation of the component means alone can make the common-center moment arbitrarily large, and the fluctuation moments see none of it.
\begin{lemma}[Finiteness and mutual non-domination of the three moments]\label{lmm:power:moments}
 In the setting of \cref{ssec:main:bound}, the following hold.
 \begin{enumerate}[label=(\roman*)]
  \item $T_1+T_2+T_3<\infty$ if and only if $\Sigma_\alpha<\infty$. For finitely many positive weights, this holds if and only if every active component has a finite $\alpha$-moment.
  For infinitely many weights, $\popdis\in\mc P^\alpha(\mc Q)$ is not necessary: like \cref{ex:two:infinitevariance}\,(b), take $\mc Q=\R$, $w_i=2^{-i}$ and $\mu_i=\frac12(\delta_{-a_i}+\delta_{a_i})$ with $a_i=2^{i/\alpha}$; then $\sum_iw_i\Eof{\absof{Y_i}^\alpha}=\infty$, whereas $\sigma_{\alpha-1}=\sum_i2^{-i/\alpha}<\infty$ and $\Sigma_\alpha=2^{\alpha-2}\sum_i2^{-i/(\alpha-1)}<\infty$.
  \item Let $1<\alpha<2$. For every $M<\infty$ there are weights and laws with finite support on $\R$ such that $\Eof{\ol{\popfm}{\samfm}^\alpha}^{1/\alpha}>M(\Sigma_{\alpha-1}^{\phi}\Sigma_{2\alpha-2}^{1/2}+\Sigma_{\alpha-1}^{\phi/2}\Sigma_\alpha^{1/2}+\Sigma_\alpha^{1/\alpha})$; that is, $\sigma_{\alpha-1}$ cannot be replaced by $\Sigma_{\alpha-1}$ in \eqref{eq:power:rate:sharp} with a constant depending only on $\alpha$.
  \item Let $1<\alpha<2$. For each $j\in\{1,2,3\}$, the ratio $T_j/\sum_{k\neq j}T_k$ is unbounded over weights and laws with finite support on $\R$.
 \end{enumerate}
\end{lemma}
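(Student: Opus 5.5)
Part (i) needs only the moment bookkeeping already set up in \cref{app:power}; parts (ii) and (iii) need explicit two- and three-component examples on $\R$ in which the means can be computed, or estimated to first order, in closed form.

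\emph{Part (i).} If $\Sigma_\alpha=\infty$ then $T_3=\infty$. Conversely, assume $\Sigma_\alpha<\infty$.
\begin{itemize}
\item We have $\sigma_{\alpha-1}<\infty$ because $\popdis\in\mc P^{\alpha-1}(\mc Q)$.
\item By \eqref{eq:power:sigmaw:bound}, $\Sigma_{\alpha-1}\leq\sigma_{\alpha-1}<\infty$.
\item By \cref{lmm:power:interpol}\,\ref{lmm:power:interpol:itm:main}, $\Sigma_{2\alpha-2}\leq\Sigma_{\alpha-1}^{2-\alpha}\Sigma_\alpha^{\alpha-1}<\infty$.
\end{itemize}
Hence $T_1,T_2,T_3$ are all finite.

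For finitely many positive weights, $\Sigma_\alpha<\infty$ holds if and only if $\Eof{\ol{Y_i}{Y_i\pr}^\alpha}<\infty$ for each active $i$. One direction follows from $\Vmpof{\alpha}{Y_i}\leq2^{\alpha-1}\Eof{\ol{Y_i}{o}^\alpha}$. For the other, the Fubini argument of Step~7 of the proof of \cref{prp:power:excess} gives a point $z$ with finite $\alpha$-moment about it, and the triangle inequality transfers this to any reference point.

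For the example, each $\mu_i$ is symmetric, so the mixture is symmetric, and uniqueness gives $\popfm=0$. Then:
\begin{itemize}
\item $w_i\Eof{\absof{Y_i}^\alpha}=2^{-i}a_i^\alpha=1$, so the mixture $\alpha$-moment diverges;
\item $\sigma_{\alpha-1}=\sum_i2^{-i}2^{i(\alpha-1)/\alpha}=\sum_i2^{-i/\alpha}<\infty$;
\item $\Vmpof{\alpha}{Y_i}=\frac14(2a_i)^\alpha=2^{\alpha-2}2^i$, and multiplying by $w_i^{\alpha/(\alpha-1)}$ gives the terms $2^{\alpha-2}2^{-i/(\alpha-1)}$, which are summable.
\end{itemize}

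\emph{Part (ii).} I would use a three-component family on $\R$.
\begin{itemize}
\item Deterministic components at $\pm R$, each with weight $(1-\epsilon)/2$.
\item One noisy component with weight $\epsilon$ and law $\frac12(\delta_{-1}+\delta_1)$.
\end{itemize}
By symmetry, $\popfm=0$. The empirical mean equals $\pm D$, each with probability $\frac12$. Here $D$ solves the first-order condition, in which the deterministic atoms contribute curvature $\alpha(\alpha-1)R^{\alpha-2}$ and the noisy atom contributes a force of order $\epsilon\alpha$. This gives
\[
D\sim\epsilon R^{2-\alpha}/(\alpha-1)\quad\text{as }\epsilon\to0\text{ with }R\text{ fixed.}
\]
An explicit lower bound on $D$ from monotonicity of the derivative is enough.

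On the right-hand side, only the noisy component fluctuates, so $\Sigma_s=\epsilon^{s/(\alpha-1)}2^{s-2}$. Consequently each of the three terms with $\Sigma_{\alpha-1}$ in place of $\sigma_{\alpha-1}$ is of order $\epsilon^{1/(\alpha-1)}$ and independent of $R$. The exponent checks are $\phi+1=1/(\alpha-1)$ and $\phi/2+\alpha/(2(\alpha-1))=1/(\alpha-1)$. Fixing $\epsilon$ small and letting $R\to\infty$ sends the ratio to infinity.

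\emph{Part (iii).} I would use the same family, now with the noisy component $(1-p)\delta_0+\frac p2(\delta_{-a}+\delta_a)$. Here $\sigma_{\alpha-1}\asymp R^{\alpha-1}$ for large $R$, and
\[
T_1\asymp R^{2-\alpha}\epsilon p^{1/2}a^{\alpha-1},\qquad
T_2\asymp R^{(2-\alpha)/2}\epsilon^{\alpha/(2(\alpha-1))}p^{1/2}a^{\alpha/2},\qquad
T_3\asymp\epsilon^{1/(\alpha-1)}p^{1/\alpha}a.
\]
Each term is dominated in a different regime.
\begin{itemize}
\item \emph{$T_3$ dominates:} drop the separation by taking a single component with weight one. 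As $p\to0$, $T_3=p^{1/\alpha}a$ carries the smallest $p$-exponent, since $\tfrac{1}{2(\alpha-1)}>\tfrac1\alpha$ and $\phi+\tfrac12>\tfrac1\alpha$ for $1<\alpha<2$.
\item \emph{$T_1$ dominates:} take $p=1$, $a=1$, with $R$ large and $\epsilon$ small. Then $T_1/T_2$ and $T_1/T_3$ carry positive powers of $R$ and negative powers of $\epsilon$.
\item \emph{$T_2$ dominates:} choose $R$ as a function of $a$, with $p$ small, so that $T_2/T_1=(a/R^{2})^{(2-\alpha)/2}$ and $T_2/T_3$ both diverge. Concretely, take $R^{2-\alpha}=a^{(2-\alpha)/2}$ and then let $p\to0$ and $a\to\infty$ at suitable rates.
\end{itemize}
I expect this last regime to be the main obstacle. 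It requires keeping exact track of all exponents and confirming that $\sigma_{\alpha-1}\asymp R^{\alpha-1}$ still holds, which needs $pa^{\alpha-1}\ll R^{\alpha-1}$.
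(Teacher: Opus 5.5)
Your part (i) is the paper's argument, and your families for (ii) and (iii) are adequate; with separation $R=1$ they contain the paper's examples. However, two of your limiting steps fail as written.

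\emph{Part (ii).} Letting $R\to\infty$ at fixed $\epsilon$ does not make the ratio diverge. For $R>1$, the derivative of the empirical objective is $-\epsilon\alpha<0$ at $0$ and positive on $[1,\infty)$. So $0<D<1$ for every $R$, and in fact $D\to1$. The right-hand side with $\Sigma_{\alpha-1}$ is $\asymp\epsilon^{1/(\alpha-1)}$ independently of $R$, as you note yourself. Hence for fixed $\epsilon$ the ratio stays below $c_\alpha\epsilon^{-1/(\alpha-1)}$.

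Your asymptotic $D\sim\epsilon R^{2-\alpha}/(\alpha-1)$ holds only while $\epsilon R^{2-\alpha}\to0$, i.e., for $\epsilon\to0$ at fixed $R$. That is the limit to take: there the ratio is $\asymp R^{2-\alpha}\epsilon^{-\phi}\to\infty$. Alternatively, choosing $\epsilon$ small first and then $R$ large also exceeds any $M$. At $R=1$ this is exactly the paper's example, where $\absof{\samfm}$ even has a closed form, so the extra parameter $R$ buys nothing.

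\emph{Part (iii), $T_2$ case.} Your own displayed asymptotics give
\[
\frac{T_2}{T_1}\asymp\left(\frac{a\,\epsilon^{1/(\alpha-1)}}{R}\right)^{\frac{2-\alpha}{2}},
\qquad
\frac{T_2}{T_1}\cdot\frac{T_2}{T_3}=\frac{\Sigma_\alpha^{(\alpha-1)/\alpha}}{\Sigma_{2\alpha-2}^{1/2}}\asymp p^{-\frac{2-\alpha}{2\alpha}},
\]
not $(a/R^2)^{(2-\alpha)/2}$. Under your formula, the concrete choice $R=\sqrt a$ would make $T_2/T_1$ bounded. The rates for $p$ and $a$ are also never fixed, so this case is not actually established.

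With the correct exponents it suffices to have $1\ll a\epsilon^{1/(\alpha-1)}/R\ll p^{-1/\alpha}$, together with $\epsilon pa^{\alpha-1}\lesssim R^{\alpha-1}$ so that $\sigma_{\alpha-1}\asymp R^{\alpha-1}$. In particular $R$ need not grow with $a$. Taking $R=1$, $\epsilon=\frac12$, $p=a^{-\alpha-1}$ and $a\to\infty$ gives $T_1\asymp a^{(\alpha-3)/2}$, $T_2\asymp a^{-1/2}$ and $T_3\asymp a^{-1/\alpha}$. This is precisely the paper's example with weights $\frac12,\frac14,\frac14$.
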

\begin{proof}
 \emph{(i).} Since $\Sigma_{\alpha-1}\leq\sigma_{\alpha-1}<\infty$, interpolation gives $\Sigma_{2\alpha-2}^{1/(\alpha-1)}\leq\Sigma_{\alpha-1}^\phi\Sigma_\alpha$ whenever $\Sigma_\alpha<\infty$, so $\Sigma_\alpha<\infty$ implies $T_1+T_2+T_3<\infty$; the converse follows from $T_3$.
 For finitely many positive weights, a finite pairwise moment gives an ordinary moment by Fubini's theorem, as in Step 7 of the proof of \cref{prp:power:excess}, and conversely.
 In the example for infinitely many weights, $\popfm=0$ by symmetry, $\Eof{\absof{Y_i}^{s}}=a_i^{s}$ and $\VmpofP{s}{\mu_i}=2^{s-2}a_i^{s}$, so $w_i\Eof{\absof{Y_i}^\alpha}=1$, $w_ia_i^{\alpha-1}=2^{-i/\alpha}$ and $w_i^{\alpha/(\alpha-1)}\VmpofP{\alpha}{\mu_i}=2^{\alpha-2}2^{-i/(\alpha-1)}$.
 In (ii) and (iii), constants in $\asymp$ may depend on $\alpha$.

 \emph{(ii), and (iii) for $j=1$.}
 Take weights $w,(1-w)/2,(1-w)/2$ and laws $\frac12(\delta_{-1}+\delta_1),\delta_{-1},\delta_1$, respectively, with $w\searrow0$.
 The population mixture is uniform on $\{-1,1\}$, so $\popfm=0$, $\sigma_{\alpha-1}=1$, and $\Sigma_s=2^{s-2}w^{s/(\alpha-1)}$ for every $s>0$.
 Thus
 \[
  T_1\asymp w,\qquad T_2\asymp w^{\alpha/(2(\alpha-1))},\qquad T_3\asymp w^{1/(\alpha-1)},
 \]
 and $T_1/(T_2+T_3)\to\infty$, which is (iii) for $j=1$.
 The empirical mean has a random sign and deterministic absolute value
 \[
  |\samfm|=
  \frac{(1+w)^{1/(\alpha-1)}-(1-w)^{1/(\alpha-1)}}{(1+w)^{1/(\alpha-1)}+(1-w)^{1/(\alpha-1)}}
  \sim\frac{w}{\alpha-1}\qquad(w\searrow0).
 \]
 Replacing $\sigma_{\alpha-1}$ by $\Sigma_{\alpha-1}=2^{\alpha-3}w$ makes all three terms of order $w^{1/(\alpha-1)}=\mathrm{o}(w)$ as $w\searrow0$, which proves (ii).

 \emph{(iii) for $j=2$.}
 Take weights $1/2,1/4,1/4$ and laws $\mu_1=(1-\epsilon)\delta_0+\frac\epsilon2(\delta_{-R}+\delta_R)$, $\delta_{-1}$, $\delta_1$, where $R\to\infty$ and $\epsilon=R^{-\alpha-1}$.
 Symmetry gives $\popfm=0$ and $\sigma_{\alpha-1}=(1+\epsilon R^{\alpha-1})/2\to1/2$.
 For each fixed $s>0$, $\Eof{|Y_1-Y_1\pr|^s}\asymp\epsilon R^s$, so
 \[
  T_1\asymp R^{(\alpha-3)/2},\qquad
  T_2\asymp R^{-1/2},\qquad T_3\asymp R^{-1/\alpha}.
 \]
 Hence $T_2/(T_1+T_3)\to\infty$.

 \emph{(iii) for $j=3$.}
 Take a single observation with law $(1-\epsilon)\delta_0+\epsilon\delta_1$, where $\epsilon\searrow0$.
 Its population mean is $m=\epsilon^{1/(\alpha-1)}/(\epsilon^{1/(\alpha-1)}+(1-\epsilon)^{1/(\alpha-1)})$.
 The first-order condition gives $\sigma_{\alpha-1}=2\epsilon(1-m)^{\alpha-1}\sim2\epsilon$, and $\Sigma_s=\epsilon(1-\epsilon)$ for every $s>0$.
 Thus
 \[
  T_1\asymp\epsilon^{\phi+1/2},\qquad
  T_2\asymp\epsilon^{1/(2(\alpha-1))},\qquad T_3\asymp\epsilon^{1/\alpha}.
 \]
 Since $\phi+1/2>1/\alpha$ and $1/(2(\alpha-1))>1/\alpha$, $T_3/(T_1+T_2)\to\infty$.
\end{proof}
The two ends are stronger than the middle: in the examples for $j=1$ and $j=3$ the risk itself is of the order of $T_j$, so those two terms are matched by a lower bound on $\Eof{\ol{\popfm}{\samfm}^\alpha}^{1/\alpha}$, whereas for $j=2$ the risk is of order $T_3$ and the example shows only that $T_2$ is not dominated by $T_1+T_3$.

\begin{remark}[The constant $c_\alpha$]\label{rem:power:calpha}
 The proof of \cref{thm:power:main} gives a constant $c_\alpha$ that is bounded on every compact subinterval of $(1,2]$, since the only $\alpha$-dependent input that is not explicit, the Rosenthal constant of \cref{lmm:rosenthal} at $s=\phi$, can be chosen locally bounded in $s$.
 Conversely, for $1<\alpha<2$, every constant $c_\alpha$ for which \eqref{eq:power:rate:sharp} holds for all weights and laws satisfies
 \begin{equation*}
  c_\alpha\geq\max\cb{2^\phi,\frac{2^{2-\alpha}}{\alpha-1}}
  \eqcm
 \end{equation*}
 so that $c_\alpha\to\infty$ as $\alpha\searrow1$.
 For the first bound, let $1<\alpha<2$ and take weights $\frac12,\frac12$ and laws $\mu_1=(1-\epsilon)\delta_0+\epsilon\delta_1$, $\mu_2=\delta_0$ on $\R$, with $\epsilon\searrow0$.
 Then $\popfm\sim(\epsilon/2)^{1/(\alpha-1)}$, the empirical mean is $\frac12$ with probability $\epsilon$ and $0$ otherwise, so the risk is $\epsilon^{1/\alpha}/2\,(1+\mathrm o(1))$, while $T_3=2^{-1/(\alpha-1)}(\epsilon(1-\epsilon))^{1/\alpha}$, $T_1=\landauO(\epsilon^{\phi+1/2})$, and $T_2=\landauO(\epsilon^{1/(2(\alpha-1))})$, both $\mathrm o(\epsilon^{1/\alpha})$; hence the ratio of the risk to $T_1+T_2+T_3$ tends to $2^{1/(\alpha-1)-1}=2^\phi$.
 For the second bound, in the example for (ii) and (iii) with $j=1$ in the proof of \cref{lmm:power:moments}, $T_1=2^{\alpha-2}w$ and $T_2+T_3=\mathrm o(w)$, while the risk is $w/(\alpha-1)\,(1+\mathrm o(1))$ as $w\searrow0$.
\end{remark}

\subsubsection{Local Constant Power Fr\'echet Regression}
\begin{proof}[Proof of \cref{cor:power:localfrechet}]
 Put $m_x:=m_\alpha(\mu_x)$, $m_i:=m_\alpha(\mu_{x_i})$,
 $\popdis:=\sum_iw_i\mu_{x_i}$, and $\popfm:=m_\alpha(\popdis)$.
 Set $\mathcal B:=Lh^\beta$, $M:=u^{1/\alpha}$, $a:=\normof{w}_2\leq1$, and
 $\epsilon:=\max_{i:w_i>0}\widetilde W_{\alpha-1}(\mu_{x_i},\mu_x)$.
 For each active index with $x_i\ne x$, put $b_i:=L\rho(x_i,x)^\beta\leq\mathcal B$.
 By \eqref{eq:power:localfrechet:smooth} and monotonicity of $\Psi_{\mu_x}$,
 \begin{equation*}
 \widetilde W_{\alpha-1}(\mu_{x_i},\mu_x)
 \leq b_i\LamOf{\mu_x}{b_i}=\Psi_{\mu_x}(b_i)\leq\Psi_{\mu_x}(\mathcal B).
 \end{equation*}
 Indices with $x_i=x$ have zero transport cost, so $\epsilon\leq\Psi_{\mu_x}(\mathcal B)$.
 Thus \cref{lmm:power:cp:mixture} gives
 \begin{equation}\label{eq:power:localfrechet:bias}
 \max_{i:w_i>0}\ol{m_i}{m_x}+\ol{\popfm}{m_x}
 \leq c_\alpha\Psi_{\mu_x}^{-1}(\epsilon)\leq c_\alpha\mathcal B.
 \end{equation}
 By \cref{lmm:variances},
 $\int\ol y{m_i}^\alpha\,\dl\mu_{x_i}(y)\leq2u=2M^\alpha$.
 The minimizing property of $\popfm$, the power triangle inequality, and \eqref{eq:power:localfrechet:bias} yield
 \begin{equation*}
 \sigma_\alpha^\alpha(\popdis)
 \leq\sum_{i=1}^{n}w_i\int\ol y{m_x}^\alpha\,\dl\mu_{x_i}(y)
 \leq c_\alpha(M+\mathcal B)^\alpha,
 \qquad
 \sigma_\alpha^{\alpha-1}(\popdis)\leq c_\alpha(M+\mathcal B)^{\alpha-1},
 \end{equation*}
 the second by Lyapunov's inequality.
 For independent copies $Y_i\pr$, the definition of $u$ gives
 $\Eof{\ol{Y_i}{Y_i\pr}^\alpha}\leq2M^\alpha$ and, since $2\alpha-2\leq\alpha$,
 $\Eof{\ol{Y_i}{Y_i\pr}^{2\alpha-2}}\leq(2M^\alpha)^{(2\alpha-2)/\alpha}$.
 Set $\tau:=\alpha/(\alpha-1)\geq2$. Then
 \begin{equation*}
 \Sigma_{2\alpha-2}^{1/2}\leq c_\alpha M^{\alpha-1}a,
 \qquad
 \Sigma_\alpha\leq M^\alpha\sum_{i=1}^{n}w_i^\tau\leq M^\alpha a^\tau.
 \end{equation*}
 Inserting these bounds into \cref{thm:power:main}, and using $\tau/2\geq1$, $1/(\alpha-1)\geq1$, and $a\leq1$, gives
 \begin{align*}
 \br{\EOf{\ol{\samfm_h\!(x)}{\popfm}^\alpha}}^{1/\alpha}
 &\leq c_\alpha\br{(M+\mathcal B)^{2-\alpha}M^{\alpha-1}a
              +(M+\mathcal B)^{(2-\alpha)/2}M^{\alpha/2}a^{\tau/2}
              +Ma^{1/(\alpha-1)}}\\
 &\leq c_\alpha(M+\mathcal B)a.
 \end{align*}
 The triangle inequality in $L^\alpha$ and $\mathcal Ba\leq\mathcal B$ give the more general bound
 \begin{equation*}
 \EOf{\ol{\samfm_h\!(x)}{m_\alpha(\mu_x)}^\alpha}^{1/\alpha}
 \leq c_\alpha\br{Lh^\beta+u^{1/\alpha}\normOf{w}_2}.
 \end{equation*}
 The bound $\normof{w}_2^2\leq\const{Kmi}\const{Kma}/N_n(x,h/2)$ and the design assumption give \eqref{eq:power:localfrechet}.
 The bandwidth calculation is the same as for \cref{cor:two:localreg}, with $v=u^{2/\alpha}$, and proves \eqref{eq:power:localfrechet:optimal}.
\end{proof}

\subsubsection{Additive Noise in Hilbert Spaces}\label{app:power:euclidean}
\begin{proof}[Proof of \cref{cor:power:additive}]
 In $\mc H$, translation equivariance gives $m_\alpha[\gamma(x)+\varepsilon]=\gamma(x)+m_\alpha[\varepsilon]=\gamma(x)$; this point lies in $\mc Q$ and minimizes $q\mapsto\Eof{\normof{\gamma(x)+\varepsilon-q}^\alpha}$ over all of $\mc H$, hence over $\mc Q$, so $m_\alpha(\mu_x)=\gamma(x)$ and
 \[
 \LamOf{\mu_x}{t}=\EOf{(\normOf\varepsilon+t)^{\alpha-2}},\qquad t>0,
 \]
 independently of $x$.
 For an independent copy $\varepsilon\pr$, the power triangle inequality gives
 $\VmpofP{\alpha}{\mu_x}=\frac12\Eof{\normof{\varepsilon-\varepsilon\pr}^\alpha}\leq2^{\alpha-1}\Eof{\normof\varepsilon^\alpha}$.
 Fix distinct $x,x\pr$.
 If $\normof{\gamma(x)-\gamma(x\pr)}=0$, the two laws agree and their transport cost vanishes.
 Otherwise, couple $Y_x=\gamma(x)+\varepsilon$ and $Y_{x\pr}=\gamma(x\pr)+\varepsilon$ using the same noise, so that $\ol{Y_x}{Y_{x\pr}}=\normof{\gamma(x)-\gamma(x\pr)}$.
 Put $a:=\gamma(x)-\gamma(x\pr)$. The four points entering the diameter in \eqref{eq:power:Wtilde} are $\gamma(x)+\varepsilon$, $\gamma(x\pr)+\varepsilon$, $\gamma(x)$ and $\gamma(x\pr)$, so that diameter is at least $\max\{\normof{\varepsilon+a},\normof{\varepsilon-a}\}$, and the parallelogram law $\normof{\varepsilon+a}^2+\normof{\varepsilon-a}^2=2(\normof\varepsilon^2+\normof{a}^2)$ gives
 \begin{equation*}
  \diam\cb{Y_x,Y_{x\pr},\gamma(x),\gamma(x\pr)}
  \geq\sqrt{\normOf\varepsilon^2+\normOf{\gamma(x)-\gamma(x\pr)}^2}
  \geq2^{-1/2}\br{\normOf\varepsilon+\normOf{\gamma(x)-\gamma(x\pr)}}
  \eqfs
 \end{equation*}
 Hence
 \begin{align*}
 \widetilde W_{\alpha-1}(\mu_x,\mu_{x\pr})
 &\leq2^{(2-\alpha)/2}\normOf{\gamma(x)-\gamma(x\pr)}\,
     \EOf{(\normOf\varepsilon+\normOf{\gamma(x)-\gamma(x\pr)})^{\alpha-2}}\\
 &=2^{(2-\alpha)/2}\Psi_{\mu_x}(\normOf{\gamma(x)-\gamma(x\pr)})\\
 &\leq\Psi_{\mu_x}(2^{\phi/2}\normOf{\gamma(x)-\gamma(x\pr)}).
 \end{align*}
 The last step uses \eqref{eq:power:Psi:scaling} and $(\alpha-1)\phi=2-\alpha$.
 Since $2^{\phi/2}\normof{\gamma(x)-\gamma(x\pr)}\leq2^{\phi/2}L_\gamma\rho(x,x\pr)^\beta=b$ and both laws have the same $\Lambda_\alpha$, monotonicity proves \eqref{eq:power:localfrechet:smooth} with $L=2^{\phi/2}L_\gamma$. \Cref{cor:power:localfrechet} then gives \eqref{eq:power:additive}, the factors $2^{\phi/2}$ and $2^{(\alpha-1)/\alpha}$ being absorbed into $c_\alpha$.
\end{proof}

\begin{proof}[Verification of the examples in \cref{rem:power:smoothness}]
 For the symmetric two-point example, take $x=0$ and $x\pr=t>0$, so that the means are $0$ and $t$.
 The coupling of corresponding atoms pairs $-1$ with $t-1$ and $1$ with $1+t$; both diameters in \eqref{eq:power:Wtilde} equal $1+t$, so it gives $\widetilde W_{\alpha-1}(\mu_0,\mu_t)\leq t(1+t)^{\alpha-2}$ for every $t>0$, and $\Lamof{\mu_0}{t}=(1+t)^{\alpha-2}$.
 The two cross costs are $(2-t)^{\alpha-1}$ and $(2+t)^{\alpha-1}$.
 For $0<t\leq1/2$, each corresponding-atom cost is smaller than either cross cost; thus the corresponding-atom coupling is optimal.
 The same comparison for ordinary transport gives $W_{\alpha-1}(\mu_0,\mu_t)=t^{\alpha-1}$.
 Its ratio to $Lt\Lamof{\mu_0}{Lt}=Lt(1+Lt)^{\alpha-2}$ tends to infinity as $t\searrow0$ for every fixed $L>0$ when $\alpha<2$.

 To see that the moment-only criterion \eqref{eq:power:localfrechet:sufficient} is strictly stronger for $\alpha<2$, let
 $\mathcal L(\varepsilon)=\frac12\delta_0+\frac14\delta_{-1}+\frac14\delta_1$ and $\gamma(x)=x$.
 Then $m_\alpha[\varepsilon]=0$ and $\sigma_\alpha^{\alpha-1}(\mu_x)=1/2$.
 For small $t>0$, each source atom is cheapest to serve by its synchronous partner: the cost is $t\cdot t^{\alpha-2}=t^{\alpha-1}$ for the atom at $0$ and $t(1+t)^{\alpha-2}$ for those at $\pm1$, whereas every other destination costs at least $1-t$.
 Summing these per-atom minima against the source masses $\frac12,\frac14,\frac14$ gives a lower bound equal to the synchronous cost, so
 \begin{equation}\label{eq:power:atomexample}
  \widetilde W_{\alpha-1}(\mu_0,\mu_t)=\frac{t^{\alpha-1}}2+\frac{t(1+t)^{\alpha-2}}2=\Psi_{\mu_0}(t)\sim\frac{t^{\alpha-1}}2
  \qquad(t\searrow0)
  \eqfs
 \end{equation}
 For $\alpha<2$, the right-hand side of \eqref{eq:power:localfrechet:sufficient} with $b=Lt$ is at most $2^{\phi}Lt=\mathrm{o}(t^{\alpha-1})$ as $t\searrow0$, so the criterion fails at $\beta=1$ for every finite $L$, although \eqref{eq:power:localfrechet:smooth} holds by \cref{cor:power:additive}, and in fact with $L=1$ and with equality, by \eqref{eq:power:atomexample}.
\end{proof}

\subsubsection{Hyperbolic Location--Scale Families}\label{app:power:hyperbolic}
\begin{proof}[Verification of \cref{ex:power:hyperbolic}]
	Let $L_\gamma,L_\zeta$ be H\"older constants of $\gamma,\zeta$.
	Spherical symmetry of $Z$ makes the law in \eqref{eq:power:hyperbolic:model} independent of the choice of $I_x$ and invariant under every isometry fixing $\gamma(x)$.
	Its unique $\alpha$-Fr\'echet mean is therefore fixed by that whole isotropy group; since the group acts transitively on each sphere of $T_{\gamma(x)}\mathbb H^k$ and $\exp_{\gamma(x)}$ is a bijection onto $\mathbb H^k$, the only common fixed point is $\gamma(x)$ itself, so the mean is $\gamma(x)$.
	The exponential map preserves distance from its base point, so
	$\ol{Y_x}{\gamma(x)}=\zeta(x)\euclof Z$.
	The radial density of $\euclof Z$ is of order $r^{-\theta-1}$ at infinity; hence $\Eof{\euclof Z^\alpha}<\infty$ and $\Eof{\euclof Z^2}=\infty$.
	Moreover
	\begin{equation*}
		\sup_x\VmpOfP{\alpha}{\mu_x}
		\leq2^{\alpha-1}\zeta_+^\alpha\EOf{\euclOf Z^\alpha}<\infty
		\eqfs
	\end{equation*}

	It remains to verify the symmetric smoothness condition. We first bound the cost $W_{\alpha-1}$.
	For any $\alpha\in(1,2]$, densities $f,g$ with respect to a common measure $\lambda$, and $o\in\mc Q$, retaining their common mass and coupling the remainders gives
	\begin{equation}\label{eq:power:localfrechet:commonmass}
		W_{\alpha-1}(f\lambda,g\lambda)
		\leq\int d(y,o)^{\alpha-1}\abs{f(y)-g(y)}\,\dl\lambda(y)
		\eqfs
	\end{equation}
	Indeed, $d(y,z)^{\alpha-1}\leq d(y,o)^{\alpha-1}+d(z,o)^{\alpha-1}$, and the two remainder marginals sum to $\absof{f-g}\lambda$.

	For $r=d(y,m)$, the density of $\exp_m(\zeta IZ)$ with respect to hyperbolic volume is
	\begin{equation*}
		f_{m,\zeta}(y)
		=c_{k,\theta}\zeta^{-k}
		\br{1+\frac{r^2}{\theta \zeta^2}}^{-(k+\theta)/2}
		\br{\frac r{\sinh(r)}}^{k-1}
		\eqcm
	\end{equation*}
	with the last factor interpreted continuously at $r=0$.
	Here $c_{k,\theta}$ is the normalizing constant of the standard $k$-dimensional Student density; the last factor is the reciprocal of the Jacobian $(\sinh(r)/r)^{k-1}$ of the hyperbolic exponential map.
	Direct differentiation gives
	\begin{equation*}
		\partial_r\log f_{m,\zeta}
		=-\frac{(k+\theta)r}{\theta \zeta^2+r^2}
		+(k-1)\br{\frac1r-\coth(r)}
		\eqcm\qquad
		\partial_\zeta\log f_{m,\zeta}
		=\frac1\zeta\br{-k+\frac{(k+\theta)r^2}{\theta \zeta^2+r^2}}
		\eqfs
	\end{equation*}
	Since $0\leq\coth(r)-1/r\leq1$ for $r>0$, these derivatives satisfy
	\begin{equation*}
		\abs{\partial_r\log f_{m,\zeta}}
		\leq A_r:=\frac{k+\theta}{2\sqrt\theta\,\zeta_-}+k-1
		\eqcm\qquad
		\abs{\partial_\zeta\log f_{m,\zeta}}
		\leq A_\zeta:=\frac{\max(k,\theta)}{\zeta_-}
		\eqfs
	\end{equation*}
 Fix $x,x\pr\in[0,1]$ and put $\ell:=\absof{x-x\pr}\leq1$.
 Join $\gamma(x)$ to $\gamma(x\pr)$ by a constant-speed geodesic $m_t$, $t\in[0,1]$, and set $\zeta_t=(1-t)\zeta(x)+t\zeta(x\pr)$ and $f_t=f_{m_t,\zeta_t}$.
 Since $t\mapsto d(y,m_t)$ is Lipschitz with constant $d(\gamma(x),\gamma(x\pr))\leq L_\gamma\ell^\beta$, $t\mapsto\zeta_t$ is Lipschitz with constant $\absof{\zeta(x)-\zeta(x\pr)}\leq L_\zeta\ell^\beta$, and $\log f_{m,\zeta}$ is Lipschitz in $(r,\zeta)$ by the bounds above, $t\mapsto f_t(y)$ is absolutely continuous for every $y$, and for almost every $t$,
 \[
 \abs{\partial_t f_t(y)}\leq B\ell^\beta f_t(y),
 \qquad B:=A_rL_\gamma+A_\zeta L_\zeta.
 \]
 Apply \eqref{eq:power:localfrechet:commonmass} with $o=\gamma(x)$, write $f_1-f_0=\int_0^1\partial_tf_t\,\dl t$, and use Tonelli's theorem:
 \begin{align*}
 W_{\alpha-1}(\mu_x,\mu_{x\pr})
 &\leq B\ell^\beta\int_0^1\int d(y,\gamma(x))^{\alpha-1}f_t(y)\,\dl\operatorname{vol}(y)\,\dl t\\
 &\leq B\ell^\beta\br{\zeta_+^{\alpha-1}\EOf{\euclOf{Z}^{\alpha-1}}+(L_\gamma\ell^\beta)^{\alpha-1}}
 \leq C\ell^\beta.
 \end{align*}
 Put $\bar\sigma_{\sup}:=\sup_x\sigma_\alpha^{\alpha-1}(\mu_x)\leq\zeta_+^{\alpha-1}\Eof{\euclof Z^{\alpha-1}}<\infty$, which bounds the pairwise maximum $\bar\sigma$ of \cref{lmm:power:sufficient} for every pair $\mu_x,\mu_{x\pr}$.
 Since $\widetilde W_{\alpha-1}\leq W_{\alpha-1}\leq C\ell^\beta$, the criterion \eqref{eq:power:localfrechet:sufficient} holds with $b=L\ell^\beta$ for every $L\geq\max\{C\bar\sigma_{\sup}^\phi,C^{1/(\alpha-1)}\}$, because $\ell\leq1$.
 Thus \cref{lmm:power:sufficient}\,\ref{lmm:power:sufficient:itm:tworegime} proves \eqref{eq:power:localfrechet:smooth} with the H\"older constant $2^{\phi/(\alpha-1)}L$.
 This verifies the claimed smoothness condition for arbitrary H\"older location and scale functions.
 Finally, the curves $\gamma=\exp_o\circ\gamma_0$ suggested in \cref{ex:power:hyperbolic} are $\beta$-H\"older whenever $\gamma_0$ is $\beta$-H\"older, and hence bounded, on $[0,1]$.
 By \eqref{eq:power:hyperbolic:exp}, $\exp_o$ is smooth as a map into $\R^{k+1}$, since $\cosh(\normof v)$ and $\sinh(\normof v)/\normof v$ are power series in $\normof v^2$; hence it is Lipschitz on bounded subsets of $T_o\mathbb H^k$ for the Euclidean distance of $\R^{k+1}$.
 Moreover, $d(p,q)\leq\euclof{p-q}$ for all $p,q\in\mathbb H^k$: from $\mink pp=\mink qq=-1$ we get $-\mink pq=1+\frac12\mink{p-q}{p-q}\leq1+\frac12\euclof{p-q}^2$, and $\operatorname{arcosh}(1+t)\leq\sqrt{2t}$ for $t\geq0$ because $\cosh(\sqrt{2t})\geq1+t$.
\end{proof}
\subsubsection{Dirichlet-Process Power Fr\'echet Means}\label{app:power:dirichlet}
\begin{proof}[Proof of \cref{cor:power:dirichlet}]
 Abbreviate $\tau:=\alpha/(\alpha-1)=2+\phi\geq2$ throughout this proof, so that $H_\tau=H_{2+\phi}$.
 For $V\sim\operatorname{Beta}(1,a)$ and $s\in\Rpp$, beta integration gives
 \[
 \EOf{V^s}=\frac{\Gamma(s+1)\Gamma(a+1)}{\Gamma(a+s+1)},
 \qquad \EOf{(1-V)^s}=\frac a{a+s}.
 \]
 Independence of the stick breaks and summation of a geometric series prove \eqref{eq:power:dirichlet:weights}.
 For any $o\in\mc Q$, Tonelli's theorem gives
 $\Eof{\int\ol yo^\alpha\,\dl P(y)}=\int\ol yo^\alpha\,\dl\mu(y)<\infty$.
 Thus $P\in\mc P^\alpha(\mc Q)$ almost surely, and its power Fr\'echet mean exists.

 Conditionally on the weights, the atoms in \eqref{eq:dirichlet:stickbreaking} are iid with law $\mu$ and the population mixture is $\mu$.
 Write $S_t:=\sum_iW_i^t$ and $v_t:=\VmpofP{t}{\mu}$.
 Then $\Sigma_{2\alpha-2}=v_{2\alpha-2}S_2$ and $\Sigma_\alpha=v_\alpha S_\tau$.
 Applying \cref{thm:power:main} conditionally gives, with $\sigma_{\alpha-1}:=\sigma_\alpha^{\alpha-1}(\mu)$ and the constant $c_\alpha$ of that theorem,
 \[
 \br{\EOf{\ol{m_\alpha(P)}{m_\alpha(\mu)}^\alpha\,\middle|\,W}}^{1/\alpha}
 \leq c_\alpha\br{
 \sigma_{\alpha-1}^\phi v_{2\alpha-2}^{1/2}S_2^{1/2}
 +\sigma_{\alpha-1}^{\phi/2}v_\alpha^{1/2}S_\tau^{1/2}
 +v_\alpha^{1/\alpha}S_\tau^{1/\alpha}}.
 \]
 Take the $L^\alpha$ norm over the weights on both sides and use Minkowski's inequality.
 Since $\alpha/2\leq1$, Jensen's inequality yields
 $\normof{S_t^{1/2}}_{L^\alpha}\leq(\Eof{S_t})^{1/2}$,
 while $\normof{S_\tau^{1/\alpha}}_{L^\alpha}=(\Eof{S_\tau})^{1/\alpha}$.
 Substitution of $\Eof{S_2}=H_2(a)$ and $\Eof{S_\tau}=H_\tau(a)$ proves \eqref{eq:power:dirichlet:prior}.
 The conjugacy statement in \cref{ssec:dirichlet} and $\tilde\mu_n\in\mc P^\alpha(\mc Q)$ give \eqref{eq:power:dirichlet:posterior} by the same argument.

 To prove the simpler bound, first observe that
 \begin{equation}\label{eq:power:dirichlet:Hbound}
 H_\tau(b)\leq\Gamma(\tau)(b+1)^{1-\tau},\qquad b>0,
 \end{equation}
 because a gamma random variable $Z$ with shape $b+1$ and unit scale satisfies
 $\Gamma(b+\tau)/\Gamma(b+1)=\Eof{Z^{\tau-1}}\geq(\Eof Z)^{\tau-1}=(b+1)^{\tau-1}$ by Jensen's inequality.
 Put $\mathcal M:=\VmpofP{\alpha}{\nu}$.
 Lyapunov's inequality and \cref{lmm:variances} imply
 $\sigma_\alpha^{\alpha-1}(\nu)\leq\sigma_\alpha^\alpha(\nu)^{(\alpha-1)/\alpha}=\VfpofP{\alpha}{\nu}^{(\alpha-1)/\alpha}\leq(2\mathcal M)^{(\alpha-1)/\alpha}$,
 and Lyapunov's inequality gives
 $\VmpofP{2\alpha-2}{\nu}\leq c_\alpha\mathcal M^{(2\alpha-2)/\alpha}$.
 Consequently each term of \eqref{eq:power:dirichlet:R} is bounded by $c_\alpha\mathcal M^{1/\alpha}$ times its weight factor.
 Since $\tau-1=1/(\alpha-1)$, $1/(2(\alpha-1))\geq1/2$, and $1/(\alpha(\alpha-1))\geq1/2$, \eqref{eq:power:dirichlet:Hbound} gives
 \begin{equation}\label{eq:power:dirichlet:coarse}
 \mathcal R_\alpha(b,\nu)
 \leq c_\alpha\frac{\VmpofP{\alpha}{\nu}^{1/\alpha}}{\sqrt{b+1}}.
 \end{equation}
 The case $\mathcal M=0$ is immediate, including at $\alpha=2$ under the exponent-zero convention.
 Apply this with $b=a+n$ and $\nu=\tilde\mu_n$.
 The computation leading to \eqref{eq:dirichlet:bayesrisk} applies verbatim with $d^\alpha$ in place of $d^2$ and gives $\Eof{\VmpofP{\alpha}{\tilde\mu_n}}=\frac{a(a+n+1)}{(a+1)(a+n)}\VmpofP{\alpha}{\mu}$.
 Raise the conditional bound to the $\alpha$-th power and take expectations to obtain \eqref{eq:power:dirichlet:bayes}.
 The prior case follows directly from \eqref{eq:power:dirichlet:prior} and \eqref{eq:power:dirichlet:coarse}.
\end{proof}

\subsection{Details Supporting the Regression Comparisons}\label{app:comparison}
The deductions below explain the specializations used in \cref{ssec:localfrechet,ssec:pos:localfrechet,ssec:power:localfrechet}.

\paragraph{From density smoothness to transport smoothness.}
The common-mass coupling estimate \eqref{eq:power:localfrechet:commonmass} at $\alpha=2$, applied to $\mu=f\lambda$, $\nu=g\lambda$ and a fixed reference point $o$, gives
\[
 W_1(f\lambda,g\lambda)\leq\int\ol yo\,\abs{f-g}\,\dl\lambda.
\]
Consequently, $\absof{f_x-f_{x\pr}}\leq L\absof{x-x\pr}^{\beta}$ with $L\in L^2(\lambda)$ implies
\begin{equation*}
 W_1(\mu_x,\mu_{x\pr})\leq\abs{x-x\pr}^{\beta}
 \br{\int\ol yo^2\,\dl\lambda}^{1/2}\normOf{L}_{L^2(\lambda)},
\end{equation*}
which is the smoothness condition of \cref{cor:two:localreg,cor:pos:localfrechet}.
Conversely, a common dominating probability measure for $\{\delta_{\gamma(x)}:x\in[0,1]\}$ with $\gamma$ injective would have to assign positive mass to uncountably many distinct points, which is impossible. The transport condition therefore does not require a common-density representation.

\paragraph{From the local constant density condition of Petersen and M\"uller to that of Sch\"otz.}
For the local constant version of the arguments of \cite{PetersenMueller2019Regression-AOS}, let the covariate $X$ have a Lipschitz density $f_X$ on $\R$, let $J$ be an interval with $f_X\geq f_->0$ on $J$, and let the conditional densities $g_y$ of $X$ given $Y=y$ be Lipschitz with a constant $L_g$ not depending on $y$.
These are the first-order density assumptions used here; the published local linear condition (L1) uses twice continuously differentiable densities and a uniform bound on $g_y^{\prime\prime}$.
As in (L1), take the conditional laws to be continuous in $x$ on open sets, so that their Bayes representation holds throughout $J$.
A Lipschitz probability density on $\R$ is bounded by the square root of its Lipschitz constant, so $g_y\leq\sqrt{L_g}$.
With $\nu$ the law of $Y$, Bayes' formula gives $f(y\mid x)=g_y(x)/f_X(x)$ for $x\in J$, and for $x,x\pr\in J$,
\begin{equation*}
 \begin{split}
 \abs{f(y\mid x)-f(y\mid x\pr)}
 &\leq\frac{\abs{g_y(x)-g_y(x\pr)}}{f_X(x)}+g_y(x\pr)\abs{\frac1{f_X(x)}-\frac1{f_X(x\pr)}}\\
 &\leq\br{\frac{L_g}{f_-}+\frac{\sqrt{L_g}\operatorname{Lip}(f_X)}{f_-^2}}\abs{x-x\pr}\eqfs
 \end{split}
\end{equation*}
Thus the density smoothness condition of \cite{Schoetz2022Regression-EJS} holds on $J$ with the constant function $L(y)=L_g/f_-+\sqrt{L_g}\operatorname{Lip}(f_X)/f_-^2$, and one may take $C_{\mathrm{SmD}}=\max\{1,\normof{L}_{L^2(\nu)}\}$.
The second moment $\int\ol y{m_2(\mu_{x_0})}^2\,\dl\nu(y)$ of the reference measure, for a fixed $x_0\in J$, is finite whenever $Y$ has a finite second moment, in particular for the bounded response spaces of \cite{PetersenMueller2019Regression-AOS}.

\begin{proof}[Verification of \cref{ex:two:phase}]
Write $\Gamma:=\gamma(\R)$ and let $\lambda$ be the image of the Lebesgue measure under $\gamma$, the arc-length measure on $\Gamma$.

\emph{(i) Transport smoothness.}
The coupling $(\gamma(x+\tau U),\gamma(x\pr+\tau U))$ of $\mu_x$ and $\mu_{x\pr}$ has cost $\Eof{\ol{\gamma(x+\tau U)}{\gamma(x\pr+\tau U)}}=\absof{x-x\pr}$, so $W_1(\mu_x,\mu_{x\pr})\leq\absof{x-x\pr}$.
For the reverse inequality and the formula for the mean, note that $\Gamma$ is closed and convex in $\mc Q$: it is complete as an isometric image of $\R$, and the unique geodesic between $\gamma(s)$ and $\gamma(t)$ is $\gamma|_{[s,t]}$.
The metric projection $P\colon\mc Q\to\Gamma$ is nonexpansive with $P|_\Gamma=\id$ \cite[Proposition II.2.4]{BridsonHaefliger1999NPC-Springer}, so $\ol{P(q)}{y}\leq\ol qy$ for all $y\in\Gamma$ and $\Eof{\ol{Y_x}{P(q)}^2}\leq\Eof{\ol{Y_x}q^2}$ for all $q\in\mc Q$.
Hence the unique minimizer $m_2(\mu_x)$ \cite[Proposition 4.3]{Sturm2003NPC-ContempMath} lies in $\Gamma$; as $\gamma$ is an isometry of $\R$ onto $\Gamma$, $\Eof{\ol{Y_x}{\gamma(t)}^2}=\Eof{(x+\tau U-t)^2}$ is minimized exactly at $t=x$, i.e., $m_2(\mu_x)=\gamma(x)$.
Now \eqref{eq:cat0:2fm:CP} gives $\absof{x-x\pr}=\ol{\gamma(x)}{\gamma(x\pr)}\leq W_1(\mu_x,\mu_{x\pr})$, which proves the first identity in \eqref{eq:two:phase:w1}.
For an independent copy $U\pr$ of $U$, $\VmofP{\mu_x}=\frac12\Eof{\ol{\gamma(x+\tau U)}{\gamma(x+\tau U\pr)}^2}=\frac{\tau^2}2\Eof{(U-U\pr)^2}=\tau^2\Eof{U^2}$.
For the design $x_i=i/n$, $x\in[0,1]$, and $4/n\leq h\leq1$, one of the half-windows $[x,x+h/2]$ and $[x-h/2,x]$ lies in $[0,1]$; it has length $h/2$ and therefore contains at least $\lfloor nh/2\rfloor\geq nh/2-1\geq nh/4$ of the points $i/n$, $i\in\intset n$.
Thus $N_n(x,h/2)\geq nh/4$, which is \eqref{eq:localfrechet:design} with $D=1$ and $\const{dns}=4$, and inserting $L=1$, $\beta=1$, $v=\tau^2\Eof{U^2}$, $D=1$, and $\const{dns}=4$ into \eqref{eq:localfrechet:designbound} yields \eqref{eq:two:phase:bound}.

\emph{(ii) Density smoothness, uniform noise.}
Let $\tau>0$ and $U\sim\dunif{[-1,1]}$, and put $A_x:=\gamma([x-\tau,x+\tau])$, so that $\mu_x=(2\tau)^{-1}\lambda|_{A_x}$.
Let $\nu$ be a probability measure with $\mu_x\ll\nu$ for all $x\in[0,1]$.
Every $s\in[-\tau,1+\tau]$ admits a rational $x\in[0,1]$ with $\absof{s-x}\leq\tau$, so $A:=\gamma([-\tau,1+\tau])$ is the union of the sets $A_x$ over $x\in T:=\Q\cap[0,1]$, and $\lambda|_A\ll\nu$; let $g$ be a density of $\lambda|_A$ with respect to $\nu$.
Then $f_x:=(2\tau)^{-1}g\ind_{A_x}$ is a version of $\dl\mu_x/\dl\nu$, and every other version agrees with $f_x$ outside a $\nu$-null set.
Suppose $\tilde f(\cdot\mid x)$, $x\in[0,1]$, were versions such that $x\mapsto\tilde f(y\mid x)$ is continuous for all $y$ outside a $\nu$-null set $N_0$.
Then $N:=N_0\cup\bigcup_{x\in T}\{\tilde f(\cdot\mid x)\neq f_x\}$ is $\nu$-null.
The set $B:=\{\gamma(s):s\in(-\tau,1-\tau),\,g(\gamma(s))>0\}$ has positive $\nu$-measure, because $1=\lambda(\gamma((-\tau,1-\tau)))=\int_{\gamma((-\tau,1-\tau))}g\,\dl\nu$.
Pick $y=\gamma(s)\in B\setminus N$.
For $x\in T$ we have $\tilde f(y\mid x)=(2\tau)^{-1}g(y)\ind\{\absof{s-x}\leq\tau\}$, which equals $(2\tau)^{-1}g(y)>0$ for rational $x\nearrow s+\tau$ and $0$ for rational $x\searrow s+\tau$; since $s+\tau\in(0,1)$, this contradicts the continuity of $x\mapsto\tilde f(y\mid x)$.
Hence no such versions exist, and the assumption of \cite{Schoetz2022Regression-EJS} fails for every $\beta>0$ and every $\nu$.
For random covariates with a positive Lipschitz density $f_X$ on the interior of the covariate interval, the conditional density of $X$ given $Y=\gamma(s)$ is proportional to $x\mapsto f_X(x)\ind\{\absof{s-x}\leq\tau\}$.
Whenever $s+\tau\in(0,1)$, it has a jump there, so the local constant Lipschitz condition on $g_y$ fails as well.

\emph{(ii) Density smoothness, triangular noise.}
Now let $U$ have the density $\psi(u):=(1-\absof u)_+$, so that $\Eof{U^2}=1/6$, and let $\nu:=(1+2\tau)^{-1}\lambda|_A$ with $A=\gamma([-\tau,1+\tau])$ as before.
For $y=\gamma(s)\in A$, the density of $\mu_x$ with respect to $\nu$ is $f(y\mid x)=(1+2\tau)\tau^{-1}\psi((s-x)/\tau)$, and since $\psi$ is $1$-Lipschitz, $\absof{f(y\mid x)-f(y\mid x\pr)}\leq(1+2\tau)\tau^{-2}\absof{x-x\pr}$.
Thus the density smoothness assumption of \cite{Schoetz2022Regression-EJS} holds with $\beta=1$, the constant function $L\equiv(1+2\tau)/\tau^2$, $C_{\mathrm{SmD}}=\max\{1,(1+2\tau)/\tau^2\}$, and the continuous map $x\mapsto m_2(\mu_x)=\gamma(x)$.
For its remaining constants, take $C_{\mathrm{Len}}=1$, which bounds $\sup_{x,x\pr\in[0,1]}\ol{m_2(\mu_x)}{m_2(\mu_{x\pr})}$, and
\[
 C_{\mathrm{Int}}=\max\left\{1,\frac{1+\tau+\tau^2}{3}\right\},
 \qquad
 \int\ol y{\gamma(0)}^2\,\dl\nu(y)
 =\frac{(1+\tau)^3+\tau^3}{3(1+2\tau)}
 =\frac{1+\tau+\tau^2}{3}.
\]

For the lower bound with $\beta=1$, let $\nu$ be any admissible reference measure, with Lipschitz constants $L(y)$.
As in the uniform case, $\lambda|_A\ll\nu$, since $\mu_x$ has a positive density with respect to $\lambda$ on $\gamma((x-\tau,x+\tau))$; let $g$ be a density of $\lambda|_A$ with respect to $\nu$.
For each $x\in T$, $f(\gamma(s)\mid x)=\tau^{-1}\psi((s-x)/\tau)\,g(\gamma(s))$ for $\nu$-almost every $\gamma(s)\in A$; as both sides are continuous in $x$ for $\nu$-almost every $y$, this holds for all $x\in[0,1]$ outside a single $\nu$-null set.
For $s\in(-\tau,1+\tau)$, the map $x\mapsto\tau^{-1}\psi((s-x)/\tau)$ has slope $\pm\tau^{-2}$ on a nondegenerate subinterval of $[0,1]$, so $L(y)\geq\tau^{-2}g(y)$ for $\nu$-almost every $y\in A$, and
\[
 C_{\mathrm{SmD}}\geq\normof{L}_{L^2(\nu)}\geq\normof{L}_{L^1(\nu)}\geq\tau^{-2}\int_Ag\,\dl\nu=\tau^{-2}\lambda(A)=\frac{1+2\tau}{\tau^2}\eqfs
\]
For $\beta\in(0,1]$ and $\tau\leq\frac12$, let $\nu$ and $L$ be admissible with exponent $\beta$ and let $g$ be as above; as for $\beta=1$, $f(\gamma(s)\mid x)=\tau^{-1}\psi((s-x)/\tau)\,g(\gamma(s))$ for all $x\in[0,1]$ and $\nu$-almost every $\gamma(s)\in A$.
For $s\in[0,1]$, one of the intervals $[s-\tau,s]$ and $[s,s+\tau]$ lies in $[0,1]$, and between its endpoints $x\mapsto\tau^{-1}\psi((s-x)/\tau)$ changes by $\tau^{-1}$; hence $L(y)\geq\tau^{-1-\beta}g(y)$ for $\nu$-almost every $y\in\gamma([0,1])$, and
\[
 C_{\mathrm{SmD}}\geq\normof{L}_{L^1(\nu)}\geq\tau^{-1-\beta}\lambda(\gamma([0,1]))=\tau^{-1-\beta}\eqfs
\]
Conversely, $\absof{\psi(u)-\psi(u\pr)}\leq\min\{1,\absof{u-u\pr}\}\leq\absof{u-u\pr}^\beta$, so the normalized arc-length measure on $A$ is admissible with $L\equiv(1+2\tau)\tau^{-1-\beta}\geq1$.
Finally, for every admissible choice, the transport constant $C_{\mathrm{Int}}^{1/2}C_{\mathrm{SmD}}$ that the implication at the beginning of this subsection extracts from the density condition is at least $(1+2\tau)/\tau^2$, since $C_{\mathrm{Int}}\geq1$, against the true value $L=1$ in \eqref{eq:two:phase:w1}.
\end{proof}

\paragraph{Transport versus the objective condition.}
If $\mc Q$ is bounded, \eqref{eq:two:localreg:smooth} implies \eqref{eq:two:localreg:objective}: for $q\in\mc Q$ and any coupling $\pi$ of $\mu_x$ and $\mu_{x\pr}$, the identity $\absof{a-b}(a+b)=\absof{a^2-b^2}$ and the reverse triangle inequality give
\begin{equation*}
 \abs{F_x(q)-F_{x\pr}(q)}
 \leq\int\abs{\ol yq-\ol{y\pr}q}\br{\ol yq+\ol{y\pr}q}\,\dl\pi(y,y\pr)
 \leq2\diam(\mc Q)\int\ol y{y\pr}\,\dl\pi(y,y\pr)
 \eqcm
\end{equation*}
and taking the infimum over $\pi$ yields \eqref{eq:two:localreg:objective} with constant $2\diam(\mc Q)L$.
The supremum over all $q\in\mc Q$ in \eqref{eq:two:localreg:objective} makes boundedness necessary: for $\mu_x:=\delta_x$ on $\R$, \eqref{eq:two:localreg:smooth} holds with $L=\beta=1$, but $\sup_q\absof{(x-q)^2-(x\pr-q)^2}=\sup_q\absof{x-x\pr}\,\absof{x+x\pr-2q}=\infty$ for $x\neq x\pr$; \cref{ex:two:phase} with a full geodesic line $\gamma$ fails in the same way.
The implication is strict: on $\mc Q=[-1,1]$, the laws $\mu_x:=\frac12(\delta_{-\sqrt x}+\delta_{\sqrt x})$, $x\in[0,1]$, have $F_x(q)=q^2+x$, so \eqref{eq:two:localreg:objective} holds with $\beta=1$ and $L=1$, whereas $W_1(\mu_x,\mu_{x\pr})=\absof{\sqrt x-\sqrt{x\pr}}$ makes \eqref{eq:two:localreg:smooth} fail at $\beta=1$ and hold only with $\beta=\frac12$.

\paragraph{A hyperbolic specialization of the earlier entropy condition.}
For $\mathbb H^k$ of curvature $-1$, comparing the volumes of disjoint balls of radius $\epsilon/2$ inside a ball of radius $\diam(B)+\epsilon/2$, with the polar volume formula $V_k(r)=\omega_k\int_0^r\sinh(t)^{k-1}\,\dl t$, gives $\log N(B,d,\epsilon)\leq c_k(\diam(B)+1+\log(\diam(B)/\epsilon))$ for every bounded subset $B$ with $\diam(B)>0$ and $0<\epsilon\leq\diam(B)$.
For Talagrand's functional $\gamma_2$ \cite[Definition 2.7.3]{Talagrand2021StochasticProcesses-Springer}, the entropy-integral bound used in \cite[(6), p.~4686]{Schoetz2022Regression-EJS} then gives $\gamma_2(B,d)\leq c_k\max(\diam(B),\diam(B)^{3/2})$, so the entropy exponent $3/2$ is admissible there, and with this estimate the moment condition of that theorem requires a moment of order greater than $4$ on full hyperbolic space, whereas \cref{cor:two:localreg} uses only the second moment.
The exponent $3/2$ cannot be lowered: for a ball $B$ of large radius $r$ in $\mathbb H^k$, $k\geq2$, the same volume comparison gives $\log N(B,d,r/4)\geq c_kr$, and $\gamma_2(B,d)\geq c\,\epsilon\sqrt{\log N(B,d,\epsilon)}$ for every $\epsilon>0$ and an absolute constant $c>0$ \cite[Exercise 2.7.8(b)]{Talagrand2021StochasticProcesses-Springer} then gives $\gamma_2(B,d)\geq c_kr^{3/2}$. Hence, on full hyperbolic space, that framework requires a moment of order greater than $4$ whatever entropy estimate is used.

\paragraph{The power target and the squared-loss frameworks.}
For $1<\alpha<2$, a comparison must concern estimators of $m_\alpha(\mu_x)$.
The target can be encoded in the earlier squared-loss frameworks through the metric
\begin{equation}\label{eq:comparison:snowflake}
 d_\alpha(q,p):=d(q,p)^{\alpha/2},
\end{equation}
for which $d_\alpha^2=d^\alpha$, so that a second moment in $d_\alpha$ is an $\alpha$-moment in $d$. Two hypotheses of those frameworks are then lost.
First, $d_\alpha$ is not geodesic on any nontrivial geodesic space: a midpoint $z$ of $q\neq p$ for $d_\alpha$ would satisfy $d(q,z)=d(z,p)=2^{-2/\alpha}d(q,p)$, contradicting the triangle inequality for $d$ since $2^{1-2/\alpha}<1$. So the Hadamard theorem of \cite{Schoetz2022Regression-EJS} does not apply to $(\mc Q,d_\alpha)$.
Second, the bounded-space Theorem 1 and the general Theorem 7 of \cite{Schoetz2022Regression-EJS} require a quadratic variance inequality in $d_\alpha$, that is, excess risk at least a positive multiple of $d(q,m)^\alpha$.
This fails already for the equally weighted law on $\{-1,1\}$, even with the response space restricted to $[-2,2]$:
\begin{equation*}
 \begin{aligned}
 \frac{\abs{1-t}^\alpha+\abs{1+t}^\alpha}{2}-1
 &=\frac{\alpha(\alpha-1)}2t^2+\landauO(t^4)\\
 &=\mathrm{o}(\abs t^\alpha),\qquad t\longrightarrow0.
 \end{aligned}
\end{equation*}
It also fails for the hyperbolic family \eqref{eq:power:hyperbolic:model}: with $m=\gamma(x)$ and a unit-speed geodesic $q_t$ through $m$, the radial and transverse Hessian eigenvalues of $r^\alpha$ are $\alpha(\alpha-1)r^{\alpha-2}$ and $\alpha r^{\alpha-1}\coth(r)$, and averaging over directions gives
\begin{equation*}
 \begin{aligned}
 \EOf{\ol{Y_x}{q_t}^\alpha-\ol{Y_x}m^\alpha}
 &=\frac{\alpha}{2k}\Bigl((\alpha-1)\EOf{\ol{Y_x}m^{\alpha-2}}\\
 &\qquad +(k-1)\EOf{\ol{Y_x}m^{\alpha-1}\coth(\ol{Y_x}m)}\Bigr)t^2
       +\mathrm{o}(t^2),\qquad t\to0.
 \end{aligned}
\end{equation*}
Both expectations are finite, because the radial density is of order $r^{k-1}$ at zero and $r^{-\theta-1}$ at infinity, with $k+\alpha-2>0$ and $\theta>\alpha$.
For the expansion, apply Taylor's theorem with integral remainder to $s\mapsto\ol y{q_s}^\alpha$, which is twice continuously differentiable on $[-\absof t,\absof t]$ for every $y$ off the geodesic segment $q([-\absof t,\absof t])$, a null set for $\mu_x$.
Its second derivative is nonnegative, so Tonelli's theorem allows taking the expectation of the remainder, and $s\mapsto\Eof{\partial_s^2\ol{Y_x}{q_s}^\alpha}$ is continuous at $s=0$, because $Y_x$ has a density that is bounded near $m$ and $r\mapsto r^{\alpha-2}$ is integrable near zero against $r^{k-1}\,\dl r$; the first-order term vanishes because $m$ is the mean, and isotropy turns the Hessian into the direction average above.
The coefficient is positive and finite, so the excess risk is $\mathrm{o}(\absof t^\alpha)$ as $t\to0$ for this full-support, infinite-variance family as well.
In addition, the unbounded-space moment requirement of \cite{Schoetz2022Regression-EJS}, read in $d_\alpha$, is a moment strictly above $\alpha$.
The local constant criterion (4.1) of \cite{PetersenMueller2019Regression-AOS} becomes the present power criterion under \eqref{eq:comparison:snowflake}, but their local rate theorems are local linear, and their entropy and growth conditions would have to be verified in $d_\alpha$.
\section{Analytic Tools}\label{sec:powfun}
This appendix collects standard scalar power inequalities (cf.\ \cite[Lemmas S1.1--S1.3]{Schoetz2026Transformed-arxiv}), the loss conversion of \cite[Lemma S3.1(i)]{Schoetz2026Transformed-arxiv}, and Rosenthal's inequality. We include proofs to make the analytic inputs self-contained.

\textbf{Notation.}
The following scalar notation is local to this appendix; the conventions of \cref{sec:prelim} apply.
\begin{center}
\begin{tabular}{ll}
\toprule
$x,y,x_i$ & nonnegative scalar arguments\\
$p$, $r$ & power exponents; \cref{lmm:power:subadd:gen,lmm:power:ana:diff,lmm:power:holder}\\
$\alpha\in(1,2]$, $\phi=(2-\alpha)/(\alpha-1)$ & mean order and exponent; \cref{prp:app:conversion}\\
$b>0$, $\psi(x)=x^2(b+x^{\alpha-1})^{-\phi}$ & scale and loss in \cref{prp:app:conversion}\\
$s$, $c_s$ & moment order and constant in \cref{lmm:rosenthal}\\
\bottomrule
\end{tabular}
\end{center}

\begin{lemma}[Powers of sums]\label{lmm:power:subadd:gen}
    Let $n\in\N$, $x_1,\dots,x_n \in\Rp$ and $p\in\Rp$, with the convention $0^0:=1$. Then
    \begin{equation*}
        \br{\sum_{i=1}^n x_i}^{\!p} \leq n^{\max(0, p-1)}\sum_{i=1}^n x_i^p
        \eqfs
    \end{equation*}
\end{lemma}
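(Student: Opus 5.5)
The plan is to split by the size of $p$ and reduce each case to an elementary scalar inequality. Throughout, we may assume that not all $x_i$ vanish. If they all vanish, both sides equal $0$ for $p>0$. For $p=0$, under the convention $0^0:=1$, both sides equal $1$ and $n$ respectively, so the inequality holds.

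\emph{Case $p\geq1$.} Here the claimed constant is $n^{p-1}$, and the bound is Jensen's inequality for the convex function $t\mapsto t^p$ on $\Rp$ with uniform weights $1/n$:
\[
\br{\frac1n\sum_{i=1}^n x_i}^{p}\leq\frac1n\sum_{i=1}^n x_i^p .
\]
Multiplying both sides by $n^p$ gives exactly $(\sum_ix_i)^p\leq n^{p-1}\sum_ix_i^p$.

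\emph{Case $0\leq p<1$.} Here the claimed constant is $1$, so we need subadditivity of $t\mapsto t^p$. For $p=0$ the left side is $1$. The right side counts $1$ for each term, so it is at least $1$, since at least one $x_i$ is positive, and the convention $0^0=1$ only adds further nonnegative terms.

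For $0<p<1$, set $S:=\sum_ix_i>0$ and $u_i:=x_i/S\in[0,1]$. Then $u_i^p\geq u_i$ for each $i$, hence
\[
\sum_{i=1}^n u_i^p\geq\sum_{i=1}^n u_i=1 .
\]
Multiplying by $S^p$ gives $\sum_ix_i^p\geq S^p$, which is the claim.

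No step is a real obstacle. The only care needed is the degenerate bookkeeping at $p=0$ and at zero arguments, and the convention fixed in the statement resolves it.
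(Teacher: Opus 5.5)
Your proof is correct and matches the paper's route: Jensen's inequality with equal weights for $p\geq1$, subadditivity of $t\mapsto t^p$ for $0<p<1$, and the convention $0^0=1$ for $p=0$, including the all-zero case. Your normalization argument $u_i^p\geq u_i$ just spells out the subadditivity that the paper attributes to concavity.
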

\begin{proof}
 For $0<p\leq1$ this is concavity and subadditivity; for $p\geq1$ it is Jensen's inequality with equal weights; and $p=0$ follows from the convention.
\end{proof}
\Needspace{9\baselineskip}
\begin{lemma}[Differences of powers]\label{lmm:power:ana:diff}
    Let $x_1, x_2 \in\Rp$ and $p\in(1,2]$. Then
    \begin{equation*}
        \abs{x_1^p - x_2^p}
        \leq
        \abs{x_1 - x_2}\br{x_1^{p-1} + x_2^{p-1}}
        \eqfs
    \end{equation*}
\end{lemma}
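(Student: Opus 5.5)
The plan is a reduction by symmetry to $x_1\geq x_2$, followed by the mean value theorem and a comparison of the intermediate derivative value with the two endpoint powers. If $x_1=x_2$ both sides vanish, and the statement is symmetric in $x_1,x_2$, so we may assume $x_1>x_2\geq0$. The map $t\mapsto t^p$ is continuous on $[x_2,x_1]$ and differentiable on $(x_2,x_1)$ with derivative $pt^{p-1}$. The mean value theorem therefore gives some $\xi\in(x_2,x_1)$ with
\begin{equation*}
 x_1^p-x_2^p=p\,\xi^{p-1}(x_1-x_2).
\end{equation*}
Since $p-1\in(0,1]$, the power $\xi^{p-1}$ is at most $x_1^{p-1}$. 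This yields only the constant $p\leq2$ times $x_1^{p-1}$, which is not the claimed form when $p>1$. A crude mean value argument is therefore insufficient, and the sharper bound needs a direct comparison.

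For the direct comparison, normalize by homogeneity (both sides are homogeneous of degree $p$). If $x_2=0$ the inequality reads $x_1^p\leq x_1\cdot x_1^{p-1}$, which is an equality. Otherwise, set $u:=x_2/x_1\in(0,1)$; the claim is then equivalent to
\begin{equation*}
 1-u^p\leq(1-u)(1+u^{p-1}).
\end{equation*}
Expanding the right-hand side gives $1-u+u^{p-1}-u^p$, so the claim reduces to $u\leq u^{p-1}$. This holds for $u\in(0,1)$ because $p-1\leq1$. So the inequality follows from a one-line expansion after normalization, and the mean value theorem is not needed at all.

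I expect no real obstacle here. The only points requiring care are the degenerate cases $x_2=0$ and $x_1=x_2$, which the case distinction handles, and the observation that $p\leq2$ is exactly what makes $u^{p-1}\geq u$. The restriction $p>1$ is used only to make sense of $x^{p-1}$ at $x=0$ without a convention; for $p=1$ the inequality would also hold trivially.
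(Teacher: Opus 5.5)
Your proof is correct and is essentially the paper's argument: the paper also assumes $x_1>x_2$, expands $(x_1-x_2)(x_1^{p-1}+x_2^{p-1})$, and reduces the claim to $x_2x_1^{p-1}\leq x_1x_2^{p-1}$ using $p-1\leq1$. Your condition $u\leq u^{p-1}$ is this inequality divided by $x_1^p$, and the abandoned mean value theorem detour can simply be dropped.
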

\begin{proof}
 The case $x_1=x_2$ is immediate. By symmetry assume $x_1>x_2$ and put $r=p-1\in(0,1]$. The identity
 \[
 x_1^p-x_2^p-(x_1-x_2)(x_1^r+x_2^r)
 =x_2x_1^r-x_1x_2^r
 \]
 has a nonpositive right-hand side because $r\leq1$.
\end{proof}
\begin{lemma}[Reverse triangle inequality for concave powers]\label{lmm:power:holder}
	Let $r\in(0,1]$ and $x,y\in\Rp$. Then $\absof{x^r - y^r} \leq \absof{x-y}^r$.
\end{lemma}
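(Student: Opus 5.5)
The plan is a two-case argument that reduces the inequality $\absof{x^r-y^r}\leq\absof{x-y}^r$ to subadditivity of the concave power $t\mapsto t^r$ on $\Rp$, i.e.\ to \cref{lmm:power:subadd:gen} with $n=2$ and $p=r\in(0,1]$.

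First I reduce to one ordering. Both sides are symmetric in $x$ and $y$, and the case $x=y$ is trivial, so I assume without loss of generality that $x>y\geq0$. Then $x^r\geq y^r$ because $t\mapsto t^r$ is nondecreasing, and the claim becomes $x^r-y^r\leq(x-y)^r$.

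Next I write $x=y+(x-y)$ with both summands nonnegative. \Cref{lmm:power:subadd:gen} with $n=2$ and $p=r\leq1$ has factor $2^{\max(0,r-1)}=1$, so
\[
x^r=\br{y+(x-y)}^r\leq y^r+(x-y)^r .
\]
Rearranging gives $x^r-y^r\leq(x-y)^r=\absof{x-y}^r$, which is the claim.

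If I wanted to avoid citing the lemma, I would prove subadditivity directly. For $a,b>0$, concavity of $t\mapsto t^r$ together with $0^r=0$ gives $a^r\geq\frac a{a+b}(a+b)^r$ and $b^r\geq\frac b{a+b}(a+b)^r$; adding these yields $(a+b)^r\leq a^r+b^r$. If $a=0$ or $b=0$, the inequality is an identity.

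I do not expect a genuine obstacle. The only point needing care is the endpoint $y=0$: there the claim reads $x^r\leq x^r$, which is exactly the equality case of subadditivity, and it requires $r>0$ so that $0^r=0$. The hypothesis $r\in(0,1]$ guarantees this.
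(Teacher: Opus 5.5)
Your proposal is correct and is essentially the paper's own proof: assume $x\geq y$, write $x=(x-y)+y$, and apply \cref{lmm:power:subadd:gen} with $n=2$ and $p=r\in(0,1]$ (factor $2^{\max(0,r-1)}=1$) to get $x^r\leq(x-y)^r+y^r$. Your optional direct proof of subadditivity via concavity and $0^r=0$ is also valid, though the paper simply cites the lemma.
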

\begin{proof}
	Assume $x\geq y$. By \cref{lmm:power:subadd:gen} with $p = r\in(0,1]$, so that $2^{\max(0,r-1)} = 1$, applied to $x - y$ and $y$,
	$x^r = ((x-y)+y)^r \leq (x-y)^r + y^r$.
\end{proof}

The next result is the second inequality of \cite[Lemma S3.1(i)]{Schoetz2026Transformed-arxiv}. We retain a direct proof to keep the loss conversion self-contained.
It converts the loss $g$ of \cref{prp:power:excess} into the standard $L^\alpha$-loss while keeping track of the scaling; it is used in the proof of \cref{thm:power:main}.
\begin{proposition}[Conversion of the loss]\label{prp:app:conversion}
	Let $\alpha\in(1,2]$, $\phi = \frac{2-\alpha}{\alpha-1}$, and $b\in\Rpp$, and set $\psi(x) := x^2(b+x^{\alpha-1})^{-\phi}$ for $x\in\Rp$.
	Then, for every nonnegative random variable $X$,
	\begin{equation}\label{eq:app:conversion}
		\EOf{X^\alpha}
		\leq
		b^{\frac{\phi\alpha}{2}}\, \EOf{\psi(X)}^{\frac\alpha2}\br{1 + b^{-\frac\alpha2}\, \EOf{\psi(X)}^{\frac{\alpha-1}{2}}}^{\phi}
		\eqfs
	\end{equation}
\end{proposition}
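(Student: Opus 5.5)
The plan is to reduce \eqref{eq:app:conversion} to an implicit inequality for $M:=\EOf{X^\alpha}$ via H\"older and Minkowski, and then to resolve that inequality by a scalar fixed-point argument. The case $\alpha=2$ is trivial, since then $\phi=0$, $\psi(x)=x^2$ and both sides equal $\EOf{X^2}$; so assume $1<\alpha<2$.

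\emph{Reduction to bounded $X$.} First I would reduce to $\EOf{X^\alpha}<\infty$ by truncation. The map $\psi$ is increasing, as noted before \cref{prp:app:conversion}, so $\psi(X\wedge n)\leq\psi(X)$. Hence it suffices to prove the claim for the bounded variable $X\wedge n$, because the right-hand side is nondecreasing in $\EOf{\psi(X)}$. Monotone convergence as $n\to\infty$ then gives the general case. We may also assume $P:=\EOf{\psi(X)}<\infty$.

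\emph{Implicit bound.} Write pointwise
\begin{equation*}
 X^\alpha=\psi(X)^{\alpha/2}\,(b+X^{\alpha-1})^{\phi\alpha/2}.
\end{equation*}
Apply H\"older with exponents $2/\alpha$ and $2/(2-\alpha)$. Since $\frac{\phi\alpha}{2}\cdot\frac{2}{2-\alpha}=\frac{\alpha}{\alpha-1}$, this gives
\begin{equation*}
 M\leq P^{\alpha/2}\,\EOf{(b+X^{\alpha-1})^{\alpha/(\alpha-1)}}^{(2-\alpha)/2}.
\end{equation*}
Minkowski's inequality in $L^{\alpha/(\alpha-1)}$ gives $\|b+X^{\alpha-1}\|_{\alpha/(\alpha-1)}\leq b+M^{(\alpha-1)/\alpha}$. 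The resulting exponent is $\frac{\alpha}{\alpha-1}\cdot\frac{2-\alpha}{2}=\frac{\phi\alpha}{2}$, so
\begin{equation*}
 M\leq P^{\alpha/2}\br{b+M^{(\alpha-1)/\alpha}}^{\phi\alpha/2}.
\end{equation*}

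\emph{Resolving the implicit inequality.} This is the step needing care, because the goal is constant $1$ with no loss. I would normalize by setting
\begin{equation*}
 y:=M^{(\alpha-1)/\alpha}/b,\qquad z:=b^{-\alpha/2}P^{(\alpha-1)/2}.
\end{equation*}
The exponent bookkeeping uses $\frac{\alpha}{\alpha-1}-\frac{\phi\alpha}{2}=\frac{\alpha^2}{2(\alpha-1)}$. Taking $\frac{\alpha-1}{\alpha}$-th powers, the implicit bound becomes
\begin{equation*}
 y\leq z\,(1+y)^{\kappa},\qquad \kappa:=\tfrac{2-\alpha}{2}\in(0,1),
\end{equation*}
and the target \eqref{eq:app:conversion} becomes $y\leq z(1+z)^{(2-\alpha)/\alpha}$. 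Since $\frac{(2-\alpha)}{\alpha}=\frac{\kappa}{1-\kappa}$, the target reads $y\leq z(1+z)^{\kappa/(1-\kappa)}$.

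If $y\leq z$, the target holds trivially. Otherwise, since $(1+y)^\kappa\geq1$, the implicit bound gives
\begin{equation*}
 1+y\leq(1+z)(1+y)^\kappa,
\end{equation*}
so $(1+y)^{1-\kappa}\leq1+z$. Substituting $(1+y)^\kappa\leq(1+z)^{\kappa/(1-\kappa)}$ back into $y\leq z(1+y)^\kappa$ yields the target. Undoing the normalization, I expect only the exponent checks to need verification, and these follow from the identity $(\alpha-1)\phi=2-\alpha$.
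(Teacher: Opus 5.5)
Your proof is correct, but it handles the key step differently from the paper.

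The paper's route is as follows:
\begin{itemize}
\item It normalizes to $b=1$ and introduces $f(y)=y^{2/\alpha}(1+y^{(\alpha-1)/\alpha})^{-\phi}$, so that $f(X^\alpha)=\psi(X)$.
\item It shows that $f$ is convex and increasing by computing $f''$, and applies Jensen to get $f(\Eof{X^\alpha})\leq\Eof{\psi(X)}$.
\item It then exhibits the explicit point $y_*=r^{\alpha/2}(1+r^{(\alpha-1)/2})^{\phi}$, checks $f(y_*)\geq r$ via $(1+t)^{1+\omega}\geq1+t(1+t)^{\omega}$, and concludes by monotonicity of $f$.
\item Finally it truncates and rescales to general $b$.
\end{itemize}

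Your H\"older--Minkowski bound $M\leq P^{\alpha/2}(b+M^{(\alpha-1)/\alpha})^{\phi\alpha/2}$ becomes, after raising it to the power $2/\alpha$, exactly $\psi(M^{1/\alpha})\leq P$. That is the same Jensen inequality. Applied to two-point laws of $X^\alpha$, your argument even recovers the convexity of $f$. So you lose nothing, and you avoid the derivative computation.

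Your inversion, deducing $(1+y)^{1-\kappa}\leq1+z$ from $y\leq z(1+y)^{\kappa}$, is the scalar counterpart of the paper's check $f(y_*)\geq r$. It yields the same constant-one bound. Working at general $b$ from the start replaces the final rescaling. Your case distinction $y\leq z$ is superfluous, since $1\leq(1+y)^{\kappa}$ for all $y\geq0$.

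In short, the paper's route isolates a reusable structural fact, namely convexity of the loss as a function of $x^\alpha$. Yours is shorter and relies only on H\"older and Minkowski.
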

\begin{proof}
 Put $s=(\alpha-1)/\alpha$ and $\omega=(2-\alpha)/\alpha$. First take $b=1$ and bounded $X$.
 Set
 \[
 f(y)=\frac{y^{2/\alpha}}{(1+y^{(\alpha-1)/\alpha})^\phi}=yh(y),
 \qquad h(y)=\br{\frac{y^s}{1+y^s}}^\phi,\qquad f(0)=0.
 \]
 For $y>0$, direct differentiation yields
 \[
 f''(y)=\frac{\omega\,h(y)\br{1+\omega+(1-s)y^s}}{y(1+y^s)^2}\geq0.
 \]
 Thus $f$ is convex and increasing on $\Rp$; at $\alpha=2$ it is the identity.
 With $r=\Eof{\psi(X)}$, Jensen gives $f(\Eof{X^\alpha})\leq r$. If $r=0$, $X=0$ almost surely.
 Otherwise define $y_*=r^{\alpha/2}(1+r^{(\alpha-1)/2})^\phi$. Writing $t=r^{(\alpha-1)/2}$, the inequality $f(y_*)\geq r$ reduces to
 \[
 (1+t)^{1+\omega}\geq1+t(1+t)^\omega,
 \]
 which follows from $(1+t)^\omega\geq1$. Monotonicity implies $\Eof{X^\alpha}\leq y_*$.
 Applying the result to $X\wedge M$ and then taking $M\to\infty$ proves it for arbitrary nonnegative $X$, with extended expectations allowed.
 Finally write $\psi_b$ for the displayed loss at scale $b$ and set $X=b^{1/(\alpha-1)}Z$. Then $\psi_b(X)=b^{\alpha/(\alpha-1)}\psi_1(Z)$, and rescaling gives exactly \eqref{eq:app:conversion}.
\end{proof}

Finally, the proof of \cref{lmm:power:empmom} uses the Rosenthal inequality.
\Needspace{10\baselineskip}
\begin{lemma}[Rosenthal's inequality; {\cite[Theorem 3]{Rosenthal1970Subspaces-IJM}}]\label{lmm:rosenthal}
	Let $X_1, \dots, X_n$ be real-valued, independent, centered random variables and let $s \geq 2$. Then
	\begin{equation*}
		\EOf{\abs{\sum_{i=1}^n X_i}^s} \leq c_s \max\brOf{ \sum_{i=1}^n\EOf{|X_i|^s},\ \br{\sum_{i=1}^n \EOf{X_i^2}}^{\frac s2}}
		\eqcm
	\end{equation*}
 where $c_s<\infty$ depends only on $s$.
 The constant can be chosen locally bounded in $s$: for symmetric $X_i$ and $s>2$, Johnson, Schechtman, and Zinn \cite[Theorem 4.1]{JohnsonEtAl1985MomentInequalities-AOP} give the inequality with $c_s\leq(Ks/\max\{1,\log(s)\})^s$ for a universal constant $K\leq7.35$, and replacing $X_i$ by $X_i-X_i\pr$ for independent copies $X_i\pr$ extends this to centered $X_i$ with $2K$ in place of $K$; for $s=2$, $c_2=1$.
\end{lemma}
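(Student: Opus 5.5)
The statement to prove is Rosenthal's inequality with locally bounded constants. The plan is to take the classical inequality of \cite[Theorem 3]{Rosenthal1970Subspaces-IJM} as given for finite sums of independent centered real variables and fixed $s\geq2$. The only work is then to justify the claims about the constant.

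\textbf{The case $s=2$.} Independence and centering give $\Eof{(\sum_iX_i)^2}=\sum_i\Eof{X_i^2}$ directly, by expanding the square and observing that the cross terms vanish. This yields $c_2=1$.

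\textbf{Symmetric variables, $s>2$.} We quote \cite[Theorem 4.1]{JohnsonEtAl1985MomentInequalities-AOP}. It gives the inequality in the form
\[
\normOf{\textstyle\sum_iX_i}_{L^s}\leq\frac{Ks}{\max\{1,\log s\}}\max\Bigl\{\bigl(\textstyle\sum_i\Eof{|X_i|^s}\bigr)^{1/s},\ \bigl(\textstyle\sum_i\Eof{X_i^2}\bigr)^{1/2}\Bigr\}.
\]
Raising both sides to the power $s$ gives the stated $c_s$.

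\textbf{Centered variables.} This is a symmetrization step. Let $X_i\pr$ be independent copies of the $X_i$. Since $\Eof{\sum_iX_i\pr}=0$, conditional Jensen gives
\[
\Eof{|\textstyle\sum_i X_i|^s}\leq\Eof{|\textstyle\sum_i(X_i-X_i\pr)|^s}.
\]
The variables $X_i-X_i\pr$ are independent and symmetric, so the symmetric case applies to them. Their moments satisfy
\[
\Eof{|X_i-X_i\pr|^s}\leq2^{s}\Eof{|X_i|^s},\qquad \Eof{(X_i-X_i\pr)^2}=2\Eof{X_i^2}.
\]
Substituting these bounds costs at most a factor $2^s$ overall, which is absorbed by replacing $K$ with $2K$.

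\textbf{Local boundedness.} The resulting $c_s$ is a continuous function of $s$ on $(2,\infty)$ and tends to a finite value as $s\searrow2$. Together with $c_2=1$, this shows that $c_s$ is locally bounded on $[2,\infty)$.

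The only step that needs care is tracking the constants through the symmetrization. No genuine obstacle arises, since the statement is a citation supplemented by these routine estimates.
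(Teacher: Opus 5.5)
Your proposal is correct and follows the same route as the paper. Like the paper, you take Rosenthal's theorem as cited and use the Johnson--Schechtman--Zinn theorem for symmetric summands. You then symmetrize with $X_i-X_i'$: your conditional Jensen step and the moment bounds $2^s\EOf{|X_i|^s}$ and $2\EOf{X_i^2}$ are exactly what turns $K$ into $2K$. Finally, you obtain $c_2=1$ from the vanishing cross terms.

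The only input taken on trust, as in the paper, is the precise form of the Johnson--Schechtman--Zinn constant. If it is stated with $\log s$ rather than $\max\{1,\log s\}$, this changes $K$ by a factor of at most $1/\log 2$ on $(2,\euler)$, which does not affect local boundedness.
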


\bibliographystyle{alpha-doi}
\apptocmd{\thebibliography}{\setlength{\itemsep}{2pt}\setlength{\parsep}{0pt}\setlength{\parskip}{0pt}}{}{}
\bibliography{literature}
\end{document}